\newtheorem{theorem}{Theorem}[section]
\newenvironment{customthm}[1]
{\innercustomthm}
{\endinnercustomthm}
\newtheorem{lemma}[theorem]{Lemma}
\newtheorem{proposition}[theorem]{Proposition}
\newtheorem{corollary}[theorem]{Corollary}
\newtheorem{conjecture}[theorem]{Conjecture}
\theoremstyle{definition}
\newtheorem{definition}[theorem]{Definition}
\newtheorem{remark}[theorem]{Remark}
\newtheorem{conv}[theorem]{Convention}
\newtheorem{example}[theorem]{Example}
\numberwithin{equation}{section}
\acrodef{1d-MDLA}{One-Dimensional Multiparticle Diffusion Limited Aggregation}
\acrodef{DLA}{Diffusion Limited Aggregation}
\acrodef{RCLL}{Right Continuous with Left Limits}
\acrodef{KITP}{Kavli Institute for Theoretical Physic}
\acrodef{PDE}{Partial Differential Equation}
\newcommand{\Ex}{ \mathbf{E} }				%expectation
\renewcommand{\Pr}{ \mathbf{P} }			%probability
\newcommand{\Prw}{ \mathbf{P}_\text{RW} }	%law random walk
\newcommand{\Erw}{ \mathbf{E}_\text{RW} }	%Ex random walk
\newcommand{\Var}{\text{Var}}				%Variance
\newcommand{\ind}{\mathbf{1}}
\newcommand{\set}[1]{{\{#1\}}}
\newcommand{\invs}{\iota}		%inversion on rcll
\newcommand{\intOp}{\mathscr{I}} %integral operator
\newcommand{\sumOp}{\mathscr{S}} %sum operator
\newcommand{\blt}{G}			%boundary layer	term
\newcommand{\ict}{F}			%initial condition term
\newcommand{\Ict}{\mathcal{F}}	%initial condition term, limit
\newcommand{\mgt}{M}			%martingale term
\newcommand{\erf}{\Phi}		%discrete tail function
\newcommand{\Erf}{\Phi_*}	%tail function
\newcommand{\Hk}{p_*}		%heat kernel
\newcommand{\hk}{p}			%discrete heat kernel
\newcommand{\erff}{\Psi}
\newcommand{\hit}{T} 					% hitting time process
\newcommand{\Hit}{\mathcal{T}} 			% hitting time process, limit
\newcommand{\hitUp}{\overline{T}}
\newcommand{\hitLw}{\underline{T}}
\newcommand{\frnt}{R}					%front
\newcommand{\Frnt}{\mathcal{R}}			%front, limit
\newcommand{\frntUp}{\overline{R}}		
\newcommand{\frntLw}{\underline{R}}
\newcommand{\rcll}{\mathbb{D}^{\uparrow}}			% rcll, nondecreasing 	
\newcommand{\rclll}{\mathbb{D}^{\uparrow}_{\Z}}		%% Z-valued, no limit point jumps
\newcommand{\Rcll}{\mathbb{D}}						% rcll,
\newcommand{\uTop}{\mathscr{U}}		%uniform topology
\newcommand{\MTop}{\mathscr{M}_1}	%M1 topology
\newcommand{\ic}{\text{ic}}
\newcommand{\Shd}{A}
\newcommand{\R}{\mathbb{R}}
\newcommand{\Z}{\mathbb{Z}}
\newcommand{\uh}{u_*}							% the hydrodynamic u
\newcommand{\utr}{u_\text{tw}}					% traveling wave
\newcommand{\utrUp}{\overline{u}_\text{tw}}		% traveling wave upper
\newcommand{\utrLw}{\underline{u}_\text{tw}}	% traveling wave lower
\newcommand{\calA}{\mathcal{A}}
\newcommand{\calD}{\mathcal{D}}
\newcommand{\calE}{\mathcal{E}}
\newcommand{\Lup}{\overline{L}}
\newcommand{\Llw}{\underline{L}}
\newcommand{\e}{\varepsilon}
\newcommand{\et}{\widetilde{\varepsilon}}
\renewcommand{\hat}{\widehat}
\newcommand{\til}{\widetilde}
\renewcommand{\bar}{\overline}
\newcommand*{\Cdot}{{\raisebox{-0.5ex}{\scalebox{1.8}{$\cdot$}}}} % largedot
\begin{document}

\title[Criticality of a Randomly-Driven Front]
{Criticality of a Randomly-Driven Front}
\author[A.\ Dembo]{Amir Dembo}
\address{A.\ Dembo,
	Departments of Statistics and Mathematics, Stanford University,
	\newline\hphantom{\quad\ \ A Dembo}
	Stanford, California, CA 94305}
\email{adembo@stanford.edu}

\author[L.-C.\ Tsai]{Li-Cheng Tsai}
\address{L.-C.\ Tsai,
	Departments of Mathematics, Columbia University,
	\newline\hphantom{\quad\quad L-C Tsai}
	2990 Broadway, New York, NY 10027}
\email{lctsai.math@gmail.com}
\subjclass[2010]{%
Primary 60K35, 		%Interacting random processes; statistical mechanics type models; percolation theory 
Secondary 35B30, 	%Dependence of solutions on initial and boundary data, parameters
80A22}				%Stefan problems, phase changes, etc.
\keywords{Supercooled Stefan problem, explosion of PDEs, interacting particles system, multiparticle diffusion limited aggregation}

\begin{abstract}
Consider an advancing `front' 
$\frnt(t) \in \Z_{\geq 0}$ and
particles performing independent continuous time random walks 
on $ (\frnt(t),\infty)\cap\Z$. Starting at $\frnt(0)=0$, whenever 
a particle attempts to jump into $\frnt(t)$ the latter instantaneously 
moves $k \ge 1$ steps to the right,
\emph{absorbing all} particles along its path. 
%A well known open problem is to understand the 
%large-time behavior of $\frnt$ at the critical particle density $\rho=1$ 
%for the \ac*{1d-MDLA} \cite{kesten08a, kesten08b} where $k:=1$ is fixed. 
We take $ k $ to be the minimal 
%possibly infinite, 
random integer such that exactly $ k $ particles are absorbed by the move of $ \frnt $,
and view the particle system as a discrete version of the Stefan problem
\begin{align*}
	&\partial_t \uh(t,\xi) = \tfrac12 \partial^2_{\xi} \uh(t,\xi), \quad \xi >r(t),
\\
	&\uh(t,r(t))=0,
\\
	&\tfrac{d~}{dt}r(t) = \tfrac12 \partial_\xi \uh(t,r(t)),
\\
	&t\mapsto r(t) \text{ non-decreasing }, \quad r(0):=0.
\end{align*}
%
%Taking here instead $k$ as the minimal, possibly infinite, random
%integer such that exactly $k$ particles are absorbed by 
%the move of $\frnt$, 
%
For a constant initial particles density $\uh(0,\xi)=\rho {\bf 1}_{\{\xi >0\}}$, 
at $\rho<1$ the particle system and the PDE exhibit the same
diffusive behavior at large time, whereas
at $\rho \ge 1$ the PDE explodes instantaneously.
Focusing on the critical density $ \rho=1 $,
we analyze the large time behavior of the front $ \frnt (t)$ for the particle system,
and obtain both the scaling exponent of $\frnt(t)$ 
and an explicit description of its random scaling limit.
Our result unveils a rarely seen phenomenon 
where the macroscopic scaling exponent is \emph{sensitive} to the amount of initial local fluctuations.
Further, the scaling limit demonstrates an interesting 
oscillation between instantaneous super- and sub-critical phases.
Our method is based on a novel monotonicity as well as PDE-type estimates.
\end{abstract}

\maketitle

\section{Introduction}
Consider the following Stefan \ac{PDE} problem:
\begin{subequations}
\label{eq:Stefan}
\begin{align}
	\label{eq:HE}
	&\partial_t \uh(t,\xi) = \tfrac12 \partial^2_{\xi} \uh(t,\xi), \ \xi >r(t),
\\
	\label{eq:DiriBC}
	&\uh(t,r(t))=0,
\\
	\label{eq:fluxBC}
	&\tfrac{d~}{dt}r(t) = \tfrac12 \partial_\xi \uh(t,r(t)),
\\
	&t\mapsto r(t) \text{ non-decreasing }, \quad r(0):=0,
\end{align}
\end{subequations}
with a given, nonnegative initial condition $ \uh(0,\xi) \geq 0 $, $ \forall \xi \geq 0 $.
Upon a sign change $ v_* := -\uh $, 
the Stefan problem~\eqref{eq:Stefan} describes a solid-liquid system,
where the solid is kept at its freezing temperature $ 0 $,
and the liquid is super-cooled, with temperature distribution $ v_*(t,\xi) $.
Here, instead of the super-cooled, solid-liquid system, 
we consider a different type of physical phenomenon
that is also described by~\eqref{eq:Stefan}.
That is, 
$ \uh $ represents the density of particles 
that diffuse in the ambient region $ (r(t),\infty) $.
A sticky aggregate occupies the region $ [0,r(t)] $ to the left of the particles,
and we refer to $ r(t) $ as the `front' of the aggregate.
Whenever a particle hits $ r(t) $, the particle adheres to the aggregate 
and the front advances according to the particle mass thus accumulated.
The zero-value boundary condition~\eqref{eq:DiriBC} arises due to absorption of particles (into the aggregate),
while the condition~\eqref{eq:fluxBC} ensures that the front advances by the total mass of particles being absorbed.
Indeed, given sufficient smoothness of $ \uh $ and $ r $, the condition \eqref{eq:fluxBC} is written 
(using~\eqref{eq:HE}--\eqref{eq:DiriBC}) as
\begin{align*}
	\frac{d~}{dt}r(t) = \frac12 \partial_\xi \uh(t,r(t)) = \frac{d~}{dt} \int_{0}^\infty \big( \uh(0,\xi)-\uh(t,\xi)\ind_\set{\xi>r(t)} \big) d\xi.
\end{align*}
Integrating in $ t $ gives
\begin{align}
	\tag{\ref{eq:fluxBC}'}
	\label{eq:fluxBC:}
	r(t) = \int_{0}^\infty \big( \uh(0,\xi)-\uh(t,\xi)\ind_\set{\xi>r(t)} \big) d\xi
	=(\text{total absorbed mass up to time }t).
\end{align}
We refer to~\eqref{eq:fluxBC}--\eqref{eq:fluxBC:} as the \textbf{flux condition}.

In this article we focus on the case of 
a constant initial density $ \uh(0,\xi) = \rho \ind_\set{\xi >0} $.
For $ \rho\in(0,1) $, the system is solved explicitly as
\begin{align}
	\label{eq:explictu}
	\uh(t,\xi) &:= \frac{\rho}{\Erf(1,\kappa_\rho)}
	\big( \Erf(1,\kappa_\rho)-\Erf(t,\xi) \big)
	\ind_\set{\xi\geq r(t)},
\\
	\label{eq:explictr}
	r(t) &:= \kappa_\rho \sqrt{t}.
\end{align}
Here $ \Hk(t,\xi) := \frac{1}{\sqrt{2\pi t}} \exp(-\frac{\xi^2}{2t}) $ 
denotes the standard heat kernel, with the corresponding tail distribution function 
$ \Erf(t,\xi) := \int_{\xi}^{\infty} \Hk(t,\zeta) d\zeta $.
The value $ \kappa_\rho\in(0,\infty) $ is the unique positive solution  to the following equation
\begin{align}
	\label{eq:kappaEq}
	\rho = g(\kappa):=\frac{\kappa\Erf(1,\kappa)}{\Hk(1,\kappa)}.
\end{align}
Indeed, \eqref{eq:kappaEq} has a unique positive solution since 
%\begin{align*}
%	g(\kappa) = \kappa \int_{\kappa}^{\infty} e^{-\frac{\xi^2}{2}+\frac{\kappa^2}{2}} d\xi
%	= 	\int_0^\infty e^{-s^2/(2\kappa^2)} e^{-s} ds 
%\end{align*}
%(take $\xi=\kappa+s/\kappa$), so 
$ g(\Cdot) $ is strictly increasing from $ g(0)=0 $ to $ g(\infty)=1 $. 
The explicit solution~\eqref{eq:explictr} shows that $ r(t) $ travels diffusively,
i.e., $ r(t) = O(t^{\frac12}) $.
On the other hand, for $ \rho \geq 1 $,
the Stefan problem~\eqref{eq:Stefan} admits no solution.
To see this, note that if $ \uh $ solves heat equation~\eqref{eq:HE} with zero boundary condition~\eqref{eq:DiriBC},
by the strong maximal principle we have $ \uh(t,\xi) < \rho=\uh(0,\xi) $, 
for all $ t >0 $ and $ \xi>r(t) $.
Using this in~\eqref{eq:fluxBC:} gives
\begin{align*}
	r(t) > \int_0^{r(t)} \uh(0,\xi) d\xi = \rho\, r(t),
\end{align*}
which cannot hold for any $ r(t) \in [0,\infty) $ if $ \rho \geq 1 $.
Put it in physics term, 
if the particles density is $ \geq 1 $ everywhere initially,
the flux condition~\eqref{eq:fluxBC:} forces the front $ r(t) $ to explode instantaneously.
This is also seen by taking $ \rho\uparrow 1 $ in~\eqref{eq:kappaEq}, whence $ \kappa_\rho \uparrow \infty $.
In addition to the one-phase Stefan problem~\eqref{eq:Stefan},
explosion of similar \acp{PDE} appears in a wide range of contexts,
such as systemic risk modeling~\cite{nadtochiy17}
and neural networks~\cite{carrillo13, delarue15}.

Explosions of the type of Stefan problem~\eqref{eq:Stefan},
as well as possible regularizations beyond explosions,
have been intensively investigated.
We refer to \cite{fasano81, fasano89, fasano90, herrero96} and the references therein.
Commonly considered in the literature is the case 
where $ \uh(0,r(0))=0 $ (and $ \uh(0,\xi) $ is bounded and continuous).
In this case the corresponding Stefan problem admits a unique solution
for a short time \cite{friedman76, fasano83}.
For the case $ \uh(0,\xi) = \rho \ind_\set{\xi>0} $, $ \rho \geq 1 $, considered here,
explosion occurs \emph{instantaneously},
as discussed previously (and also \cite[Theorem~2.2]{fasano81}).
Further, our system being semi-infinite $ (r(t),\infty) $,
the explosion cannot be cured by conventional approaches of perturbing
the other end point of a finite system. 

%Explosion of the \ac{PDE} raises 
%the question of how to model the underlying physical system for $ \rho \geq 1 $.

Among all possible explosions,
of particular interest is the case $ \rho=1 $, where the explosion is \emph{marginal}.
%A regularization of the \ac{PDE} has been proposed \cite{fasano90} for finite systems.
%
%However,
%the semi-infinite $ (r(t),\infty) $ system considered here 
%exhibits a more severe explosion, and cannot be cured by this regularization. 
%
%While such an explosion occurs at the \ac{PDE} level,
%it remains open whether the explosion is intrinsic to this type of phenomena,
%and weather the explosion occurs also at a discrete level.
%
To study the behavior of the underlying phenomenon at this critical density $ \rho=1 $,
we propose a different approach:
we introduce a \emph{discrete}, stochastic particle system 
that models the type of phenomena as the Stefan problem~\eqref{eq:Stefan}.
Indeed, for $ \rho<1 $ 
the particle system exhibits the same diffusive behavior as~\eqref{eq:explictr}
at large time (Proposition~\ref{prop:subp}\ref{enu:sub});
while for $ \rho>1 $, the particle system explodes in finite time
(Proposition~\ref{prop:subp}\ref{enu:sup}).
For the case $ \rho=1 $ of interest,
we show that the particle system exhibits 
an intriguing scaling exponent $ r(t) \asymp t^{\alpha} $,
which is super-diffusive $ \alpha>\frac12 $ and sub-linear $ \alpha<1 $.
Even though here the front does not explode,
$ \rho=1 $ has an effect of making the exponent $ \alpha $ \emph{sensitive} 
to the amount of initial local fluctuations (Theorem~\ref{thm:main:}).

We now define the particle system that is studied in this article.
A non-decreasing, $ \Z_{\geq 0} $-valued process $ t\mapsto\frnt(t) $ 
is fueled by a crowd of random walkers occupying the region to its right 
$ (\frnt(t),\infty)\cap\Z $.
We regard  $ \frnt(t) $ as the front of an aggregate $ \Omega(t)=[0,\frnt(t)]\cap\Z $,
and refer to $ \frnt $ as the `front' throughout the article.
To define the model,
we start the front at the origin, i.e., $ \frnt(0)=0 $,
and consider particles $ \{X_i(t)\}_{i=1}^\infty $ 
performing independent simple random walks on $ \Z_{>0} $ in continuous-time.
That is, at $ t=0 $, we initiate the particles $ \{X_i(0)\}_{i=1}^\infty $ on $ \Z_{>0} $
according to a given distribution,
and for $ t>0 $, each $X_i(t)$ waits an independent Exponential$(1)$ time,
then independently chooses to jump one step to the left or to the right with probability $1/2$ each.
The front $ \frnt $ remains stationary expect when
a particle $ X_i $ attempts to jump into the front, i.e.,
\begin{align*}
	\mathcal{J}_i(t)
	:=
	\{ X_i(t^-)=\frnt(t^-)+1 \text{ attempts to jump to } \frnt(t^-) \}.
\end{align*}
When such an attempt occurs,
the front immediately moves $k \ge 1$ steps to the right, i.e.\
\begin{align}
	\label{eq:R:move}
	\frnt(t) - \frnt(t^-) = k \ind_{\cup_{i=1}^\infty\mathcal{J}_i(t)},
\end{align}
and \emph{absorbs} all the particles on the sites $ (\frnt(t^-),\frnt(t)]\cap\Z $.
Here we choose $k$ to be the smallest integer such that $ \frnt(t) $ satisfies 
the flux condition:
\begin{align}
	\label{eq:fluxCond}
	\#\{ \text{particles absorbed by } \frnt \text{ up to time } t \} 
	=: N^\frnt(t)
	= \frnt(t).
\end{align}
More explicitly,
\begin{align}
	\label{eq:fluxCond:}
	k :=& \inf \Big\{ 
		j\in\Z_{>0} : \#\Big( (\frnt(t^-),\frnt(t^-)+j] \cap \big\{ X_i(t^-) \big\}_i\Big) = j 
		\Big\}.
\end{align}
See Figure~\ref{fig:fgm} for an illustration.
We adopt the convention $ \inf\emptyset :=\infty $,
allowing finite-time explosion: $ \frnt(t)=\infty $.
We refer to this model as the \textbf{frictionless growth model},
where the term `frictionless' refers to the fact
that the front travels in a fashion satisfying the flux condition~\eqref{eq:fluxCond}.
\begin{figure}[h]
\centering
\begin{subfigure}{0.45\textwidth}
    \includegraphics[width=\textwidth]{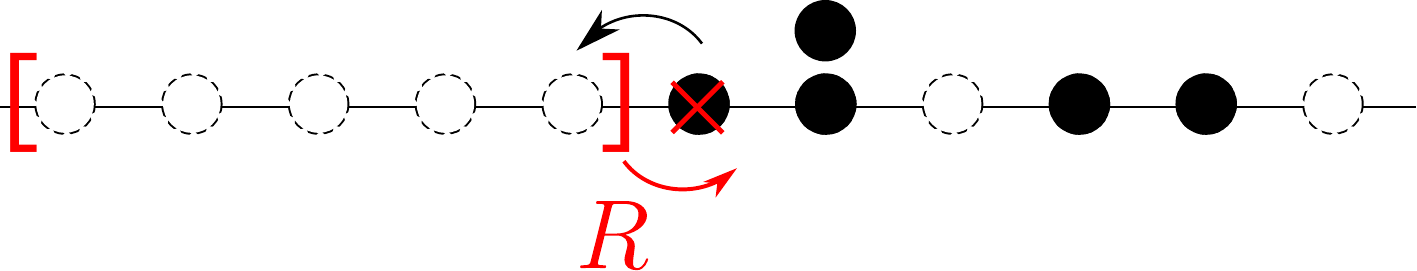}
    \caption{The case of single absorption}
    \label{fig:fgm1}
\end{subfigure}
\hfil
\begin{subfigure}{0.45\textwidth}
    \includegraphics[width=\textwidth]{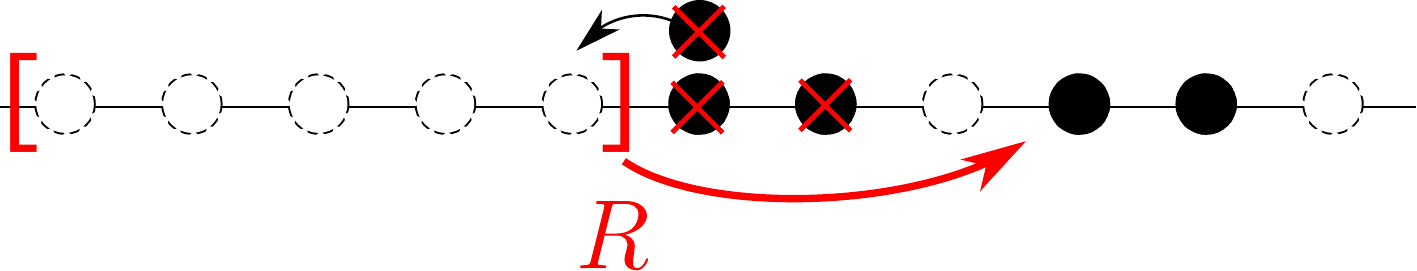}
    \caption{The case of multiple absorption}
    \label{fig:fgm2}
\end{subfigure}
\caption{Motion of $ \frnt $. Red crosses represent absorptions.}\label{fig:fgm}
\end{figure}

Similar models have been studied in the literature under a different context.
Among them is the \ac{1d-MDLA} \cite{kesten08a, kesten08b}, 
which is defined the same way as in the preceding except $ k:=1 $ in \eqref{eq:R:move}.
That is, the front moves exactly one step to the right
whenever a particle attempts to jump onto the front.
Letting $ k:=1 $ introduces possible \emph{friction} to the motion of the front,
in the sense that the front may consume more particles than the step it moves.
For comparison, we let $ \til{\frnt}(t) $ denote the front of 
the \ac{1d-MDLA}.
The interest of \ac{1d-MDLA} originates from its relation to reaction diffusion-type particle systems.
The precise definition of such particle systems differ among literature,
and roughly speaking they consist of two species of particles $ A $ and $ B $
performing independent random walks on $ \Z^d $ in continuous time,
with jumps occurring at rates $ D_A $ and $ D_B $, respectively,
such that an $ A $-particle is converted into a $ B $-particle
whenever the $ A $-particle is in the vicinity of a $ B $-particle.
Particle systems of this type serve as a prototype of various phenomenon,
such as stochastic combustion and infection spread,
depending on the values of the jumping rates $ D_A $ and $ D_B $ \cite{kesten12}.
Despite their seemly simple setup, 
the reaction-diffusion particle systems cast significant challenges
for rigorous mathematical study,
and has attracted much attention.
We refer to \cite{alves02a,alves02,berard10,comets07,comets09,kesten08,kesten12,ramirez04,richardson73} 
and the references therein.
Of relevance to our current discussion is the special case $ D_A=1, D_B=0 $.
Under this specification, reaction diffusion-type particle systems
can be formulated as problems of a randomly growing aggregate.
That is, we view the cluster of the stationary $ B $-particles as an aggregate 
$ \Omega(t)\subset\Z^d $, 
which grows in the bath of $ A $-particles.
For $ d>1 $, numerical simulations show that the cluster exhibits intriguing geometry,
from which speculations and conjectures arise.
Here we mention a recent result \cite{sidoravicius16}
on the linear growth of (the longest arms of) the cluster under certain assumptions,
and refer to the references therein for development in $ d>1 $.
As for $ d=1 $, the aforementioned \ac{1d-MDLA}
is a specific realization of such models \cite[Section~4]{kesten12}.
For the \ac{1d-MDLA},
the longtime behavior of $ \til{\frnt} $ 
exhibits a transition from diffusive scaling $ \til{\frnt}(t) \asymp t^{\frac12} $
for $ \rho<1 $ to linear motions $ \til{\frnt}(t) \asymp t $ for $ \rho>1 $,
as shown in \cite{kesten08b} and \cite{sly16}, respectively.
The behavior of \ac{1d-MDLA} at $ \rho=1 $ remains open,
and there has been attempts \cite{sidoravicius17} to derive the scaling limit of $ \til\frnt $ non-rigorously.
The frictionless model considered in article is more tractable than the \ac{1d-MDLA}.
In particular, the flux condition~\eqref{eq:fluxCond} allows us to derive certain monotonicity
to bypass refined estimates on the process $ \frnt $.

%%
%To state this precisely, 
%we adopt the standard notation 
%\begin{align*}
%	\eta^\ic(x) := \# \{ X_i(0)=x \}
%\end{align*}
%of the occupation variables (i.e., number of particles at site $ x $) at $ t=0 $.
%%
%Starting the initial condition $ (\eta^\ic(x))_{x\in\Z_{>0}} $ 
%at i.i.d.\ Poisson$ (\rho) $ distribution,
%\begin{align}
%	\label{eq:PoisIC}
%	(\eta^\ic(x))_{x\in\Z_{>0}} \sim \bigotimes_{\Z{>0}} \text{Pois}(\rho),
%\end{align}
%for $ \rho<1 $, 
% , 
%demonstrated that $ \til{\frnt}(t) $ travels \emph{diffusively},
%by showing $ \{\til{\frnt}(t)/\sqrt{t}\}_{t\geq 1} $ is tight with nontrivial limit points.
%%
%For $ \rho>1 $, Sly~ established the \emph{linear} motion
%of $ \til{\frnt}(t) $ as $ t\to\infty $.
%These results point to a phase transition at $ \rho=1 $,
%and brings forward the intriguing question of 
%the scaling exponent and scaling limit of the front at this criticality.
%
%The frictionless growth model studied here
%is a variant of the \ac{1d-MDLA} such that the front travels frictionlessly.
%
%In higher dimensions $ d>1 $, the aggregate exhibits intriguing geometric properties,
%and there has been intensive study in physics regarding the shape
%and scaling behaviors of the aggregate at large times.
%%
%In particular, in two dimensions $ d=2 $,
%the aggregate exhibits a transition form \ac{DLA}-type \cite{witten81} fractional behavior
%into an Eden-type \cite{eden61} compact aggregate,
%\cite{voss84, uwaha88}.
%We refer to \cite{russ13} and the references therein for physics literature.

We now return to our discussion about the frictionless growth model.
Adopt the standard notation 
\begin{align*}
	\eta^\ic(x) := \# \{ X_i(0)=x \}
\end{align*}
for occupation variables (i.e., number of particles at site $ x $) at $ t=0 $.
A natural setup for constant density initial condition is to let 
$ \{\eta^\ic(x)\}_{x\in\Z_{>0}} $ be i.i.d.\ with $ \Ex(\eta^\ic(1))=\rho $. 
Our first result verifies that: if $ \rho>1 $ the front $ \frnt $ explodes in finite time;
and if $ \rho<1 $, the front converges to same expression~\eqref{eq:explictr}
as the Stefan problem.
Recall that $ \kappa_\rho $ is the unique solution to~\eqref{eq:kappaEq} for a given $ \rho\in(0,1) $.

\begin{proposition}
\label{prop:subp}
Start the system from the following i.i.d.\ initial condition:
\begin{align}
	\label{eq:iidIC}
	\{\eta^\ic(x)\}_{x\in\Z_{>0}} \text{i.i.d.},
	\quad
	\text{ with } \Ex(\eta^\ic(1))=\rho,
	\
	\Ex(e^{\lambda_0 \eta^\ic(1)}) <\infty,
	\ \text{ for some }\lambda_0>0.
\end{align} 
\begin{enumerate}[label=(\alph*)]
\item \label{enu:sup}
If $ \rho>1 $, the front $ \frnt $ explodes in finite time:
	\begin{align}
		\label{eq:supcrt}
		\Pr\Big( \frnt(t) =\infty, \text{ for some }t<\infty \Big) =1.
	\end{align}
\item \label{enu:sub}
If $ 0<\rho<1 $, the front scales diffusively to the deterministic trajectory $ \kappa_\rho\sqrt{t} $:
	\begin{align}
		\label{eq:subcrt}
		\sup_{t\in[0,t_0]} |\e\frnt(\e^{-2}t) - \kappa_\rho\sqrt{t}|
		\longrightarrow_\text{P} 0,
		\quad
		\text{as } \e \to 0,
	\end{align}
	for any fixed $ t_0 <\infty $.
\end{enumerate}
\end{proposition}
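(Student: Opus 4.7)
The plan is to handle the two parts via complementary strategies: for~(a) a direct random-walk-hitting argument at each jump attempt, and for~(b) a sandwich bound matching the explicit Stefan trajectory $\kappa_\rho\sqrt{t}$.

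For part~(a), I would exploit the key observation derived from the flux condition~\eqref{eq:fluxCond:}: at a jump attempt time $T$, explosion occurs precisely when the partial sums
$S_k := \#\{X_i(T^-) \in (\frnt(T^-),\frnt(T^-)+k]\}$ strictly exceed $k$ for every $k \geq 1$. Since the initial configuration is i.i.d.\ with mean $\rho > 1$ and finite exponential moments, and since symmetric random walk preserves expected density in the absence of absorption, the configuration ahead of the front at time $T^-$ has density approximately $\rho$ away from the boundary; thus $S_k - k$ behaves like a random walk with strictly positive drift $\rho - 1$. Classical random-walk theory then yields a uniform lower bound $p_0 > 0$ on the conditional probability that $S_k > k$ for all $k \geq 1$. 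Iterating over successive jump attempt times, which must be infinite in number on the event of no explosion (since the positive initial density ensures particles continually reach the front), gives $\Pr(\text{no explosion}) \leq \lim_{n\to\infty}(1-p_0)^n = 0$.

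For part~(b), I would seek matching upper and lower bounds for $\frnt(t)$ by comparing against the deterministic trajectories $\bar r(t) := (\kappa_\rho + \delta)\sqrt{t}$ and $\und r(t) := (\kappa_\rho - \delta)\sqrt{t}$, for arbitrary $\delta > 0$. Via a monotonicity in the driving dynamics (as announced in the abstract), one can sandwich $\frnt$ between auxiliary front processes whose motion is forced along these trajectories. The upper bound $\frnt(\e^{-2}t) \leq (\kappa_\rho + \delta)\e^{-1}\sqrt{t}$ with high probability reduces, via the flux condition $\frnt(t) = N^\frnt(t)$, to showing that the cumulative number of independent random walks absorbed by a deterministic absorbing boundary following $\bar r(\cdot)$ up to time $\e^{-2}t$ is strictly less than $(\kappa_\rho + \delta)\e^{-1}\sqrt{t}$ with high probability. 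Under the diffusive scaling this random-walk flux converges to the corresponding Stefan-PDE flux, which by the defining equation~\eqref{eq:kappaEq} equals exactly $\kappa_\rho < \kappa_\rho + \delta$, so the bound holds. A symmetric argument handles the lower bound, and together they yield~\eqref{eq:subcrt}.

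The main obstacle I expect is making the density-preservation property rigorous. In part~(a) one must control the distortion of the configuration caused by conditioning on no prior jump attempt, so that the effective drift of $S_k - k$ is genuinely bounded below by $\rho-1$; in part~(b), one must quantify with high probability the flux of independent random walks into a slowly moving absorbing boundary, uniformly in time. Both require careful couplings, and the part~(b) argument additionally needs hydrodynamic-limit-type estimates for particle systems with moving boundaries, which is of heat-kernel/PDE nature and aligns with the method advertised in the abstract.
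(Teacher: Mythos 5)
For part~(b), your plan is essentially the paper's: the paper also sandwiches $\frnt$ between deterministic barriers close to $\kappa_\rho\sqrt{t}$, invokes the monotonicity of Proposition~\ref{prop:mono}, and then computes the flux by comparing with the explicit Stefan solution. The only difference is the choice of barrier: you take $(\kappa_\rho\pm\delta)\sqrt{t}$, whereas the paper sets $\frntUp_\lambda(t) = \til\frnt(t)+\lfloor\lambda\e^{-1}\rfloor$ with $\til\frnt$ defined by a level set of the \emph{discrete} tail function $\erf$, so that the expected occupation $\bar u_\lambda$ inside the barrier is an exact solution of the discrete heat equation (see \eqref{eq:up:tilR}, \eqref{eq:barulambda}). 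That choice replaces the ``hydrodynamic-limit-type estimate'' you acknowledge needing by an explicit formula, which is the cleaner way to close the argument; otherwise the approaches are the same.

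For part~(a), your approach is genuinely different from the paper's, and it has a gap that is more than the technical obstacle you flag. You want to argue that at each jump-attempt time $T$ the partial sums $S_k - k$ of the occupation at sites $(\frnt(T^-),\frnt(T^-)+k]$ have drift $\rho-1>0$, conclude a uniform conditional probability $p_0>0$ of explosion, and iterate to get $(1-p_0)^n\to 0$. But the configuration at time $T^-$, conditioned on the front not having exploded yet, is precisely the one in which the occupation near the front is \emph{depleted} by the accumulated boundary layer --- this depletion is what allows the front to survive, and for small $k$ the increments of $S_k-k$ have negative drift in that depleted zone, so $S_k-k$ is not a random walk with drift $\rho-1$ starting from $0$. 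Moreover the product bound $\prod_n \Pr(A_n\mid A_1,\dots,A_{n-1})\le(1-p_0)^n$ requires the uniform lower bound $p_0$ to persist under conditioning on all prior non-explosion events, which is exactly where the bias toward low density near the front enters. The paper sidesteps both issues entirely by working with the \emph{free} (non-absorbed) system: it observes $N^\frnt(t)\ge\hat\ict(t,\frnt(t))$ where $\hat\ict(t,x)=\sum_{y\le x}\eta(t,y)$ counts free particles, so that if $\hat\ict(t,x)>x$ for every $x\ge 0$ then $\frnt(t)=\infty$, and then shows the latter holds with probability $\to 1$ as $t\to\infty$ via a direct computation of $\Ex(\hat\ict(t,x))=x+(\rho-1)x+\rho\,\Erw(W(t)\ind_{\{W(t)>x\}})>x$ plus concentration. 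This free-system counting involves no conditioning on the front's history and no boundary-layer depletion, which is exactly what makes the $\rho>1$ explosion argument clean. I would recommend adopting that route for~(a).
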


Proposition~\ref{prop:subp} is settled in the Appendix.
We now turn to the case $ \rho=1 $ of interest.
To prepare for notations, consider the space
\begin{align}
	\label{eq:rcll}
	\rcll &:= \big\{ 
		f:
		\R_{\geq 0} \xrightarrow[\text{nondecr.}]{\text{RCLL}} [0,\infty]
	\big\}
\end{align}
of non-decreasing, $ [0,\infty] $-valued, \ac{RCLL} functions.
On this space $ \rcll $, we define the map
\begin{align}
	\label{eq:inversion}
	\invs: \rcll \to \rcll,
	\quad
	\invs\big(f\big)(t) := \sup \big( \{ \xi\in[0,\infty) : f(\xi) < t \} \cup \{0\} \big).
\end{align}
It is straightforward to verify that
$ \invs $ is an involution, i.e.\ $ \invs^2(f)=f $.
Alternatively, defining the \textbf{Complete Graph} of $ f\in\rcll $ as
\begin{align*}
	\text{CG}(f) &:= 
	\bigcup_{t\in[0,\infty)} \{ (t,\xi): f(t^-)\leq \xi < f(t) \}
	\subset [0,\infty)^2,
	\\
	&\text{ where } f(t^-) := \lim_{s\uparrow t}f(s) \text{ for } t>0 \text{ and }
	f(0^-) := 0,
	\text{ see Figure~\ref{fig:CG},}
\end{align*}
one equivalently defines $ \invs(f)=:g $ as the unique $ \rcll $-valued function
such that CG$ (g) $ equals the `transpose' of CG$ (f) $, i.e.,
$
	\text{CG}(g) = (\text{CG}(f))^{\text{t}} := \{ (\xi,t) : (t,\xi)\in\text{CG}(f) \}.
$
In view of this,
hereafter we refer to $ \invs(f) $ as the \textbf{inverse} of $ f $.
\begin{figure}[h]
%\psfrag{T}{$ t $}
%\psfrag{F}{$ f(t) $}
%\psfrag{C}{CG$ (f) $}
\includegraphics[width=.8\textwidth]{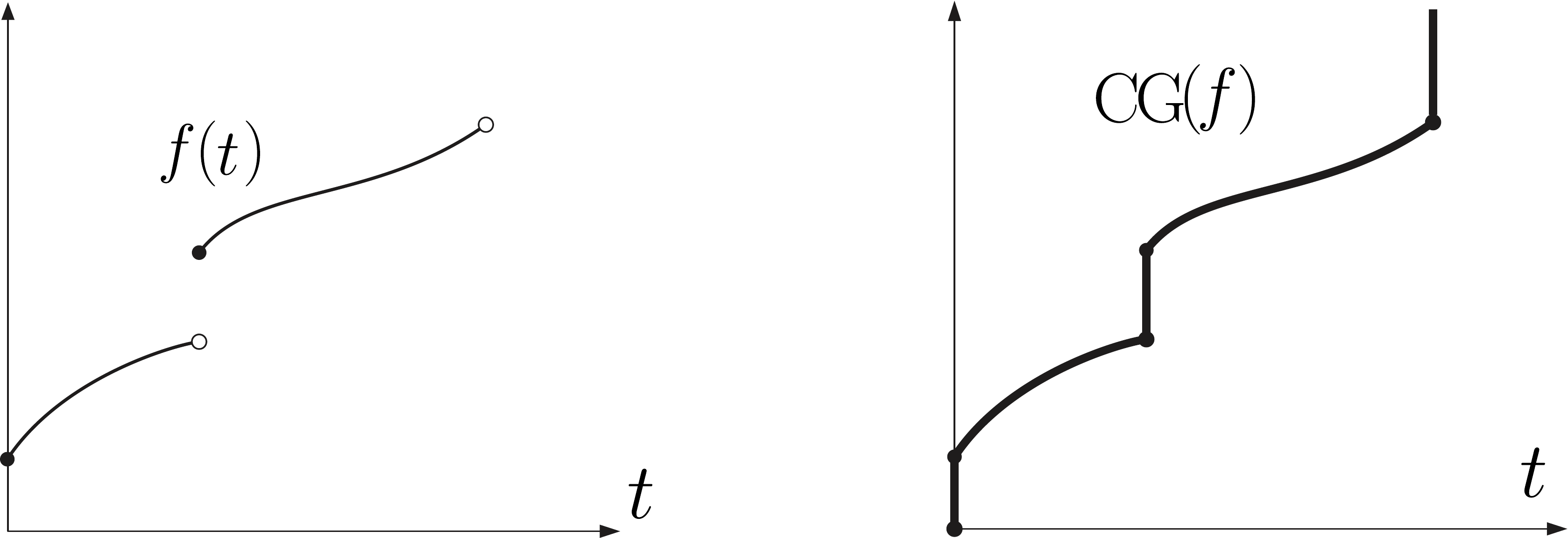}
\caption{The complete graph of $ t\mapsto f(t) $.}
\label{fig:CG}
\end{figure}

\noindent
Next, considering the space
\begin{align}
	\label{eq:Rcllt0}
	\Rcll[0,t_0] := \{ f:[0,t_0]\xrightarrow{\text{RCLL}} \R \}
\end{align}
of RCLL functions on $ [0,t_0] $.

\begin{definition}
\label{def:M1}
For $ f\in\Rcll[0,t_0] $, we say $ g \in C([0,1];\R^2) $ 
is a parametrization of $ \text{CG}(f) $ if $ g $ maps $ [0,1] $ onto $ \text{CG}(f) $, 
with $ g(0)=(0,f(0)) $ and $ g(1)=(t_0,f(t_0)) $.
Recall from~\cite[Chapter~12.3]{whitt02}
that Skorokhod's $ M_1 $-topology on $ \Rcll[0,t_0] $ is characterized by the metric
\begin{align*}
	d_{M_1}(f_1,f_2) := \inf \big\{ \Vert g_1-g_2 \Vert_{C[0,1]} \big\},
\end{align*}
where the infimum goes over all continuous parameterizations $ g_i $ of $ \text{CG}(f_i) $,
and $ \Vert g_1-g_2 \Vert_{C[0,1]} $ denotes the supremum norm measured in the Euclidean distance of $ \R^2 $.
Let $ \MTop[0,t_0] $ denote Skorokhod's $ M_1 $-topology on $ \Rcll[0,t_0] $.
\end{definition}
\noindent
To avoid technical sophistication regarding topology, 
we do not define Skorokhod's $ M_1 $-topology for functions on $ [0,\infty) $,
and restrict our discussion to functions defined on finite intervals $ \Rcll[0,t_0] $, $ t_0<\infty $.
We use $ \Rightarrow $ to denote the weak convergence
of the laws of stochastic processes.
For i.i.d.\ initial conditions we have
%%
%Starting the initial condition $ (\eta^\ic(x))_{x\in\Z_{>0}} $ 
%at i.i.d.\ Poisson$ (\rho) $ distribution,
%\begin{align}
%	\label{eq:PoisIC}
%	(\eta^\ic(x))_{x\in\Z_{>0}} \sim \bigotimes_{\Z{>0}} \text{Pois}(\rho),
%\end{align}

\begin{theorem}
\label{thm:iid}
%\begin{enumerate}[label=(\alph*)]
%\item[]
%\item \label{enu:iid}
Let $ \{\eta^\ic(x)\}_{x\in\Z_{\geq 0}} $ be i.i.d., with
\begin{align}
	\label{eq:iid}
	&\Ex(\eta^\ic(1)) = 1,
	\
	\Var(\eta^\ic(1)) := \sigma^2>0,
	\quad
	\quad
	\Ex(e^{\lambda_0\eta^\ic(1)})<\infty,
	\
	\text{ for some } \lambda_0>0.
\end{align}
Let $ \Hit_*(\xi) := 2\sigma \int_0^\xi [ B(\zeta)]_+ d\zeta $,
where $ B(\Cdot) $ denotes a standard Brownian motion,
and let $ \Frnt_* := \invs(\Hit_*) $.
For any fixed $ t_0<\infty $,
we have that 
\begin{align*}
	T^{-\frac{2}{3}} \frnt\big(T\Cdot\big) \Rightarrow \Frnt_*(\Cdot) 
		\text{ under } \MTop[0,t_0],
	\
	\text{ as } T \to \infty.
\end{align*}
%%
%\item \label{enu:sine}
%Fixing $ \gamma\in(\frac13,1) $,
%we let $ \eta^\ic_\e $ be as in Example~\ref{exm:ic},
%let $ \Ict_{**}(\xi) := 2\int_0^\xi \sin(\zeta)_+ d\zeta $,
%and let $ \Frnt_{**} := \invs(\Ict_{**}) $.
%%
%For any fixed $ t_0<\infty $,
%we have that 
%\begin{align*}
%	\et^{\frac{\gamma}{1+\gamma}} \frnt(\et^{\,-1}\Cdot) \Rightarrow \Frnt_{**}(\Cdot) 
%		\text{ under } \MTop[0,t_0],
%	\
%	\text{ as } \et \to 0,
%\end{align*}
%where, as in Theorem~\ref{thm:main:}, $ \et := \e^{\frac{1+\gamma}{\gamma}} $.
%\end{enumerate}
\end{theorem}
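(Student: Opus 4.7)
The plan is to pass to the hitting-time (inverse) process of the front and identify its limit as a Brownian functional. Set $\calT := \invs(\frnt)$; since $\invs^2=\mathrm{id}$ and $\invs$ realises the complete-graph transposition (Figure~\ref{fig:CG}), it is a homeomorphism on non-decreasing $\rcll$-functions equipped with $\MTop$. Hence Theorem~\ref{thm:iid} reduces, for each fixed $\xi_0<\infty$, to proving
\begin{align*}
	T^{-1}\calT(T^{2/3}\xi) \Longrightarrow \Hit_*(\xi) \quad \text{under } \MTop[0,\xi_0].
\end{align*}

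For the right-hand side, let $\Lambda(x):=\sum_{y=1}^x \eta^\ic(y)$ and define the cumulative initial deficit $D(x):=x-\Lambda(x)$. The i.i.d.\ hypothesis~\eqref{eq:iid} with its exponential moment yields, by Donsker, the invariance principle
\begin{align*}
	T^{-1/3} D(\lfloor T^{2/3}\xi\rfloor)\Longrightarrow \sigma B(\xi) \quad \text{in } C([0,\xi_0]).
\end{align*}
Since $f\mapsto 2\int_0^\cdot [f(\zeta)]_+\,d\zeta$ is continuous on $C([0,\xi_0])$, the continuous mapping theorem turns this into
\begin{align*}
	T^{-1}\cdot 2\int_0^{T^{2/3}\xi}[D(y)]_+\,dy \Longrightarrow 2\sigma\int_0^\xi [B(\zeta)]_+\,d\zeta = \Hit_*(\xi).
\end{align*}

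The crux is therefore the identification
\begin{align*}
	\calT(\xi) - 2\int_0^\xi [D(y)]_+\,dy = o(T)
\end{align*}
in probability, uniformly on $\xi\in[0,\xi_0 T^{2/3}]$. Heuristically, a surplus stretch ($D<0$) contributes only $o(T)$ to $\calT$ because the flux condition~\eqref{eq:fluxCond} lets the front consume the local over-supply nearly instantaneously on the macroscopic scale, while a deficit stretch ($D>0$) contributes the integrated heat-flux time required to transport $[D]_+$ extra particles across the moving boundary. I would implement this via two ingredients. First, the novel monotonicity of the frictionless dynamics (alluded to in the abstract) sandwiches the true $\calT$ between the hitting times of two simpler coupled fronts driven by deficit-preserving rearrangements of the initial configuration. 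Second, PDE/heat-kernel estimates in the spirit of those used for Proposition~\ref{prop:subp}\ref{enu:sub} quantify the diffusive transport of particles across the moving front and pin down the constant $2$ in front of the integral.

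Tightness in $\MTop[0,t_0]$ follows from the monotonicity of $\frnt$ together with the a priori one-point bounds produced by the sandwich, so no further modulus-of-continuity estimates are needed. The main obstacle will be the crux step: controlling the oscillation of $\calT$ near the zero set $\{B=0\}$, where the front alternates rapidly between its stalled (sub-critical) and darting (super-critical) regimes. This is precisely the `oscillation between instantaneous super- and sub-critical phases' mentioned in the abstract, and is the step in which the novel monotonicity and the PDE estimates must be combined most carefully.
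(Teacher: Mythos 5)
Your plan follows exactly the paper's route and correctly identifies the key reduction: pass to the hitting-time process $\hit=\invs(\frnt)$, apply Donsker to the cumulative initial deficit $\ict(x)=\sum_{0<y\le x}(1-\eta^\ic(y))$ (your $D$), compose with the continuous map $f\mapsto2\int_0^\cdot[f]_+$ to identify $\Hit_*=2\sigma\int_0^\cdot[B]_+$, and reduce everything to the uniform-in-probability approximation $\hit(\xi)\approx 2\int_0^\xi[\ict(y)]_+\,dy$, implemented by sandwiching via the flux-monotonicity of Proposition~\ref{prop:mono} plus PDE/heat-kernel boundary-layer estimates. Indeed, the paper proves Theorem~\ref{thm:iid} as the $\gamma=\tfrac12$ specialization of Theorems~\ref{thm:main:} and~\ref{thm:main} (with $T=\e^{-3/2}$), and the proof of the latter is precisely this scheme.

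Two calibration points. First, rather than assert that $\invs$ is an $\MTop$-homeomorphism, the paper records the one-directional continuity $\mathfrak{r}_{t_0}\circ\invs:(\rcll_*,\uTop)\to(\Rcll[0,t_0],\MTop[0,t_0])$, which is exactly what is needed: $\Hit_\Ict$ is continuous (so $\uTop$ convergence suffices on the hitting-time side) while $\Frnt_*$ has jumps (so $\MTop$ is required on the front side); you do not need, and should not lean on, full invertibility of $\invs$ under $\MTop$. Second, the sandwiching is not implemented by rearranging the initial configuration. Instead, the bounding hitting-time processes $\hitUp_\lambda,\hitLw_\lambda$ are prescribed directly as $\lambda$-offset versions of $2\sum[\ict]_+$ (see \eqref{eq:Tup} and \eqref{eq:Tlw}); the criterion $N^Q\lessgtr Q$ of Proposition~\ref{prop:mono} is then verified via the decomposition $N^Q=Q-\ict(Q)+\mgt+\blt^Q$ together with the noise bound (Proposition~\ref{prop:mgtbd}) and the boundary-layer estimate $\blt^{\Lup_{t_0,x_0,v}}\approx(2v)^{-1}$ (Proposition~\ref{prop:blt}), which is what fixes the constant $2$. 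Your crux paragraph — controlling the oscillation uniformly, including near the zero set of $B$, by combining the sandwich with the traveling-wave/heat-kernel estimates — accurately names where the difficulty lies, but the actual constructions and estimates that make it work are not carried out in your plan.
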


Theorem~\ref{thm:iid} completely characterizes the scaling behavior
of $ \frnt $ at the critical density $ \rho=1 $ under the scope stated therein,
giving a scaling exponent $ \frac{2}{3} $, and a non-Gaussian limiting process $ \Frnt_* $.
In contrast, as shown in \cite{berard16},
for $ D_A=D_B=1 $ and $ d=1 $ the front admits
Brownian fluctuations at scaling exponent $\frac{1}{2}$ for generic initial conditions.
Another interesting property is that the limiting process $ \Frnt_* $ exhibits \emph{jumps}.
Indeed, the process $ \Hit_*(\xi) := 2\sigma \int_0^\xi [B(\zeta)]_+ d\zeta $
remains constant during negative Brownian excursions $ O_*:=\{\xi: B(\xi)<0\} \subset \R $,
which results in jumps of $ \Frnt_*:=\invs(\Hit_*) $.
%
%For example, the processes $ \Hit_{**} $ in Corollary~\ref{cor:iid}\ref{enu:iid} is
%\begin{align*}
%	\Hit_{**}(\xi) = 
%	\left\{
%	\begin{array}{l@{,}l}
%		2(1-\cos(\xi))	+ 4 \lfloor\tfrac{\xi}{2\pi}\rfloor &	
%		 \text{ for } \xi \in [0,\pi]\cup[2\pi,3\pi]\cup[4\pi,5\pi]\cup\ldots,
%		 \\
%		 \Hit_{**}(\pi \lfloor\tfrac{\xi}{\pi}\rfloor) &	
%		 \text{ for } \xi \in O_{**}:=(\pi,2\pi)\cup(3\pi,4\pi)\cup(5\pi,6\pi)\cup\ldots,
%	\end{array}\right.
%\end{align*}
%so 	$ \Frnt_{**}(t) = \invs(\Hit_{**}) $ has jumps at
%$ \Hit_{**}(O_{**}) = \{ 4,8,12,\ldots \} $; see Figure~\ref{fig:sine}.
%A similar structure holds for $ \Hit_*, \Frnt_* $ in Corollary~\ref{cor:iid}\ref{enu:sine},
%due to negative Brownian excursions.
%
From a microscopic point of view, 
such jumps originate from the \emph{oscillation between two phases}.
Indeed, given the i.i.d.\ initial condition as in Theorem~\ref{thm:iid}, 
the number of particles in $ [0,L] $ oscillates around $ L $ similarly to a Brownian motion 
as $ L $ varies.
The Brownian motion $ B $ in Theorem~\ref{thm:iid} 
being negative corresponds to a region with an \emph{excess} of particles.
In this case, the front $ \frnt $
travels effectively at \emph{infinite} velocity under the scaling of consideration,
resulting in jumps of $ \Frnt_* $.
On the other hand, $ B(\xi)>0 $ corresponds to a region with a \emph{deficiency} of particles.
In this case, the front is limited by the scarcity of particles,
and travels at the specified scale $ t^{\frac{2}{3}} $, 
resulting in a $ C^1 $-smooth region of $ \Frnt_* $.
%
%From these observation, 
%the system oscillates between instantaneous super- and sub-critical phases
%at the critical density $ \rho=1 $.

While our approach of proving Theorem~\ref{thm:iid} relies on the flux condition~\eqref{eq:fluxCond},
through coupling it is clear that $ \frnt $ 
stochastically dominates $ \til{\frnt} $ (the front of \ac{1d-MDLA}).
This immediately yields
\begin{corollary}\label{cor}
Let $\til{\frnt} $ denote the front of the \ac{1d-MDLA}.
Under the same initial conditioned as in Theorem~\ref{thm:iid},
$ \{ T^{-\frac23} \til{\frnt}(T) \}_{T>0} $ is tight,
and the limit points are stochastically dominated by $ \Frnt_*(1) $.
\end{corollary}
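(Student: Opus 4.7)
My plan is to produce a pathwise coupling of $\til{\frnt}$ and $\frnt$ on the same probability space in which $\til{\frnt}(t)\le\frnt(t)$ for all $t\ge 0$ almost surely, and to deduce the corollary from Theorem~\ref{thm:iid} by standard weak-convergence arguments. For the coupling both processes are driven by a single family $\{X_i(\Cdot)\}$ of independent continuous-time simple random walks, sharing Exponential$(1)$ clocks, coin flips for jump directions, and initial positions distributed as $\{\eta^\ic(x)\}$, with each front reacting only to the particles alive in its own system. I would establish the invariant $\til{\frnt}\le\frnt$ inductively along the combined sequence of front-jumps: when only $\frnt$ jumps the inequality is preserved trivially; when only $\til{\frnt}$ jumps and $\til{\frnt}(t^-)<\frnt(t^-)$ is strict, the $+1$ step satisfies $\til{\frnt}(t)=\til{\frnt}(t^-)+1\le\frnt(t^-)=\frnt(t)$; and when the two fronts coincide at some common value $m$ and a particle at $m+1$ alive in both systems simultaneously triggers both, the flux condition~\eqref{eq:fluxCond} forces $\frnt$ to advance by $k\ge 1$ while $\til{\frnt}$ advances by exactly one, giving $m+1\le m+k$.

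Granting the coupling, Theorem~\ref{thm:iid} provides $T^{-2/3}\frnt(T\Cdot)\Rightarrow \Frnt_*(\Cdot)$ under $\MTop[0,1]$. The limit $\Frnt_*=\invs(\Hit_*)$ is a nondecreasing RCLL process with at most countably many jumps, located at those $t$-values equal to plateau heights of $\Hit_*$; these heights are indexed by the negative excursions of the driving Brownian motion $B$ and have an atomless joint distribution, so the deterministic point $t=1$ is almost surely a continuity point of $\Frnt_*$. Coordinate evaluation under $\MTop$ therefore gives $T^{-2/3}\frnt(T)\Rightarrow \Frnt_*(1)$, and combining with the a.s.\ bound $\til{\frnt}(T)\le \frnt(T)$ yields both the tightness of $\{T^{-2/3}\til{\frnt}(T)\}_{T>0}$ and the stochastic domination of every subsequential weak limit by $\Frnt_*(1)$.

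The main obstacle is the inductive verification of the coupling invariant: specifically, ruling out ``ghost'' scenarios in which a particle absorbed by $\frnt$ continues its underlying trajectory and later triggers a 1d-MDLA event at a moment when the two fronts have re-coincided but the particle is no longer alive in the frictionless system. Excluding such rogue triggerings requires careful exploitation of the monotonicity of $\frnt$ together with the flux condition~\eqref{eq:fluxCond}, which forces the frictionless front to sweep up any 1d-MDLA-alive particles in the wedge $(\til{\frnt},\frnt]$ at each advancing event so that no ghost ahead of $\frnt$ survives to outpace $\til{\frnt}$. The remaining ingredients---almost-sure continuity of $\Frnt_*$ at deterministic points, coordinate evaluation under $\MTop$, and preservation of stochastic order under weak limits---are routine.
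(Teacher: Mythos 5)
Your overall route — a pathwise coupling giving $\til{\frnt}(t)\le\frnt(t)$ almost surely, then Theorem~\ref{thm:iid} together with evaluation at the $\MTop$-continuity point $t=1$ — is the one the paper intends (it is what the $N^Q\ge Q$ half of Proposition~\ref{prop:mono} with $Q=\til\frnt$ delivers). You are also right that the only delicate step is the ``ghost'' case: a particle dead in $\eta^\frnt$ but still alive in $\eta^{\til\frnt}$ might trigger $\til\frnt$ at an instant when the two fronts have re-coincided, and then $\til\frnt$ would leapfrog $\frnt$. However, the resolution you sketch does not work. It is simply false that ``no ghost ahead of $\frnt$ survives'': once a particle has been absorbed by $\frnt$, its free random-walk trajectory keeps evolving, and nothing prevents it from climbing above the (temporarily stationary) front while never having entered the smaller shaded region $\Shd_{\til\frnt}$. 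Ghosts at sites strictly above $\frnt$ genuinely occur; the ``sweeping up the wedge'' picture controls where $\frnt$-absorbed particles were at the sweep time, not where their free trajectories go afterward.

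The step you are missing is a counting argument, not a spatial-containment one. Suppose $\til\frnt(s)\le\frnt(s)$ for all $s<t$ and $\til\frnt(t^-)=\frnt(t^-)=m$. Then $\Shd_{\til\frnt}(t^-)\subseteq\Shd_\frnt(t^-)$, so $N^{\til\frnt}(t^-)\le N^\frnt(t^-)$. The flux condition~\eqref{eq:fluxCond} forces $N^\frnt(t^-)=m$, while every unit advance of $\til\frnt$ absorbs at least the triggering particle, which was $\eta^{\til\frnt}$-alive just before and hence newly counted, so $N^{\til\frnt}(t^-)\ge\til\frnt(t^-)=m$. Therefore $N^{\til\frnt}(t^-)=N^\frnt(t^-)$, and this equality of cardinalities combined with the inclusion of shaded regions forces the two absorbed sets to coincide, i.e.\ $\eta^{\til\frnt}(t^-,\cdot)=\eta^\frnt(t^-,\cdot)$. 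At the moment the fronts meet there are therefore \emph{no} ghosts at all: the particle at $m+1$ that triggers $\til\frnt$ is automatically $\eta^\frnt$-alive and triggers $\frnt$ too, which lands you in your already-handled third case. This ``equality of absorbed sets whenever the fronts coincide'' is the content of the $N^Q\ge Q$ branch of Proposition~\ref{prop:mono}, and it is precisely what your sweeping heuristic fails to supply. Your remaining weak-convergence steps (almost-sure continuity of $\Frnt_*$ at $t=1$, $\MTop$-evaluation, preservation of stochastic order) are correct and routine.
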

\begin{remark}
Under prescribed scaling,
Corollary~\ref{cor} does \emph{not} exclude the possibility 
that limit points $ \til{\Frnt}_* $ of the \ac{1d-MDLA} degenerates, i.e., $ \til{\Frnt}_*=0 $.
\end{remark}

\noindent
Event though, 
for $ \rho>1 $ the front $ \frnt $ explodes in finite time,
it is possible to avoid such finite time explosion 
while keeping the flux condition~\eqref{eq:fluxCond}.
For example, let $ \hat{\frnt} $ denote the front of system where,
in the case of potential multiple absorptions,
the front absorbs exactly one particle, advance one step,
and \emph{pushes} all the excess particles one step to the right.
See Figure~\ref{fig:fpgm}, and compare that with Figure~\ref{fig:fgm2}.
It is straightforward to show that, under i.i.d.\ initial conditions,
$ \hat{\frnt} $ stays finite for all time even when $ \rho>1 $.
While we do not pursuit this direction here, 
we believe that our approach is applicable for analyzing $ \hat{\frnt} $ at $ \rho=1 $,
and conjecture that
\begin{conjecture}
Theorem~\ref{thm:iid} holds for $ \hat{\frnt} $ in place of $ \frnt $.
\end{conjecture}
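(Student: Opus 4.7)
The plan is to transfer the scaling limit from $\frnt$ to $\hat{\frnt}$ via a coupling, supplemented by a microscopic estimate showing that the two fronts differ by $o(T^{2/3})$ at the timescale of interest.

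First I would construct a coupling using the same Poisson clocks, jump-direction coin-flips, and initial particle configuration for both models, so that the two fronts agree up to the first time a potential multi-absorption event occurs. At such an event, when the frictionless front jumps by $k\geq 2$ absorbing the cluster at $\frnt(t^-)+1,\ldots,\frnt(t^-)+k$, I would have $\hat{\frnt}$ absorb only the particle at $\hat{\frnt}(t^-)+1$, advance by one step, and push the remaining $k-1$ particles one site to the right. It is then immediate from the construction that $\hat{\frnt}(t)\leq\frnt(t)$ almost surely, which together with Theorem~\ref{thm:iid} already yields tightness of $T^{-\frac{2}{3}}\hat{\frnt}(T\Cdot)$ in $\MTop[0,t_0]$ and the one-sided inequality for every subsequential limit.

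For the matching lower bound, I would aim to show $\sup_{t\leq Tt_0}\big(\frnt(t)-\hat{\frnt}(t)\big)=o(T^{2/3})$ in probability; combined with the $\MTop$-convergence of $T^{-2/3}\frnt(T\Cdot)$ this identifies the limit of $T^{-2/3}\hat{\frnt}(T\Cdot)$ as $\Frnt_*$. The heuristic is that a bulk absorption of size $k$ in the frictionless model leaves, in the pushing model, a dense cluster of $k-1$ particles immediately ahead of $\hat{\frnt}$, so each subsequent leftward jump from a site adjacent to the front triggers a further single advance on the $\Theta(1)$ time scale. Thus $\hat{\frnt}$ catches up in time $O(k)$, which is negligible at the macroscopic scale as long as the largest cluster size encountered by time $T$ is sub-polynomial in $T$ at $\rho=1$. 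Controlling this cluster size should be accessible either from the a priori bound $\hat{\frnt}\leq\frnt$ together with an analysis of the local configuration near $\frnt$ used in Theorem~\ref{thm:iid}, or from a direct estimate on the maximal local density in an i.i.d.\ field satisfying~\eqref{eq:iid}.

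The main obstacle is that the particle field to the right of $\hat{\frnt}$ fails to be i.i.d.\ once pushes have accumulated, so the Brownian-fluctuation structure of the initial data that drives the proof of Theorem~\ref{thm:iid} cannot be invoked verbatim. I would overcome this by a stochastic-monotonicity sandwich formulated on the inverse hitting-time process $\hat{\Hit}(\xi):=\inf\{t:\hat{\frnt}(t)\geq\xi\}$. Since pushing only \emph{increases} the local density to the right of the front, $\hat{\Hit}(\xi)$ is dominated above by $\Hit(\xi)$ (by the coupling) and below by the hitting time of an auxiliary system in which the post-push configuration is replaced, at a suitable sequence of regeneration times, by a fresh i.i.d.\ field of slightly larger density. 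Both bounds should converge after $\xi^{3/2}$-rescaling to the Brownian process $\Hit_*$ of Theorem~\ref{thm:iid}, with the density gap vanishing in the limit. Applying $\invs$ and using its continuity on complete graphs (as already invoked for $\frnt$) converts this into $\MTop$-convergence of $T^{-2/3}\hat{\frnt}(T\Cdot)$ to $\Frnt_*$.
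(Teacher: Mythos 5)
This statement is stated in the paper as a \emph{conjecture}: the authors explicitly decline to pursue it, and no proof of it appears anywhere in the text. So there is no ``paper proof'' to compare against, and the question is whether your proposal actually closes the problem. It does not; it is a strategy sketch whose two load-bearing steps are asserted rather than proven. First, the coupling giving $\hat{\frnt}(t)\leq\frnt(t)$ is not actually constructed: the two systems agree only up to the first multi-absorption, after which their particle configurations differ (the pushed system retains, ahead of its front, particles the frictionless system has already absorbed, at shifted positions), so ``same Poisson clocks and coin flips'' no longer defines a joint evolution; one needs an explicit particle-matching or a monotonicity statement analogous to Proposition~\ref{prop:mono}, which is proven in the paper only for the frictionless jump rule~\eqref{eq:fluxCond:} and whose proof uses that rule essentially. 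Second, and more seriously, the claimed lower bound $\sup_{t\le Tt_0}(\frnt(t)-\hat{\frnt}(t))=o(T^{2/3})$ is exactly the content of the conjecture and is supported only by the heuristic that $\hat{\frnt}$ ``catches up in time $O(k)$.'' During a macroscopic jump of $\Frnt_*$ the frictionless front traverses $\Theta(T^{2/3})$ sites essentially instantaneously on the macroscopic clock, while $\hat{\frnt}$ must absorb these particles one at a time as the pushed surplus simultaneously diffuses rightward and piles up; nothing in the proposal controls the cumulative lag over the whole horizon, nor the altered boundary-layer profile created by the accumulated surplus.

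The final ``regeneration-time sandwich'' is where the real difficulty lives, and it is only gestured at: no construction of the regeneration times, no proof that replacing the post-push configuration by an i.i.d.\ field of larger density yields a genuine stochastic bound on $\hat{\Hit}$ (domination of occupation fields does not by itself order hitting times of a front whose motion depends nonmonotonically on the configuration through the absorption rule), and no argument that the density surplus can be sent to zero. The paper's own method for $\frnt$ circumvents precisely this loss of product structure via the flux-condition monotonicity and the PDE estimates for the boundary layer term $\blt^{Q}$ (Propositions~\ref{prop:mono} and~\ref{prop:blt}); to adapt it to $\hat{\frnt}$ you would at minimum need an analogue of Proposition~\ref{prop:mono} for the pushing dynamics and a version of Proposition~\ref{prop:blt} that accounts for the pushed particles in the boundary layer. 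As it stands, the proposal identifies the right obstacles but does not overcome them, so the conjecture remains open.
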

\begin{figure}[h]
\includegraphics[width=0.5\textwidth]{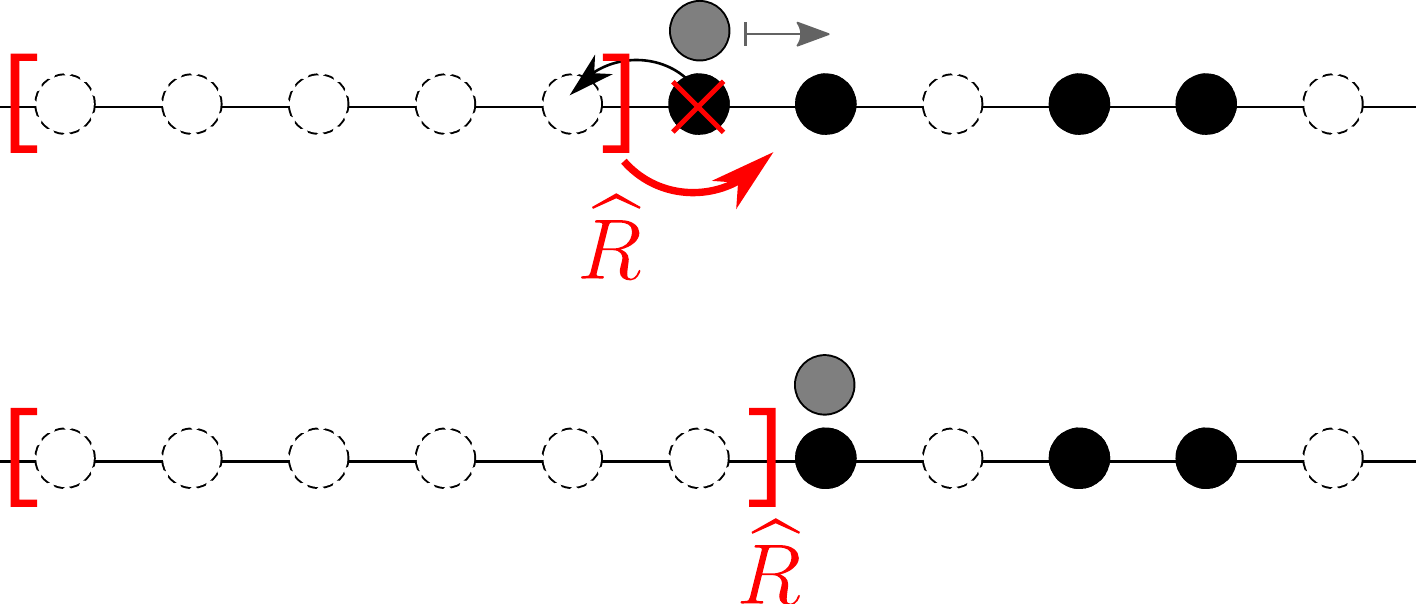}
\caption{Motion of $ \hat{\frnt} $}
\label{fig:fpgm}
\end{figure}

%In this article we \emph{answer} this question for the frictionless growth model.
%%
%As stated in Proposition~\ref{prop:subp} in the following, 
%the frictionless growth model exhibits a similar phase transition at $ \rho=1 $.
%%
%Focusing on $ \rho=1 $ hereafter,
%we show in Corollary~\ref{cor:iid}\ref{enu:iid} 
%that for the i.i.d.\ Poisson initial condition~\eqref{eq:PoisIC} with density $ \rho=1 $,
%the front scales as $ t^{2/3} $ as $ t\to\infty $,
%and we establish an explicit description for
%the \emph{random} scaling limit of $ \frnt $.

To explain the origin of the $ \frac23 $ scaling exponent
as well as demonstrating the robustness of our method, 
consider the following class of initial conditions.
Let $ \{\eta^\ic_\e = (\eta^\ic_\e(x))_{x\in\Z_{>0}}\}_{\e\in(0,1]} $ 
be a sequence of (possibly random) initial conditions, 
parameterized by a scaling parameter $ \e\in(0,1] $.
To each $ \eta^\ic_\e $
we attach the centered, integrated function:
\begin{align}
	\label{eq:ict}
	\ict_\e(\xi) &:= \sum_{0<y \leq \lfloor \xi \rfloor} (1-\eta^\ic_\e(y))\,.
\end{align}
% This is defined for $ x\in\Z_{\geq 0} $, and extend to an \ac{RCLL} function
% by setting $ \ict_\e(\xi) := \ict_\e(\lfloor\xi\rfloor) $, $ \forall \xi\in[0,\infty) $.
%
Let $ \uTop $ denote the uniform topology over compact sets,
defined on the space
$
	\Rcll := \{ f: [0,\infty) \xrightarrow{\text{RCLL}}\R \}
$
of \ac{RCLL} functions.
\begin{definition}
\label{def:ic}
Let $ \Ict $ be a $ C[0,\infty) $-valued process.
We say that a possibly random collection of initial condition $ \{\eta^\ic_\e\}_{\e\in(0,1]} $ is at density $ 1 $,
with \textbf{shape exponent} $ \gamma\in[0,1) $ and 
\textbf{limiting fluctuation} $ \Ict $ if
\newline
(a).
There exist constants $ C_*<\infty $ and $ a_*>0 $ such that
for all $ \e\in(0,1] $, $ r>0 $ and $ x_1,x_2\in\Z_{\geq 0} $,
\begin{align}
	\label{eq:ic:den1}
	\Pr\bigg( |\ict_\e(x_2)-\ict_\e(x_1)| \geq r|x_2-x_1|^\gamma \bigg) 
	\leq 
	C_* e^{-r^{a_*}},
\end{align}
which, for non-random initial conditions,
amounts to the condition $|\ict_\e(x_2)-\ict_\e(x_1)| < r|x_2-x_1|^\gamma$, for some $r < \infty$.
\newline
(b). As $ \e\to 0 $,
\begin{align}
	\label{eq:ic:lim}
	\e^{\gamma}\ict_\e(\e^{-1}\Cdot) \Rightarrow \Ict(\Cdot)\,,
	\quad \text{ under } \quad \uTop.
\end{align}
\end{definition}
We have the following for any initial condition satisfying Definition~\ref{def:ic}:
\begin{theorem}
\label{thm:main:}
Fixing $ \Ict \in C[0,\infty) $, % satisfying $ \Ict(0)=0 $,
we define
\begin{align}
	\label{eq:Hit}
	\Hit_\Ict(\xi) := 2 \int_0^\xi [\Ict(\zeta)]_+ d\zeta.
\end{align}
Assuming further
\begin{align}
	\label{eq:Hit:cnd}
%$
	\lim_{\xi\to\infty}\Hit_\Ict(\xi) = \infty,
%$
\end{align}
we let $ \Frnt := \invs(\Hit_\Ict)\in\rcll\cap\Rcll $.
Fixing $ \gamma\in(\frac13,1) $,
and starting the system from initial conditions $ \{\eta^\ic_\e\}_{\e\in(0,1]} $ 
as in Definition~\ref{def:ic},
with density $ 1 $, shape exponent $ \gamma $
and limiting fluctuation $ \Ict $,
for any fixed $ t_0<\infty $,
we have
\begin{align*}
	\et^{\frac{1}{1+\gamma}} \frnt(\et^{\,-1}\Cdot) \Rightarrow \Frnt(\Cdot)
	\quad \text{ under } \; \MTop[0,t_0],
	\quad
	\text{ as } \;\; \et \to 0,
\end{align*}
where $ \et := \e^{1+\gamma} $.
\end{theorem}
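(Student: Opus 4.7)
The plan is to transfer the problem to the inverse process
\begin{align*}
	\hit(\xi) := \inf\{ t \geq 0 : \frnt(t) \geq \xi \},
\end{align*}
so that $\frnt = \invs(\hit)$ via the involution of~\eqref{eq:inversion}. Because $\invs$ intertwines $\uTop$-convergence of nondecreasing functions with $\MTop$-convergence of their inverses, and because~\eqref{eq:Hit:cnd} guarantees that $\Frnt = \invs(\Hit_\Ict)$ lies in $\Rcll$, it suffices to prove
\begin{align*}
	\et\, \hit(\e^{-1}\Cdot) \;\Rightarrow\; \Hit_\Ict(\Cdot) \quad\text{under } \uTop.
\end{align*}
By the flux condition~\eqref{eq:fluxCond}, $\hit(\xi)$ is nothing but the time of the $\lceil \xi \rceil$-th absorption, so the target is a statement about absorption times alone.

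The central identification driving the entire argument is the microscopic asymptotic
\begin{align*}
	\hit(\xi) \;=\; 2\int_0^\xi [\ict_\e(y)]_+\, dy + o(\et^{-1}), \qquad \text{uniformly for } \xi \leq C\,\e^{-1}.
\end{align*}
Granted this, the rescaling $y = \e^{-1} z$ followed by~\eqref{eq:ic:lim} gives
\begin{align*}
	\et\, \hit(\e^{-1} x) \;\approx\; 2\et\, \e^{-1-\gamma} \int_0^x [\Ict(z)]_+\, dz \;=\; \Hit_\Ict(x),
\end{align*}
where the last equality uses $\et = \e^{1+\gamma}$ and the definition~\eqref{eq:Hit}. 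Tightness plus continuity of the map $\Ict \mapsto \Hit_\Ict$ under $\uTop$ then completes the passage to the weak limit.

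To establish the microscopic formula, I would prove matching upper and lower bounds. For the \emph{upper bound}, I would exhibit a concrete strategy for advancing $\frnt$: over a time window of length $\sim [\ict_\e(y)]_+$ spent near position $y$, standard heat-kernel estimates show that, with high probability, enough particles from a band of width $\sim [\ict_\e(y)]_+$ to the right of the front diffuse into the absorbing region to permit advancement by $dy$; integrating in $y$ yields the bound. For the \emph{lower bound}, I would deploy the novel monotonicity flagged in the abstract — the natural version being that enlarging the initial mass pointwise can only accelerate the front — in order to sandwich $\eta^\ic_\e$ between tractable piecewise-constant initial profiles. On those, Stefan-type PDE computations modelled on~\eqref{eq:explictu}--\eqref{eq:explictr}, applied locally to intervals where the empirical density exceeds or falls short of $1$, deliver the matching lower bound.

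The main technical obstacle is the control of dynamical (random-walk) fluctuations uniformly in $\xi$; this is precisely where the restriction $\gamma > \tfrac13$ enters. Heuristically, random-walk fluctuations in the cumulative absorbed mass over the time scale $\et^{-1}$ must be dominated by the initial-condition fluctuations at scale $\e^{-\gamma}$, and matching these exponents yields the threshold $\gamma = \tfrac13$. Implementing this domination rigorously — showing that dynamical noise can neither speed up nor slow down $\frnt$ beyond what is already captured by the deterministic quantity $\ict_\e$ — is the crux of the PDE-type estimates that form the technical backbone of the argument.
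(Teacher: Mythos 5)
Your reduction to the hitting-time process, the identification $\hit(\xi)\approx 2\int_0^\xi[\ict_\e(y)]_+\,dy$, and the explanation of where $\gamma>\frac13$ enters all match the paper's strategy. However, the two pillars on which you propose to build the matching bounds are not the ones that work, and one of them is likely false. The ``novel monotonicity'' of the paper (Proposition~\ref{prop:mono}) is \emph{not} monotonicity in the initial data; it is a comparison between the true front and an arbitrary candidate trajectory $Q$, stated purely through the flux condition: if $N^Q(t)\le Q(t)$ up to time $\tau$ then $Q$ dominates $\frnt$, and conversely. Your proposed version --- ``enlarging the initial mass pointwise can only accelerate the front'' --- is suspect for this model: because the jump size $k$ in~\eqref{eq:fluxCond:} is the \emph{minimal} integer for which exactly $k$ particles are swept up, inserting an extra particle just ahead of the front can truncate a long jump into a short one, so pointwise domination of $\eta^\ic$ does not obviously propagate to domination of fronts. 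Without the trajectory-comparison form of the monotonicity you have no mechanism for converting estimates on candidate barriers into bounds on $\frnt$ itself.

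Second, the quantitative heart of the proof is missing. The paper writes $N^Q(t)=Q(t)-\ict(Q(t))+\mgt(t,Q(t))+\blt^Q(t)$, and the whole game is to show that the boundary-layer term $\blt^Q(t)$ --- the mass deficit created to the right of the front by absorption --- equals $\tfrac{1}{2v}$ up to controllable errors when $Q$ moves locally at velocity $v$. This is proved by comparing the absorbed system with traveling-wave solutions of the \emph{discrete} heat equation via a discrete maximum principle, and it is exactly this $\tfrac1{2v}$ which, balanced against $\ict(Q(t))$, produces the factor $2[\Ict]_+$ in $\Hit_\Ict$. Your upper-bound sketch (``enough particles from a band of width $\sim[\ict_\e(y)]_+$ diffuse in'') and your lower-bound sketch (``Stefan-type PDE computations applied locally to piecewise-constant profiles'') gesture at this but do not supply it: they neither pin down the constant $2$ nor justify why a local-in-time linear (traveling-wave) approximation of the boundary layer is legitimate, which in the paper rests on the super-diffusive scaling and on barrier trajectories $\frntUp_\lambda,\frntLw_\lambda$ built by hand as inverses of explicit functionals of $\ict$, precisely so that they have the regularity required for the linear comparison and so that $N^{\frntUp_\lambda}\le\frntUp_\lambda$, $N^{\frntLw_\lambda}\ge\frntLw_\lambda$ can be checked. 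As it stands the proposal is a faithful restatement of the heuristic of Section~\ref{sect:heu}, not a proof.
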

\begin{remark}
The assumption $ \gamma>\frac13 $ in Theorem~\ref{thm:main:} assures that
the fluctuation of the initial condition (characterized by $ \ict_\e $ in \eqref{eq:ict})
overwhelms the random fluctuation due to the motions of the particles;
see Remark~\ref{rmk:mgt<<}.
When $ \gamma\leq\frac13 $, we conjecture that both the scaling exponents
and the scaling limit change.
\end{remark}

\begin{remark}
\label{exm:iid}
For i.i.d.\ $ (\eta^\ic(x))_{x\in\Z_{>0}} $
satisfying~\eqref{eq:iid} as in Theorem~\ref{thm:iid},
it is standard to verify that the conditions of Definition~\ref{def:ic} hold
with $ \gamma=\frac12 $, some $ 0<a_*<1 $ and $ \Ict(\xi)=\sigma B(\xi) $.
Hence, Theorem~\ref{thm:iid} is a direct consequence of Theorem~\ref{thm:main:}.
\end{remark}

\begin{example}
\label{exm:ic}
%While i.i.d.\ initial conditions always have shape exponent $ \frac12 $,
To construct initial conditions with
a \emph{generic} (other than $ \frac12 $) shape exponent $ \gamma\in(0,1) $,
consider the \emph{deterministic} initial condition:
\begin{align}
	\label{eq:sineIC}
	\eta^\ic_\e(x) := 
	1 
	- \lfloor \e^{-\gamma} \sin(\e x) \rfloor 
	+ \lfloor \e^{-\gamma} \sin(\e (x-1)) \rfloor,
	\quad
	x\in\Z_{>0}.
\end{align}
Since $ |\e^{-\gamma} \sin(\e (x-1))-\e^{-\gamma} \sin(\e x)| \leq \e^{1-\gamma} \leq 1 $,
such $ \eta^\ic_\e(x) $ is indeed non-negative, and hence defines an occupation variable.
For such an $ \eta^\ic_\e(x) $,
we have $ \ict_\e(x) = \lfloor \e^{-\gamma} \sin(\e x) \rfloor $.
From this it is straightforward to verify that
\begin{align*}
	&
	|\ict_\e(x_1) - \ict_\e(x_2)| \leq (\e^{1-\gamma}|x_1-x_2|) \wedge \e^{-\gamma}
	\leq |x_1-x_2|^{\gamma},
	\quad
	\forall x_1,x_2\in\Z_{\geq 0},
\\
	&\sup_{\xi\in\R_{\geq 0}}  |\e^{\gamma}\ict_\e(\e\xi) - \sin(\xi)|
	\rightarrow
	0,
	\quad \text{ as } \e \to 0,
\end{align*}
so
% in particular
 the initial condition~\eqref{eq:sineIC} satisfies
Definition~\ref{def:ic} with shape exponent $ \gamma\in [0,1)$,
and limiting fluctuation $ \Ict(\xi) = \sin(\xi) $.
\end{example}

\subsection{A PDE heuristic for Theorem~\ref{thm:main:}}
\label{sect:heu}
In this subsection we give a heuristic derivation of Theorem~\ref{thm:main:} 
via a combination of PDE-type calculations and consequences of the flux condition.
We begin with a discussion of the case $ \rho<1 $.
Express the flux condition~\eqref{eq:fluxBC:} as
\begin{align}
	\label{eq:graphical}
	(\xi- \rho \xi)|_{\xi=r(t)} = \int_{r(t)}^\infty (\rho-\uh(t,\xi)) d\xi.
\end{align}
Indeed, the flux condition~\eqref{eq:fluxBC:} demands that the front absorbs exactly mass $ \xi $ when $ r(t)=\xi $,
but initially there is only an amount of mass $ \xi\rho $ allocated within $ [0,\xi] $.
The l.h.s.\ of~\eqref{eq:graphical} represents this deficiency.
Such a deficiency is compensated by the `boundary layer' $ \rho-\uh(t,\xi) $
caused by the motion of the front.
That is, \eqref{eq:graphical} offers an alternative description of the motion of $ r(t) $,
by matching the deficiency to the mass of boundary layer.

We now attempt to generalize the preceding matching argument to $ \rho=1 $.
When $ \rho=1 $, however, the l.h.s.\ of~\eqref{eq:graphical} becomes zero.
This suggests that we should look for the next order,
namely the fluctuation of the initial condition $ \ict(\xi) $ (defined in~\eqref{eq:ict}).
Under this setting the matching condition~\eqref{eq:graphical} generalizes into
\begin{align}
	\label{eq:N:dcmp:heu}
	\ict(\frnt(t)) = \blt^{\frnt}(t) + \mgt(t,\frnt(t)).
\end{align}
This identity follows as a special case of the general decomposition~\eqref{eq:N:dcmp} we derive in Section~\ref{sect:elem},
by setting $ Q=\frnt $ therein and using $ N^Q(t)=Q(t) $.
Here $ \blt^{\frnt}(t) $ (defined in~\eqref{eq:blt}) acts as the discrete analog of the boundary layer mass $ \int_{r(t)}^\infty (\rho-\uh(t,\xi)) d\xi $,
and $ \mgt(t,\frnt(t)) $ (defined in~\eqref{eq:mgt}) encodes the random fluctuations of motions of the particles.
As we show in Proposition~\ref{prop:mgtbd} (see also Remark~\ref{rmk:mgt<<}),
the term $ \mgt(t,\frnt(t)) $ is of smaller order, and we hence rewrite~\eqref{eq:N:dcmp:heu} as
\begin{align}
	\label{eq:N:dcmp:heu:}
	\ict(\frnt(t)) \approx \blt^{\frnt}(t).
\end{align}

To obtain the position $ \frnt (t)$ of the front, we need to approximate the boundary layer term $ \blt^{\frnt}(t) $.
The boundary layer is in general coupled to the entire trajectory of $ \frnt(\Cdot) $.
However, as $ \rho=1 $ puts us in a \emph{super-diffusive} scenario,
we expect the boundary layer to depend on $ \frnt $ \emph{locally} in time,
only through its derivative $ \frac{d~}{dt}\frnt(t) $.
This being the case, we look for stationary solutions $ u_v $ to the Stefan problem~\eqref{eq:Stefan}
with a constant velocity $ \frac{d~}{dt}r(t) = v $:
\begin{align*}
	u_v(t,\xi) = (1 - e^{-2v(x-r(t))})\ind_\set{\xi>r(t)},
	\quad
	r(t) = vt + \alpha. 
\end{align*}
Such a solution enjoys the relation $ \int_{r(t)}^\infty (1-u_v(t,\xi)) d\xi = \frac{1}{2v} $
between the mass of the boundary layer and the front velocity.
This suggests the ansatz $\blt^{\frnt}(t) \approx 1/(2 d\frnt(t)/dt).$
Combining this with~\eqref{eq:N:dcmp:heu:} gives
% which 
%in combination with \eqref{eq:F-to-bdry-mass} suggests the ansatz
%\begin{align*}
%	\label{eq:graphical:}
$\ict(\frnt(t)) \approx 1/\big(2 \frac{d \frnt (t)}{dt}\big)$.
%\end{align*}
So far our discussion has been for $\ict(\frnt(t))>0$,
i.e., when the front experiences an instantaneous deficiency of particles (see~\eqref{eq:ict}).
In contrast, when $ \ict(\frnt(t))<0 $, we expect,
as discussed earlier, that $\frac{d\frnt(t)}{dt} \approx \infty$ 
under the relevant scaling.
This being the case, the general form our ansatz reads
$ [\ict(\frnt(t)) ]_+ \approx 1/(2d \frnt (t)/dt) $, or equivalently
\begin{align}
	\label{eq:ansatz}
	2 [ \ict(\frnt(t))]_+ d \frnt(t) \approx dt.
\end{align}

We now perform the scaling $ t \mapsto \et^{-1} t $ in~\eqref{eq:ansatz},
and postulate that $ \et^{\alpha}\frnt(\et^{-1}t) $ converges to a non-degenerate limiting process $ \Frnt(t) $, for some $ \alpha\in\R $ as $ \et\to 0 $.
This together with our assumptions on $ \ict $ in Definition~\ref{def:ic} suggests
$ \ict(\frnt(\et^{-1} t)) \approx \et^{-\gamma\alpha} \Ict(\Frnt(t)) $.
Writing also $ d \frnt(\et^{-1} t) = \et^{-\alpha} d\Frnt(t) $,
We now obtain
\begin{align}
	\label{eq:ansatz:}
	2 \et^{\,-\gamma\alpha} [ \Ict(\Frnt(t))]_+ \, \et^{\,-\alpha} d \Frnt(t) \approx \et^{\,-1} dt.
\end{align}
Balancing the powers of $ \et $ on both sides of~\eqref{eq:ansatz:} requires $ \alpha=1/(1+\gamma) $,
which is indeed the scaling in Theorem~\ref{thm:main:}.
Further, for $ \alpha=1/(1+\gamma) $, passing~\eqref{eq:ansatz:} to the limit $ \til{\e}\to 0 $ gives
$
	2 [\Ict(\Frnt(t))]_+ d \Frnt(t) = dt.
$
Upon integrating in $ t $, we obtain $ \Hit_\Ict(\Frnt(t)) = t $. After applying the inversion $\invs$, we obtain 
the claimed limiting process $ \Frnt $.

Our proof of Theorem~\ref{thm:main:} amounts to rigorously executing the prescribed heuristics.
The challenge lies in controlling the regularity of the front $ \frnt $.
Indeed, the limiting process $ \Frnt:=\invs(\Hit_\Ict) $ is $ C^1 $
with derivative $ \frac{d~}{dt}\Frnt(t) = \frac{2}{\Ict(\Frnt(t))} $
away from the points of discontinuity.
On the other hand, the microscopic front $ \frnt $ is a \emph{pure jump process}.
A direct proof of the convergence of $ \frnt $ hence requires establishing
certain mesoscopic averaging to match the regularity of the limiting process $ \Frnt $.
This poses a significant challenge 
due to the lack of invariant measures (as a result of absorption).
The problem is further exacerbated by \textit{a}) criticality,
which requires more refined estimates; and 
\textit{b}) the aforementioned oscillation between two phases,
which requires us to incorporate in the argument 
two distinct scaling ansatzes.

In this article we largely \emph{circumvent} these problems 
by utilizing a novel monotonicity.
This monotonicity, established in Proposition~\ref{prop:mono},
is a direct consequence of the flux condition~\eqref{eq:fluxCond}--\eqref{eq:fluxCond:},
and it allows us to construct certain upper and lower bounds
which \emph{by design} have the desired microscopic regularity.

\subsection*{Acknowledgements}
We thank Vladas Sidoravicius for introducing 
one of us (A.D.) to questions about critical behavior of the \ac{1d-MDLA}.
Dembo's research was partially supported by the NSF grant 
DMS-1613091, whereas Tsai's research was partially supported 
by a Graduate Fellowship from the \ac{KITP} 
and by a Junior Fellow award from the Simons Foundation.
Some of this work was done during the KITP
program ``New approaches to non-equilibrium and random
systems: KPZ integrability, universality, applications and experiments''
supported in part by the NSF grant PHY-1125915.

\section{Overview of the Proof}
\label{sect:elem}
To simplify notations, hereafter we often omit dependence on $ \e $, 
and write $ \ict(x), \eta^\ic(x) $ in place of $ \ict_\e(x), \eta^\ic_\e(x) $.
Throughout this article,
we adopt the convention that
$ x,y $, etc., denote points on the integer lattice $ \Z $,
while $ \xi,\zeta $, etc., denote points real line $ \R $,
and we use $ t,s\in[0,\infty) $ for the time variable.

We begin with a reduction of Theorem~\ref{thm:main:}.
Consider the hitting time process corresponding to $ \frnt $:
\begin{align*}
%	\label{eq:ht}
	\hit := \invs(\frnt),
	\text{ i.e., }
	\hit(\xi) := \inf\{ t\geq 0: \frnt(t) > \xi \}.
\end{align*}
Recall that $ \uTop $ denotes the uniform topology over compact intervals.
Instead of proving Theorem~\ref{thm:main:} directly,
we aim to proving the analogous statement regarding the hitting time process $ \hit $.

\begin{customthm}{\ref*{thm:main:}}
\label{thm:main}
Fixing $ \Ict \in C[0,\infty) $, %satisfying $ \Ict(0)=0 $,
we let $ \Hit_\Ict $ be as in \eqref{eq:Hit}.
Fixing $ \gamma\in(\frac13,1) $,
and starting the system from initial conditions $ \{\eta^\ic_\e\}_{\e\in(0,1]} $ 
as in Definition~\ref{def:ic},
with  density $ 1 $, shape exponent $ \gamma $
and limiting fluctuation $ \Ict $, we have
\begin{align}
	\label{eq:main}
	\e^{1+\gamma} \hit(\e^{-1}\Cdot) \Rightarrow \Hit_\Ict(\Cdot)
	\text{ under } \uTop,
	\text{ as }
	\e \to 0.
\end{align}
\end{customthm}
\noindent
We now explain how Theorem~\ref{thm:main} implies Theorem~\ref{thm:main:}.
Recall the space $ \rcll $ from~\eqref{eq:rcll}, and consider the subspace 
\begin{align*}
	\rcll_* := \Big\{ f\in\rcll : \, f(\xi)<\infty, \forall \xi\in[0,\infty), \ \lim_{\xi\to\infty} f(\xi)=\infty \Big\}.
\end{align*}
It is readily checked that $ \invs $ maps $ \rcll_* $ into itself, i.e., $ \invs(\rcll_*) \subset \rcll_* $.
Recall the definition of $ \Rcll[0,t_0] $ from~\eqref{eq:Rcllt0}.
For any fixed $ t_0<\infty $, consider the restriction maps
\begin{align*}
	\mathfrak{r}_{t_0} : \rcll_* \to \Rcll[0,t_0],
	\quad
	\mathfrak{r}_{t_0}(f) := f|_{[0,t_0]}.
\end{align*}
Equipping $ \rcll_* $ with the uniform topology $ \uTop $
and equipping $ \Rcll[0,t_0] $ with the $ \MTop[0,t_0] $ topology, we have that
\begin{align}
	\label{eq:conti}
	\mathfrak{r}_{t_0}\circ \invs: (\rcll_*, \uTop) \longrightarrow (\Rcll[0,t_0],\MTop[0,t_0])
	\text{ continuously.}
\end{align}
To see this, fix $ f\in\rcll_* $ and consider a sequence $ \{f_n\}_n \subset \rcll_* $ such that $ f_n \to f $ in $ \uTop $.
This gives a convergence at the level of parametrization of $ \invs(f)|_{[0,t_0]} $,
and hence, by Definition~\ref{def:M1}, gives convergence of $ \invs(f_n)|_{[0,t_0]} $ to $ \invs(f)|_{[0,t_0]} $ under $ \MTop[0,t_0] $.
The assumption~\eqref{eq:Hit:cnd} ensures $ \Hit_\Ict \in \rcll_* $.
Hence, by~\eqref{eq:conti},
Theorem~\ref{thm:main} immediately implies Theorem~\ref{thm:main:}.

We focus on Theorem~\ref{thm:main} hereafter.
To give an overview of the proof, we begin by preparing some notations.
Define the following functional space
\begin{align}
	\label{eq:rclll}
	\rclll 
	:= 
	\big\{ f:\R_{\geq 0} \xrightarrow[\text{nondecr.}]{\text{RCLL}} \Z\cup\{\infty\} \big\}.
\end{align}
Note that, unlike the space $ \rcll $,
here we allow trajectories to take negative values in $ \rclll $.
%
%In our current setting,
%the particles $ \{X_i(t)\}_i $ interact with the front $ \frnt $ via absorption.
%%
%To separate such an interaction between the particles and the front,
We consider the `free' particle system,
which is simply the system of particles performing independent random walks 
\emph{without} absorption, starting from $ \eta^\ic $.
We adopt the standard notation
\begin{align*}
	\eta(t,x) := \#\{ \text{free particles at time } t \text{ and site }x\}
\end{align*}
for the occupation variable,
and, by abuse of notations, use $ \eta $ also to refer the free particle system itself.
Next, for any $ \rclll $-valued process $ Q $, letting
\begin{align}
	\label{eq:shade}
	\Shd_Q(t) := \{ (s,x) : s\in[0,t], \ x\in(-\infty,Q(s)] \cap\Z \} \subset [0,\infty)\times\Z
\end{align}
denote the `shaded region' of $ Q $ up to time $ t $,
we construct the absorbed particle system $ \eta^Q $
from $ \eta $ by deleting all $ \eta $-particles that have visited
$ \Shd_Q(t) $:
\begin{align*}
	\eta^Q(t,x) 
	:= 
	\# \{ 
		\eta\text{-particles at site } x \text{ that have never visited } 
		\Shd_Q(t) \text{ up to time }t 
	\}.
\end{align*}
Under these notations, $ \eta^\frnt(t,x) $ denotes the occupation variable of $ \{X_i(t)\}_i $.
Recall from~\eqref{eq:fluxCond} that $ N^\frnt(t) $ denotes the number of $ \eta $-particles
absorbed into $ \frnt $ up to time $ t $.
We likewise let $ N^Q(t) $ denote the analogous quantity for any $ \rclll $-valued process $ Q $, i.e.,
\begin{align}
	N^Q(t) 
	\label{eq:NQ}
	:=
	\sum_{ x\in\Z } (\eta(t,x) - \eta^Q(t,x)).
\end{align}
Indeed, even though both $\sum_{x\in\Z} \eta(t,x)$ and 
$\sum_{x\in\Z}\eta^Q(t,x)$ are infinite, 
% the expression in
 \eqref{eq:NQ} is well-defined since
\begin{align*}
	\lim_{x\to-\infty} \eta(t,x) =0,
	\quad
	\eta^Q(t,x) =0, \ \forall x\leq Q(t),
	\quad
	\lim_{x\to\infty} (\eta(t,x)-\eta^Q(t,x))=0.
\end{align*}

The starting point of the proof of Theorem~\ref{thm:main} 
is the following monotonicity property (which is proven in 
Section~\ref{sect:basic}).

\begin{proposition}
\label{prop:mono}
Let $ \tau\in[0,\infty) $ be (possibly) random,
and let $ Q $ be a $ \rclll $-valued process.
If $ N^Q(t) \leq Q(t) $, $ \forall t\leq \tau $ and $ Q(0)\geq 0 $,
we have that $ Q(t) \geq \frnt(t) $, $ \forall t \leq \tau $.
Similarly, if $ Q(0)=0 $ and $ N^Q(t) \geq Q(t) $, $ \forall t\leq \tau $,
we have that $ Q(t) \leq \frnt(t) $, $ \forall t \leq \tau $.
\end{proposition}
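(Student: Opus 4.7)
I will prove the first statement by contradiction; the second is analogous, with the additional subtleties noted at the end. Assume $\tau_0 := \inf\{t \leq \tau : Q(t) < \frnt(t)\} < \infty$. On $[0,\tau_0)$ the inequality $\frnt(s) \leq Q(s)$ forces the shaded regions of~\eqref{eq:shade} to satisfy $\Shd_\frnt(s) \subset \Shd_Q(s)$; since $\eta^\frnt$ and $\eta^Q$ both arise from the free system $\eta$ by deleting particles that visit their respective shaded regions, this set inclusion yields $N^\frnt(s) \leq N^Q(s)$, which combined with the flux identity $N^\frnt = \frnt$ and the hypothesis $N^Q \leq Q$ is consistent with $\frnt \leq Q$ on $[0,\tau_0)$. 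A short argument using that $Q$ is nondecreasing and that both processes are integer-valued and RCLL then forces $Q(\tau_0) < \frnt(\tau_0)$ strictly; chasing inequalities via $\frnt(\tau_0^-) \leq Q(\tau_0^-) \leq Q(\tau_0) < \frnt(\tau_0)$ shows that $\frnt$ must jump at $\tau_0$. Write $a := \frnt(\tau_0^-)$, $k := \frnt(\tau_0) - a \geq 1$, and $Q(\tau_0) = a+j$ with $0 \leq j \leq k-1$.

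The key input is the minimality of $k$ in~\eqref{eq:fluxCond:}: letting $f(l)$ denote the number of $\eta^\frnt$-particles in $(a,a+l]$ at time $\tau_0^-$, a brief induction (base case: the triggering particle forces $f(1)\geq 1$) shows $f(l) \geq l+1$ for $1 \leq l < k$, as otherwise $\frnt$ would have jumped by $l$ rather than $k$. Applying this at $l=j$ (the case $j\geq 1$) produces at least $j+1$ such $\eta^\frnt$-particles in $(a,a+j]$ at $\tau_0^-$. Partition them into those already $Q$-absorbed at $\tau_0^-$, each contributing $+1$ to the surplus $N^Q(\tau_0^-) - N^\frnt(\tau_0^-)$, and those still in $\eta^Q$ at $\tau_0^-$; since the triggering particle is the only one that moves between $\tau_0^-$ and $\tau_0$ (from $a+1$ to $a$), every still-$\eta^Q$ particle in this group sits at a position $\leq a+j = Q(\tau_0)$ at $\tau_0$ and hence is newly $Q$-absorbed. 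Summing the two contributions yields $N^Q(\tau_0) \geq N^\frnt(\tau_0^-) + (j+1) = a + j + 1 > Q(\tau_0)$, contradicting $N^Q \leq Q$. The edge case $j=0$ is handled by applying the same dichotomy to the triggering particle alone: either it is still in $\eta^Q$ and is newly absorbed at $\tau_0$, or it was $Q$-absorbed strictly before $\tau_0$ without ever being $\frnt$-absorbed; in both subcases $N^Q(\tau_0) \geq a+1 > Q(\tau_0)$.

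For the second statement I let $\tau_0 := \inf\{t \leq \tau : Q(t) > \frnt(t)\}$ and assume $\tau_0 < \infty$. Now on $[0,\tau_0)$ the reverse inclusion $\Shd_Q(s) \subset \Shd_\frnt(s)$ gives $N^Q(s) \leq N^\frnt(s) = \frnt(s)$, so $N^Q(\tau_0^-) \leq \frnt(\tau_0^-)$; the chain $Q(\tau_0) > \frnt(\tau_0) \geq \frnt(\tau_0^-) \geq Q(\tau_0^-)$ now forces $Q$ (rather than $\frnt$) to jump at $\tau_0$, say from $q$ to $q+j$ with $j \geq 1$. Any particle newly $Q$-absorbed at $\tau_0$ must have stayed at positions $\geq q+1$ throughout $[0,\tau_0)$ (else it would already belong to $\Shd_Q(\tau_0^-)$) and lie at position $\leq q+j$ at $\tau_0$; together with the at most one free-particle jump that can occur at $\tau_0$, this confines the newly $Q$-absorbed particles to a pool whose size one bounds above by invoking the minimality of $k$ at the last jump of $\frnt$ strictly before $\tau_0$, producing a contradiction with the required $N^Q(\tau_0) \geq q+j$ in the spirit of Part~1.

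The main obstacle is combinatorial bookkeeping: because $Q$ and $\frnt$ can disagree arbitrarily on $[0,\tau_0)$, a single free particle can fall into any of four $\eta^\frnt/\eta^Q$ absorption classes, and the contribution of each class to $N^Q(\tau_0)$ is different. The cleanest route, adopted above, is to group the particles identified by the minimality of $k$ according to whether they are still in $\eta^Q$ at $\tau_0^-$, so that in either case their effect on $N^Q$ is accounted for uniformly. I expect the trickiest ingredients to be the $j=0$ edge case in Part~1 and the supply-side count in Part~2, where the combinatorial bound from~\eqref{eq:fluxCond:} either degenerates to a trivial count or must be invoked at a strictly earlier jump time of $\frnt$; in both places one must fall back on tracking the triggering particle directly to close the gap.
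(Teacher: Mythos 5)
Your Part~1 is correct and is essentially the paper's own argument (the paper only writes out this case): you locate the first time $\tau_0$ at which $\frnt$ jumps strictly past $Q$, use the minimality of $k$ in~\eqref{eq:fluxCond:} to certify at least $Q(\tau_0)-\frnt(\tau_0^-)+1$ not-yet-$\frnt$-absorbed particles in $(\frnt(\tau_0^-),Q(\tau_0)]$ at $\tau_0^-$, and observe that these, together with everything $\frnt$ absorbed before $\tau_0$, must all lie in $\Shd_Q(\tau_0)$, forcing $N^Q(\tau_0)>Q(\tau_0)$. Your induction $f(l)\geq l+1$ for $l<k$ and the partition into already/newly $Q$-absorbed particles is a more explicit rendering of the paper's one-line appeal to ``$\til N>Q(\sigma)-\frnt(\sigma^-)$, otherwise $\frnt$ would have stopped at or before $Q(\sigma)$''; your separate treatment of $j=0$ is in fact more careful than the paper's, which tacitly folds the triggering particle into an empty interval.

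Part~2 is where your proposal has a genuine gap, and the step that fails is exactly the one you flag as the ``supply-side count.'' After reducing to the inequality $N^Q(\tau_0)\leq \frnt(\tau_0)+h$, where $h$ is the number of $\eta^{\frnt}$-particles sitting in $(\frnt(\tau_0),Q(\tau_0)]$ at time $\tau_0$, you need the \emph{upper} bound $h<Q(\tau_0)-\frnt(\tau_0)$ to contradict $N^Q(\tau_0)\geq Q(\tau_0)$. You propose to extract this from the minimality of $k$ at the last jump of $\frnt$ strictly before $\tau_0$. But minimality in~\eqref{eq:fluxCond:} only yields \emph{lower} bounds on occupation numbers in the intervals $(\frnt(t^-),\frnt(t^-)+l]$, $l<k$, at the jump instant $t^-$ (plus the fact that exactly $k$ particles sit in $(\frnt(t^-),\frnt(t)]$); it gives no control whatsoever on the number of $\eta^{\frnt}$-particles strictly to the right of $\frnt(t)$, and a fortiori none at the later time $\tau_0$, since those particles diffuse freely in between. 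Concretely, nothing in the dynamics prevents several $\eta^{\frnt}$-particles from occupying the site $\frnt(\tau_0)+1$ at time $\tau_0$ without any of them having yet attempted the leftward jump that would trigger $\frnt$; in that configuration a unit jump of $Q$ onto that site gives $N^Q(\tau_0)\geq \frnt(\tau_0)+h\geq Q(\tau_0)$, so the hypothesis $N^Q\geq Q$ survives while $Q(\tau_0)>\frnt(\tau_0)$, and no contradiction can be reached along your route. The two halves of the proposition are therefore \emph{not} symmetric: in the first, the jump of $\frnt$ is an event of the particle system and minimality certifies a surplus of particles behind $Q$; in the second, the jump of $Q$ is an arbitrary event of an arbitrary process and certifies nothing about the configuration. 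The paper's remark that ``the other case is proven by the same argument'' glosses over precisely this asymmetry, so you should not expect a mirror image of Part~1 to close; the second implication needs either additional structure on $Q$ (e.g.\ information about $N^Q(t^-)$ versus $Q(t)$ at the jump times of $Q$) or a genuinely different mechanism for ruling out a cluster of particles sitting just ahead of the front.
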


Given Proposition~\ref{prop:mono},
our strategy is to construct suitable processes $ \frntUp_\lambda,\frntLw_\lambda\in\rclll $
such that $ N^{\frntUp_\lambda}(t) \leq \frntUp_\lambda(t) $ 
and that $ N^{\frntLw_\lambda}(t) \geq \frntLw_\lambda(t) $.
Here $ \lambda>0 $ is an auxiliary parameter,
such that, for any fixed $ \lambda>0 $,
$ \frntUp_\lambda, \frntLw_\lambda $ are suitable deformations of $ \frnt $
that allows certain rooms 
to accommodate various error terms for our analysis,
but as $ \lambda\to 0 $, both $ \frntUp_\lambda $ and $ \frntLw_\lambda $ approximate $ \frnt $.

The precise constructions of $ \frntUp_\lambda $ and $ \frntLw_\lambda $
are given in Section~\ref{sect:pfmain}.
Essential to the constructions 
is the following identity~\eqref{eq:N:dcmp} that relates
$ N^Q(t) $ to the motion of the $ \eta $- and $ \eta^Q $-particles.
To derive such an identity,
%we express the number of absorption $ N^Q(t) $ as
%the difference of the total number of $ \eta $-particles and the total number of $ \eta^Q $-particles, i.e.,
%\begin{align}
%%	\notag
%	N^Q(t) 
%%\text{``}\!\!=&\text{''}\sum_{x\in\Z} \eta(t,x) - \sum_{ x\in\Z  \eta^Q(t,x)\\
%	\label{eq:NQ}
%	:=&
%	\sum_{ x\in\Z } (\eta(t,x) - \eta^Q(t,x)).
%\end{align}
%
define
\begin{align}
	\label{eq:mgt}
	\mgt(t,x) &:= \sum_{y \leq x} (\eta(t,y)-\eta^\ic(y)),
\\
	\label{eq:blt}
	\blt^Q(t) &:= \sum_{ y > Q(t) } (\eta(t,y)-\eta^Q(t,y)).
\end{align}
Hereafter, for consistency of notations, we set $ \eta^\ic(y):= 0 $ for $ y\leq 0 $.
Recall the definition of $ \ict(x) $ from \eqref{eq:ict} and recall $ N^Q(t) $ from~\eqref{eq:NQ}.
Under these notations,
it is now straightforward to verify
\begin{align}
	\label{eq:N:dcmp}
	N^Q(t) = Q(t) - \ict(Q(t)) + \mgt(t,Q(t)) + \blt^Q(t).
\end{align}
The first two terms on the r.h.s.\ of \eqref{eq:N:dcmp} 
collectively contribute 
$ N'(t) := \sum_{0<y\leq Q(t)} \eta^\ic(y) $,
which is the value of $ N^Q(t) $ had all particles 
been \emph{frozen} at their $ t=0 $ locations.
Indeed,
as the density equals $ 1 $ under current consideration, 
$ Q(t) $ represents the first order approximation of $ N'(t) $,
and the \textbf{initial fluctuation term} 
$ \ict(Q(t)) $ describes the random fluctuation of the initial condition.
Subsequently, the \textbf{noise term} $ \mgt(t,Q(t)) $
accounts for the \emph{time evolution} of the $ \eta $-particle;
and the \textbf{boundary layer term} $ \blt^Q(t) $ encodes
the loss of $ \eta^Q $-particles due to absorption seen at time $ t $ to the right of $ Q(t)$.

For $Q(t)=\frnt(t)$ we have by the flux condition~\eqref{eq:fluxCond}
that $ N^\frnt(t) = \frnt(t)$, hence the last three terms in 
\eqref{eq:N:dcmp} add up to zero.
Focusing hereafter on these terms, recall from \eqref{eq:main} that,
under our scaling convention, the time and space variables
are of order $ \e^{-1-\gamma} $ and $ \e^{-1} $, respectively.
With this and \eqref{eq:ic:lim},
we expect the term $ -\ict(Q(t)) $ to scale as $ (\e^{-1})^{\gamma}=\e^{-\gamma}=:\Theta_\ict $.
As for the noise term $ \mgt(t,x) $,
we establish the following bound in Section~\ref{sect:mgt}.
\begin{proposition}
\label{prop:mgtbd}
Let
\begin{align}
	\label{eq:Xi}
	\Xi_\e(a) 
	:= 
	\{ (t,x) : t\in[0,\e^{-1-\gamma-a}], \ x\in[0,\e^{-1-a}]\cap\Z, \ x/\sqrt{t} \geq \e^{-a} \}.
\end{align}
Starting 
%the system 
from an initial condition $ \eta^\ic $
satisfying \eqref{eq:ic:den1},
for any fixed $ a\in(0,1] $, we have
\begin{align}
	\label{eq:mgt:bd}
	\lim_{\e\to 0}
	\Pr\big( 
		|\mgt(t,x)| \leq 6\e^{-a}(1+t)^{\frac{1}{4}\vee\frac{\gamma}{2}}, \
		\forall (t,x)\in\Xi_\e(a)
	\big)
	= 1.
\end{align}
\end{proposition}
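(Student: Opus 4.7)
The plan is to write
\[
	\mgt(t,x) = \sum_i \bigl( \ind_\set{X_i(t)\leq x} - \ind_\set{X_i(0)\leq x} \bigr)
\]
as a sum over the independent walkers and, conditional on $ \eta^\ic $, decompose
$ \mgt(t,x) = \Ex[\mgt(t,x)\,|\,\eta^\ic] + \hat{\mgt}(t,x) $ into its $ \eta^\ic $-measurable conditional mean plus an independent-walker fluctuation. One then shows that the mean contributes $ O(t^{\gamma/2}) $ (via the $ \gamma $-H\"older regularity of $ \ict $ coming from~\eqref{eq:ic:den1}) while the fluctuation contributes $ O(t^{1/4}) $ (via Bernstein's inequality), each with stretched-exponential tails. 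Together these match the bound $ (1+t)^{\frac14\vee\frac\gamma 2} $ in~\eqref{eq:mgt:bd}.

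\textbf{Step 1 (conditional mean).} Write $ S(t) $ for the centered continuous-time simple random walk, so that each walker initiated at $ y $ occupies $ y+S(t) $, and set $ F(u) := \Pr(S(t)\leq u) $. Then
\[
	\Ex[\mgt(t,x)\,|\,\eta^\ic] = \sum_{y>0}\eta^\ic(y) \bigl( F(x-y) - \ind_\set{y\leq x} \bigr).
\]
Split $ \eta^\ic(y) = 1 + (\eta^\ic(y)-1) $. The ``$1$''-part collapses by symmetry of $ S $ to $ -\Pr(S(t)\geq 0) + \sum_{k\geq x}\Pr(S(t)>k) $, an $ O(1) $ quantity because the constraint $ x/\sqrt t \geq \e^{-a} $ on $ \Xi_\e(a) $ renders the second sum Gaussian-small. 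For the ``$\eta^\ic-1$''-part, use $ 1-\eta^\ic(y) = \ict(y) - \ict(y-1) $ and summation-by-parts to rewrite it as $ \ict(x) - \Erw[\ict(x-S(t))] $. Applying~\eqref{eq:ic:den1} with $ x_1 = x,\ x_2 = x - S(t) $, union-bounded over the typical Gaussian range of $ S(t) $, controls this by $ C r\,t^{\gamma/2} $ with probability at least $ 1 - C e^{-r^{a_*}} $.

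\textbf{Step 2 (walker fluctuation).} Conditional on $ \eta^\ic $, $ \hat{\mgt}(t,x) $ is a centered sum of independent variables bounded in $ [-1,1] $, with conditional variance at most $ v(t,x) := \sum_y \eta^\ic(y)\,\Pr(|S(t)|\geq |x-y|) $. Splitting $ \eta^\ic $ as before, $ \sum_y \Pr(|S(t)|\geq |x-y|) \lesssim \sqrt t $ by Gaussian estimates, while the correction $ \sum_y (\eta^\ic(y)-1)\Pr(|S(t)|\geq |x-y|) $, after summation-by-parts and centering $ \ict $ at $ x $, is a convolution of $ [\ict(y)-\ict(x)] $ against a kernel of spread $ \sqrt t $, bounded by $ C r\,t^{\gamma/2} $ via~\eqref{eq:ic:den1}, hence negligible compared with $ \sqrt t $. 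Thus $ v(t,x) \lesssim \sqrt t $ on a high-probability event, and Bernstein yields $ \Pr(|\hat{\mgt}(t,x)|\geq r\,t^{1/4}\mid \eta^\ic)\leq 2 e^{-c r^2} $ there; taking $ r = \e^{-a} $ produces the desired stretched-exponential tail.

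\textbf{Step 3 (uniformity on $ \Xi_\e(a) $).} Discretize $ (t,x) $ on a grid polynomially small in $ \e^{-1} $ and union-bound: the grid has only polynomially many points, easily beaten by the stretched-exponential tails of Steps~1--2. Between grid times, $ |\mgt(t+\delta,x)-\mgt(t,x)| $ is controlled by the number of walker hops across level $ x $ during $ [t,t+\delta] $, a Poisson quantity whose rate is handled through the density estimate extracted from~\eqref{eq:ic:den1}. The main obstacle is Step~2: the bound~\eqref{eq:ic:den1} controls only \emph{increments} of $ \ict $, so converting it into quantitative Gaussian-weighted bounds on $ \sum_y \eta^\ic(y)(\cdots) $, uniformly over the large window $ \Xi_\e(a) $, is what requires the full strength of the stretched-exponential parameter $ a_* $.
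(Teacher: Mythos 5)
Your plan is sound and would work, and the two routes are essentially equivalent in content. The only real structural difference is the decomposition: you split $\mgt$ as a conditional mean plus a centered walker-fluctuation $\hat\mgt$, and then prove a variance bound $v(t,x)\lesssim\sqrt{t}$ before applying Bernstein; the paper instead writes $\mgt(t,x)=\mgt^-(t,x)-\mgt^+(t,x)$, where $\mgt^+$ (resp.\ $\mgt^-$) counts particles starting in $(0,x]$ (resp.\ $(x,\infty)$) and ending in $(x,\infty)$ (resp.\ $(-\infty,x]$). Each $\mgt^\pm$ is a sum of Bernoulli indicators of the form~\eqref{eq:XBer}, so Lemma~\ref{lem:cnd:cnctrtn} applies with the variance automatically dominated by the conditional mean, which is shown in Lemma~\ref{lem:mgtEx'} to concentrate around $V(t)=\sum_{y>0}\erf(t,y)\lesssim\sqrt{t+1}$. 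Note that $\Var(\mgt\,|\,\eta^\ic)\leq\Ex[\mgt^++\mgt^-\,|\,\eta^\ic]$, so your $v(t,x)$ bound is precisely the paper's conditional-mean estimate $\Ex[\mgt^\pm\,|\,\eta^\ic]\approx V(t)$ in disguise; the $\mgt^\pm$ decomposition simply lets one avoid computing the variance as a separate object. Your Step~1 (summation by parts, $1-\eta^\ic(y)=\ict(y)-\ict(y-1)$, then H\"older regularity of $\ict$ from~\eqref{eq:ic:den1} to get $O(r\,t^{\gamma/2})$) matches the computation in the proof of Lemma~\ref{lem:mgtEx'} via $\Gamma(x,y)$, including the use of the constraint $x/\sqrt{t}\geq\e^{-a}$ on $\Xi_\e(a)$ to kill the tail term $\sum_{j>x}\Prw(W(t)\geq j)$. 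Your Step~3 (grid of polynomial size plus jump counting for time-continuity) is exactly the paper's closing step via Lemma~\ref{lem:eta:J} and the bound $\sup_{t\in[i,i+1]}|\mgt(t,x)-\mgt(i,x)|\leq 2J(i,x)$. One small point of hygiene: when controlling $\Erw[\ict(x)-\ict(x-S(t))]$, you should apply the uniform-in-$y$ H\"older bound of the type in Lemma~\ref{lem:Gamma} (which holds on a single $\eta^\ic$-measurable event of stretched-exponentially high probability) and then integrate deterministically against the law of $S(t)$; there is no need for a separate union bound over the Gaussian range of $S(t)$.
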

\noindent
Roughly speaking,
the conditions $ t\leq \e^{-1-\gamma-a} $ and $ x\leq \e^{-1-a} $ in \eqref{eq:Xi}
correspond to the scaling $ (\e^{-1-\gamma},\e^{-1}) $ for $ (t,x) $.
The extra factor $ a $ is a small parameter devised for absorbing various error terms in the subsequent analysis.
\begin{remark}
\label{rmk:mgt<<}
Under the scaling $ (\e^{-1-\gamma},\e^{-1}) $ of time and space,
Proposition~\ref{prop:mgtbd} asserts that 
$ |\mgt(t,x)| $ is at most of order 
$ \Theta_\mgt:=\e^{-(\frac14\vee\frac{\gamma}{2})(1+\gamma)-a} $
for all relevant $ (t,x) $.
The condition~$ \frac13<\gamma<1 $ implies $ (\tfrac14\vee\tfrac{\gamma}{2})(1+\gamma) < \gamma $.
In particular, by choosing $ a $ small enough, 
we have $ \Theta_\mgt \ll \Theta_\ict = \e^{-\gamma} $,
i.e., $ \mgt(t,x) $ is negligible compared to $ \ict(Q(t)) $.
\emph{This is where the assumption~$ \gamma>\frac13 $ enters}.
If $ \gamma\leq \frac13 $, the preceding scaling argument is invalid,
and we expect $ \mgt(t,x) $ to be non-negligible, and the scaling should change.
\end{remark}

As explained in Remark~\ref{rmk:mgt<<},
we expect $ |\mgt(t,x)| $ to be of smaller order than $ \ict(Q(t)) $,
so the latter must be effectively 
balanced by the boundary layer term~$ \blt^Q(t) $ and 
our next step is thus to derive an approximate expression for $ \blt^Q(t) $.
To this end, instead of a generic trajectory $ Q $,
we consider first \emph{linear} trajectories and truncated linear trajectories as follows.
Adopting the notations
$ \lfloor \xi \rfloor := \sup((-\infty,\xi]\cap\Z) $
and $ \lceil \xi \rceil := \inf([\xi,\infty)\cap\Z) $,
we let $ L_{t_0,x_0,v}: \R_{\geq 0} \to \Z $
denote the $ \rclll $-valued linear trajectory
that passes through $ (t_0,x_0) $ with velocity $ v\in(0,\infty) $:
\begin{align}
	\label{eq:L}
	L_{t_0,x_0,v}(t) &:= x_0 - \lceil v(t_0-t) \rceil.
\end{align}
Fixing $ \gamma'\in(\frac{\gamma+1}{2},1) $,
we consider also the $ \rclll $-valued truncated linear trajectories
\begin{align}
	\label{eq:Lup}
	\Lup_{t_0,x_0,v}(t) &:=
	\left\{\begin{array}{l@{,}l}
		L_{t_0,x_0,v}(t)	&	\text{ if } L_{t_0,x_0,v}(t) \geq x_0-\lfloor\e^{-\gamma'}\rfloor,
		\\
		x_0-\lfloor\e^{-\gamma'}\rfloor
		&	\text{ otherwise}, 
	\end{array}\right. 
\\
	\label{eq:Llw}
	\Llw_{t_0,x_0,v}(t) &:=
	\left\{\begin{array}{l@{,}l}
		\big[ L_{t_0,x_0,v}(t) \big]_+	&	\text{ if } L_{t_0,x_0,v}(t) 
		\geq x_0-\lfloor\e^{-\gamma'}\rfloor,
		\\
		0
		&	\text{ otherwise},
	\end{array}\right.
\end{align}
where $ [\xi]_+ $ denotes the positive part of $ \xi $.
The following proposition, proved in Section~\ref{sect:blt},
provides the necessary estimates of 
$ \blt^{\Lup_{t_0,x_0,v}}(t) $, $ \blt^{\Llw_{t_0,x_0,v}}(t) $ and $ \blt^{L_{t_0,x_0,v}}(t_0) $.
To state this proposition,
we first define the admissible set of parameters:
\begin{align}
	\label{eq:tv:cnd}
	\Sigma_{\e}(a) 
	:=
	\big\{
		(t_0,x_0,v):
		\ t_0 \in [1,\e^{-1-\gamma-a}], 
		\
		&x_0 \in [\e^{-\gamma'-a},\e^{-1-a}]\cap\Z,
		\ 
		v\in[\e^{\gamma+a},\e^{\gamma-a}],
\\
	\label{eq:supdiff}
		&\ \text{such that} \
		v \sqrt{t_0} \geq \e^{-a}
	\big\}.
\end{align}	
Similarly to \eqref{eq:Xi}, the conditions in \eqref{eq:tv:cnd}
correspond to the scaling $ (\e^{-\gamma-1},\e^{-1},\e^{\gamma}) $ for $ (t_0,x_0,v) $,
where the scaling $ \e^{\gamma} $ of $ v $ 
is understood under the informal matching $ v\mapsto \frac{x_0}{t_0} $.
On the other hand,
the condition \eqref{eq:supdiff} quantifies super-diffusivity,
and excludes the short-time regime $ t_0 \leq \e^{-2a}v^{-2} $.
To bridge the gap, we consider also
\begin{align}
	\label{eq:tv:cnd:}
	\til{\Sigma}_{\e}(a) 
	:=
	\big\{
		(t_0,x_0,v):
		\ t_0 \in [0,\e^{-1-\gamma-a}],
		\
		x_0\in[0,\e^{-1-a}]\cap\Z,
		\
		v \in [\e^{\gamma+a},\e^{a}]
	\big\}.
\end{align}

\begin{proposition}\label{prop:blt}
Start the system from an initial condition $ \eta^\ic $
satisfying \eqref{eq:ic:den1},
with the corresponding constants $ a_*,C_*,\gamma $.
For any fixed $ 0<a <(\gamma'-\frac{1+\gamma}{2})\wedge(1-\gamma)\wedge\frac{\gamma}{2} $,
there exists a constant $ C<\infty $,
depending only on $ a,a_*,C_*,\gamma $, such that
\begin{align}
	\label{eq:blt:cnt:up}
	&
	\lim_{\e\to 0}
	\Pr \Big( 
		\big| \blt^{\Lup_{t_0,x_0,v}}(t_0) - \tfrac{1}{2v} \big| 
		\leq v^{\frac1C-1},
		\
		\forall (t_0,x_0,v)\in\Sigma_\e(a)
	\Big)
	= 1,
\\
	\label{eq:blt:cnt:lw}
	&
	\lim_{\e\to 0}
	\Pr \Big( 
	\big| \blt^{\Llw_{t_0,x_0,v}}(t_0) - \tfrac{1}{2v} \big| 
	\leq v^{\frac1C-1},
	\
	\forall (t_0,x_0,v)\in\Sigma_\e(a)
	\Big)
	= 1,
\\
	\label{eq:blt:<}
	&
	\lim_{\e\to 0}
	\Pr \Big( 
		\blt^{L_{t_0,x_0,v}}(t_0) \leq 4\e^{-a}v^{-1},
		\
		\forall (t_0,x_0,v) \in \til{\Sigma}_\e(a)
	\Big)
	=
	1.
\end{align}
%\begin{enumerate}[label=(\alph*)]
%\item \label{enu:blt}
%\item \label{enu:blt:<}
%\end{enumerate}
\end{proposition}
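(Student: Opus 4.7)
My plan is to pass to the frame co-moving with $L := L_{t_0,x_0,v}$, identify the discrete analog of the Stefan traveling wave, and combine spectral/concentration tools with a comparison argument for the two truncations. Set $\til Y_i(s) := X_i(s) - L(s)$; modulo the $O(1)$ ceiling in~\eqref{eq:L}, each $\til Y_i$ is a continuous-time random walk on $\Z$ with rates $1/2$ upward and $1/2 + v$ downward — the extra rate $v$ downward coming from the jumps of $L$ — absorbed on $\{y \leq 0\}$, and
\[
	\blt^{L}(t_0) = \#\bigl\{i : \til Y_i(t_0) > 0, \ \til Y_i(s) \leq 0 \text{ for some } s \leq t_0\bigr\}.
\]
Setting $\til u(t,y) := \Ex[\eta^{L}(t, L(t)+y)]$ on $y \geq 0$, a short calculation shows that $\til u$ satisfies a discrete heat equation on $\{y \geq 1\}$ with Dirichlet condition $\til u(t,0) = 0$ and initial data approximately $\mathbf{1}\{y\geq 1\}$. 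Its bounded stationary profile $\til u_\infty(y) = 1 - (1+2v)^{-y}$ has boundary-layer mass equal exactly to the target: $\sum_{y\geq 1}(1-\til u_\infty(y)) = \sum_{y\geq 1}(1+2v)^{-y} = 1/(2v)$.

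For the sharp bounds~\eqref{eq:blt:cnt:up}--\eqref{eq:blt:cnt:lw}, I would first show $\Ex[\blt^{L}(t_0)] = 1/(2v) + o(v^{1/C-1})$ by exploiting the $\Theta(v^2)$ spectral gap of the Dirichlet generator to establish exponential relaxation of $\til u - \til u_\infty$ at rate $cv^2$; under~\eqref{eq:supdiff} we have $v^2 t_0 \geq \e^{-2a}$, so this deviation is super-polynomially small. For concentration, $\blt^{L}(t_0)$ decomposes as a sum over initial sites $y$ of independent contributions bounded by $\eta^{\ic}(y)$ (which has sub-exponential tails by~\eqref{eq:ic:den1}), with total variance $\leq C\Ex[\blt^{L}(t_0)] \leq C'/v$; Bernstein's inequality then gives $\Pr(|\blt^{L}(t_0) - 1/(2v)| > v^{1/C-1}) \leq \exp(-v^{-\delta})$ for some $\delta = \delta(C) > 0$ whenever $C > 2$, which survives a union bound over a polynomial-size $\e$-grid covering $\Sigma_\e(a)$. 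The transfer to $\Lup$ and $\Llw$ is then comparison: these trajectories agree with $L$ except on the interval $s \leq s_* := t_0 - \e^{-\gamma'}/v + O(1)$, and under $\gamma' > (1+\gamma)/2$ the remaining time $t_0 - s_* = \e^{-\gamma'}/v$ is too short for a particle crossing the truncation interface at distance $\e^{-\gamma'}$ to diffuse back above $x_0$, as a Gaussian tail bound quantifies.

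For the weak bound~\eqref{eq:blt:<} on the larger set $\til\Sigma_\e(a)$, where~\eqref{eq:supdiff} may fail, I would split absorbed particles by their absorption time $s$ around $s_{**} := \e^{-2a}/v^2$. The contribution from $s \geq s_{**}$ is controlled by a stationary-like argument and bounded by $(1 + o(1))/(2v)$; the contribution from $s < s_{**}$ comes from particles starting within $O(\sqrt{s_{**}}) = O(\e^{-a}/v)$ of the trajectory, and a crude count against the initial density $\eta^{\ic}$ (via~\eqref{eq:ic:den1}) gives a bound of $O(\e^{-a}/v)$ as well, for the total $4\e^{-a}/v$ claimed.

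The most delicate step will be quantifying the $\ell^1$-relaxation of $\til u$ to $\til u_\infty$ with an explicit polynomial error in $v$: the natural framework for the generator above is the weighted $\ell^2$ space where $\til u_\infty$ plays the role of the reversible measure, rendering the generator self-adjoint, but the initial deviation $\til u(0,\cdot) - \til u_\infty$ has $\ell^1$-norm of order $1/v$ and only $\ell^2$-norm of order $1/\sqrt v$, so converting back to $\ell^1$ loses a factor of $v^{-1/2}$. The super-polynomial factor $\exp(-c\e^{-2a})$ afforded by~\eqref{eq:supdiff} leaves more than enough room to absorb this loss and still arrive at a deviation of order $v^{1/C-1}$ for any chosen $C > 2$.
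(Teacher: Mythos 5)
Your architecture is essentially the paper's: compute $\Ex(\blt^{L}(t_0)\mid\eta^\ic)$ by a PDE-type argument in which the traveling wave with mass $\tfrac1{2v}$ is the target profile, upgrade to the random variable by a Bernstein/Chernoff bound conditional on $\eta^\ic$ (the paper's Lemma~\ref{lem:cnd:cnctrtn}), reduce $\Lup,\Llw$ to $L$ by a Gaussian-tail estimate exploiting $\e^{-\gamma'}\gg\sqrt{t_0}$ (which is exactly how $\gamma'>\frac{1+\gamma}{2}+a$ is used in the paper), and finish with a union bound. The one genuinely different ingredient is your relaxation mechanism: you propose a spectral-gap estimate of order $v^2$ for the drifted Dirichlet generator in the co-moving frame, whereas the paper stays in the lab frame and sandwiches $u=\Ex(\eta-\eta^{L}\mid\eta^\ic)$ between explicit discrete super/sub-solutions ($\utr$ corrected by heat-kernel tails) via a discrete maximum principle on the moving domain (Lemma~\ref{lem:max}). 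Your route is viable in principle --- the condition $v\sqrt{t_0}\ge\e^{-a}$ does give $e^{-cv^2t_0}\le e^{-c\e^{-2a}}$, enough to absorb the $\ell^2\to\ell^1$ loss you flag --- though note that $L$ advances by \emph{deterministic} unit steps every $1/v$ time units, so $\til Y_i$ is not a time-homogeneous walk with an extra rate-$v$ downward jump; you would have to work with the period-$1/v$ composition operator or keep the time-inhomogeneity explicit.

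The substantive gap is that you never confront the fluctuations of the initial condition. Your stationary profile $\til u_\infty$ and your relaxation estimate presuppose initial data ``approximately $\mathbf 1\{y\ge1\}$,'' but $\eta^\ic$ only satisfies \eqref{eq:ic:den1}: the centered sum $\ict$ varies by $\asymp r\,\ell^{\gamma}$ over windows of length $\ell$, so $\eta^\ic$ is not close to $1$ pointwise, and the conditional expectation $\Ex(\blt^{L}(t_0)\mid\eta^\ic)$ genuinely depends on these fluctuations. One must show their contribution to the boundary-layer mass is $o(v^{-1})$; in the paper this is the content of Step~2 of the proof of Lemma~\ref{lem:blt} --- a summation-by-parts against the heat kernel (via Lemma~\ref{lem:Gamma}) showing the fluctuation part of the boundary value decays like $(1+t)^{-\frac14(1-\gamma)}$, which is then fed into the barrier construction. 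Without an analogue of this step your expectation computation is incomplete, and it is precisely here (not in the concentration step) that the hypothesis \eqref{eq:ic:den1} and several of the constraints on $a$ do their work. A second, more routine omission: a union bound over a polynomial grid in $(t_0,x_0,v)$ does not by itself yield the statement ``for all $(t_0,x_0,v)\in\Sigma_\e(a)$''; you need to interpolate between grid points, which the paper does via the shade-region monotonicity $\Shd_Q\subset\Shd_{Q'}\Rightarrow\blt^{Q}\le\blt^{Q'}$ together with the bond-crossing count $J(i,x)$ to control the variation of $\blt^{\Lup_{t,x,v}}(t)$ in $t$ and $v$ within a cell. Finally, your split of \eqref{eq:blt:<} at $s_{**}=\e^{-2a}v^{-2}$ is only a sketch --- in particular, a particle absorbed before $s_{**}$ can use the \emph{entire} remaining time $t_0-s$, not just $t_0-s_*$, to return above $x_0$, so the counting bound for the early contribution needs to be argued against $\sqrt{t_0}$, not $\sqrt{s_{**}}$ --- but the target $4\e^{-a}v^{-1}$ is generous enough that this part should go through once made precise.
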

%
%To see the intuition behind this,
%consider the following PDE with the linear boundary $ \mathcal{L}(t) := v(t-t_0)+x_0 $,
%\begin{subequations}
%\begin{align}
%	\label{eq:PDE:HE}
%	&\partial_t u(t,\xi) = \tfrac12 \partial_{\xi\xi} u(t,\xi), \  \xi > \mathcal{L}(t),
%\\
%	\label{eq:PDE:bdv}
%	&u(t,\mathcal{L}(t)) = 0,
%\\
%	\label{eq:PDE:bd'}
%	&\partial_\xi u(t,\mathcal{L}(t)) = \tfrac{d~}{dt} \mathcal{L}(t) = v.
%\end{align}
%\end{subequations}
%%
%The solution $ u $ is expected to approximate 
%the density of the $ \eta^{L_{t_0,x_0,v}} $-particles at large scale.
%%
%Indeed, the heat equation~\eqref{eq:PDE:HE} 
%corresponds to the diffusive behavior of independent random walks,
%the Dirichlet boundary condition~\eqref{eq:PDE:bdv}
%corresponds to absorption at the boundary,
%and the last condition~\eqref{eq:PDE:bd'} corresponds 
%to the flux condition \eqref{eq:fluxCond}.
%
\noindent
The first two estimates \eqref{eq:blt:cnt:up}--\eqref{eq:blt:cnt:lw}
state that $ \blt^{\Lup_{t_0,x_0,v}}(t_0) $ and $ \blt^{\Llw_{t_0,x_0,v}}(t_0) $
are well approximated by $ (2v)^{-1} $
for $ (t_0,x_0,v)\in\Sigma_\e(a) $.
As for the short time regime $ (t_0,x_0,v)\in\til{\Sigma}_\e(a) $,
we establish a weaker estimate~\eqref{eq:blt:<} that suffices for our purpose.

In Section~\ref{sect:pfmain},
we employ Proposition~\ref{prop:blt} to estimate
$ \blt^{\frntUp_\lambda}(t) $ and $ \blt^{\frntLw_\lambda}(t) $,
by approximating $ \frntUp_\lambda $ and $ \frntLw_\lambda $ with suitable truncated linear trajectories.
Such linear approximations suffice 
due to the \emph{super-diffusive} nature of $ \frnt $.
In general, $ \blt^Q(t) $ depends on the entire trajectory of $ Q $ from $ 0 $ to $ t $,
but for the super diffusive trajectories $ \frntUp_\lambda $ and $ \frntLw_\lambda $,
a linear approximation is accurate enough to capture
the leading order of $ \blt^{\frntUp_\lambda}(t) $ and $ \blt^{\frntLw_\lambda}(t) $.

Based on these estimates of $ \blt^{\frntUp_\lambda}(t) $ and $ \blt^{\frntLw_\lambda}(t) $,
we then show that, with sufficiently high probability,
$ N^{\frntUp_\lambda}(t) \leq \frntUp_\lambda(t) $
and $ N^{\frntLw_\lambda}(t) \geq \frntLw_{\lambda}(t) $ over the relevant time regime.
This together with Proposition~\ref{prop:mono} shows that 
$ \frntUp_\lambda $ and $ \frntLw_\lambda $ indeed sandwich $ \frnt $ in the middle.
Our last step of the proof is then to show that this sandwiching becomes
sharp under the iterated limit $ \lim_{\lambda\to0}\lim_{\e\to 0} $.
More precisely, we show that the hitting time processes 
$ \hitUp_\lambda $ and $ \hitLw_\lambda $ corresponding to $ \frntUp_\lambda $ and $ \frntLw_\lambda $,
respectively,
weakly converge to $ \Hit_\Ict $ under the prescribed iterated limit.

\subsection*{Outline of the rest of this article}
To prepare for the proof, we establish a few basic tools in Section~\ref{sect:basic}.
Subsequently, 
in Section~\ref{sect:mgt}, 
we settle Proposition~\ref{prop:mgtbd} regarding bounding the noise term,
and in Section~\ref{sect:blt}, 
we show Proposition~\ref{prop:blt} regarding estimations of the boundary layer term.
In Section~\ref{sect:pfmain},
we put together results from Sections~\ref{sect:mgt}--\ref{sect:blt}
to give a proof of the main result Theorem~\ref{thm:main}.
In Appendix~\ref{sect:subp},
to complement our study of the critical behaviors at $ \rho=1 $ throughout this article,
we discuss the $ \rho<1 $ and $ \rho>1 $ behaviors of the front $ \frnt $.

\section{Basic Tools}
\label{sect:basic}
%We begin by settling Proposition~\ref{prop:mono}.
%
\begin{proof}[Proof of Proposition~\ref{prop:mono}]
Fixing $ \tau<\infty $,
we consider only the case $ Q(t)\leq N^Q(t) $, $ \forall t \leq \tau $,
as the other case is proven by the same argument.
By assumption, $ Q(0) \geq 0 $ and $ \frnt(0)=0 $,
so $ Q(t) \geq \frnt(t) $ holds for $ t=0 $.
Our goal is to prove that this dominance continues to hold for all $ t\leq \tau $.
To this end,
we let $ \sigma := \inf\{ t: \frnt(t) > Q(t) \} $ be the first time when 
such a dominance fails.
At time $ \sigma $,
exactly one $ \eta^\frnt $-particle attempts to jump, 
triggering the front $ \frnt $ to move for $ \frnt(\sigma^-) $ to $ \frnt(\sigma) $.
%
%Let us image
%first performing the motion of the $ \eta^\frnt $-particles
%at time $ \sigma $ \emph{without absorption by $ \frnt $ yet}.
%%
%Under this consideration,
% 
%$ Y_1(\sigma^-) $ denote the $ \eta^\frnt $-particle
%that attempts to jump at time $ \sigma^- $,
%and subsequently index the rest $ \eta^\frnt $-particles

Index all the $ \eta^\frnt $ at time $ \sigma^- $
as $ Y_i(\sigma^-) $, $ i=1,2,\ldots $.
Let us now imagine performing the motion of $ \frnt $ into two steps:
\textit{i}) from $ \frnt(\sigma^-) $ to $ Q(\sigma) $;
and \textit{ii}) from $ Q(\sigma) $ to $ \frnt(\sigma) $.
During step~(\textit{i}), the front absorbs
\begin{align*}
	\til{N} := \# \big( (\frnt(\sigma^-),Q(\sigma)]\cap\{Y_i(\sigma^{-})\}_{i=1}^\infty \big)
\end{align*}
particles.
Due to the condition~\eqref{eq:fluxCond:},
we must have $ \til{N} > Q(\sigma)-\frnt(\sigma^-) $,
otherwise $ \frnt $ would have stopped at or before it reaches $ Q(\sigma) $
and not performed step~(\textit{ii}).
Combining $ \til{N} > Q(\sigma)-\frnt(\sigma^-) $ with
$ \frnt(\sigma^-)=N^\frnt(\sigma^-) $ (by the flux condition~\eqref{eq:fluxCond}) yields
\begin{align}
	\label{eq:N':mono}
	N^\frnt(\sigma^-)+\til{N} > Q(\sigma).
\end{align}
Further, since 
$ \frnt $ is dominated by $ Q $ up to time $ \sigma^- $,
i.e.\ $ \frnt(t) \leq Q(t) $, $ \forall t<\sigma $,
the total number of particles absorbed by $ \frnt $
\emph{up to step~(\textit{i})} cannot exceed the number of particles
absorbed by $ Q $ up to time $ \sigma $, i.e\
$ N^\frnt(\sigma^-)+\til{N} \leq N^Q(\sigma) $.
Combining this with \eqref{eq:N':mono} yields $ N^Q(\sigma) > Q(\sigma) $,
which holds only if $ \sigma > \tau $ by our assumption.
\end{proof}

We devote the rest of this section to establishing a few technical lemmas,
in order to facilitate the proof in subsequent sections.
The proof of these lemmas are standard.

\begin{lemma}\label{lem:cnd:cnctrtn}
Letting $ \{B_{x,j}\}_{(x,j)\in\Z_{>0}^2} $
be mutually independent Bernoulli variables, independent of $ \eta^\ic $,
we consider a random variable $ X $ of the form
\begin{align}
	\label{eq:XBer}
	X = \sum_{x\in\Z_{>0}} \sum_{j=1}^{\eta^\ic(x)} B_{x,j}.
\end{align}
We have that for all $ r\in(0,\infty) $ and $ \zeta \geq 0$,
\begin{align}
	\label{eq:Chernov00}
	&
	\Pr\big( |X-\zeta \big| > 2r \big) 
	\leq 
	2 e^{-\frac{r^2}{3r+2\zeta}} + \Pr\big( |\Ex(X|\eta^\ic) - \zeta| >  r \big).
\end{align}
\end{lemma}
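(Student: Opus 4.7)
The plan is to condition on $\eta^\ic$ and apply a Bernstein-type concentration inequality to the conditionally independent Bernoulli summands. Introduce the $\eta^\ic$-measurable event $\calE := \{|\Ex(X|\eta^\ic) - \zeta| \leq r\}$ and split
\begin{align*}
	\Pr\big(|X - \zeta| > 2r\big)
	\leq
	\Pr\big(|X - \zeta| > 2r,\, \calE\big) + \Pr(\calE^c).
\end{align*}
The second term matches the conditional-mean contribution appearing on the right-hand side of \eqref{eq:Chernov00}, so it suffices to bound the first term by $2e^{-r^2/(3r+2\zeta)}$.

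On $\calE$, the triangle inequality shows that $|X - \zeta| > 2r$ forces $|X - \Ex(X|\eta^\ic)| > r$; moreover $\Ex(X|\eta^\ic) \leq \zeta + r$ on $\calE$. Conditional on $\eta^\ic$, $X$ is a sum of independent $\{0,1\}$-valued variables $B_{x,j}$ whose conditional variance is bounded by their conditional mean $\Ex(X|\eta^\ic)$ (since each Bernoulli satisfies $p(1-p)\leq p$). I would then invoke the standard Bernstein inequality for bounded independent sums to obtain, almost surely,
\begin{align*}
	\Pr\big(|X - \Ex(X|\eta^\ic)| > r \,\big|\, \eta^\ic\big)
	\leq
	2\exp\!\bigg(-\frac{r^2}{2\,\Ex(X|\eta^\ic) + \tfrac{2}{3}r}\bigg).
\end{align*}
On the event $\calE$, the denominator is at most $2(\zeta + r) + \tfrac{2}{3}r \leq 2\zeta + 3r$, so this conditional probability is bounded by $2e^{-r^2/(3r + 2\zeta)}$ on $\calE$. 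Integrating over the $\eta^\ic$-measurable event $\calE$ then yields the claimed bound on $\Pr\big(|X - \zeta| > 2r,\, \calE\big)$, which completes the proof when combined with the initial split. If one worries about $\Ex(X|\eta^\ic) = \infty$, then $\calE$ is automatically empty and only the second term in the split contributes (trivially bounded by $1$), so the argument covers all cases uniformly.

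The argument is essentially a routine application of a conditional Bernstein bound; the only bookkeeping is to verify that the constant $\tfrac{2}{3}$ coming from the Bernstein exponent combines with the extra $r$ gained from $\calE$ to fit inside the $3r + 2\zeta$ denominator of \eqref{eq:Chernov00}, as done above. I do not anticipate any serious obstacle.
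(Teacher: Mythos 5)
Your proposal is correct and follows essentially the same route as the paper: the identical decomposition over the $\eta^\ic$-measurable event $\{|\Ex(X|\eta^\ic)-\zeta|\leq r\}$, followed by a conditional concentration bound for the sum of independent Bernoullis, using $\Ex(X|\eta^\ic)\leq\zeta+r$ to control the denominator of the exponent. The only difference is that you invoke Bernstein (denominator $2\Ex(X|\eta^\ic)+\tfrac23 r$) where the paper derives a Chernoff--Bennett bound from $\log\Ex(e^{sX}|\eta^\ic)\leq(e^s-1)\Ex(X|\eta^\ic)$ (denominator $r+2\Ex(X|\eta^\ic)$); both fit inside $3r+2\zeta$, so your argument goes through.
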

\begin{proof}
To simplify notations, we write $ \Ex'(\Cdot) := \Ex(\Cdot|\eta^\ic) $
and $ \Pr'(\calA) := \Ex(\ind_\calA|\eta^\ic) $
for the conditional expectation and the conditional probability.
Since $\log \Ex (e^{s B_{x,j}}) \le (e^s-1) \Ex (B_{x,j})$ for any 
$s,x,j$, it follows that $\log \Ex' (e^{s X}) \le (e^s-1) \Ex' (X)$.
The 
%elementary 
inequality 
$(1+\delta)\log(1+\delta) -\delta \ge \delta^2/(|\delta|+2)$ for 
$\delta \ge -1$ then yields that  
%Conditionally on $ \eta^\ic $, Chernoff's bound states that
\begin{align}
	\label{eq:Chernovv}
	\Pr'(|X-\Ex'(X)| > r) \leq 2\exp\Big( -\frac{r^2}{r+2\Ex'(X)} \Big),\quad\forall r\geq 0
\end{align}
(e.g.\ \cite[Theorem~4]{goemans15}, where $r = |\delta| \Ex'(X)$).
Since 
\begin{align*}
	\{ |X-\zeta| > 2r\} \subseteq 
	\{ |X-\Ex'(X)| > r, |\Ex'(X) - \zeta| \le  r \} 
	\cup \{|\Ex'(X)-\zeta| > r \} 
\end{align*}
and \eqref{eq:Chernovv} implies that  
\begin{align}
	\label{eq:chernov:union:}
	\Pr'\big( |X-\Ex'(X)| > r,  |\Ex'(X)-\zeta|\leq r \big)
	\leq
	2 e^{-\frac{r^2}{3r+2\zeta}}\,,
\end{align}
we get \eqref{eq:Chernov00} by taking $ \Ex(\Cdot) $ on both sides
of \eqref{eq:chernov:union:} followed by the union bound. 
\end{proof}

Let $ \Delta f(x) := f(x+1) + f(x-1) - 2f(x) $
denote the discrete Laplacian.
\begin{lemma}[discrete maximal principle, bounded fixed domain]
\label{lem:max:}
Fixing $ x_1<x_2\in\Z $ and $ \bar{t} <\infty $.
We consider 
$ u(\Cdot,\Cdot): [0,\bar{t}]\times([x_1,x_2]\cap\Z) \to \R $,
such that
$ u(\Cdot,x) \in C([0,\bar{t}])\cap C^1((0,\bar{t})) $, 
for each fixed $ x\in((x_1,x_2)\cap\Z) $.
If $ u $ solves the discrete heat equation
\begin{align}
	\label{eq:ui:HE:}
	\partial_t u(t,x) = \tfrac12 \Delta u(t,x),
	\quad
	\forall t\in(0,\bar{t}), \ x_1<x<x_2,
\end{align}
and satisfies
\begin{align}
	\label{eq:ui:domin:bdy:}
	u(0,x) \geq 0,	\ \forall x \in (x_1,x_2)\cap\Z,
	\quad
	u(t,x_1) \geq 0, \ 
	u(t,x_2) \geq 0, \ 
	\forall t \in [0,\bar{t}],
\end{align}
then
\begin{align}
	\label{eq:max:}
	u(t,x) \geq 0, \quad \forall (t,x) \in[0,\bar{t}]\times([x_1,x_2]\cap\Z).
\end{align}
\end{lemma}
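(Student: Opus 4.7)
The plan is the classical strict-subsolution perturbation argument for the discrete parabolic maximum principle. For arbitrary $\delta > 0$, I introduce $v_\delta(t,x) := u(t,x) + \delta t$. This inherits the regularity of $u$, satisfies $v_\delta \geq 0$ on the parabolic boundary $(\{0\} \times ([x_1,x_2]\cap\Z)) \cup ([0,\bar{t}] \times \{x_1, x_2\})$ by \eqref{eq:ui:domin:bdy:}, and upgrades \eqref{eq:ui:HE:} to the strict inequality $\partial_t v_\delta - \tfrac12 \Delta v_\delta \equiv \delta > 0$ on $(0,\bar{t}) \times ((x_1,x_2)\cap\Z)$. The goal becomes showing $v_\delta \geq 0$ for every $\delta > 0$, after which $\delta \downarrow 0$ yields \eqref{eq:max:}.

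To sidestep the fact that neither the PDE nor the $C^1$-regularity is assumed at $t = \bar{t}$, I will work first on the shrunken rectangle $K_\eta := [0,\bar{t} - \eta] \times ([x_1,x_2]\cap\Z)$ for small $\eta \in (0,\bar{t})$. Since $v_\delta$ is continuous on the compact set $K_\eta$ (which is finite in the $x$-direction), it attains its minimum at some $(t^*, x^*) \in K_\eta$. If that minimum were negative, boundary non-negativity of $v_\delta$ would force $t^* \in (0, \bar{t} - \eta]$ and $x_1 < x^* < x_2$. At any such interior minimum, the discrete Laplacian is non-negative, $\Delta v_\delta(t^*,x^*) \geq 0$, by definition of a minimum over the three-point stencil $\{x^*-1, x^*, x^*+1\}$; meanwhile $\partial_t v_\delta(t^*, x^*)$ is well-defined (since $t^* \in (0,\bar{t})$, where $v_\delta(\Cdot, x^*)$ is $C^1$) and is $\leq 0$, since the left-derivative of a $C^1$ function at a minimum of $[0,t^*]$ is non-positive. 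This forces $(\partial_t - \tfrac12 \Delta) v_\delta(t^*, x^*) \leq 0$, contradicting the strict inequality $= \delta > 0$ established above.

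Two limits then close the argument: first send $\eta \downarrow 0$ and use continuity of $v_\delta$ on $[0,\bar{t}]$ to upgrade $v_\delta \geq 0$ on $K_\eta$ to $v_\delta \geq 0$ on all of $[0,\bar{t}] \times ([x_1,x_2] \cap \Z)$; this reads $u(t,x) \geq -\delta t \geq -\delta \bar{t}$. Then send $\delta \downarrow 0$ to conclude \eqref{eq:max:}. The only delicate point in the plan is the treatment of the right endpoint $t = \bar{t}$: because the hypotheses give the PDE and $C^1$-regularity only on the open interval $(0, \bar{t})$, the minimum-point argument cannot be applied directly at $t = \bar{t}$, and the shrinkage-then-limit step above is the standard remedy. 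Everything else is routine: finiteness in $x$ removes any compactness issues one would face in the continuum analogue, and the perturbation $\delta t$ converts the weak parabolic inequality into a strict one that is incompatible with an interior minimum.
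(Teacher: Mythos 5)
Your proof is correct. It is the classical perturbation version of the discrete parabolic maximum principle: you add $\delta t$ to make $u$ a strict supersolution, locate a global minimum on a compact (time-shrunken) rectangle, and contradict strictness via $\Delta v_\delta \geq 0$ and $\partial_t v_\delta \leq 0$ at that minimum. The paper reaches the same conclusion by a slightly different mechanism: it perturbs the \emph{level} rather than the function, defining $t_0$ as the first time $u$ hits $-\e$ for some interior $x$, and obtains the needed strict inequality $\Delta u(t_0,x_0)>0$ not from a source term but by taking $x_0$ to be the \emph{left-most} hitting point (so that $u(t_0,x_0-1)>-\e$ strictly); the contradiction is then that $\partial_t u(t_0,x_0)>0$ forces $u$ to have been below $-\e$ just before $t_0$. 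The two arguments are interchangeable here; yours has the minor advantage of treating the endpoint $t=\bar t$ carefully via the $\eta$-shrinkage (the hypotheses only give the equation and $C^1$-regularity on the open interval $(0,\bar t)$, a point the paper's proof passes over when $t_0=\bar t$), while the paper's avoids introducing the auxiliary function and the double limit $\eta,\delta\downarrow 0$.
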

\begin{proof}
Assume the contrary.
Namely, for some fixed $ \e>0 $,
letting
\begin{align}
	\label{eq:t0}
	t_0 := \inf\{ t\in[0,\bar{t}] : u(t,x) \leq -\e, \text{ for some } x_1<x<x_2 \},
\end{align}
we have $ 	t_0 \in (0,\bar{t}] $.
Since $ t\mapsto u(t,x) $ is continuous,
we must have $ u(t_0,x)=-\e $, for some $ x\in (x_1,x_2)\cap\Z $.
Such $ x $ may not be unique,
and we let $ x_0 := \min\{ x\in(x_1,x_2)\cap\Z : u(t_0,x)=-\e \} $.
That is, $ t_0 $ is the first time where the function 
$ u(t,\Cdot) $ hits level $ -\e $, and $ x_0 $ is the left-most point 
where this hitting occurs.
We have $ u(t_0,x_0-1) > -\e $ and $ u(t_0,x_0+1) \geq -\e $,
so in particular $ \Delta u(t_0,x_0) > 0 $,
and thereby $ \partial_t u(t_0,x_0) >0 $.
This implies that $ u(t,x_0)< u(t_0,x_0) $,
for all $ t<t_0 $ sufficiently close to $ t_0 $,
which contradicts with the definition~\eqref{eq:t0} of $ t_0 $.
This proves that, for any given $ \e>0 $, such $ t_0 $ does not exist,
so \eqref{eq:max:} must hold.
\end{proof}

\begin{lemma}[discrete maximal principle, with a moving boundary]
\label{lem:max}
Fixing a $ \rclll $-valued function $ Q $ and $ \bar{t} <\infty $,
we consider $ u_i(t,x) $, $ i=1,2 $, 
defined on $ \calD := \{(t,x): t\in[0,\bar{t}], x\geq Q(t)\} $,
such that
\begin{enumerate}[label=\roman*)]
	\item
	\label{enu:ui:1}
	$ u_i(t,x) $ is continuous in $ t $ on $ \calD $;
	\item 
	$ u_i(t,x) $ is $ C^1 $ in $ t $ on 
	$ \calD^\circ := \{ (t,x) : t\in(0,\bar{t}), x > Q(t) \} $;
	\item
	\label{enu:ui:tailbd} 
%	there exists $ c_0<\infty $ such that
	\begin{align}
		\label{eq:u12:expbd}
		\limsup_{x\to\infty} \sup_{t\in[0,\bar{t}]} \log |u_i(t,x)| <\infty ,
		\quad
		\forall (t,x) \in \calD,
		\
		i=1,2.
	\end{align}
\end{enumerate}
If $ u_1, u_2 $ solve the discrete heat equation
\begin{align}
	\label{eq:ui:HE}
	\partial_t u_i(t,x) = \tfrac12 \Delta u_i(t,x),
	\quad
	\forall (t,x)\in\calD^\circ,
\end{align}
and satisfy the dominance condition 
at $ t=0 $ and on the boundary:
\begin{align}
	\label{eq:ui:domin:bdy}
	u_1(0,x) \geq u_2(0,x),	\ \forall x > Q(0);
	\quad
	u_1(t,Q(t)) \geq u_2(t,Q(t)), \ \forall t \in [0,\bar{t}],
\end{align}
then such a dominance extends to the entire $ \calD $:
\begin{align}
	u_1(t,x) \geq u_2(t,x), \quad \forall (t,x) \in \calD.
\end{align}
\end{lemma}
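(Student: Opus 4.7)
Set $w := u_1 - u_2$; by linearity $w$ inherits continuity in $t$, the $C^1$ regularity in $t$ on $\calD^\circ$, the discrete heat equation \eqref{eq:ui:HE}, and the bound \eqref{eq:u12:expbd} at infinity, while \eqref{eq:ui:domin:bdy} gives $w(0,\cdot)\geq 0$ and $w(t,Q(t))\geq 0$ for $t\in[0,\bar{t}]$. The goal is to show $w\geq 0$ on $\calD$. The plan is to reduce to the bounded, fixed-domain maximum principle Lemma~\ref{lem:max:} by two standard regularizations: first, add a positive exponential supersolution $\delta\phi$, both to beat $|w|$ at $x=\infty$ (allowing truncation at some large $x_2$) and to make the inequalities on the parabolic boundary strict; then decompose $[0,\bar{t}]$ along the jumps of the RCLL function $Q$ so that on each sub-interval the moving wall becomes stationary.

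For any $\lambda>0$, the function $\phi(t,x):=\exp\bigl(\lambda x+(\cosh\lambda-1)t\bigr)$ is strictly positive and solves $\partial_t\phi=\tfrac12\Delta\phi$. By \eqref{eq:u12:expbd} one has $|w(t,x)|/\phi(t,x)\to 0$ as $x\to\infty$ uniformly in $t\in[0,\bar{t}]$. For each $\delta>0$ I set $w_\delta:=w+\delta\phi$; it solves $\partial_t w_\delta=\tfrac12\Delta w_\delta$ on $\calD^\circ$, satisfies $w_\delta(0,x)\geq\delta\phi(0,x)>0$ and $w_\delta(t,Q(t))\geq\delta\phi(t,Q(t))>0$, and admits some $x_2=x_2(\delta)\in\Z$ with $w_\delta(t,x)\geq 0$ for all $t\in[0,\bar{t}]$ and $x\geq x_2$. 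Showing $w\geq 0$ therefore reduces to showing $w_\delta\geq 0$ on $\calD\cap\{x\leq x_2\}$ for every $\delta>0$ and then sending $\delta\downarrow 0$.

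Because $Q$ is RCLL, non-decreasing and $\Z\cup\{\infty\}$-valued, on $\{t\in[0,\bar{t}]:Q(t)\leq x_2\}$ it attains only finitely many integer values, hence has finitely many jump times there, say $0=\tau_0<\tau_1<\cdots<\tau_n\leq\bar{t}$, with $Q\equiv q_i\in\Z$ on $[\tau_i,\tau_{i+1})$; fibers with $Q(t)>x_2$ contribute nothing to $\calD\cap\{x\leq x_2\}$. I then proceed by induction on $i$, applying Lemma~\ref{lem:max:} to $w_\delta$ on each rectangle $[\tau_i,\tau_{i+1}]\times([q_i,x_2]\cap\Z)$: the discrete heat equation holds in the interior; on $x=q_i$, $w_\delta(t,q_i)\geq 0$ since $q_i=Q(t)$ on $[\tau_i,\tau_{i+1})$; on $x=x_2$ the truncation step supplies the sign; and at $t=\tau_i$ non-negativity holds either by hypothesis ($i=0$) or by the conclusion of the previous strip, restricted from $[q_{i-1},x_2]\cap\Z$ to the smaller range $[q_i,x_2]\cap\Z$ and transported across $\tau_i$ via the continuity in $t$ of hypothesis~\ref{enu:ui:1}. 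The main delicate point is exactly this passage across the jumps of $Q$, and nothing beyond restriction and continuity is needed. Iterating over $i$ and letting $\delta\downarrow 0$ then concludes the proof.
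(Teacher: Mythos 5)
Your proof is correct and, in substance, follows the same route as the paper's: subtract to get $w=u_1-u_2$, add an exponential supersolution of the discrete heat equation, truncate to a finite rectangle so that Lemma~\ref{lem:max:} applies, decompose time at the jumps of $Q$, and pass each jump using the continuity hypothesis~\ref{enu:ui:1}. The differences are organizational rather than conceptual, and are arguably improvements. You truncate in space \emph{before} decomposing in time, which makes it transparent that $Q$ has only finitely many jumps on $\{t:Q(t)\leq x_2\}$; the paper's phrasing (``index all the discontinuous points of $Q$ as $0<t_1<\cdots<t_n$'' and then truncate inside each strip) tacitly assumes there are finitely many jumps on all of $[0,\bar t]$, which need not hold for a general $\rclll$-valued $Q$. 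You also parametrize the auxiliary barrier by a vanishing amplitude $\delta$ (sending $\delta\downarrow 0$ at fixed $(t,x)$), whereas the paper uses $\hat u(t,x)=c_0\exp(2c_0 x+c_1 t-c_0 x')$ and sends the cutoff $x'\to\infty$; the two devices are interchangeable. Two small points to tighten. First, the paper's own use of \eqref{eq:u12:expbd} is via the inequality $|u_i(t,x)|\leq c_0 e^{c_0 x}$, so strictly speaking you should take $\lambda>c_0$ rather than ``any $\lambda>0$'' to guarantee $|w|/\phi\to 0$. Second, when invoking Lemma~\ref{lem:max:} on the closed rectangle $[\tau_i,\tau_{i+1}]\times([q_i,x_2]\cap\Z)$, note that $w_\delta$ is not actually defined at $(\tau_{i+1},x)$ for $q_i\leq x<q_{i+1}$, since those points lie outside $\calD$; one should apply the lemma on $[\tau_i,\tau_{i+1}-\varepsilon]$ and let $\varepsilon\downarrow 0$, exactly the continuity passage you already describe for the start of the next strip.
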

\begin{proof}
First, we claim that it suffices to settle
the case of a \emph{fixed} boundary $ Q(t)=c $, $ \forall t \leq \bar{t} $.
To see this,
index all the discontinuous points of $ Q $ as $ 0<t_1<t_2<\ldots < t_n \leq \bar{t} $.
Since the domain $ [Q(t_i),\infty) $ shrinks as $ i $ increases,
once we settle this Lemma for the case of a fixed boundary,
applying this result within the time interval $ [t_i,t_{i+1}) $,
we conclude the general case by induction in $ i $.	

Now, let us assume without loss of generality $ Q(t)=0 $, $ \forall t \leq \bar{t} $.
Let $ u:= u_1-u_2 $.
By \eqref{eq:u12:expbd}, 
there exists $ c_0<\infty $ such that
$ u(t,x) \geq -c_0 e^{c_0x} $, $ \forall t\leq \bar{t} $ and $ x>0 $.
With this, fixing $ x' > 0 $,
we let $ c_1 := \cosh(2c_0)-1 $ and $ \hat{u}(t,x) := c_0 \exp(2c_0x+c_1t-c_0x') $,
and consider the function $ \til{u}(t,x) := u(t,x) + \hat{u}(t,x) $.
It is straightforward to verify that $ \hat{u} $ solves the discrete heat equation
on $ (t,x)\in[0,\infty)\times\Z $,
so $ \til{u} $ also solves the discrete heat equation for $ t,x>0 $.
Further, with $ u(0,x) \geq 0 $, $ u(t,0) \geq 0 $ and $ u(t,x') \geq -c_0 e^{c_0x'} $,
$ \forall t\in[0,\bar{t}] $, $ x\in\Z_{>0} $,
we indeed have $ \til{u}(0,x), \til{u}(t,0), \til{u}(t,x') \geq 0 $,
$ \forall t\in[0,\bar{t}], x\in (0,x')\cap\Z $.
With these properties of $ \til{u} $, 
we apply Lemma~\ref{lem:max:} with $ (x_1,x_2)=(0,x') $
to conclude that $ \til{u}(t,x) \geq 0 $, 
$ \forall t\in[0,\bar{t}] $, $ x \in (0,x')\cap\Z_{>0} $.
Consequently,
\begin{align*}
	u(t,x) \geq -c_0 e^{2c_0x+c_1t-c_0x'}
	\geq - c_0 e^{2c_0x+c_1\bar{t}} e^{-c_0x'},
\end{align*}
$\forall t\in[0,\bar{t}] $, $ x<x'\in \Z_{>0} $.
Now, for fixed $ x\in\Z_{>0} $, sending $ x'\to\infty $,
we arrive at the desired result: $ u(t,x) \geq 0 $, $ \forall t\in[0,\bar{t}] $.
\end{proof}

\section{Bounding the Noise Term: Proof of Proposition~\ref{prop:mgtbd}}
\label{sect:mgt}
Throughout this section, 
we fix an initial condition $ \eta^\ic $ satisfying~\eqref{eq:ic:den1},
with the corresponding constants $ \gamma,a_*,C_* $.
Fix $ a\in(0,1] $,
throughout this section we use $ C<\infty $ to denote a generic finite constant,
that may change from line to line,
but depends only on $ a,a_*,\gamma,C_* $.

For any fixed $ (t,x) $,
we let $ \mgt^+(t,x) $ denotes the number of $ \eta $-particles
starting in $ (0,x] $ and ending up in $ (x,\infty) $ at $ t $.
Similarly we let $ \mgt^-(t,x) $ denotes the number of $ \eta $-particles
starting in $ (x,\infty) $ and ending up in $ (-\infty,x] $ at $ t $.
More explicitly, labeling all the $ \eta $-particles as $ Z_1(t), Z_2(t),\ldots $,
we write
\begin{align}
	\label{eq:mgt+-}
	\mgt^+(t,x) := \sum_{i=1}^\infty \ind_\set{Z_i(t)>x} \ind_\set{Z_i(0) \leq x},
	\quad
%\\
%	\label{eq:mgt-}
	\mgt^-(t,x) := \sum_{i=1}^\infty \ind_\set{Z_i(t)\leq x} \ind_\set{Z_i(0) > x},
\end{align}
From the definition~\eqref{eq:mgt} of $ \mgt(t,x) $,
it is straightforward to verify that
\begin{align}
	\label{eq:mgt:pm}
	\mgt(t,x) = \mgt^{-}(t,x) - \mgt^{+}(t,x).
\end{align}

Given the decomposition~\eqref{eq:mgt:pm},
our aim is to establish a certain concentration result of $ \mgt^\pm(t,x) $.
Let $ \Prw $ denote the law of 
a random walk $ W $ on $ \Z $ starting from $ 0 $,
so that $ \hk(t,x) := \Prw(W(t)=x) $ is the standard discrete heat kernel,
and let
\begin{align}
	\label{eq:erf}
	\erf(t,x) := \Prw(W(t)\geq x) = \sum_{y\geq x} \hk(t,y)
\end{align}
denote the corresponding tail distribution function.
We expect $ \mgt^\pm(t,x) $ to concentrate around 
\begin{align}
	\label{eq:V}
	V(t) := \sum_{y>0} \erf(t,y).
\end{align}
To see why, recall the definition of $ \mgt^\pm(t,x) $ from~\eqref{eq:mgt+-}.
Taking $ \Ex(\Cdot|\eta^\ic) $ gives
\begin{subequations}
\label{eq:mgt:ex}
\begin{align}
	\label{eq:mgt:ex-}
	\Ex(\mgt^-(t,x)|\eta^\ic) &= \sum_{y> x} \Prw(W(t)+y\leq x) \eta^\ic(y)
	= \sum_{y> x} \erf(t,y-x) \eta^\ic(y),
\\
	\label{eq:mgt:ex+}
	\Ex(\mgt^+(t,x)|\eta^\ic) &= \sum_{0<y\leq x} \Prw( W(t)+y> x) \eta^\ic(y)
	= \sum_{0<y\leq x} \erf(t,y-x) \eta^\ic(y).
\end{align}
\end{subequations}
Since density is roughly $ 1 $ under current consideration,
we approximate $ \eta^\ic(y) $ with $ 1 $ in~\eqref{eq:mgt:ex-}--\eqref{eq:mgt:ex+}.
Doing so in~\eqref{eq:mgt:ex-} gives exactly $ V(t) $,
and doing so in~\eqref{eq:mgt:ex+} gives approximately $ V(t) $ for $ x $ that are suitably large.

To prove this concentration of $ \mgt(t,x) $,
we begin by quantifying how $ \eta^\ic $ is well-approximated by unity density.
Recall the definition of $ \ict $ from~\eqref{eq:ict}, and consider
\begin{align}
	\label{eq:Gamma}
	\Gamma(x,y) := \ict(y) - \ict(x)
	=
	\left\{\begin{array}{l@{,}l}
		\sum_{z\in(x,y]} (1-\eta^\ic(z))	&	\text{ for } x\leq y,
		\\
		\sum_{z\in(y,x]} (1-\eta^\ic(z))	&	\text{ for } x>y.
	\end{array}\right.
\end{align}
which measures the deviations of $ \eta^\ic $ from unity density.
We show
\begin{lemma}
\label{lem:Gamma}
Let $ a, b,b'\in(0,1] $.
There exists $ C_1=C_1(a,a_*,C_*,b,b')<\infty $, 
%depending only on $ a,a_*,C_* $ and $ b,b' $, 
such that
\begin{align*}
	\Pr\Big( 
		|\Gamma(x,y)| \leq \e^{-b'} |x-y|^{\gamma+b}, 
		\ \forall y\in\Z_{\geq 0}, \ x\in [0,\e^{-1-a}]
	\Big)
	\geq
	1 - 
	C_1 \exp(-\e^{-b'a_*/C_1}).
\end{align*}
\end{lemma}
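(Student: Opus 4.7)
The plan is a straightforward two-level union bound, combining the pointwise tail estimate \eqref{eq:ic:den1} with the strong summability of the resulting sub-Weibull tail in the spacing. The extra $b$-power in the target exponent $\gamma+b$ (versus $\gamma$ in \eqref{eq:ic:den1}) is precisely what creates enough room for the union bound to close.

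First, for each pair of integers $x \in [0,\e^{-1-a}]\cap\Z$ and $y \in \Z_{\geq 0}$ with $d := |x-y| \geq 1$, I apply \eqref{eq:ic:den1} (which is symmetric in its two arguments) with $r := \e^{-b'} d^{b}$ to obtain
$$\Pr\bigl(|\Gamma(x,y)| \geq \e^{-b'} d^{\gamma+b}\bigr) \;\leq\; C_* \exp\bigl(-\e^{-b' a_*} d^{b a_*}\bigr);$$
the case $d = 0$ is trivial as $\Gamma(x,y)=0$. Next, for each fixed $x$, I sum over $y$, using that at most two $y$'s in $\Z_{\geq 0}$ sit at distance $d$ from $x$. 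The elementary inequality $c d^\alpha \geq c + (d^\alpha - 1)$, valid for $c \geq 1$, $\alpha > 0$, $d \geq 1$, yields
$$\sum_{d \geq 1} e^{-c d^\alpha} \;\leq\; e^{-c}\, e \sum_{d \geq 1} e^{-d^\alpha} \;=:\; K(\alpha)\, e^{-c},$$
with $K(\alpha) < \infty$ by the integral test (the substitution $v = u^\alpha$ turns $\int_1^\infty e^{-u^\alpha}\,du$ into a convergent Gamma-type integral). Setting $c := \e^{-b' a_*}$, which is $\geq 1$ for all sufficiently small $\e$, and $\alpha := b a_*$, I get
$$\Pr\bigl(\exists\, y \in \Z_{\geq 0}: |\Gamma(x,y)| > \e^{-b'}|x-y|^{\gamma+b}\bigr) \;\leq\; 2 C_* K(b a_*)\, \exp(-\e^{-b' a_*}).$$

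Then I union-bound over the $\leq \e^{-1-a}+1$ integer values of $x$:
$$\Pr(\text{bad event}) \;\leq\; (\e^{-1-a}+1) \cdot 2 C_* K(b a_*)\, \exp(-\e^{-b' a_*}).$$
Since $\log(\e^{-1-a}+1)$ grows only logarithmically in $1/\e$ whereas $\e^{-b' a_*}$ grows polynomially in $1/\e$, picking any $C_1 > 1$ and enlarging it once to cover the finitely many $\e$ bounded away from $0$ yields the claimed bound $C_1 \exp(-\e^{-b' a_*/C_1})$ on the full range $\e \in (0,1]$.

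There is no genuine obstacle in this argument; it is a routine estimation made possible by the sub-Weibull tail hypothesis and the $b$-slack in the target exponent. The only steps deserving explicit attention are the summability estimate $\sum_d e^{-c d^\alpha} \lesssim e^{-c}$ for $c \geq 1$, and the final rate trade-off which converts the polynomial prefactor $\e^{-1-a}$ into a mild weakening of the exponent from $-\e^{-b' a_*}$ to $-\e^{-b' a_*/C_1}$.
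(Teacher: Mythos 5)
Your proposal is correct and follows essentially the same route as the paper's proof: a pointwise application of \eqref{eq:ic:den1} with $r=\e^{-b'}|x-y|^{b}$, an elementary decoupling of the product $\e^{-b'a_*}|x-y|^{ba_*}$ into a sum (you use $cd^\alpha\geq c+d^\alpha-1$ where the paper uses $\xi_1\xi_2\geq\tfrac12(\xi_1+\xi_2)$, to the same effect), a convergent union bound over $y$, and a final union bound over the polynomially many $x$ absorbed by weakening the exponent to $\e^{-b'a_*/C_1}$. No gaps.
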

\begin{proof}
To simply notations, throughout this proof we write $ C=C(a,a_*,C_*, b,b') $, whose value may change from line to line.
As $ \eta^\ic $ satisfies the condition~\eqref{eq:ic:den1},
setting $ (x_1,x_2)=(x,y) $ and $ r= \e^{-b'}|x-y|^{b} $ in \eqref{eq:ic:den1},
we have
\begin{align}
	\label{eq:Gamma:bd:1}
	\Pr( |\Gamma(x,y)| > \e^{-b'}|x-y|^{\gamma+b} ) 
	\leq 
	C \exp(\e^{-a_*b'}|y-x|^{a_*b}),
\end{align}
for all $ y \in \Z_{\geq 0} $.
Using the elementary inequality
$ \xi_1\xi_2 \geq \frac12(\xi_1+\xi_2) $,
$ \forall \xi_1,\xi_2 \geq 1 $,
for $ \xi_1= \e^{-a_*b'} $ and $ \xi_2=|y-x|^{a_*b} $,
we obtain
$ \e^{-a_*b'}|y-x|^{a_*b} \geq \frac{1}{2}(\e^{-a_*b'}+|y-x|^{a_*b}) $,
for all $ y\neq x $.
Using this to bound the last expression in \eqref{eq:Gamma:bd:1},
and taking the union bound of the result
over $ y\in\Z_{>0}\setminus\{x\} $,
we conclude that
\begin{align}
	\notag
	\Pr\Big( 
		|\Gamma(x,y)| \leq \e^{-b'}&|x-y|^{\gamma+b}, 
		\ \forall y\in\Z_{\geq 0}\setminus\{x\}
	\Big)
\\
	\label{eq:Gamma:bd:2}
	&\geq 
		1 - C \exp(-\tfrac12\e^{-b'a_*}) \sum_{i=1}^\infty \exp(-\tfrac12i^{a_*b})
	\geq
		1 - C \exp(-\tfrac12\e^{-b'a_*}).
\end{align}
Since $ \Gamma(x,x)=0 $,
the event in \eqref{eq:Gamma:bd:2} automatically extend to all $ y\in\Z_{>0} $.
With this,
taking union bound of \eqref{eq:Gamma:bd:2} over $ x\in[0,\e^{-1-a}] $,
we obtain
\begin{align*}
%	\notag
	\Pr\Big( 
		|\Gamma(x,y)| \leq \e^{-b'}&|x-y|^{\gamma+b}, 
		\ \forall y\in\Z_{\geq 0}, \ \forall x\in[0,\e^{-1-a}]
	\Big)
\\
%	\label{eq:Gamma:bd:3}
	&\geq 
		1 - C \e^{-2} \exp(-\tfrac12\e^{-b'a_*})
	\geq
		1 - C \exp(-\e^{-b'a_*/C}). 
\end{align*}
This concludes the desired result.
\end{proof}

Lemma~\ref{lem:Gamma} gives the relevant estimate on how $ \eta^\ic $ is approximated by unit density.
Based on this, we proceed to show the concentration
of $ \Ex(\mgt^\pm(t,x)|\eta^\ic) $.
For the kernel $ \hk(t,x) $,
we have the following standard estimate (see \cite[Eq.(A.13)]{dembo16})
\begin{align}
	\label{eq:hk:bd}
	\hk(t,x) \leq C (t+1)^{-\frac12} e^{-\frac{|x|}{\sqrt{t+1}}},
\end{align}
and hence
\begin{align}
	\label{eq:erf:bd}
	\erf(t,x) \leq C e^{-\frac{[x]_+}{\sqrt{t+1}}}.
\end{align}
Recall the definition of $ \Xi_\e(a) $ from \eqref{eq:Xi}.
\begin{lemma}
\label{lem:mgtEx'}
Let $ a\in(0,1] $. There exists $ C=C(a)<\infty $such that
\begin{align}
	\label{eq:mgtEx':bd}
	\Pr\Big( \big|\Ex(\mgt^\pm(t,x)|\eta^\ic)-V(t)\big| 
	\leq 
	\e^{-a}(t+1)^{\frac{\gamma}{2}} \Big)
	\geq
	1 - C\exp(-\e^{-a/C}),
\end{align}
for any $ (t,x)\in \Xi_\e(a) $ and for all $ \e\in(0,1] $.
\end{lemma}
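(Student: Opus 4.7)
The plan is to use the explicit conditional expressions \eqref{eq:mgt:ex-}--\eqref{eq:mgt:ex+} to write $\Ex(\mgt^\pm(t,x)\mid\eta^\ic) - V(t)$ as a sum involving $\eta^\ic - 1$, apply discrete integration by parts (Abel summation) to trade differences of $\eta^\ic$ for differences of $\erf(t,\cdot)$, and then invoke Lemma~\ref{lem:Gamma} together with the heat-kernel bound \eqref{eq:hk:bd} to control the resulting sum.

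For $\mgt^-$ the calculation is clean: shifting the index $z = y - x$ in \eqref{eq:mgt:ex-} gives
\begin{align*}
\Ex(\mgt^-(t,x)\mid\eta^\ic) - V(t) = \sum_{z\ge 1}\erf(t,z)\bigl(\eta^\ic(x+z)-1\bigr).
\end{align*}
Using $\eta^\ic(x+z) - 1 = -\bigl(\Gamma(x,x+z) - \Gamma(x,x+z-1)\bigr)$ and $\erf(t,z) - \erf(t,z+1) = \hk(t,z)$, Abel summation yields
\begin{align*}
\Ex(\mgt^-(t,x)\mid\eta^\ic) - V(t) = -\sum_{z\ge 1} \Gamma(x,x+z)\,\hk(t,z),
\end{align*}
where the boundary term at infinity vanishes because $\erf(t,N)$ decays exponentially while $|\Gamma(x,x+N)|$ grows only polynomially on the good event of Lemma~\ref{lem:Gamma}.

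For $\mgt^+$ there is an honest boundary at $y=1$ that must be dealt with. I would first isolate the gap between $V(t)$ and the unit-density expectation, namely $V(t) - \sum_{z=1}^x\erf(t,z) = \sum_{z>x}\erf(t,z)$. By \eqref{eq:erf:bd} this is bounded by $C\sqrt{t+1}\,e^{-x/\sqrt{t+1}}$, which under the super-diffusive condition $x/\sqrt{t}\ge\e^{-a}$ built into $\Xi_\e(a)$ is super-polynomially small in $\e^{-1}$ and therefore negligible. The remaining fluctuation piece $\sum_{y=1}^x(\eta^\ic(y)-1)\Prw(W(t)+y>x)$ is handled by reversing the index ($z = x-y+1$); a second Abel summation produces a bulk term $-\sum_{z=1}^{x-1}\Gamma(x-z,x)\,\hk(t,z)$, entirely analogous to the $\mgt^-$ bulk, plus a boundary term of the form $-\Gamma(0,x)\,\erf(t,x)$. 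The boundary term is controlled by the same super-diffusive input: $|\Gamma(0,x)|\le\e^{-b'}x^{\gamma+b}$ on the good event, while $\erf(t,x)\le C e^{-x/\sqrt{t+1}}\le Ce^{-\e^{-a}/C}$ wipes out the polynomial factor.

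It remains to bound the common bulk sum $\sum_{z\ge 1}|\Gamma(\cdot,\cdot)|\,\hk(t,z)$. Choose $b,b'\in(0,1]$ small enough that $b' + (1+\gamma+a)b/2 < a$. Then Lemma~\ref{lem:Gamma} gives $|\Gamma(\cdot,\cdot)|\le\e^{-b'}z^{\gamma+b}$ on an event of probability $\ge 1 - C\exp(-\e^{-a_* b'/C_1})$, and \eqref{eq:hk:bd} yields $\sum_{z\ge 1}z^{\gamma+b}\hk(t,z)\le C(t+1)^{(\gamma+b)/2}$ by a direct Gaussian-tail summation. Combining, for $t\le\e^{-1-\gamma-a}$,
\begin{align*}
C\e^{-b'}(t+1)^{(\gamma+b)/2}
\le C\e^{-b'-(1+\gamma+a)b/2}(t+1)^{\gamma/2}
\le \e^{-a}(t+1)^{\gamma/2},
\end{align*}
which is the required deterministic bound. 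The probability bound $1 - C\exp(-\e^{-a/C})$ then follows directly from Lemma~\ref{lem:Gamma} by renaming constants. The main obstacle is the $\mgt^+$ step: truncating the sum at $y=1$ creates a genuine boundary term $\Gamma(0,x)\erf(t,x)$ that is not automatically small and can only be absorbed using the super-diffusive condition $x/\sqrt{t}\ge\e^{-a}$ of $\Xi_\e(a)$. This is precisely where that restriction on the parameter set enters; the $\mgt^-$ side, having an infinite tail, takes care of itself.
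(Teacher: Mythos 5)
Your proposal is correct and follows essentially the same route as the paper's proof: writing the fluctuation in terms of $\Gamma$, applying Abel summation to trade $\eta^\ic$-differences for heat-kernel differences, bounding the resulting bulk sum via Lemma~\ref{lem:Gamma} together with \eqref{eq:hk:bd}, absorbing the extra $(t+1)^{b/2}$ factor using $t\le\e^{-1-\gamma-a}$, and controlling the $V_2$ tail and the $\Gamma(x,0)\erf(t,x)$ boundary term via the super-diffusive constraint $x/\sqrt{t}\ge\e^{-a}$. One small over-statement: $V_2$ and the boundary term need not be super-polynomially small uniformly over $\Xi_\e(a)$ (e.g.\ for $t<1$ and $x=1$), but since the target bound $\e^{-a}(t+1)^{\gamma/2}\ge 1$ exceeds any fixed constant as $\e\to 0$, the weaker $O(1)$ bound that is actually available suffices, exactly as in the paper.
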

\begin{remark}
We will prove~\eqref{eq:mgtEx':bd} only for $\e \in (0, 1/C]$.
This suffices because, once~\eqref{eq:mgtEx':bd} holds for all 
$ \e $ small enough, by increasing the constant $C$ in \eqref{eq:mgtEx':bd}, 
that statement trivially extends to all $\e\in(0,1]$.
The same convention be used in the sequel, when statements of such form 
are made for all $ \e\in(0,1] $ but only proven for small enough $ \e $.
\end{remark}
\begin{conv}
\label{conv:omitsp}
To simplify the presentation,
in the course of proving Lemma~\ref{lem:mgtEx'},
we omit finitely many events $ \calE_i $, $ i=1,2,\ldots $, 
of probability $ \leq C\exp(-\e^{-a/C}) $,
sometimes without explicitly stating it.
Similar conventions are adopted in proving other statements in the following,
where we omit events of small probability,
of the form permitted in corresponding statement.
\end{conv}
\begin{proof}
Fixing $ (t,x)\in\Xi_\e(a) $,
we consider first $ \Ex(\mgt^-(t,x)|\eta^\ic) $.
On the r.h.s.\ of~\eqref{eq:mgt:ex-},
write $ \eta^\ic(y)=1-(1-\eta^\ic(y)) $
to separate the contributions of the average density $ 1 $ and fluctuation.
For the former we have $ \sum_{y> x} \erf(t,y-x) = V(t) $ (as defined in~\eqref{eq:V}).
For the latter, writing $ 1-\eta^\ic(y) = \Gamma(x,y)-\Gamma(x,y-1)$
%so that $ \sum_{y> x} \erf(t,y-x) (\eta^\ic(y)-1) = -\sum_{y> x} \erf(t,y-x) (\Gamma(x,y)-\Gamma(x,y-1)) $. This 
gives
\begin{align}
	\label{eq:mgt:Ex':}
	\Ex(\mgt^-(t,x)|\eta^\ic) - V(t) 
	=
	-\sum_{y> x} \erf(t,y-x) (\Gamma(x,y)-\Gamma(x,y-1)).
\end{align}

To bound the r.h.s.\ of \eqref{eq:mgt:Ex':},
we apply Lemma~\ref{lem:Gamma} with $ b=\frac{a}{2} $ and $ b'=\frac{a}{4} $ to conclude that
\begin{align}
	\label{eq:mgt:Gammabd}
	|\Gamma(x,y)| \leq \e^{-\frac{a}{4}}|x-y|^{\gamma+\frac{a}{2}},
	\quad
	y\in\Z_{>0}, \ x\in [0,\e^{-1-a}].
\end{align}
Here~\eqref{eq:mgt:Gammabd} holds up to probability $ C\exp(-\e^{-a/C}) $.
As declared in Convention~\ref{conv:omitsp},
we will often omit events of probability $ \leq C\exp(-\e^{-a/C}) $ 
\emph{without} explicitly stating it.
With $ \Gamma(x,x)=0 $, we have the following summation by parts formulas,
\begin{align}
	\label{eq:sby1}
	\sum_{0< y\leq x} f(y) (\Gamma(x,y)-\Gamma(x,y-1)) 
	&= \sum_{0< y\leq x} (f(y)-f(y+1)) \Gamma(x,y)
	- f(1) \Gamma(x,0),
\\
	\label{eq:sby2}
	\sum_{y> x} f(y) (\Gamma(x,y)-\Gamma(x,y-1)) 
	&= \sum_{y> x} (f(y)-f(y+1)) \Gamma(x,y),
\end{align}
for all $ f $ such that 
\begin{align}
	\label{eq:sby:cdn}
	\sum_{y\in\Z} |f(y)||\Gamma(x,y)|<\infty,
	\quad
	\sum_{y\in\Z} |f(y+1)||\Gamma(x,y)| < \infty.
\end{align}
Apply the formula \eqref{eq:sby2}
with $ f(y) = \erf(t,y-x) $,
where the summability condition~\eqref{eq:sby:cdn}
holds by \eqref{eq:erf:bd} and \eqref{eq:mgt:Gammabd}.
With $ \erf(t,y-x)-\erf(t,y-x+1) = p(t,y-x) $
we obtain
\begin{align}
	\label{eq:mgt:Ex'::}
	\Ex(\mgt^-(t,x)|\eta^\ic) - V(t) 
	= -\sum_{y> x} p(t,y-x) \Gamma(x,y).
\end{align}
On the r.h.s., using \eqref{eq:hk:bd} to bound the discrete heat kernel,
and using \eqref{eq:mgt:Gammabd} to bound $ |\Gamma(x,y)| $,
we obtain
\begin{align}
	\label{eq:mgt:Ex':::}
	|\Ex(\mgt^-(t,x)|\eta^\ic) - V(t) | 
	\leq
	C \e^{-\frac{a}{4}} \sum_{y\in\Z} \frac{|x-y|^{\gamma+a/2}}{\sqrt{t+1}} e^{-\frac{|x-y|}{\sqrt{t+1}}}
	\leq
	C \e^{-\frac{a}{4}}(t+1)^{\frac{a}{4}+\frac{\gamma}{2}}.
\end{align}
Further, with $ (t,x)\in\Xi_\e(a) $,
we have $ t \leq \e^{-1-\gamma-a} $.
Using this to bound $ (t+1)^{\frac{a}{4}} $ in \eqref{eq:mgt:Ex':::},
we obtain
\begin{align*}
	|\Ex(\mgt^-(t,x)|\eta^\ic) - V(t) | 
	\leq
	C \e^{-\frac{a}{4}} \e^{-\frac{a}{4}(1+\gamma+a)} (t+1)^{\frac{\gamma}{2}}
	\leq
	\e^{-a} (t+1)^{\frac{\gamma}{2}},
\end{align*}
for all $ \e $ small enough.
This concludes the desired result~\eqref{eq:mgtEx':bd}.

As for $ \Ex(\mgt^+(t,x)|\eta^\ic) $,
similarly to \eqref{eq:mgt:Ex':} we have
\begin{align}
	\label{eq:mgt+:Ex':}
	\Ex(\mgt^+(t,x)|\eta^\ic) 
	= V_1(t,x) - \sum_{0<y\leq x} \erf(t,x+1-y) (\Gamma(x,y)-\Gamma(x,y-1)),
\end{align}
where $ V_1(t,x) := \sum_{0<y\leq x} \erf(t,x+1-y) = \sum_{0<z\leq x} \erf(t,z) $.
Let
\begin{align}
	\label{eq:V2}
	V_2(t,x) := \sum_{z>x} \erf(t,z).
\end{align}
In \eqref{eq:mgt+:Ex':},
we write $ V_1(t,x) = V(t)-V_2(t,x) $ and apply the summation by parts formula~\eqref{eq:sby1}
with $ f(y)=\erf(t,x+1-y) $ to get
\begin{align}
	\label{eq:mgt+V}
	\Ex(\mgt^+(t,x)|\eta^\ic) - V(t) 
	= -V_2(t,x) - \erf(t,x)\Gamma(x,0) - \sum_{0<y\leq x} p(t,x-y) \Gamma(x,y).
\end{align}
The last term in \eqref{eq:mgt+V} 
is of the same form as the r.h.s.\ of \eqref{eq:mgt:Ex'::},
so, applying the same argument following~\eqref{eq:mgt:Ex'::},
here we have
\begin{align}
	\label{eq:mgt:last}
	\sum_{0<y\leq x} \big|p(t,x-y) \Gamma(x,y) \big|
	\leq 
	C \e^{-\frac{a}{4}} (t+1)^{\frac{\gamma}{2}+\frac{a}{4}}
	\leq 
	\tfrac12 \e^{-a} (t+1)^{\frac{\gamma}{2}},
\end{align}
for all $ \e $ small enough.
Next, with $ V_2(t,x) $ defined in~\eqref{eq:V2},
by \eqref{eq:erf:bd} we have $ 	|V_2(t,x)| \leq C \sqrt{t+1} \exp(-\frac{x}{\sqrt{t+1}}) $.
Further, since $ (t,x)\in\Xi_\e(a) $ we have
$ t\leq \e^{-1-\gamma-a} $ and $ x/\sqrt{t} \geq \e^{-a} $, so
\begin{align}
	\label{eq:mgt:V2}
	|V_2(t,x)| \leq C \e^{-\frac32} e^{-\e^{-a}} \leq C.
\end{align}
To bound the term $ \erf(t,x)\Gamma(x,0) $,
combining \eqref{eq:erf:bd} and \eqref{eq:mgt:Gammabd},
followed by using $ x\leq \e^{-1-a} $ and $ x/\sqrt{t} \geq \e^{-a} $,
we obtain
\begin{align}
	\label{eq:mgt:erfGamma}
	|\erf(t,x)\Gamma(x,0)| \leq C  e^{-\frac{x}{\sqrt{t+1}}} \e^{-\frac{a}{4}}x^{\gamma+\frac{a}{2}}
	\leq
	C e^{-\e^{-a}} \e^{-3}
	\leq
	C.
\end{align}
Inserting \eqref{eq:mgt:last}--\eqref{eq:mgt:erfGamma} into \eqref{eq:mgt+V},
we conclude the desired result~\eqref{eq:mgtEx':bd}
for $ \Ex(\mgt^+(t,x)|\eta^\ic) $,
for all $ \e $ small enough.
\end{proof}

Having established concentration of $ \Ex(\mgt^\pm(t,x)|\eta^\ic) $ in Lemma~\ref{lem:mgtEx'},
we proceed to show the concentration of $ \mgt^\pm(t,x) $.
%We emphasize here that the result in Lemma~\ref{lem:mgt:cntr} is pointwise,
%namely the bound~\eqref{eq:mgt:cntr}} holds only for a \emph{fixed} $ (t,x) $ within the relevant regime.

%
\begin{lemma}
\label{lem:mgt:cntr}
Let $ a\in(0,1] $ be fixed as in the proceeding.
There exists  $ C<\infty $, 
such that
\begin{align}
	\label{eq:mgt:cntr}
	\Pr\Big( 
		\big|\mgt^\pm(t,x)-V(t)\big| 
		\leq 2\e^{-a} (t+1)^{\frac{\gamma}{2}\vee\frac{1}{4}} 
	\Big)
	\geq
	1 - C\exp(-\e^{-a/C}),
\end{align}
for any fixed $ (t,x)\in\Xi_\e(a) $ and for all $ \e\in(0,1] $.
\end{lemma}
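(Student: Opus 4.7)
The plan is to combine the conditional-mean estimate of Lemma~\ref{lem:mgtEx'} with a conditional Bernstein-type concentration, as furnished by Lemma~\ref{lem:cnd:cnctrtn}. The decisive observation is that, given $\eta^\ic$, the trajectories of the $\eta$-particles are mutually independent. Labelling the particles initially at site $y$ by $Z_{y,1},\ldots,Z_{y,\eta^\ic(y)}$, one may rewrite
\begin{align*}
\mgt^-(t,x) = \sum_{y>x}\sum_{j=1}^{\eta^\ic(y)}\ind_{\{Z_{y,j}(t)\leq x\}},
\qquad
\mgt^+(t,x) = \sum_{0<y\leq x}\sum_{j=1}^{\eta^\ic(y)}\ind_{\{Z_{y,j}(t)>x\}},
\end{align*}
each of which is exactly of the form~\eqref{eq:XBer} with conditionally (on $\eta^\ic$) independent Bernoulli summands $B_{y,j}$.

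I would then apply Lemma~\ref{lem:cnd:cnctrtn} with $X=\mgt^\pm(t,x)$, $\zeta=V(t)$, and $r:=\e^{-a}(t+1)^{\gamma/2\vee 1/4}$. The second (conditional-mean) term on the right-hand side of~\eqref{eq:Chernov00} is bounded by $C\exp(-\e^{-a/C})$ directly by Lemma~\ref{lem:mgtEx'}, since $r\geq \e^{-a}(t+1)^{\gamma/2}$ and $(t,x)\in\Xi_\e(a)$.

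To handle the first term $2\exp(-r^2/(3r+2V(t)))$, I would use the crude but sufficient bound $V(t)=\sum_{y>0}\erf(t,y)\leq C\sqrt{t+1}$, which follows from~\eqref{eq:erf:bd}. The elementary inequality $\frac{r^2}{3r+2V(t)}\geq \frac12\min\!\bigl(\frac{r}{3},\frac{r^2}{2V(t)}\bigr)$ then yields
\begin{align*}
\frac{r}{3}\geq \tfrac13\e^{-a},
\qquad
\frac{r^2}{V(t)}\geq C^{-1}\e^{-2a}(t+1)^{(\gamma\vee 1/2)-1/2}\geq C^{-1}\e^{-2a},
\end{align*}
since $\gamma\vee\tfrac12\geq\tfrac12$. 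Hence $r^2/(3r+2V(t))\geq \e^{-a}/C$ in every regime, so the first term is also bounded by $C\exp(-\e^{-a/C})$. A union bound over the two contributions in~\eqref{eq:Chernov00} concludes the proof.

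The main obstacle is essentially just bookkeeping rather than a genuine difficulty: once the conditional-Bernoulli-sum structure of $\mgt^\pm$ is recognized, the rest is a standard Bernstein estimate. The reason the exponent $\gamma/2\vee 1/4$ appears is that $\gamma/2$ is forced by the conditional-mean fluctuation in Lemma~\ref{lem:mgtEx'}, while $1/4$ is forced by the Bernstein denominator $V(t)\asymp\sqrt{t}$; taking the maximum dominates both sources of error simultaneously and reproduces the target bound~\eqref{eq:mgt:cntr}.
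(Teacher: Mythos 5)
Your proposal is correct and follows essentially the same route as the paper: identify $\mgt^\pm(t,x)$ as a conditional Bernoulli sum of the form~\eqref{eq:XBer}, apply Lemma~\ref{lem:cnd:cnctrtn} with $\zeta=V(t)$ and $r=\e^{-a}(t+1)^{\frac14\vee\frac{\gamma}{2}}$, control the conditional-mean term via Lemma~\ref{lem:mgtEx'}, and bound the Bernstein exponent using $V(t)\leq C\sqrt{t+1}$. Your closing remark correctly identifies why the exponent $\frac{\gamma}{2}\vee\frac14$ arises from the two error sources.
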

\begin{proof}
Since $\mgt^\pm(t,x) $ is of the form \eqref{eq:XBer}, setting 
$r_{\e,t} := \e^{-a}(t+1)^{\frac{1}{4}\vee\frac{\gamma}{2}}$, 
we obtain upon applying \eqref{eq:Chernov00} for $X= \mgt^\pm(t,x)$,
$r = r_{\e,t}$ and $\zeta=V(t)$ that 
\begin{align}
	\label{eq:mg:chernov001}
	\Pr( |\mgt^\pm(t,x)-V(t)| > & 2 r_{\e,t} )
	\leq
	2 e^{ 
		-\frac{ r_{\e,t}^2 }
		{3 r_{\e,t} +2 V(t)}}
	+  \Pr\Big( |\Ex(\mgt^\pm(t,x)|\eta^\ic)-V(t)| > r_{\e,t} \Big).
\end{align}
Lemma~\ref{lem:mgtEx'} bounds
the right-most term in \eqref{eq:mg:chernov001} by $ C\exp(-\e^{-a/C}) $.
Further, 
summing \eqref{eq:erf:bd} over $ x>0 $ yields $ V(t) \leq C\sqrt{t+1} $.
Hence,
\begin{align*}
	\exp\Big( 
		-\frac{ \e^{-2a}(t+1)^{\frac{1}{2}\vee\gamma} }
		{3 \e^{-a}(t+1)^{\frac14\vee\frac{\gamma}{2}}+2V(t)} 
	\Big)
	\leq 
	C \exp(-\tfrac1C\e^{-a})
	\leq
	C \exp(-\e^{-a/C})\,,
\end{align*}
which thereby bounds the other term on the r.h.s.\ of \eqref{eq:mg:chernov001} and consequently establishes \eqref{eq:mgt:cntr}.
\end{proof}

Given Lemma~\ref{lem:mgt:cntr},
proving Proposition~\ref{prop:mgtbd} amounts to
extending the pointwise bound \eqref{eq:mgt:cntr}
to a bound that holds simultaneously for all relevant $ (t,x) $.
This requires a technical lemma:

\begin{lemma}
\label{lem:eta:J}
Let $ J(i,x) $ denote the total number of jumps
of the $ \eta $-particles across the bond $ (x,x+1) $ within the time interval $ [i,i+1] $,
let $ a\in(0,1] $ be fixed as in the proceeding, and let $ b\in(0,1] $. 
We have
\begin{align}
	&
	\label{eq:eta:bd}
	\lim_{\e\to 0}
	\Pr\Big( 
		\eta(t,x) \leq \e^{-b}, \
		\forall t\in[0,\e^{-1-\gamma-a}],
		\ x\in[0,\e^{-1-a}]
	\Big)
	=1,
\\
	\label{eq:J:bd}
	&
	\lim_{\e\to 0}
	\Pr\Big( 
		J(i,x) \leq \e^{-b}, \
		\forall i\in [0,\e^{-1-a}],
		\ x\in[-\e^{-1-a},\e^{-1-a}]
	\Big)
	=1.
\end{align}
\end{lemma}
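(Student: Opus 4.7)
The plan is to prove both parts by Chernoff-style concentration at each fixed lattice point plus a union bound: first bounding $ \eta(i,x) $ at integer times $ i $, then the bond fluxes $ J(i,x) $, and finally upgrading the $ \eta $-bound to continuous $ t $ via the deterministic inequality
\begin{equation*}
    \eta(t,x)\;\leq\;\eta(\lfloor t\rfloor,x)+J(\lfloor t\rfloor,x-1)+J(\lfloor t\rfloor,x),\qquad t\in[\lfloor t\rfloor,\lfloor t\rfloor+1],
\end{equation*}
valid because any particle at $ x $ at time $ t $ was either already there at $ \lfloor t\rfloor $ or arrived by crossing one of the two adjacent bonds in the intervening unit interval. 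It therefore suffices to prove each bound pointwise up to events of super-polynomially small probability, and then to union-bound over the $ O(\e^{-\mathrm{const}}) $ relevant lattice points, absorbing the resulting factor by replacing $ b $ with $ b/3 $ throughout.

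For $ \eta(i,x) $, I would index initial particles by $ (y,j) $ with $ 1\leq j\leq\eta^\ic(y) $ and write $ \eta(i,x)=\sum_{y,j}\ind_{W_{y,j}(i)=x-y} $ for independent continuous-time SSRWs $ W_{y,j} $ started at $ 0 $. This places $ \eta(i,x) $ in the form~\eqref{eq:XBer}, so Lemma~\ref{lem:cnd:cnctrtn} applies. The conditional mean $ \Ex(\eta(i,x)|\eta^\ic)=\sum_y \hk(i,x-y)\eta^\ic(y) $ is controlled by writing $ 1-\eta^\ic(y)=\Gamma(x,y)-\Gamma(x,y-1) $ and summing by parts as in~\eqref{eq:sby1}--\eqref{eq:sby2}; combining $ \sum_y \hk(i,x-y)=1 $ with Lemma~\ref{lem:Gamma} and the kernel bound~\eqref{eq:hk:bd} yields $ \Ex(\eta(i,x)|\eta^\ic)\leq 2 $ uniformly on a high-probability event, and Lemma~\ref{lem:cnd:cnctrtn} with $ \zeta=0 $ and $ r=\e^{-b}/6 $ then delivers the desired pointwise tail bound.

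For $ J(i,x) $, I would condition on the configuration $ \eta(i,\cdot) $ at time $ i $ and decompose $ J(i,x)=\sum_{z,k}J^{(i,x)}_{z,k} $ with $ 1\leq k\leq\eta(i,z) $ and mutually independent contributions, each $ J^{(i,x)}_{z,k} $ being the number of crossings of $ (x,x+1) $ by a fresh unit-time SSRW launched from $ z $. Each $ J^{(i,x)}_{z,k} $ is stochastically dominated by $ N_{z,k}\sim\mathrm{Poisson}(1) $, vanishes unless the walk visits $ \{x,x+1\} $, and has mean given by a kernel $ g(z-x) $ that is bounded near zero and decays super-exponentially by Poisson-tail estimates, with $ \sum_w\sqrt{g(w)}=O(1) $. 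Applied on a spatial window slightly larger than $ [-\e^{-1-a},\e^{-1-a}] $ by the same argument as in the previous paragraph, the integer-time bound on $ \eta(i,\cdot) $ combined with an exponential-moment estimate of the form $ \Ex(e^{\lambda J^{(i,x)}_{z,k}}|\eta(i,\cdot))\leq 1+C\lambda\sqrt{g(z-x)} $ (valid for $ \lambda=O(1) $ by Cauchy--Schwarz against the Poisson moment) gives $ \log\Ex(e^{\lambda J(i,x)}|\eta(i,\cdot))\leq C\lambda\e^{-b/3} $ on a high-probability event, whence Chernoff produces the required tail bound. The main obstacle is precisely this step: unlike $ \eta $, the variable $ J $ is not a sum of Bernoullis, so Lemma~\ref{lem:cnd:cnctrtn} does not apply, and the bookkeeping of Poisson moments against the heat-kernel decay of visit probabilities has to be done carefully; the nested conditioning (using the $ \eta $-bound as input to the $ J $-bound, and then both to control $ \eta $ in continuous time) is routine but essential.
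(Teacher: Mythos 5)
Your architecture is exactly the paper's: pointwise Chernoff bounds via Lemma~\ref{lem:cnd:cnctrtn} at integer times, a union bound over polynomially many lattice points, and the upgrade to continuous $t$ through the deterministic inequality $\sup_{t\in[i,i+1]}(\eta(t,x)-\eta(i,x))\leq J(i,x-1)+J(i,x)$, which is precisely \eqref{eq:changinflux}. The one place you genuinely diverge is the treatment of $J(i,x)$. You read $J$ literally as the number of jumps counted with multiplicity, observe correctly that this is then not of the form \eqref{eq:XBer}, and build a separate Chernoff bound from Poisson domination of the jump count plus a Cauchy--Schwarz estimate against the super-exponentially decaying visit probability $g(z-x)$; this works (the summability of $\sqrt{g}$ and the integer-time bound on $\eta(i,\cdot)$ on a slightly enlarged window give $\log\Ex(e^{\lambda J(i,x)}\mid\eta(i,\cdot))\leq C\e^{-b/3}$ on the good event) and in fact proves a slightly stronger statement. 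The paper instead writes $J(i,x)=\sum_{y}\sum_{j\le\eta(i,y)}\ind_{\calA(y,j;i,x)}$ with $\calA$ the event that the particle crosses the bond \emph{at least once} in $[i,i+1]$ --- i.e.\ it silently counts crossing particles rather than crossings --- which puts $J$ back in the Bernoulli form \eqref{eq:XBer} so that Lemma~\ref{lem:cnd:cnctrtn} and the reflection-principle bound \eqref{eq:erff:bd} finish the job; this suffices for every downstream use of $J$. So your route buys a stronger conclusion at the cost of an extra moment computation, while the paper's buys brevity at the cost of a mild abuse in the definition of $J$.

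One intermediate claim of yours is too strong: under \eqref{eq:ic:den1} you cannot get $\Ex(\eta(i,x)\mid\eta^\ic)\leq 2$ on a high-probability event. Lemma~\ref{lem:Gamma} always carries a diverging prefactor $\e^{-b'}$ (needed for its probability bound to tend to $1$), so summation by parts only yields $\Ex(\eta(i,x)\mid\eta^\ic)\leq 1+C\e^{-b'}(1+i)^{(\gamma+b-1)/2}\leq C\e^{-b'}$; the paper's direct route gives the analogous $2\e^{-b/2}$. This is harmless for your argument --- choosing $b'<b$ still makes the conditional mean $\ll\e^{-b}$, which is all the Chernoff step needs --- but the bound as you stated it is not available.
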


\noindent{}We postpone the proof of Lemma~\ref{lem:eta:J} until the end of this section, and continue to finish the proof of Proposition~\ref{prop:mgtbd}.

\begin{proof}[Proof of Proposition~\ref{prop:mgtbd}]
Given the decomposition~\eqref{eq:mgt:pm} of $ \mgt(t,x) $,
by Lemma~\ref{lem:mgt:cntr} we have that
\begin{align*}
	\Pr( |\mgt(t,x)| \leq 4\e^{-a}(t+1)^{\frac14\vee\frac{\gamma}{2}} )
	\geq 
	1 - C e^{-\e^{-a/C}},
\end{align*}
for any fixed $ (t,x)\in\Xi_\e(a) $. 
Take union bound of this over all $ (i,x) \in \Xi_\e(a) $, where $ i\in\Z_{\geq 0} $.
As this set is only polynomially large in $ \e^{-1} $,
we obtain
\begin{align}
	\label{eq:mgtbd:i}
	\lim_{\e\to 0}
	\Pr\big( |\mgt(i,x)| \leq 4\e^{-a}(t+1)^{\frac14\vee\frac{\gamma}{2}}, \ \forall (i,x)\in\Xi_\e(a) \big)
	= 1.
\end{align}

Given \eqref{eq:mgtbd:i},
the next step is to derive a continuity estimate of $ t\mapsto \mgt(t,x) $.
Recall the definition of $ J(i,x) $ from Lemma~\ref{lem:eta:J}.
With $ \mgt^\pm(t,x) $ defined in \eqref{eq:mgt+-},
we have that
\begin{align*}
	\sup_{t\in[i,i+1]} |\mgt^\pm(t,x)-\mgt^\pm(i,x)| \leq J(i,x).
\end{align*}
This together with \eqref{eq:mgt:pm} yields
\begin{align*}
	\sup_{t\in[i,i+1]} |\mgt(t,x)-\mgt(i,x)| \leq 2J(i,x).
\end{align*}
Combining this with \eqref{eq:J:bd}, we obtain the following continuity estimate
\begin{align*}
	\lim_{\e\to 0}
	\Pr\Big( 
		\sup_{t\in[i,i+1]} |\mgt(t,x)-\mgt(i,x)| \leq 2\e^{-a},
		\
		\forall (i,x)\in\Xi_\e(a)
	\Big)
	= 1.
\end{align*}
Using this continuity estimate in \eqref{eq:mgtbd:i},
we conclude the desired result~\eqref{eq:mgt:bd}.
\end{proof}

\begin{proof}[Proof of Lemma~\ref{lem:eta:J}]
Instead of showing~\eqref{eq:eta:bd} directly, we first establish a weaker statement
\begin{align}
	\label{eq:eta:bd:}
	\lim_{\e\to 0}
	\Pr\Big( 
		\eta(i,x) \leq 4 \e^{-\frac{b}{2}}, \
		\forall i\in[0,\e^{-1-\gamma-a}],
		\ x\in[0,\e^{-1-a}]
	\Big)
	=1,
\end{align}
where time takes integer values $ i $.
Since $ \eta $-particles perform independent random walks (starting from $ \eta^\ic $),
for each fixed $ (i,x) $, the random variable $ \eta(i,x) $ is of the form \eqref{eq:XBer}.
This being the case, applying \eqref{eq:Chernov00} with $ X=\eta(i,x) $, 
$ \zeta=0 $ and $ r=2\e^{-\frac{b}{2}} $, we obtain
\begin{align}
	\label{eq:Jeta:Chernov}
	\Pr \big(
		\eta(i,x) > 4\e^{-\frac{b}{2}}
	\big)
	\leq
	2 \exp(-\tfrac23\e^{-\frac{b}{2}}) + \Pr( \Ex(\eta(i,x)|\eta^\ic) > 2\e^{-\frac{b}{2}} ).
\end{align}
We next bound the last term in~\eqref{eq:Jeta:Chernov} that involves $ \Ex(\eta(i,x)|\eta^\ic) $.
As $ \eta $-particles perform independent random walks,
we have the following expression for the conditional expectation:
\begin{align}
	\label{eq:etaEx'}
	\Ex(\eta(i,x)|\eta^\ic) = \sum_{y\in\Z} \hk(i,x-y)\eta^\ic(y).
\end{align}
Write 
$
	\eta^\ic(y) \leq 1+|\ict(y)-\ict(y+1)| \leq \e^{-\frac{b}{2}}+|\ict(y)-\ict(y+1)|,
$
and apply~\eqref{eq:ic:den1} for $ (x_1,x_2)=(y,y+1) $ and $ r = \e^{-\frac{b}{2}} $. 
We obtain
\begin{align}
	\label{eq:Jeta:etaic:}
%$
	\Pr(
		\eta^\ic(y) \leq 2\e^{-\frac{b}{2}}
	)
	\geq
	1 - C\exp(-\e^{-b/C}).
%$
\end{align}
Taking union bound of~\eqref{eq:Jeta:etaic:} over $ y\in (0,\e^{-1-a}] $ yields
\begin{align}
	\label{eq:Jeta:etaic}
	\Pr\big(
		\eta^\ic(y) \leq 2\e^{-\frac{b}{2}},
		\
		\forall y \in (0,\e^{-1-a}]
	\big)
	\geq
	1 - C\e^{-2}e^{-\e^{-b/C}}
	\geq
	1 - Ce^{-\e^{-b/C}}.
\end{align}
Use \eqref{eq:Jeta:etaic} to bound $ \eta^\ic(y) $ on the r.h.s.\ of \eqref{eq:etaEx'},
followed by using $ \sum_{y\in\Z} \hk(i,x-y) =1 $. 
We obtain
\begin{align}
	\label{eq:eta':bd}
	\Pr \big(
		\Ex(\eta(i,x)|\eta^\ic) > 2\e^{-\frac{b}{2}}
	\big)
	\leq
	Ce^{-\e^{-b/C}}.
\end{align}
Insert~\eqref{eq:eta':bd} into~\eqref{eq:Jeta:Chernov},
and take union bound of the result over 
$i\in[0,\e^{-1-\gamma-a}] $, $ x\in[0,\e^{-1-a}] $ (which is a union of polynomial size $ \e^{-C} $),
we conclude~\eqref{eq:eta:bd:}.
%Further taking the union bound of the result
%over the set 
%\begin{align*}
%	\calS := \{ (i,x) \in([0,\e^{-1-\gamma-a}]\cap\Z)\times([-\e^{-1-a},\e^{-1-a}]\cap\Z) \},
%\end{align*}
%which is polynomially large in $ \e $, i.e., $ \#\calS \leq \e^{-C} $, 
%we conclude~\eqref{eq:J:bd}.

Passing from~\eqref{eq:eta:bd:} to~\eqref{eq:eta:bd}
amounts to bounding the change $ \eta(t,x)-\eta(i,x) $ in number of particles within $ t\in[i,i+1] $.
Indeed, such a change is encoded in the flux across the bonds $ (x-1,x) $ and $ (x,x+1) $, and
\begin{align}
	\label{eq:changinflux}
	\sup_{t\in[i,i+1]} \big( \eta(t,x) -\eta(i,x) \big)
	\leq
	J(i,x-1) + J(i,x).
\end{align}
That is, the change in number of particles is controlled by $ J $.

Given~\eqref{eq:changinflux}, let us first establish the bound~\eqref{eq:J:bd} on $ J $.
Fixing $ i $, for each $ y\in\Z $,
we order the $ \eta $-particles at time $ t=i $ at site $ y $ 
as $ X_{y,1}(i),X_{y,2}(i),\ldots, X_{y,\eta(i,y)}(i) $,
and consider the event $ \calA(y,j;i,x) $
that the $ X_{y,j} $ particle ever jumps cross the bond $ [x,x+1] $ within the time interval $ [i,i+1] $,
i.e.
\begin{align}
	\calA(y,j;i,x)
	:= 
	\left\{\begin{array}{l@{,}l}
		\{ \sup_{t\in[i,i+1]} X_{y,j}(t) \geq x+1 \}	&	\text{ for } y\leq x,
	\\
		\{ \inf_{t\in[i,i+1]} X_{y,j}(t) \leq x \}	&	\text{ for } y\geq x+1.	
	\end{array} \right.
\end{align}
Under these notations, we have
\begin{align}
	\label{eq:J:Aexpress}
	J(i,x) = \sum_{y\in\Z} \sum_{j=1}^{\eta(i,y)} \ind_{\calA(y,j;i,x)}.
\end{align}
This is a random variable of the form~\eqref{eq:XBer}.
Applying \eqref{eq:Chernov00} with $ X=J(i,x) $, 
$ \zeta=0 $ and $ r=\e^{-\frac{b}{3}} $, we obtain
\begin{align}
	\label{eq:Jeta:Chernov:J}
	\Pr \big(
		J(i,x) > 2\e^{-\frac{b}{3}}
	\big)
	\leq
	C\exp(-\e^{-\frac{b}{C}}) + \Pr( \Ex(J(i,x)|\eta^\ic) > \e^{-\frac{b}{3}} ).
\end{align}
We next bound the last term in~\eqref{eq:Jeta:Chernov:J} that involves $ \Ex(J(i,x)|\eta^\ic) $.
To this end, fix $ i\in[0,\e^{-1-\gamma-a}] $,
and view $ \eta(i+t,\Cdot) := \til{\eta}(t,\Cdot) $, $ t\geq 0 $,
as a free particle system starting from $ \til{\eta}^\ic(\Cdot)=\eta(i,\Cdot) $.
Since $ \{\calA(y,j;i,x)\}_{y,j} $ and $ \til{\eta}^\ic $ are independent,
taking the conditional expectation $ \Ex(\Cdot|\til{\eta}^\ic) $ in~\eqref{eq:J:Aexpress} yields
\begin{align}
	\label{eq:J:bd:}
	\Ex( J(i,x)| \til{\eta}^\ic ) 
	= 
	\sum_{y\in\Z} \til{\eta}^\ic(y) \Pr(\calA(y,1;i,x)).
\end{align}
Let $ \erff(t,x) $ denote the probability that a random walk $ W $
starting from $ 0 $ ever reach $ x $ within the time interval $ [0,t] $:
\begin{align}
	\label{eq:barErf}
	\erff(t,x) := \Prw\big( W(s)=x, \ \text{for some } s\leq t \big).
\end{align}
We have $ \Pr(\calA(y,1;i,x)) = \erff(1,|y-x|+1) $.
Further, by the reflection principle,
$ \erff(t,x) \leq 2 \erf(t,x) $, $ \forall x \geq 0 $.
This together with \eqref{eq:erf:bd} yields the bound
\begin{align}
	\label{eq:erff:bd}
	\erff(t,x) \leq C e^{-\frac{[x]_+}{\sqrt{t+1}}}.
\end{align}
Inserting this bound into \eqref{eq:J:bd:}, we obtain
\begin{align*}
	\Ex( J(i,x)| \til{\eta}^\ic ) 
	=
	\sum_{y\in\Z} \til{\eta}^\ic(y) \erff(1,|y-x|+1) 
	\leq
	C\sum_{y\in\Z} \til{\eta}^\ic(y) e^{-\frac12 |y-x|}.
\end{align*}
Combining this with~\eqref{eq:eta':bd} yields
%\begin{align}
%	\label{eq:J:bd::}
$
	\Pr(
		\Ex( J(i,x)| \til{\eta}^\ic ) \leq C\e^{-\frac{b}{2}}
		)
	\geq
	1 - C\exp(-\e^{-b/C}).
$
%\end{align}
Use this to bound the last term in~\eqref{eq:Jeta:Chernov:J}
(note that $ C \e^{-\frac{b}{2}} < \e^{-\frac{b}{3}} $, for all $ \e $ small enough),
and take union bound of the result over $i\in[0,\e^{-1-\gamma-a}] $, $ x\in[0,\e^{-1-a}] $. 
We obtain
\begin{align}
	\label{eq:J:bd:::}
	\Pr\big(
		J(i,x) \leq 2\e^{-\frac{b}{3}},
		\
		\forall \, i\in[0,\e^{-1-\gamma-a}], \, x\in[0,\e^{-1-a}]
	\big)
	\geq
	1 - C\exp(-\e^{-b/C}).
\end{align}
This in particular concludes~\eqref{eq:J:bd}.

Returning to showing~\eqref{eq:eta:bd},
we combine~\eqref{eq:J:bd:::} with~\eqref{eq:changinflux} to obtain
\begin{align*}
	\lim_{\e\to 0}
	\Pr \Big(
			\sup_{t\in[i,i+1]} \eta(t,x) \leq \eta(i,x) + 4\e^{-\frac{b}{3}},
			\
			\forall 
			i \in [0,\e^{-1-\gamma-a}],
			\,
			x\in [0,\e^{-1-a}]
		\Big)
	= 1.
\end{align*}
Combine this with~\eqref{eq:eta:bd:},
and use $ 4 \e^{-\frac{b}{2}}+ 4\e^{-\frac{b}{3}} \leq \e^{-b} $, for all $ \e $ small enough.
We obtain~\eqref{eq:eta:bd}.
\end{proof}

\section{Boundary Layer Estimate: Proof of Proposition~\ref{prop:blt}}
\label{sect:blt}
As in Section~\ref{sect:mgt}, 
we fix an initial condition $ \eta^\ic $ satisfying~\eqref{eq:ic:den1},
with the corresponding constants $ \gamma,a_*,C_* $.
Recall that $ \gamma'\in(\frac{\gamma+1}{2},1) $
is a fixed parameter in the definitions~\eqref{eq:Lup}--\eqref{eq:Llw}
of $ \Lup_{t_0,x_0,v} $ and $ \Llw_{t_0,x_0,v} $.
Fixing further
\begin{align}
	\label{eq:a:blt}
	0<a <(\gamma'-\tfrac{1-\gamma}{2})\wedge(1+\gamma)\wedge\tfrac{\gamma}{2},
\end{align}
throughout this section we use $ C<\infty $ to denote a generic finite constant,
that may change from line to line,
but depends only on $ a,a_*,\gamma,\gamma',C_* $.

Recall the definitions of $ \Sigma_\e(a) $ and $ \til{\Sigma}_\e(a) $
from \eqref{eq:tv:cnd}--\eqref{eq:supdiff} and \eqref{eq:tv:cnd:}.
The first step is to establish the concentration
of the conditional expectations
$ \Ex(G^{\Lup_{t_0,x_0,v}}(t_0)|\eta^\ic) $,
$ \Ex(G^{\Llw_{t_0,x_0,v}}(t_0)|\eta^\ic) $
and $ \Ex(G^{L_{t_0,x_0,v}}(t_0)|\eta^\ic) $.
\begin{lemma}\label{lem:blt}
%Let $ 0<a <(\gamma'-\frac{1+\gamma}{2})\wedge(1-\gamma)\wedge\frac{\gamma}{2} $ 
%be fixed as in the proceeding.
\begin{enumerate}[label=(\alph*)]
\item[]
\item \label{enu:blt:<:}
There exists $ C<\infty $
such that
\begin{align}
	\label{eq:blt:<:}
	\Pr \Big( 
		\Ex(\blt^{L_{t_0,x_0,v}}(t_0)|\eta^\ic)
		\leq
		\e^{-a}v^{-1}
	\Big)
	\geq
	1 - Ce^{-\e^{a/C}},
\end{align}
for all $ (t_0,x_0,v) \in \til{\Sigma}_\e(a) $.
\item \label{enu:blt:}
There exists $ C<\infty $,
such that
\begin{align}
	\label{eq:blt:cnt:up:}
	\Pr \Big( \big| 
		\Ex(\blt^{\Lup_{t_0,x_0,v}}(t_0)|\eta^\ic) - \tfrac{1}{2v} \big| 
		\leq 
		v^{\frac1C-1}
	\Big)
	\geq
	1 - Ce^{-\e^{a/C}},
\\
	\label{eq:blt:cnt:lw:}
	\Pr \Big( \big| 
		\Ex(\blt^{\Llw_{t_0,x_0,v}}(t_0)|\eta^\ic) - \tfrac{1}{2v} \big| 
		\leq 
		v^{\frac1C-1}
	\Big)
	\geq
	1 - Ce^{-\e^{a/C}},
\end{align}
for all $ (t_0,x_0,v) \in \Sigma_\e(a) $.
\end{enumerate}
\end{lemma}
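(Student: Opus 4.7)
The plan is to express $\Ex(\blt^Q(t_0)|\eta^\ic)$ as a weighted sum against $\eta^\ic$, identify the leading term via a classical ruin probability for a drifted random walk, and control the deviation of $\eta^\ic$ from unit density using Lemma~\ref{lem:Gamma}. The target value $\mu_v/(1-\mu_v) = (2v)^{-1}(1+O(v))$ arises from an explicit computation, where $\mu_v\in(0,1)$ is the unique root of $\tfrac12(\mu+\mu^{-1}) - 1 = v\log(1/\mu)$; expansion gives $\mu_v = 1 - 2v + O(v^2)$.

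Writing $u(t,x):=\Ex(\eta^Q(t,x)|\eta^\ic)$, which satisfies the discrete heat equation on $\{x>Q(t)\}$ with absorbing boundary $u(t,Q(t))=0$ and initial data $\eta^\ic$, a time-reversal of the simple random walk yields
\[
\Ex(\blt^Q(t_0)|\eta^\ic) \;=\; \sum_{y\in\Z_{>0}} \eta^\ic(y)\, q_Q(y),
\qquad q_Q(y) \;:=\; \Prw_y\bigl(\tau_Q \leq t_0,\ W(t_0) > Q(t_0)\bigr),
\]
where $\tau_Q$ denotes the entry time of the walk $W$ into $\Shd_Q$. A second time-reversal on the summation over $y$ gives $\sum_y q_L(y) = \sum_{x>x_0} \Prw_x(\tau^* \leq t_0)$, where $\tau^*$ is the first time a walk started at $x$ crosses the linear profile $s \mapsto x_0 - \lceil vs\rceil$. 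Optional stopping applied to the exponential martingale $\mu_v^{W(s)+vs}$ gives $\Prw_x(\tau^*<\infty) = \mu_v^{x-x_0}(1+O(v))$, so $\sum_{x > x_0} \Prw_x(\tau^* < \infty) = \mu_v/(1-\mu_v)$.

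For part~(b) the three error terms are handled as follows, uniformly over $(t_0,x_0,v)\in\Sigma_\e(a)$. (i)~The finite-time correction $\Prw_x(t_0 < \tau^* < \infty)$ decays like $\exp(-cv\sqrt{t_0})$ by Gaussian tail bounds, and $v\sqrt{t_0}\geq\e^{-a}$ by super-diffusivity. (ii)~For the truncations $\Lup,\Llw$, the corresponding time-reversed profiles agree with $s\mapsto x_0-\lceil vs\rceil$ on $[0,\e^{-\gamma'}/v]$ and equal a constant thereafter; a walk contributing to the difference must deviate by $\geq\e^{-\gamma'}$ in time $\geq\e^{-\gamma'}/v$, an event of probability $\leq\exp(-cv\e^{-\gamma'}) = \exp(-c\e^{-(\gamma'-\gamma-a)})$, super-polynomially small thanks to $\gamma' > (\gamma+1)/2$ and the range \eqref{eq:a:blt} of $a$. (iii)~To pass from $\sum_y q_Q(y)$ to $\sum_y \eta^\ic(y)\,q_Q(y)$, one writes $\eta^\ic(y)-1 = -[\Gamma(x_0,y)-\Gamma(x_0,y-1)]$ and sums by parts against the weights $q_Q(y)$, which decay geometrically in $|y-x_0|$ at rate $-\log\mu_v = 2v+O(v^2)$ (verified by comparing $u$ to the traveling wave $1 - \mu_v^{x-Q(t)}$ via Lemma~\ref{lem:max}); Lemma~\ref{lem:Gamma} then bounds this discrepancy by $C\e^{-a/4}v^{-\gamma}$, and since $\gamma<1$ this is at most $v^{1/C-1}$ once $C\geq 1/(1-\gamma)$ and $\e$ is small enough, establishing \eqref{eq:blt:cnt:up:} and \eqref{eq:blt:cnt:lw:}.

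For part~(a), with $(t_0,x_0,v)\in\til\Sigma_\e(a)$ and no super-diffusivity, the trivial upper bound $\sum_y q_L(y)\leq\sum_{x>x_0}\Prw_x(\tau^*<\infty) = \mu_v/(1-\mu_v)\leq Cv^{-1}$ holds for all $t_0$, and the initial-condition replacement again produces at most $C\e^{-a/4}v^{-\gamma}\leq Cv^{-1}$; these together yield $\Ex(\blt^L(t_0)|\eta^\ic)\leq\e^{-a}v^{-1}$. The main technical obstacle throughout is obtaining the geometric weight bound $q_Q(y)\leq C\mu_v^{|y-x_0|}$ uniformly over the parameter set, including across the boundary/far-field crossover and for the truncated profiles; the cleanest route invokes Lemma~\ref{lem:max} with the traveling-wave ansatz, which additionally requires verifying the exponential growth hypothesis \eqref{eq:u12:expbd}. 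Tracking all the $\e^{-a}$ slack factors simultaneously, so that the Lemma~\ref{lem:Gamma}-type error, the finite-time error, and the truncation error together fit inside the $v^{1/C-1}$ budget, is where the precise range of $a$ in \eqref{eq:a:blt} and the interplay of $\gamma,\gamma'$ come together.
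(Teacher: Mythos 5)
Your route is genuinely different from the paper's: the paper never touches the hitting probabilities indexed by the \emph{starting} point, but instead studies the final-position density $u(t,x)=\Ex(\eta(t,x)-\eta^Q(t,x)\,|\,\eta^\ic)$ as a solution of a discrete heat equation with moving boundary, estimates the boundary data $u(t,L(t))$ by first smoothing the fluctuation $\eta^\ic-1$ through the heat kernel (summation by parts against $\hk$ plus Lemma~\ref{lem:Gamma}), and then sandwiches $u$ between explicit traveling-wave barriers $e^{-v'(x-x_0)+\dots}$ via the discrete maximum principle (Lemma~\ref{lem:max}); your $\mu_v$ is exactly $e^{-v'}$, so the leading constant $\mu_v/(1-\mu_v)=e^{-v'}/(1-e^{-v'})$ agrees with the paper's. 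Your double time-reversal plus optional stopping is a clean alternative derivation of the $\tfrac1{2v}$ term, and your treatment of the truncation error (ii) and the finite-time error (i) is sound and essentially parallel to the paper's Step~1 and the $\utr'$, $\erf$ correction terms.

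There is, however, a genuine gap in step (iii). The weights $q_Q(y)=\Prw_y(\tau_Q\le t_0,\ W(t_0)>Q(t_0))$ do \emph{not} decay geometrically in $|y-x_0|$ at rate $2v$. Geometric decay at rate $2v$ holds for the \emph{final-position} density $u(t_0,x)$ (that is what Lemma~\ref{lem:max} with the traveling wave controls), but $q_Q$ is indexed by the \emph{initial} position: by your own time-reversal, $q_Q(y)$ is the density at reversed time $t_0$ of reversed walkers started from unit density on $(x_0,\infty)$ that were absorbed by the receding barrier, i.e.\ a total mass $\asymp(2v)^{-1}$ that has then diffused for time $\asymp t_0$. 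In the super-diffusive regime \eqref{eq:supdiff} one has $\sqrt{t_0}\ge \e^{-a}v^{-1}\gg v^{-1}$, so $q_Q$ is a profile of height $\asymp(v\sqrt{t_0})^{-1}$ spread over a window of width $\asymp\sqrt{t_0}$, and the claimed bound $q_Q(y)\le C\mu_v^{|y-x_0|}$ fails badly for $|y-x_0|\asymp\sqrt{t_0}$. Consequently the summation by parts against $\Gamma(x_0,\cdot)$ does not give $C\e^{-a/4}v^{-\gamma}$; the correct computation pairs the $\Gamma$-fluctuation with a $\sqrt{t_0}$-scale profile and yields an error of order $\e^{-a/4}\,v^{-1}t_0^{(\gamma-1)/2+a/4}$. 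This can probably still be squeezed under $v^{1/C-1}$ using $t_0\ge \e^{-2a}v^{-2}$ and $\gamma<1$, but to do so you must first establish regularity/decay of $y\mapsto q_Q(y)$ on the $\sqrt{t_0}$ scale (e.g.\ by a heat-kernel bound on the post-absorption spreading of the reversed walkers), which your proposal does not supply; as written, the step rests on a false estimate. Note that the paper avoids this issue entirely by injecting the initial fluctuation only through the boundary value $u_1(t,L(t))$, where the heat semigroup has already reduced it to $O(\e^{-a(1-\gamma)/8}(1+t)^{-(1-\gamma)/4})$ pointwise.
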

\begin{proof}[Proof of \ref{enu:blt:<:}]
Fixing $ (t_0,x_0,v) \in \til{\Sigma}_\e(a) $,
throughout this proof we omit the dependence on these variables,
writing $ L:= L_{t_0,x_0,v} $.
The proof is carried out in steps.

\medskip
\noindent\textbf{Step~1: setting up a discrete PDE.}
Consider $ u(t,x) := \Ex(\eta(t,x)-\eta^L(t,x)|\eta^\ic) $,
which we view as a function in $ (t,x) $.
Taking $ \Ex(\Cdot|\eta^\ic) $ on both sides of \eqref{eq:blt}, 
we express $ \Ex(\blt^L(t_0)|\eta^\ic) $ as
the mass of the function $ u $ over the region $ x>L(t_0) $ at time $ t_0 $,
i.e.,
\begin{align}
	\label{eq:blt:ex}
	\Ex(\blt^L(t_0)|\eta^\ic) = \sum_{x > L(t_0)} u(t_0,x).
\end{align}
Given \eqref{eq:blt:ex}, proving~\eqref{eq:blt:<:} amounts to analyzing the function $ u $.
We do this by studying the underlying discrete \ac{PDE} of $ u $.
To set this PDE, we decompose $ u $ into the difference of
$ u_1(t,x) := \Ex(\eta(t,x)|\eta^\ic) $ 
and $ u_2(t,x) := \Ex(\eta^{L}(t,x)|\eta^\ic) $.
Recall that $ \Delta f(x) := f(x+1)+f(x-1) - 2f(x) $ denote the discrete Laplacian.
Since $ \eta $ and $ \eta^L $ are particle systems
consisting of independent random walks with possible absorption,
and since the boundary $ L $ is deterministic,
$ u_1 $ and $ u_2 $ satisfy the discrete heat equation 
with the relevant boundary condition as follows:
\begin{align}
	\label{eq:u1PDE}
	&
	\left\{
	\begin{array}{l@{,}l}
			\partial_t u_1(t,x) = \tfrac12 \Delta u_1(t,x)  &\quad\forall x \in \Z,
		\\
			u_1(0,x) = \eta^\ic(x) &\quad \forall x \in \Z,
	\end{array}
	\right.
\\
	\label{eq:u2PDE}
	&
	\left\{
	\begin{array}{l@{,}l}
			\partial_t u_2(t,x) = \tfrac12 \Delta u_2(t,x) &\quad \forall x > L(t),
		\\
			u_2(t,x) = 0 &\quad\forall x \leq L(t),
		\\
			u_2(0,x) = \eta^\ic(x) &\quad\forall x \in \Z.
	\end{array}
	\right.
\end{align}
As $ u(t,x)=u_1(t,x)-u_2(t,x) $,
taking the difference of \eqref{eq:u1PDE}--\eqref{eq:u2PDE},
and focusing on the relevant region $ x \geq L(t) $,
we obtain the following discrete \ac{PDE} for $ u $:
%\begin{subequations}
\begin{align}\label{eq:uPDE}
	\left\{ \begin{array}{l@{,\quad}l}
%	\label{eq:uPDE:}
	\partial_t u(t,x) = \tfrac12 \Delta u(t,x) &\forall x > L(t),
	\\
%	\label{eq:uPDE:bc}
	u(t,L(t)) = u_1(t,L(t))&
\\
%	\label{eq:uPDE:ic}
	u(0,x) = 0& \forall x \geq L(0).
	\end{array} \right.
\end{align}
%\end{subequations}

\medskip
\noindent\textbf{Step~2: estimating the boundary condition.}
In order to analyze the \ac{PDE} \eqref{eq:uPDE}, here we estimate the boundary condition $ u_1(t,L(t)) $.
%
%As $ u_1 $ solves the discrete heat equation on the entire $ \Z $,
%starting from the initial condition $ \eta^\ic $ 
%of roughly uniform density $ 1 $ on the entire $ \Z $, 
%it is natural to expect $ u_1(t,L(t)) \approx 1 $ for large enough $ t $.
%
%To this end,
Recall that $ \hk(t,x) $ denotes the standard discrete heat kernel.
Since $ \eta $-particles perform independent random walks on $ \Z $,
we have
\begin{align}
	\label{eq:u1}
	u_1(t,x) 
	:= \Ex(\eta(t,x)|\eta^\ic)
	=
	\sum_{y>0} \hk(t,x-y) \eta^\ic(y).
\end{align}
For each term in the sum of \eqref{eq:u1},
write $ \eta^\ic(y) = 1+ (\eta^\ic(y)-1) $, and split the sum
into $ \sum_{y >0 } \hk(t,x-y) $ and $ \sum_{y >0 } \hk(t,x-y)(\eta^\ic(y)-1) $ accordingly.
Rewriting the first sum as
\begin{align*}
	\sum_{y >0 } \hk(t,x-y) = \sum_{y\in\Z} \hk(t,x-y) - \erf(t,x)
	= 1-\erf(t,x),
\end{align*}
we obtain
\begin{align}
	\label{eq:u1:dcmp}
	u_1(t,x) &= 1 - \erf(t,x) + \til{u}_1(t,x),
\end{align}
where $ \til{u}_1(t,x) := \sum_{y>0} \hk(t,x-y)(\eta^\ic(y)-1) $.

Given the expression~\eqref{eq:u1:dcmp},
we proceed to bound the term $ \til{u}_1(t,x) $.
Recalling the definition of $ \Gamma(x,y) $ from \eqref{eq:Gamma},
we write $ 1-\eta^\ic(y) = \Gamma(x,y) - \Gamma(x,y-1) $
and express $ \til{u}_1(t,x) $ as
\begin{align}
	\label{eq:tilu1:}
	\til{u}_1(t,x) = -\sum_{y>0} \hk(t,x-y) (\Gamma(x,y)-\Gamma(x,y-1)).
\end{align}
Applying Lemma~\ref{lem:Gamma} with $ b=\frac{1-\gamma}{2} $ and $ b'=\e^{-\frac19 a(1-\gamma)} $,
after ignoring events of small probability $ \leq C\exp(-\e^{-a/C}) $ (following Convention~\ref{conv:omitsp}),
we have
\begin{align}
	\label{eq:blt:Gammabd}
	|\Gamma(x,y)| \leq \e^{-\frac19 a(1-\gamma)} |x-y|^{\frac12(1+\gamma)}, 
	\ \forall y\in\Z_{\geq 0}, \ x\in [0,\e^{-1-a}].
\end{align}
By \eqref{eq:blt:Gammabd} and \eqref{eq:hk:bd},
the summability condition \eqref{eq:sby:cdn} holds for $ f(y)=\hk(t,x-y) $.
We now apply the summation by parts formulas~\eqref{eq:sby1}--\eqref{eq:sby2}
with $ f(y) = \hk(t,x-y) $ in \eqref{eq:tilu1:}
to express the sum as
\begin{align}
	\label{eq:blt:u1}
	\til{u}_1(t,x) 
	= -\sum_{y>0} (\hk(t,x-y)-\hk(t,x-y-1)) \Gamma(x,y)
	- \hk(t,x-1) \Gamma(x,0).
\end{align}
For the discrete heat kernel,
we have the following standard estimate 
on its discrete derivative (see, e.g., \cite[Eq.(A.13)]{dembo16})
\begin{align}
	\label{eq:hk:d}
	|\hk(t,x)-\hk(t,x-1)| \leq \tfrac{C}{t+1} e^{-\frac{|x|}{\sqrt{t+1}}}.
\end{align}
On the r.h.s.\ of \eqref{eq:blt:u1},
using the bounds \eqref{eq:blt:Gammabd}, \eqref{eq:hk:bd} and \eqref{eq:hk:d}
to bound the relevant terms,
we arrive at
\begin{align}
	\notag
	|\til{u}_1(t,x)| 
	&\leq 
	C \e^{-\frac19a(1-\gamma)} \sum_{y>0} 
	\frac{|x-y|^{\frac{1}{2}(1+\gamma)}}{t+1} e^{-\frac{|x-y|}{\sqrt{t+1}}}
	+
	\e^{-\frac19a(1-\gamma)} |x|^{\frac{1}{2}(1+\gamma)} \frac{C}{\sqrt{t+1}}e^{-\frac{|x|}{\sqrt{t+1}}}
\\
	\notag
	&\leq   
	C \e^{-\frac19a(1-\gamma)} (t+1)^{-\frac14(1-\gamma)},
\\
	\label{eq:u1:bd:1}
	&\leq   
	\e^{-\frac{a}8(1-\gamma)} (t+1)^{-\frac14(1-\gamma)},
	\quad
	\forall x \in [0,\e^{-1-a}],
\end{align}
for all $ \e $ small enough.
Inserting \eqref{eq:u1:bd:1} into \eqref{eq:u1:dcmp},
with $ -\erf(t,x) \leq 0 $, we obtain
\begin{align}
	\label{eq:u1:bd:<}
	u(t,L(t))
	=
	u_1(t,L(t)) 
	\leq
		2\e^{-\frac{a}{8}(1-\gamma)},
	\quad
	\forall t\leq t_0.
\end{align}
for all $ \e $ small enough.

\medskip
\noindent\textbf{Step~3: comparison through maximal principle.}
%We now return to estimating the solution $ u $ of \eqref{eq:uPDE}.
%Toward the end of proving~\eqref{eq:blt:<:},
%our aim here is to establish an \emph{upper bound} on $ u $.
%%
%Inserting \eqref{eq:u1:bd:1} into \eqref{eq:u1:dcmp},
%with $ -\erf(t,x) \leq 0 $, we obtain
%\begin{align}
%	\label{eq:u1:bd:<}
%	u(t,L(t))
%	=
%	u_1(t,L(t)) 
%	\leq
%		2\e^{-\frac{a}{8}(1-\gamma)},
%	\quad
%	\forall t\leq t_0.
%\end{align}
%for all $ \e $ small enough.
%
The inequality \eqref{eq:u1:bd:<} 
gives an upper bound on $ u $ along the \emph{boundary} $ L $.
Our next step is to leverage such an upper bound
into an upper bound on the \emph{entire profile} of $ u $.
We achieve this by utilizing the maximal principle, Lemma~\ref{lem:max}.
Consider the traveling wave solution $ \utr $ of the discrete heat equation:
\begin{align}
	\label{eq:baru}
	\utr(t,x) := e^{v'(v(t-t_0)-(x-x_0))}.
\end{align}
Here $ v' >0 $ is the unique positive solution to the equation
$ v=\frac{1}{v'} (\cosh(v')-1) $,
so that $ \utr $ solves the discrete heat equation
$ \partial_t \utr = \frac{1}{2} \Delta \utr $.
Equivalently, $ v':= f^{-1}(v) $, where 
\begin{align*}
	f: [0,\infty) \to [0,\infty),
	\quad
	f(v') := \left\{\begin{array}{l@{,\quad}l}
		\frac{1}{v'} (\cosh(v')-1)	&	v'>0,
		\\
		0	& v'=0.
	\end{array}\right.
\end{align*}
Further, as $ f\in C^\infty[0,\infty) $, with $ \frac{df}{dv'}>0 $, $ f(0)=0 $ 
and $ \frac{df}{dv'}(0)=\frac12 $,
we have that $ f^{-1} \in C^\infty[0,\infty) $,
$ f^{-1}(0)=0 $ and $ \frac{d f^{-1}}{dv}(0) = 2 $.
Combining these properties with $ v \leq \e^{a} $ (from~\eqref{eq:tv:cnd:}), we obtain
\begin{align}
	\label{eq:vv'}
	|2v-v'| \leq C v^2 \leq C\e^{2a},
\end{align}
and therefore 
\begin{align}
	\label{eq:baru:bdy}
	|\utr(t,L(t)) -1| 
	=|e^{v'(v(t_0-t)-\lceil v(t_0-t)\rceil)} -1|
	\leq 1-e^{-v'} \leq C\e^{a}.
\end{align}
Let $ \bar{u} := 3\e^{-\frac{a}{8}(1-\gamma)} \utr $.
Combining~\eqref{eq:baru:bdy} and \eqref{eq:u1:bd:<},
we have that
\begin{align*}
	u(t,L(t)) \leq 2\e^{-\frac{a}{8}(1-\gamma)} \tfrac{1}{1-C\e^{a}}\utr(t,L(t))
	\leq
	\bar{u}(t,L(t)),
	\quad
	\forall t\in[0,t_0],
\end{align*}
for all $ \e $ small enough.
That is, $ \bar{u} $ dominates $ u $ along the boundary $ L $.
Also, we have $ u(0,x)= 0 $ and $ \bar{u}(0,x) \geq 0 $, $ \forall x\in\Z $,
so $ \bar{u} $ dominates $ u $ at $ t=0 $.
Further, by \eqref{eq:hk:bd} and \eqref{eq:blt:Gammabd} and,
it is straightforward to verify that $ u $ satisfies \eqref{eq:u12:expbd} almost surely
(for any $ \bar{t}<\infty $),
and from the definition \eqref{eq:baru} it is clear that $ \bar{u} $
satisfies \eqref{eq:u12:expbd}.
Given these properties on $ \bar{u} $ and $ u $,
we now apply Lemma~\ref{lem:max} for $ (u_1,u_2)=(\bar{u},u) $, $ Q=L $
and $ \bar{t}=t_0 $ to obtain
\begin{align}
	\label{eq:u:bd<}
	u(t,x) \leq 3\e^{-\frac{a}{8}(1-\gamma)} \utr(t,x),
	\quad
	\forall x > L(t), \ t\leq t_0.
\end{align}

Setting $ t=t_0 $ in~\eqref{eq:u:bd<}
and inserting the result into \eqref{eq:blt:ex},
as $ L(t_0)=x_0 $, we arrive at
\begin{align}
	\label{eq:blt:<pre}
	\Ex( \blt^{L}(t_0) | \eta^\ic)
	\leq
	3\e^{-\frac{a}{8}(1-\gamma)} \sum_{x>x_0} e^{-v'(x-x_0)}
	=
	3\e^{-\frac{a}{8}(1-\gamma)} \frac{e^{-v'}}{1-e^{-v'}}.
\end{align}
Using \eqref{eq:vv'} and $ v\leq \e^{a} $
to approximate $ e^{-v'} $ by $ e^{-2v} $,
we obtain that $ 1-e^{-v'} \geq 1-Ce^{-2v} \geq \frac{1}{C}v $.
Using this in \eqref{eq:blt:<pre},
together with $ \frac{a}{8}(1-\gamma) <a $,
we conclude the desired result~\eqref{eq:blt:<:}.
\end{proof}
\vspace{.5pt}

\begin{proof}[Proof of \ref{enu:blt:}]
Fixing $ (t_0,x_0,v) $ satisfying \eqref{eq:tv:cnd}--\eqref{eq:supdiff},
throughout this proof we omit the dependence on these variables,
writing $ L:= L_{t_0,x_0,v} $, 
$ \Lup:= \Lup_{t_0,x_0,v} $. $ \Llw:= \Llw_{t_0,x_0,v} $, etc.

\medskip
\noindent\textbf{Step~1: reduction to $ L $.}
%We begin with a reduction.
%More precisely, 
We claim that
\begin{align}
	\label{eq:bltL':clam}
	\Pr\big( \blt^L(t_0)=\blt^{\Lup}(t_0)=\blt^{\Llw}(t_0) \big)
	\geq
	1 - Ce^{-\e^{-a/C}}.
\end{align}
Labeling all the $ \eta $-particles starting in $ (0,L'_0] $
as $ Z_1(t),Z_2(t),\ldots, Z_n(t) $, we have
\begin{align}
	\label{eq:bltL':::}
	\big\{ \blt^L(t_0)=\blt^{\Lup}(t_0)=\blt^{\Llw}(t_0) \big\}
	\subset
	\Big\{ \sup_{t\in[0,t_0]} Z_i(t) < L(t_0), \ \forall i=1,\ldots,n \Big\}
	:=
	\calA.
\end{align}
To see why, 
let $ L'_0 := L(t_0)-\lfloor \e^{-\gamma'} \rfloor $, and
recall from \eqref{eq:blt} that the boundary layer term  $ \blt^Q(t) $ 
records the loss of $ \eta $-particles caused by absorption by $ Q $.
Since the trajectories $ \Lup $, $ \Llw $ and $ L $ differ only when 
$ L(t) < L'_0 $ (see \eqref{eq:Lup}--\eqref{eq:Llw}),
the event $ \{\blt^L(t_0)=\blt^{\Lup}(t_0)=\blt^{\Llw}(t_0)\} $ holds
if no $ \eta $-particles starting in $ (0,L'_0] $ ever reaches $ L(t_0) $ within $ [0,t_0] $.
This gives~\eqref{eq:bltL':::}.
Recall the notation $ \erff(t,x) $ from \eqref{eq:barErf}.
From~\eqref{eq:bltL':::} we have 
\begin{align*}
%	\label{eq:bltL'}
	\Pr(\calA^c|\eta^\ic) \leq \sum_{x\in(0,L'_0]} \eta^\ic(x) \erff(t_0,L(t_0)-x).
\end{align*}
Applying the bound \eqref{eq:erff:bd} to the expression $ \erff(t_0,L(t_0)-x) $ on the the r.h.s.,
we obtain
\begin{align*}
	\Pr(\calA^c|\eta^\ic) 
	\leq 
	C\exp(-\tfrac{L(t_0)-L'_0}{\sqrt{t_0+1}}) 
	\sum_{x\in(0,L'_0]} \eta^\ic(x)
	\leq 
	\exp(-\tfrac{L(t_0)-L'_0}{\sqrt{t_0+1}}) (\ict(L'_0)+L'_0).
\end{align*}
Further using
$ L(t_0)-L_0' = \lfloor \e^{-\gamma'} \rfloor $,
$ t_0\leq\e^{-1-\gamma-a} $ (by \eqref{eq:tv:cnd})
and $ a < \gamma'-\frac{\gamma+1}{2} $,
we obtain 
$ 
	\frac{L(t_0)-L'_0}{\sqrt{t_0+1}} 
	\geq 
	\frac1C \e^{-\gamma'+\frac{1+\gamma+a}{2}} 
	\geq
	\frac1C \e^{-a/2} 	
$,
thereby
\begin{align}
	\label{eq:bltL':}
	\Pr(\calA^c|\eta^\ic)
	\leq
	C \exp(-\tfrac1C \e^{-a/2}) (L'_0+F(L'_0))
	\leq
	C \exp(-\e^{-a/C}) (L'_0+F(L'_0)).
\end{align}
To bound the term $ F(L'_0) $ in \eqref{eq:bltL':},
we apply \eqref{eq:ic:den1} for $ (x_1,x_2)=(0,L'_0) $ and $ r={L'_0}^{1-\gamma} $,
to obtain that $ \Pr(F(L'_0) > L'_0) \leq C \exp(-(L'_0)^{(1-\gamma)a_*}) $.
Inserting this into~\eqref{eq:bltL':} yields
\begin{align}
	\label{eq:bltL'::}
	\Pr(\calA^c)
	\leq
	C L'_0 \exp(-\e^{-a/C})
	+ C \exp(-(L'_0)^{(1-\gamma)a_*}).
\end{align}
Next, as $ x_0\in[\e^{-\gamma'-a},\e^{-1-a}] $ (by \eqref{eq:tv:cnd}),
we have $ L'_0 =x_0 - \lfloor \e^{-\gamma'} \rfloor \leq x_0 \leq \e^{-2} $
and $ L'_0 = x_0 - \lfloor \e^{-\gamma'} \rfloor \geq \frac{1}{2} \e^{-\gamma'} $,
for all $ \e $ small enough.
Using these bounds on $ L'_0 $ in \eqref{eq:bltL'::},
with $ a<\gamma<\gamma' $,
we further obtain
\begin{align}
	\label{eq:bltL'-}
	\Pr(\calA^c) 
	\leq 
		C \e^{-2}\exp(-\e^{-a/C})+C\exp(-\e^{-\gamma'/C})
	\leq
		C \exp(-\e^{-a/C}).
\end{align}
Combining \eqref{eq:bltL'-} and \eqref{eq:bltL':::},
we see that the claim~\eqref{eq:bltL':clam} holds.

Given \eqref{eq:bltL':clam},
to prove \eqref{eq:blt:cnt:up:}--\eqref{eq:blt:cnt:lw:},
it suffices to prove the analogous statement where $ \Lup $ and $ \Llw $
are replaced by $ L $, i.e.\
\begin{align}
	\label{eq:blt:cnt:}
	\Pr \Big( \big| 
		\Ex(\blt^{L}(t_0)|\eta^\ic) - \tfrac{1}{2v} \big| 
		\leq 
		v^{-1+\frac1C}
	\Big)
	\geq
	1 - Ce^{-\e^{a/C}}.
\end{align}

\medskip
\noindent\textbf{Step~2: Setting up the PDE.}
To prove~\eqref{eq:blt:cnt:},
we adopt the same strategy as in Part~\ref{enu:blt:<:},
by expressing $ \Ex(\blt^{L}(t_0)|\eta^\ic)  $ 
in terms of the function $ u $ as in \eqref{eq:blt:ex},
and then analyzing the r.h.s.\ through the discrete \ac{PDE} \eqref{eq:uPDE}.
As $ \Sigma_\e(a) \subset \til{\Sigma}_\e(a) $,
all the bounds established in Part~\ref{enu:blt:<:} continue to hold here.
In particular, combining \eqref{eq:u1:dcmp} and \eqref{eq:u1:bd:1}
we obtain
\begin{align}
	\label{eq:u1:upbd}
	&u(t,L(t)) \leq 1+ \e^{-\frac{a}{8}(1-\gamma)} (1+t)^{-\frac14(1-\gamma)},
\\
	\label{eq:u1:lwbd}
	&u(t,L(t)) \geq 1-\erf(t,L(t)) - \e^{-\frac{a}{8}(1-\gamma)} (1+t)^{-\frac14(1-\gamma)}.
\end{align}

Recall from \eqref{eq:baru} that $ \utr $
denotes the traveling wave solution of the discrete heat equation.
The bounds \eqref{eq:u1:upbd}--\eqref{eq:u1:lwbd} and \eqref{eq:baru:bdy}
give quantitative estimates on how closely $ u $ and $ \utr $
approximate $ 1 $ along the \emph{boundary} $ L(t) $.
Our strategy is to leverage these estimates
into showing that $ u $ and $ \utr $ approximate each other 
within the \emph{interior} $ (L(t),\infty) $.
We achieve this via the maximal principle, Lemma~\ref{lem:max},
which requires constructing
the solutions $ \utrUp $ and $ \utrLw $ 
to the discrete heat equation such that
\begin{align}
	\label{eq:baru:up}
	\utrUp(0,x) \geq 0=u(0,x), \ \forall x > L(0),
	&\quad 
	\utrUp(t,L(t)) \geq u(t,L(t)), \ \forall t \leq t_0,
\\
	\label{eq:baru:lw}
	\utrLw(0,x) \leq 0, \ \forall x > L(0),
	&\quad 
	\utrUp(t,L(t)) \leq u(t,L(t)), \ \forall t \leq t_0.
\end{align}

\medskip
\noindent\textbf{Step~3: Constructing $ \utrUp $ and $ \utrLw $.}
Recall the definition of $ \erf(t,x) $ from \eqref{eq:erf}.
We define
\begin{align}
	\label{eq:baru:up:}
	\utrUp(t,x) := 
	\utr(t,x) + 2\e^{\frac{a}{8}(1-\gamma)} \utr(t,x)
	+	
	2\e^{-\frac{a}{8}(1-\gamma)} \erf(t,x-L(\tfrac{1}{v})).
\end{align}
Indeed, $ \utrUp(0,x) \geq 0 $.
Since $ \erf(t,x-L(\frac{1}{v})) $ and $ \utr(t,x) $ solve the discrete heat equation,
so does $ \utrUp $.
To verify the last condition in \eqref{eq:baru:up},
we set $ x=L(t) $ in \eqref{eq:baru:up:}, and write
\begin{align}
	\label{eq:baru:up:Lt<}
	\utrUp(t,L(t)) \geq
	(1+2\e^{\frac{a}{8}(1-\gamma)})\utr(t,L(t))
	+	
	2\e^{-\frac{a}{8}(1-\gamma)} \erf(t,L(t)-L(\tfrac{1}{v})).
\end{align}
By~\eqref{eq:baru:bdy},
$
	(1+2\e^{\frac{a}{8}(1-\gamma)})\utrUp(t,L(t)) 
	\geq 
	(1+2\e^{\frac{a}{8}(1-\gamma)})(1-C\e^{a}).
$
With  $ \frac{a}{8}(1-\gamma) < a $, 
the last expression is greater than $ (1 + \e^{\frac{a}8(1-\gamma)}) $
for all small enough $ \e $, so in particular
\begin{align}
	\label{eq:baru:up::}
	(1+2\e^{\frac{a}{8}(1-\gamma)})\utr(t,L(t))  
	\geq 
	1+\e^{\frac{a}8(1-\gamma)},
\end{align}
for all $ \e $ small enough.
Next, to bound the last term in~\eqref{eq:baru:up:Lt<},
we consider the cases $ t\leq \frac{1}{v} $ and $ t>\frac{1}{v} $ separately.
For the case $ t\leq\frac{1}{v} $,
we have $ L(t) \leq L(\frac{1}{v}) $, so $ \erf(t,L(t)-L(\frac{1}{v})) \geq \Phi(t,0) $.
Further, since the discrete kernel satisfies
$ \hk(t,x)=\hk(t,-x) $ and $ \sum_{x\in\Z} \hk(t,x) =1 $,
we have $ \erf(t,0) \geq \frac{1}{2} $, so
\begin{align}
	\label{eq:erf:LLv}
	\erf(t,L(t)-L(\tfrac{1}{v})) \geq \tfrac12,
	\quad
	\forall t \leq \tfrac{1}{v}.
\end{align}
Using~\eqref{eq:baru:up::} and \eqref{eq:erf:LLv} to lower bound the expressions in \eqref{eq:baru:up:Lt<}, 
and comparing the result with \eqref{eq:u1:upbd},
we conclude $ \utrUp(t,L(t)) \geq u(t,L(t)) $, for $ t \leq \frac{1}{v} $.
As for the case $ \frac{1}{v} < t $,
we drop the last term in~\eqref{eq:baru:up:Lt<} and write
\begin{align}
	\label{eq:baru:up:Lt<:}
	\utrUp(t,L(t)) \geq (1+2\e^{\frac{a}{8}(1-\gamma)})\utr(t,L(t)). 
\end{align}
Under the assumption $ t>\frac1v $, the bound \eqref{eq:u1:upbd} gives
$ 
	u(t,L(t)) \leq 1 + \e^{-\frac18(1-\gamma)} v^{\frac14a(1-\gamma)}.
$
With $ v \leq \e^{\gamma-a} \leq \e^{\frac12} $ (see~\eqref{eq:a:blt}), 
we have $ u(t,L(t)) \leq 1 + \e^{\frac{a}{8}(1-\gamma)} $.
Comparing this with \eqref{eq:baru:up::} and \eqref{eq:baru:up:Lt<:}, we conclude $ \utrUp(t,L(t)) \geq u(t,L(t)) $.

Turning to constructing $ \utrLw $,
we let 
\begin{align}
	\label{eq:baru'}
	\utr'(t,x) := \sum_{y> L(0)} \hk(t,x-y) \utr(0,y), 
\end{align}
which solves the discrete heat equation on $ \Z $ 
with the initial condition $ u'_*(0,x) = \utr(0,x)\ind_\set{x>L(0)} $.
We then define $ \utrLw $ as
\begin{align}
	\label{eq:baru:lw:1}
	\utrLw(t,x) := 
	\utr(t,x) &- 2\e^{\frac{a}{8}(1-\gamma)}\utr(t,x) 
	-2\e^{-\frac{a}{8}(1-\gamma)}\erf(t,x-L(\tfrac{1}{v})) 
\\
	\label{eq:baru:lw:2} 
	&-\erf(t,x) -( 1 - 2\e^{\frac{a}{8}(1-\gamma)} )\utr'(t,x).
\end{align}
Clearly, $ \utrLw $ solves the discrete heat equation, and
\begin{align*}
	\utrLw(0,x) \leq 
	(1- 2\e^{\frac{a}{8}(1-\gamma)})\utr(0,x)
	- ( 1 - 2\e^{\frac{a}{8}(1-\gamma)} )\utr'(0,x)
	=0,
	\quad
	\forall x > L(0).
\end{align*}
To verifying the last condition~\eqref{eq:baru:lw},
we consider separately the case $ t\leq \frac{1}{v} $ and $ t>\frac{1}{v} $.
For the cases $ t\leq\frac{1}{v} $,
we set $ x=L(t) $ in~\eqref{eq:baru:lw:1}--\eqref{eq:baru:lw:2} and write
\begin{align}
	\label{eq:baru:lw:Lt<}
	\utrLw(t,L(t)) \leq 
	\utr(t,L(t))  
	-2\e^{-\frac{a}{8}(1-\gamma)}\erf(t,L(t)-L(\tfrac{1}{v})). 
\end{align}
Applying 
\eqref{eq:baru:bdy} and \eqref{eq:erf:LLv} to the r.h.s.\ of \eqref{eq:baru:lw:Lt<},
we obtain $ \utr(t,L(t)) \leq 1+C\e^{a}-\e^{-\frac{a}{8}(1-\gamma)} <0 $,
for all $ \e $ small enough.
This together with $ 0\leq u(t,L(t)) $ concludes
$ \utrLw(t,L(t)) \leq u(t,L(t)) $ for the case $ t\leq\frac{1}{v} $.
As for the case $ \frac{1}{v} < t $,
we set $ x=L(t) $ in~\eqref{eq:baru:lw:1}--\eqref{eq:baru:lw:2} and write
\begin{align*}
	\utrLw(t,L(t)) \leq
	(1 - 2\e^{\frac{a}{8}(1-\gamma)}) \utr(t,L(t)) - \erf(t,L(t)).
\end{align*}
Similarly to~\eqref{eq:baru:up::}, here we have
$	
	(1 - 2\e^{\frac{a}{8}(1-\gamma)}) \utr(t,L(t))
	\geq
	1 - \e^{\frac{a}{8}(1-\gamma)},
$
for all $ \e $ small enough, so in particular
\begin{align}
	\label{eq:baru:lw:Lt>}
	\utrLw(t,L(t))
	\leq 
	( 1-2\e^{\frac18(1-\gamma)} )(1+C\e^{a})- \erf(t,L(t))
	\leq
	1 - \e^{\frac18(1-\gamma)} - \erf(t,L(t)). 
\end{align}
On the other hand, since here $ t > v^{-1} $, the bound~\eqref{eq:u1:lwbd} gives
$ 
	u(t,L(t)) \geq 1 - \e^{-\frac18(1-\gamma)} v^{\frac14a(1-\gamma)}- \erf(t,L(t)).
$
Further using $ v \leq \e^{\gamma-a} \leq \e^{a} $ gives $ u(t,L(t)) \geq 1 - \e^{\frac{a}{8}(1-\gamma)}- \erf(t,L(t)) $.
Comparing this with the bound~\eqref{eq:baru:lw:Lt>},
we conclude $ \utrLw(t,L(t)) \leq u(t,L(t)) $
for the case $ \frac{1}{v} < t $.

With $ \utrUp $ and $ \utrLw $ satisfying the respective
conditions~\eqref{eq:baru:up}--\eqref{eq:baru:lw},
we now apply Lemma~\ref{lem:max} 
with $ (u_1,u_2) = (\utrUp,u) $ and with $ (u_1,u_2) = (u,\utrLw) $
to conclude that 
$ \utrLw(t_0,x) \leq u(t_0,x) \leq \utrUp(t_0,x) $, $ \forall x > L(t_0) $.
Combining this with \eqref{eq:blt:ex},
with the notation $ \sumOp(f):= \sum_{x>L(t_0)} f(t_0,x) $,
we arrive at the following sandwiching bound:
\begin{align}
	\label{eq:blt':bd}
	\sumOp(\utrLw)
	\leq \Ex(\blt^L(t_0)|\eta^\ic)
	\leq \sumOp(\utrUp).
\end{align}

\medskip
\noindent\textbf{Step~4: Sandwiching.}
Our last step is to show that,
the upper and lower bounds in~\eqref{eq:blt':bd} are well-approximated by $ \frac{1}{2v} $.
%
%Subtracting $ \frac{1}{2v} $ from \eqref{eq:blt':bd},
With $ \utrUp $ and $ \utrLw $ defined in \eqref{eq:baru:up:} 
and \eqref{eq:baru:lw:1}--\eqref{eq:baru:lw:2},
we indeed have
%we arrive as the following bound
\begin{subequations}
\label{eq:blt':bd:}
\begin{align}
%%	\notag
%	\big| \Ex(\blt^L(t_0)|\eta^\ic) - \tfrac{1}{2v} \big|
	\big| \sumOp(\utrUp)-\tfrac1{2v} \big|,
	\
	\big| \sumOp(\utrLw)-\tfrac1{2v} \big|
	\leq &
%\\
	\label{eq:blt':bd:utr}
	\big|\sumOp(\utr) - \tfrac{1}{2v} \big|
	+ 2\e^{\frac{a}{8}(1-\gamma)}\sumOp(\utr) 
\\
	\label{eq:blt':bd:erf}
	&+ 2 \e^{-\frac{a}{8}(1-\gamma)} \sumOp(\til\erf)
	 + \sumOp(\erf)
\\
	\label{eq:blt':bd:utr'}
	&+ \sumOp(\utr'),
\end{align}
\end{subequations}
where $ \til\erf(t_0,x) :=\erf(t_0,x-L(\tfrac1v)) $.
To complete the proof,
it remains to bound each of the terms in 
\eqref{eq:blt':bd:utr}--\eqref{eq:blt':bd:utr'}.

To bound the terms in \eqref{eq:blt':bd:utr},
set $ t=t_0 $ in \eqref{eq:baru} and sum the result over $ x>L(t_0) $:
\begin{align}
	\label{eq:utrsum}
	\sumOp(\utr) =
	\sum_{x>L(t_0)} \utr(t_0,x)
	=
	\sum_{x>L(t_0)} e^{-v'(x-L(t_0))}
	=
	\frac{ e^{-v'} }{ 1-e^{-v'} }.
\end{align}
Within the last expression of \eqref{eq:utrsum},
using \eqref{eq:vv'} to approximate $ e^{-v'} $ with $ e^{-2v} $,
we obtain
\begin{align}
	\label{eq:utrsum:est}
	|\sumOp(\utr) - \tfrac{1}{2v}| \leq C.
\end{align}
Apply~\eqref{eq:utrsum:est} to the terms in \eqref{eq:blt':bd:utr}.
Together with $ v\leq\e^{\gamma-a}\leq\e^{a} $ and $ v\geq \e^{\gamma+a} \geq \e^{2\gamma} $, 
we conclude
\begin{align}
	\label{eq:blt':bd:utr:}
	\big|\sumOp(\utr) - \tfrac{1}{2v} \big|
	+ 2\e^{\frac{a}{8}(1-\gamma)}\sumOp(\utr) 
	\leq
	C + C\e^{\frac1C} v^{-1}
	\leq
	v^{\frac1C-1},
\end{align}
for all $ \e $ small enough.
%
%More precisely, we proceed to show
%\begin{align}
%	\label{eq:baru:est}
%	&
%	\Big| \sum_{x>L(t_0)} \utr(t_0,x) - \frac{1}{2v} \Big| \leq C,
%\\
%	\label{eq:erf:bd:}
%	&
%	\sum_{x > L(t_0)} \erf(t_0,x) 
%	\leq
%	\sum_{x > L(t_0)} \erf(t_0,x-L(\tfrac{1}{v})) 
%	\leq 
%	C,
%\\
%	\label{eq:baru':bd}
%	&
%	\sum_{x>L(t_0)} \utr'(t_0,x)  \leq C.
%\end{align}
%
%To derive \eqref{eq:baru:est},

Turning to \eqref{eq:blt':bd:erf},
As $ x\mapsto \erf(t,x) $ is decreasing,
we have $ \sumOp(\til\erf) \leq \sumOp(\erf) $,
so without loss of generality we replace $ \til\erf $ with $ \erf $ in \eqref{eq:blt':bd:erf}.
Next,
applying the bound \eqref{eq:erf:bd} to $ \erf(t_0,x-L(\tfrac{1}{v})) $,
and summing the result over $ x> L(t_0) $,
we obtain
\begin{align*}
	\sumOp(\erf)
	=
	\sum_{x > L(t_0)} \erf(t_0,x-L(\tfrac{1}{v})) 
	\leq 
	C \sqrt{t_0+1} 
	\exp\Big( -\frac{L(t_0)-L(\frac{1}{v})}{\sqrt{t_0+1}} \Big).
\end{align*}
Using $ L(t_0)-L(\frac{1}{v}) =  \lceil vt_0-1 \rceil \geq vt_0-1 $,
we further obtain
\begin{align}
	\label{eq:erf:sum}
	\sumOp(\erf)
	\leq 
	C \sqrt{t_0+1} \exp \Big( -\frac{vt_0}{\sqrt{t_0+1}} \Big).	
\end{align}
Recall that $ t_0,v $ satisfy the conditions~\eqref{eq:tv:cnd}--\eqref{eq:supdiff}.
On the r.h.s.\ of \eqref{eq:erf:sum},
using $ t_0 \leq \e^{-(1+\gamma+a)} $
to bound $ \sqrt{t_0+1} \leq 2\e^{-2} $,
and using $ t_0\geq 1 $ and $ v\sqrt{t_0} \geq \e^{-a} $ to bound
$
	\exp(-\frac{vt_0}{\sqrt{t_0+1}}) 
	\leq 
	\exp(-\frac{vt_0}{\sqrt{2t_0}})
	\leq 
	\exp(-\frac{1}{\sqrt{2}}\e^{-a}),
$
we obtain
$
	\sumOp(\erf) 
	\leq 
	C \e^{-2} e^{-\frac1{C} \e^{-a}}.
$
Using this bound in \eqref{eq:blt':bd:erf} gives
\begin{align}
	\label{eq:blt':bd:erf:}
	2 \e^{-\frac{a}{8}(1-\gamma)} \sumOp(\til\erf) + \sumOp(\erf)
	\leq
	\big(2 \e^{-\frac{a}{8}(1-\gamma)}+1\big) \sumOp(\erf) \leq \e^{-C} e^{-\frac1C \e^{-a}} 
	\leq
	C
	\leq v^{1-\frac1C},
\end{align}
for all $ \e $ small enough.

Turning to \eqref{eq:blt':bd:utr'},
we first recall that $ \utr' $ is defined in terms of 
$ \utr(0,y) $ as in \eqref{eq:baru'}.
With $ \utr(t,x) $ defined in \eqref{eq:baru}, we have
\begin{align*}
	\utr(0,y) = e^{v'(-y+L(t_0)-vt_0)} 
	= 
	e^{-v'(y-L(0))} e^{v'(\lceil vt_0\rceil-vt_0)}.
\end{align*}
Using $ e^{v'(\lceil vt_0\rceil-vt_0)} \leq \e^{v'} \leq C $
to bound the last exponential factor on the r.h.s.,
inserting the resulting inequality into \eqref{eq:baru'},
and summing the result over $ y > L(0) $, we obtain
\begin{align}
	\label{eq:baru':bd:}
	\sum_{x>L(t_0)} \utr'(t,x) 
	\leq
	C \sum_{x>L(t_0)} \sum_{y > L(0)} \hk(t_0,x-y) e^{-v'(y-L(0))}.
\end{align}
By \eqref{eq:erf:bd} we have
\begin{align}
	\label{eq:baru':bd:1}
	\sum_{x>L(t_0)} \hk(t_0,x-y)
	=
	\erf(t,L(t_0)-y+1) \leq Ce^{-\frac{[L(t_0)-y+1]_+}{\sqrt{t_0+1}}}.
\end{align}
Exchanging the two sums in \eqref{eq:baru':bd:}
and applying \eqref{eq:baru':bd:1} to the result,
we arrive at 
\begin{align}
	\label{eq:baru':bd:2}
	\sumOp(\utr')
	=
	\sum_{x>L(t_0)} \utr'(t_0,x) 
	\leq
	C \sum_{y > L(0)} e^{-\frac{[L(t_0)-y+1]_+}{\sqrt{t_0+1}}} e^{-v'(y-L(0))}.
\end{align}
On r.h.s.\ of \eqref{eq:baru':bd:2},
the two exponential functions concentrate at well-separated locations $ L(t_0) $ and $ L(0) $.
To utilize this property,
we divide the r.h.s.\ of \eqref{eq:baru':bd:2}
into sums over $ L(0)<y\leq \frac{L(0)+L(t_0)}{2} $
and over $ \frac{L(0)+L(t_0)}{2}<y $,
and let $ \utr^{1,\prime} $ and $ \utr^{2,\prime} $ denote the resulting sums, respectively.
For $ \utr^{1,\prime} $, using $ L(t_0)-y+1\geq\frac{1}{2}(L(0)+L(t_0)) $
to bound the first exponential function in \eqref{eq:baru':bd:2}, we have
\begin{align*}
	\utr^{1,\prime} 
	\leq
	C \exp\Big( -\frac{\frac{1}{2}(L(t_0)-L(0))}{\sqrt{t_0+1}} \Big)
	\Big( \sum_{y > L(0)}  e^{-v'(y-L(0))} \Big).
\end{align*}
The sum over $ y>L(0) $ is equal to $ \sumOp(\utr) $ (see \eqref{eq:utrsum}),
and is in particular bounded by $ Cv^{-1} $ (by \eqref{eq:utrsum:est}).
Therefore,
\begin{align}
	\label{eq:baru'1:bd}
	\utr^{1,\prime} 
	\leq
	Cv^{-1} \exp\Big( -\frac{(L(t_0)-L(0))}{2\sqrt{t_0+1}} \Big).
\end{align}
%Within the exponential of the last expression,
%further using $ L(t_0)-L(0) \geq vt_0 $ and $ t_0 \geq 1 $,
%we conclude 
%\begin{align}
%	\label{eq:baru'1:bd}
%	\utr^{1,\prime} 
%	\leq
%	Cv^{-1} e^{ -\frac14 v\sqrt{t_0} }.
%\end{align}
%
As for $ \utr^{2,\prime} $,
we simply replace $ \exp(-\frac{[L(t_0)-y+1]_+}{\sqrt{t_0+1}}) $ with $ 1 $
and write
\begin{align}
	\label{eq:baru'2:bd}
	\utr^{2,\prime} 
	\leq
	C \hspace{-10pt} \sum_{y > \frac{1}{2}(L(0)+L(t_0))} \hspace{-15pt} e^{-v'(y-L(0))}
	\leq
	C{v'}^{-1} \exp\big(-v'(L(t_0)-L(0))\big).
\end{align}
Now, add \eqref{eq:baru'1:bd} and \eqref{eq:baru'2:bd} to obtain
\begin{align}
	\label{eq:baru'12:bd}
	\sumOp(\utr')
	\leq 
	Cv^{-1} \exp\Big( -\frac{(L(t_0)-L(0))}{2\sqrt{t_0+1}} \Big)
	+
	C{v'}^{-1} \exp\big(-v'(L(t_0)-L(0))\big).
\end{align}
On the r.h.s.\ of \eqref{eq:baru'12:bd},
using $ L(t_0)-L(0) \geq vt_0 $, $ t_0 \geq 1 $,
and using \eqref{eq:vv'} to approximate $ v' $ by $ 2v $,
with $ v\sqrt{t_0} \geq \e^{-a} $,
we obtain
\begin{align}
	\label{eq:blt':bd:utr':}
	\sumOp(\utr')
	\leq 
	Cv^{-1} \exp\big( { -\tfrac14 v\sqrt{t_0} } \big)
	+
	C{v}^{-1} \exp\big(-\tfrac{1}{C}v^2t_0\big)
	\leq
	C v^{-1} \exp(-\e^{-a})
	\leq
	v^{-1+\frac1C},
\end{align}
for all $ \e $ small enough.

Inserting the bounds \eqref{eq:blt':bd:utr:}, \eqref{eq:blt':bd:erf:} and \eqref{eq:blt':bd:utr':}
into \eqref{eq:blt':bd:} gives the desired result~\eqref{eq:blt:cnt:}.
\end{proof}

Equipped with Lemma~\ref{lem:blt},
we next establish the pointwise version of Proposition~\ref{prop:blt}.
\begin{lemma}\label{lem:blt-}
Let $ a $ be fixed as in \eqref{eq:a:blt}.
\begin{enumerate}[label=(\alph*)]
\item \label{enu:blt:<-}
There exists $ C<\infty $ such that
\begin{align}
	\label{eq:blt:<-}
	\Pr \Big( 
		\blt^{L_{t_0,x_0,v}}(t_0)
		\leq
		2\e^{-a}v^{-1}
	\Big)
	\geq
	1 - Ce^{-\e^{a/C}},
\end{align}
for all $ (t_0,x_0,v) \in \til{\Sigma}_\e(a) $.
\item \label{enu:blt-}
There exists  $ C<\infty $ such that
\begin{align}
	&
	\label{eq:blt:cnt-:up}
	\Pr \Big( \big| 
		\blt^{\Lup_{t_0,x_0,v}}(t_0) - \tfrac{1}{2v} \big| 
		\leq 
		v^{-1+\frac1C} 
	\Big)
	\geq
	1 - Ce^{-\e^{a/C}},
\\
	&
	\label{eq:blt:cnt-:lw}
	\Pr \Big( \big| 
		\blt^{\Llw_{t_0,x_0,v}}(t_0) - \tfrac{1}{2v} \big| 
		\leq 
		v^{-1+\frac1C}
	\Big)
	\geq
	1 - Ce^{-\e^{a/C}},
\end{align}
for all $ (t_0,x_0,v) \in \Sigma_\e(a) $. 
\end{enumerate}
\end{lemma}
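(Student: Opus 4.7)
The proof proceeds by upgrading the conditional expectation bounds of Lemma~\ref{lem:blt} to pointwise bounds on the random variables themselves, via the Chernoff inequality~\eqref{eq:Chernov00} from Lemma~\ref{lem:cnd:cnctrtn}. The first observation is that $\blt^Q(t_0)$ fits the framework of that lemma: for any deterministic trajectory $Q\in\{L_{t_0,x_0,v},\Lup_{t_0,x_0,v},\Llw_{t_0,x_0,v}\}$, indexing the $\eta$-particles by their initial site $x$ and an ordering label $j\in\{1,\ldots,\eta^\ic(x)\}$ yields
\begin{align*}
\blt^Q(t_0) \;=\; \sum_{x\in\Z_{>0}}\sum_{j=1}^{\eta^\ic(x)} B_{x,j}^Q,
\end{align*}
where $B_{x,j}^Q$ is the indicator that the $(x,j)$-th particle visits $\Shd_Q(t_0)$ yet occupies a site $>Q(t_0)$ at time $t_0$. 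Since the particle motions are independent of $\eta^\ic$, the variables $\{B_{x,j}^Q\}$ are mutually independent Bernoullis independent of $\eta^\ic$, so $\blt^Q(t_0)$ is of the form~\eqref{eq:XBer}.

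For part~\ref{enu:blt:<-}, I would apply~\eqref{eq:Chernov00} with $X=\blt^L(t_0)$, $\zeta=0$, and $r=\e^{-a}v^{-1}$. The second summand on the right of~\eqref{eq:Chernov00} is then bounded by $Ce^{-\e^{-a/C}}$ via Lemma~\ref{lem:blt}\ref{enu:blt:<:}, while the Gaussian factor reduces to $2e^{-r/3}=2\exp(-\tfrac13\e^{-a}v^{-1})$. Since $v\le\e^a$ on $\til\Sigma_\e(a)$, this is at most $2\exp(-\tfrac13\e^{-2a})$, which is well within the desired decay $Ce^{-\e^{-a/C}}$, proving~\eqref{eq:blt:<-}.

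For part~\ref{enu:blt-}, the same strategy applies with $Q\in\{\Lup_{t_0,x_0,v},\Llw_{t_0,x_0,v}\}$, $\zeta=(2v)^{-1}$, and a choice of $r$ of order $v^{-1+2/C}$ (the constant $C$ in the conclusion of the lemma being enlarged as needed). Lemma~\ref{lem:blt}\ref{enu:blt:} directly controls the second summand. Since $\zeta$ dominates $r$ in the denominator, the Gaussian factor is at most $2\exp(-\tfrac{r^2}{4\zeta})\le 2\exp(-\tfrac1C v^{-1+4/C})$; using $v\le\e^{\gamma-a}$ on $\Sigma_\e(a)$, this becomes $\le 2\exp(-\e^{-(1-4/C)(\gamma-a)})$, which dominates $e^{-\e^{-a/C}}$ once $C$ is taken large enough. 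Combining with the triangle inequality yields~\eqref{eq:blt:cnt-:up}--\eqref{eq:blt:cnt-:lw}. The main conceptual content is already captured by Lemma~\ref{lem:blt}, so the remaining work is largely routine; the only point requiring attention is tracking the interplay between the Chernoff rate $r^2/(3r+2\zeta)$ and the prescribed ranges $v\in[\e^{\gamma+a},\e^{\gamma-a}]$ (respectively $v\in[\e^{\gamma+a},\e^a]$), to ensure that the exponential decay outpaces $\e^{-a/C}$ uniformly.
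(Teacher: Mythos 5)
Your proof is correct and follows essentially the same route as the paper: you identify $\blt^Q(t_0)$ (for $Q$ a deterministic truncated linear trajectory) as being of the form~\eqref{eq:XBer}, then apply the conditional Chernoff bound~\eqref{eq:Chernov00} with $\zeta=0$, $r=\e^{-a}v^{-1}$ for part~(a) and $\zeta=(2v)^{-1}$, $r$ of order $v^{-1+O(1/C)}$ for part~(b), feeding in the conditional-expectation concentration from Lemma~\ref{lem:blt} and verifying that the Chernoff exponential dominates $e^{-\e^{-a/C}}$ over the relevant ranges of $v$. The paper does exactly this, only presenting part~(b) before part~(a).
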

\begin{proof}
To simplify notations,
we write $ \Ex'(\Cdot):= \Ex(\Cdot|\eta^\ic) $ and $ \Pr'(\Cdot) := \Pr(\Cdot|\eta^\ic) $
for the conditional expectation and conditional probability.

We first establish Part~\ref{enu:blt-}.
Indeed, from the definition~\eqref{eq:blt} of $ \blt^Q(t) $,
for any fixed, \emph{deterministic} $ t\mapsto Q(t) $,
the random variable $ \blt^Q(t_0) $ is of the form \eqref{eq:XBer}.
More precisely, labeling all the $ \eta $-particles 
starting at site $ x $ at $ t=0 $ as $ X_{x,j}(0) $, $ j=1,\ldots,\eta^\ic(x) $,
we have $ \blt^Q(t_{0})=\sum_{x>0} \sum_{j=1}^{\eta^\ic(x)} \ind_{\mathcal{B}_{x,j}} $, 
where
\begin{align*}
	\mathcal{B}_{x,j} := \big\{ 
		X_{x,j}(t_0) > Q(t_0), \ X_{x,j}(t) \leq Q(t), \text{ for some } t<t_0 
	\big\}.		
\end{align*}
We set $ X_1 := \blt^{\Lup_{t_0,x_0,v}(t_0)}(t_0) $ and $ X_2:=\blt^{\Llw_{t_0,x_0,v}(t_0)}(t_0) $
to simplify notations.
From Lemma~\ref{lem:blt}\ref{enu:blt:},
\begin{align}
	\label{eq:let:b}
	\Pr( |\Ex'(X_i)-\tfrac{1}{2v}| > v^{\frac1C-1} ) \leq C\exp(-\e^{-a/C}).
\end{align}
Without loss of generality, we assume $ C\geq 1 $.
We now apply \eqref{eq:Chernov00} with
$ X=X_1, X_2 $, $ r=v^{\frac1C-1} $ and $ \zeta=\frac{1}{2v} $ to obtain
\begin{align*}
%	\label{eq:bltX}
	\Pr( |X_i-2v^{-1}| > 2v^{\frac1C-1} )
	&\leq
	C\exp(-\tfrac{2}{3} v^{\frac2C-1} ) + \Pr( |\Ex'(X_i)-2v^{-1}| > v^{\frac1C-1} ),
	\quad
	i=1,2.
\end{align*}
Using \eqref{eq:let:b} to bound the last term,
with $ v^{-\frac2C+1} \geq (\e^{a})^{-\frac12} $ (since $ v\leq\e^{a} $ and $ C\geq 1 $),
we see that the desired result \eqref{eq:blt:cnt-:up}--\eqref{eq:blt:cnt-:lw}
follows.

Turning to Part~\ref{enu:blt:<-},
we let $ X_0 := \blt^{L_{t_0,x_0,v}(t_0)}(t_0) $.
Similarly to the preceding,
we apply \eqref{eq:Chernov00} with $ X=X_0 $, $ \zeta=0 $ and $ r=\e^{a}v^{-1} $ 
to obtain
\begin{align*}
	\Pr( X_0> 2\e^{-a}v^{-1} )
	\leq
	2 e^{-\frac13\e^{-a}v^{-1}} + \Pr( \Ex'(X_0) > \e^{-a}v^{-1} ).
\end{align*}
Using $ v\leq \e^{\gamma-a} \leq 1 $ and Lemma~\ref{lem:blt}\ref{enu:blt:<:},
we see that the r.h.s.\ is bounded by $ Ce^{-\e^{-a/C}} $.
Hence the desired result~\eqref{eq:blt:<-} follows.
\end{proof}

\begin{proof}[Proof of Proposition~\ref{prop:blt}]
Given Lemma~\ref{lem:blt-},
the proof of \eqref{eq:blt:cnt:up}--\eqref{eq:blt:<} are similar,
so here we prove only \eqref{eq:blt:cnt:up} and omit the rest.

Our goal is to extend the probability bound \eqref{eq:blt:cnt-:up},
so that the corresponding event holds \emph{simultaneously} 
for all $ (t,x,v) \in \Sigma(a) $.
To this end, 
fixing $ \til{a}\in(0,a) $,
and letting $ n:=\lceil \e^{-2\gamma-3a} \rceil $,
we consider the following discretization of $ \Sigma_\e(\til{a}) $:
\begin{align*}
	\Sigma_{n,\e}(\til{a}) := \Sigma_\e(\til{a}) \cap 
	\big( \Z \times \Z \times (\tfrac{1}{n}\Z_{\geq 0}) \big).
\end{align*}
That is, we consider all the points $ (t,x,v)\in\Sigma(\til{a}) $
such that $ t\in\Z $ and $ v\in\frac{1}{n}\Z $.
From \eqref{eq:tv:cnd}--\eqref{eq:supdiff},
it is clear that $ \Sigma_{n,\e}(\til{a}) $
is at most polynomially large in $ \e^{-1} $.
This being the case, 
taking union bounds of \eqref{eq:blt:cnt-:up} over 
$ (t_0,x_0,v)\in\Sigma_{n,\e}(\til{a}) $, we have that 
\begin{align}
	\label{eq:blt:union}
	|\blt^{\Lup_{t,x,v}}(t)-\tfrac{1}{2v}| \leq v^{\frac1C-1}, 
	\ 
	\forall (t,x,v)\in\Sigma_{n,\e}(\til{a}),
\end{align}
with probability $ \to 1 $ as $ \e\to 0 $.

Our next step is to extend \eqref{eq:blt:union} to those values of $ (t,v) $
not included in the discrete set $ \Sigma_{n,\e}(\til{a}) $.
To this end, 
we consider the set $ \Lambda:= \frac1n\Z\cap[\e^{\til{a}+\gamma},\e^{\gamma-\til{a}}] $
that represents the widest possible range of $ \Sigma_n(\til{a}) $ in the $ v $ variable,
and order the points in $ \Lambda $ as
\begin{align*}
	\tfrac1n\Z\cap[\e^{\til{a}+\gamma},\e^{\gamma-\til{a}}] 
	=
	\{ v_1<v_2<\ldots<v_{m} \}.
\end{align*}
We now consider a generic `cell' of the form 
$ E= [i,i+1]\times\{x\}\times[v_j,v_{j+1}] $,
such that $ E \subset \Sigma_{n,\e}(\til{a}) $,
and establish a continuity (in $ (t,x) $) estimate $ \blt^{\Lup_{t,x,v}}(t) $ for $ (t,x,v)\in E $.
More precisely, we aim at showing
\begin{align}
	\label{eq:blt:sand'}
	\blt^{\Lup_{i,x,v_{j+1}}}(i)-J(i,x-1) 
	\leq 
	\blt^{\Lup_{t,x,v}}(t) 
	\leq &\blt^{\Lup_{i,x,v_{j}}}(i)+J(i,x-1),
\\
\notag
	&\forall (t,v,x)\in E,
	\
	E \subset \Sigma_{n,\e}(\til{a}).
\end{align}

To prove~\eqref{eq:blt:sand'},
we begin by noting a simple but useful inequality~\eqref{eq:blt:mono:cmp}.
Recall from~\eqref{eq:shade} that $ \Shd_Q(t) $ denote the region shaded by a given trajectory $ Q $ up to time $ t $.
Since $ \eta^Q $ denotes the particle system obtained from absorbing $ \eta $-particles into $ Q $,
it follows that
\begin{align*}
	\eta^{Q}(t) \geq \eta^{Q'}(y), \forall y\in\Z,
	\quad
	\text{if } \Shd_Q(t) \subset \Shd_{Q'}(t).
\end{align*}
Combining this with the expression~\eqref{eq:blt} of $ \blt^Q(t) $ give the following inequality
\begin{align}
	\label{eq:blt:mono:cmp}
	\blt^{Q}(t) \leq \blt^{Q'}(t),
	\quad
	\text{if }  \Shd_Q(t) \subset \Shd_{Q'}(t), \ Q(t)=Q'(t).
\end{align}
Now, fix $ v\in[v_j,v_{j+1}] $.
From the definition~\eqref{eq:L} of $ \Lup_{t,x,v}(s) $,
we see that  
\begin{align*}
%	\label{eq:blt:mono:etaLup}
	\Shd_{\Lup_{t,x,v_{j+1}}}(t) \subset \Shd_{\Lup_{t,x,v}}(t) \subset \Shd_{\Lup_{t,x,v_j}}(t),
	\quad
	\Lup_{t,x,v_j}(t)=\Lup_{t,x,v}(t)=\Lup_{t,x,v_{j}+1}(t)=x.
\end{align*}
Given these properties, applying~\eqref{eq:blt:mono:cmp}
for $ (Q,Q')=(\Lup_{t,x,v_{j}+1},\Lup_{t,x,v}) $ and for $ (Q,Q')=(\Lup_{t,x,v},\Lup_{t,x,v_j}) $,
we conclude
\begin{align}
	\label{eq:blt:mono}
	\blt^{\Lup_{t,x,v_{j+1}}}(t) \leq \blt^{\Lup_{t,x,v}}(t) \leq \blt^{\Lup_{t,x,v_{j}}}(t),
	\quad
	\forall v\in[v_j,v_{j+1}].
\end{align}

Given~\eqref{eq:blt:mono},
our next step is to compare the difference of $ \blt^{\Lup_{t,x,v_{j+1}}}(t) $ and $ \blt^{\Lup_{i,x,v_{j+1}}}(i) $
and the difference of $ \blt^{\Lup_{t,x,v_{j}}}(t) $ and $ \blt^{\Lup_{i,x,v_{j}}}(i) $.
Fix $ t\in[i,i+1] $.
Since $ t\geq i $, we clearly have that $ \Shd_{L_{t,x,v_{j+1}}}(t) \subset \Shd_{L_{i,x,v_{j+1}}}(t) $.
Referring back to~\eqref{eq:Lup},
with $ v_{j+1}\leq \e^{\gamma-a}< 1 $ and $ 0\leq t-i \leq 1 $, we have that $ \Lup_{i,x,v_{j+1}}(i) = \Lup_{i,x,v_{j+1}}(t)=x $,
i.e., the function $ \Lup_{i,x,v_{j+1}}(s) $ remains constant for $ s\in[i,t] $.
Given these properties, applying~\eqref{eq:blt:mono:cmp}
for $ (Q,Q')=(\Lup_{t,x,v_{j+1}},\Lup_{i,x,v_{j+1}}) $ we obtain
\begin{align}
	\label{eq:blt:sand}
	\blt^{\Lup_{i,x,v_{j+1}}}(t) \leq \blt^{\Lup_{t,x,v_{j+1}}}(t). 
\end{align}
Recall the definition of $ J(i,x) $ from Lemma~\ref{lem:eta:J}.
From the definition~\eqref{eq:blt} of $ \blt^Q(t) $,
we see that the change in $ \blt^{\Lup_{t,x,v_{j+1}}}(s) $ over $ s\in[i,t] $
is dominated by the total jump across the bond $ (x-1,x) $, and in particular
$	
	\blt^{\Lup_{i,x,v_{j+1}}}(t) \geq \blt^{\Lup_{i,x,v_{j+1}}}(i) - J(i,x-1).
$
Combining this with~\eqref{eq:blt:sand} gives
\begin{align}
	\label{eq:blt:sand::}
	\blt^{\Lup_{i,x,v_{j+1}}}(i) - J(i,x-1)\leq \blt^{\Lup_{t,x,v_{j+1}}}(t).
\end{align}
A similar argument gives 
\begin{align}
	\label{eq:blt:sand::::}
	\blt^{\Lup_{t,x,v_{j}}}(t) \leq \blt^{\Lup_{i+1,x,v_{j}}}(i) + J(i,x-1).
\end{align}
Combining \eqref{eq:blt:mono} and \eqref{eq:blt:sand::}--\eqref{eq:blt:sand::::} yields~\eqref{eq:blt:sand'}.

Now, using \eqref{eq:blt:union} and \eqref{eq:J:bd} (for $ b=a $), 
after ignoring events of probability $ \to 0 $, we have
\begin{subequations}
\label{eq:thesebds}
\begin{align}
	\blt^{\Lup_{i,x,v_{j+1}}}(i) &\geq \tfrac{1}{2v_{j+1}} - (v_{j+1})^{\frac1C-1},
\\
	\blt^{\Lup_{i,x,v_{j}}}(i) &\leq \tfrac{1}{2v_{j}} + (v_{j})^{\frac1C-1},
\\
	J(i,x-1) &\leq \e^{-a} \leq \tfrac12 v^{\frac1C-1},
	\quad\quad\quad
	\forall (i,x,v_{j+1}), (i,x,v_j)\in\Sigma_{n,\e}(\til{a}),
\end{align}
\end{subequations}
for all $ \e $ small enough.
Using \eqref{eq:thesebds} in \eqref{eq:blt:sand'}, we obtain
\begin{align}
	\label{eq:blt:sand:}
	|\blt^{\Lup_{t,x,v}}(t) - \tfrac{1}{2v}|
	\leq (|\tfrac{1}{v}-\tfrac{1}{v_{j+1}}|\vee|\tfrac{1}{v}-\tfrac{1}{v_{j}}|) + \tfrac12 v^{\frac1C-1},
	\quad
	\forall (t,v,x)\in E,
	\
	E \subset \Sigma_{n,\e}(\til{a}).
\end{align}
Further, since $ v\in[v_{j},v_{j+1}] $, we have
\begin{align*}
	(|\tfrac{1}{v}-\tfrac{1}{v_{j+1}}|\vee|\tfrac{1}{v}-\tfrac{1}{v_{j}}|) 
	\leq 
	|\tfrac{1}{v_j}-\tfrac{1}{v_{j+1}}|
	\leq
	\tfrac{1/n}{v_j^2}.
\end{align*}
In the last expression,
further using the conditions $ v_{j}\geq\e^{\gamma+a} $ and $ n\geq \e^{2\gamma+3a} $, 
we obtain
$ 
	(|\tfrac{1}{v}-\tfrac{1}{v_{j+1}}|\vee|\tfrac{1}{v}-\tfrac{1}{v_{j}}|)  
	\leq \e^{-a} 
	\leq \frac12 v^{\frac1C-1}
$,
for all $ \e $ small enough.
Inserting this bound into the r.h.s.\ of \eqref{eq:blt:sand:},
we arrive at
\begin{align}
	\label{eq:blt:sand:::} 
	|\blt^{\Lup_{t,x,v}}(t)-\tfrac{1}{2v}| 
	\leq
	v^{\frac1C-1},
	\
	\forall (t,x,v) \in \Sigma'_\e(\til{a}),
\end{align}
where $ \Sigma'_\e(\til{a}) := \bigcup_{E \subset \Sigma_{n,\e}(\til{a})} E $.
Even though the set $ \Sigma'_\e(\til{a}) $ 
leaves out some points near the boundary of $ \Sigma_\e(\til{a}) $,
with $ \til{a}<a $,
$ \Sigma'_\e(\til{a})\supset\Sigma_\e(a) $ eventually hold for all $ \e $ small enough.
Hence \eqref{eq:blt:sand:::} concludes the desired result~\eqref{eq:blt:cnt:up}.
\end{proof}

\section{Proof of Theorem~\ref{thm:main}}
\label{sect:pfmain}
We fix an initial condition $ \eta^\ic $ as in Definition~\ref{def:ic},
with the corresponding constants $ \gamma,a_*,C_* $ and limiting distribution 
$ \Ict \in C[0,\infty) $.
We first note that under the conditions in Definition~\ref{def:ic},
we necessarily have
\begin{align}
	\label{eq:ict0=0}
	\Pr( \Ict(0) = 0 ) = 1.
\end{align}
To see this, set $ (x_1,x_2)=(0,\lfloor \e^{-1}\xi\rfloor) $
and $ r= \xi^{-\frac{\gamma}{2}} $ in \eqref{eq:ic:den1} 
to obtain
%\begin{align}
%	\label{eq:ict0}
$
	\Pr( |\ict(\xi)| > \e^{-\gamma}|\xi|^{\frac{\gamma}{2}} ) 
	\leq 
	C_* \e^{-|\xi|^{-\frac{\gamma a_*}{2}}}.
$
%\end{align}
Since $ \e^{\gamma} \ict(\e^{-1}\Cdot) \Rightarrow \Ict(\Cdot) $ under $ \uTop $,
letting $ \e\to 0 $ yields
\begin{align}
	\label{eq:ict0:}
	\Pr( |\Ict(\xi)| > |\xi|^{\frac{\gamma}{2}} ) 
	\leq
	C_* \exp( -|\xi|^{-\frac{\gamma a_*}{2}}),
\end{align}
for any fixed $ \xi\in(0,\infty) $.
Now, set $ \xi=\xi_n := \frac{1}{n} $ in \eqref{eq:ict0:}.
With $ \sum_{n=1}^\infty C_* \exp(-n^{\frac{\gamma a_*}{2}})<\infty $,
by the Borel--Cantelli lemma, we have
%\begin{align*}
$
	\Pr( \lim_{n\to 0} \Ict(\xi_n) =0 ) = 1. 
$
%\end{align*}
As $ \xi\mapsto\Ict(\xi) $ is continuous by assumption, this concludes~\eqref{eq:ict0=0}.

Recall that $ \gamma'\in(\frac{\gamma+1}{2},1) $
is a fixed parameter in the definitions~\eqref{eq:Lup}--\eqref{eq:Llw}
of $ \Lup_{t_0,x_0,v} $ and $ \Llw_{t_0,x_0,v} $.
Fixing further
\begin{align}
	\label{eq:a:main}
	 0<a <(\gamma'-\tfrac{1+\gamma}{2})\wedge\tfrac{1-\gamma}{5}\wedge\tfrac{\gamma}{2}\wedge\tfrac{3\gamma-1}{4}\wedge(\gamma(1-\gamma')),
\end{align}
throughout this section we use $ C<\infty $ to denote a generic finite constant,
that may change from line to line,
but depends only on $ a,a_*,\gamma,\gamma',C_* $.

Recall from Section~\ref{sect:elem} that our strategy is to construct
processes $ \frntUp_\lambda(t) $ and $ \frntLw_\lambda(t) $
that serve as upper and lower bounds of $ \frnt(t) $.
We begin with the upper bound.
\subsection{The upper bound}
\label{sect:pfmain:up}
The process $ \frntUp_\lambda(t) $
is constructed via the corresponding hitting time process $ \hitUp_\lambda(\xi) $,
defined in the following.
Fixing $ \lambda>0 $,
we let $ v_* := \e^{\gamma-a} $ and
\begin{align}
	\label{eq:x*}
	x_* &:= \inf\{ x \geq \lambda\e^{-1} : \ict(x) \geq \lambda\e^{-\gamma} \},
\\
	\label{eq:x**}
	x_{**} &:= \inf\{ x \geq x_* : \ict(x) \leq \tfrac12\lambda\e^{-\gamma} \}
	\wedge (x_*+\lambda\e^{-1}).
\end{align}
For $ x\in\Z_{\geq 0} $ we define the hitting time process $ \hitUp_\lambda(x) $ as
\begin{align}
	\label{eq:Tup}
	\hitUp_\lambda(x) :=
	\left\{\begin{array}{l@{\quad}l}
		\displaystyle
		v_*^{-1} [(x-x_*)]_+	,& x \leq x_{**},
	\\
	~
	\\
		\displaystyle
		\hitUp_\lambda(x_{**}) + 2 \sum_{x_{**}< y \leq x}
		\big( F(y) - \tfrac12 \lambda\e^{-\gamma} \big) \ind_\set{ F(y) > \lambda\e^{-\gamma} },
		&
		x > x_{**},
	\end{array}\right.
\end{align}
and  extend $ \hitUp_\lambda(\Cdot) $ to $ [0,\infty) $ by letting 
$ \hitUp_\lambda(\xi) := \hitUp_\lambda(\lfloor\xi\rfloor) $.
With this, recalling the definition of the involution $ \invs(\Cdot) $ 
from \eqref{eq:inversion},
we then define $ \frntUp_\lambda := \invs(\hitUp_\lambda) $.
Note that, even though the processes $ \hitUp_\lambda $ and $ \frntUp_\lambda $ do depend on $ \e $,
we omit the dependence to simplify notations.

Let us explain the motivation for the construction of $ \hitUp_\lambda $.
First, in~\eqref{eq:Tup}, the regimes for $ \xi \leq x_{**} $ and for $ \xi > x_{**} $
correspond respectively to $ \til{\Sigma}_\e(a) $ (defined in \eqref{eq:tv:cnd:})
and to $ \Sigma_\e(a) $ (defined in \eqref{eq:tv:cnd}--\eqref{eq:supdiff}).
As mentioned previously, the process $ \frntUp_\lambda $ will serve as an upper bound of $ \frnt $.
For this to be the case, we need $ \hitUp_\lambda $ to be a \emph{lower} bound of $ \hit $.
In the first regime $ \xi \leq x_{**} $,
we freeze the process $ \hitUp_\lambda(\xi) $ at zero until $ \xi=x_* $,
in order to accommodate potential atypical behaviors of the actually hitting process $ \hit $ upon initiation.
Then, we let $ \hitUp_\lambda $ grow linearly, with inverse speed $ (v_*)^{-1} =\e^{-\gamma+a} \ll \e^{-\gamma} $,
much slower than the expected inverse speed $ \asymp \e^{-\gamma} $.
Doing so ensures $ \hitUp_{\lambda} $ being a lower bound of $ \hit $.
This linear motion of $ \hitUp_\lambda $ translates into the motion of $ \frntUp_\lambda $ as
\begin{align}
	\label{eq:Qlin}
	&\frntUp_\lambda(t) = \lfloor v_*t \rfloor + \frntUp_\lambda(0),
	\ \forall t \leq \hitUp_\lambda(x_{**}).	
\end{align}
Next, recall from Section~\ref{sect:heu} that we expect $ \frnt $ to growth at speed $ 1/[2\ict(\frnt)]_+ $
and hence $ \hit $ to grow at inverse speed $ [2\ict(x)]_+ $.
With this in mind, in the second regime $ \xi \geq x_{**} $, we let $ \hitUp_{\lambda} $ grow at inverse speed 
$ 2( F(y) - \frac12 \lambda\e^{-\gamma} ) \ind_\set{ F(y) > \lambda\e^{-\gamma} } $.
The offset by $ - \frac12 \lambda\e^{-\gamma} $ slightly slows down $ \hitUp_\lambda $ so that it will be a lower bound of $ \hit $,
introducing the indicator $ \set{F(y) > \lambda\e^{-\gamma}}$ for technical reasons (to avoid having to deal with a non-zero but 
too small growth of $ \hitUp_{\lambda}$).

We next establish a simple comparison criterion.
\begin{lemma}
\label{lem:cmpCrit}
Fixing $ (t_0,v)\in[0,\infty)\times(0,\infty) $,
we let $ x_0:=\frntUp_\lambda(t_0) $.
If
\begin{align}
	\label{eq:shcmp:criterion}
	\hitUp_\lambda(x_0)-\hitUp_\lambda(y)
	\leq
	v^{-1} (x_0-y),
	\quad
	\forall \ x_0-\lfloor\e^{-\gamma'}\rfloor \leq y \leq x_0,
\end{align}
then
\begin{align}
	\label{eq:shcmp}
	&	
	\frntUp_\lambda(t) \leq \Lup_{t_0,x_0+1,v}(t),
	\quad
	\forall t\leq t_0.
\\
	\label{eq:shcmp:}
	&\blt^{\frntUp_\lambda}(t_0) \leq \blt^{\Lup_{t_0,x_0+1,v}}(t_0)+\eta(t_0,x_0+1).
\end{align}
\end{lemma}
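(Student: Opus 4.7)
The plan is to prove \eqref{eq:shcmp} by exploiting the involution $\frntUp_\lambda=\invs(\hitUp_\lambda)$, and then deduce \eqref{eq:shcmp:} from the shaded-region inclusion that \eqref{eq:shcmp} immediately provides.

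For \eqref{eq:shcmp}, I first observe that since $\hitUp_\lambda$ is nondecreasing, any $y$ with $\hitUp_\lambda(y)>t$ satisfies $\{\xi:\hitUp_\lambda(\xi)<t\}\subset[0,y)$ and hence $\frntUp_\lambda(t)\leq y$. Because $\frntUp_\lambda$ is $\Z$-valued and RCLL with $\frntUp_\lambda(t_0)=x_0$, one has $\hitUp_\lambda(x_0)>t_0$, so the hypothesis \eqref{eq:shcmp:criterion} yields, for every $y\in[x_0-\lfloor\e^{-\gamma'}\rfloor,x_0]$,
\begin{align*}
\hitUp_\lambda(y) \;\geq\; \hitUp_\lambda(x_0)-v^{-1}(x_0-y) \;>\; t_0 - v^{-1}(x_0-y).
\end{align*}
Now split $t\in[0,t_0)$ into the two regimes of \eqref{eq:Lup}. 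In the linear regime, where $\Lup_{t_0,x_0+1,v}(t)=x_0+1-\lceil v(t_0-t)\rceil$ lies in $[x_0+1-\lfloor\e^{-\gamma'}\rfloor,x_0]$, take $y:=\Lup_{t_0,x_0+1,v}(t)$ in the display above; the elementary bound $v^{-1}(\lceil v(t_0-t)\rceil-1)\leq t_0-t$ then gives $\hitUp_\lambda(y)>t$, hence $\frntUp_\lambda(t)\leq y=\Lup_{t_0,x_0+1,v}(t)$. In the truncated regime, $t<t_0-v^{-1}\lfloor\e^{-\gamma'}\rfloor$, and applying the display with $y=x_0-\lfloor\e^{-\gamma'}\rfloor$ yields $\hitUp_\lambda(y)>t$, so $\frntUp_\lambda(t)\leq x_0-\lfloor\e^{-\gamma'}\rfloor<\Lup_{t_0,x_0+1,v}(t)$. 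The case $t=t_0$ is immediate because $\frntUp_\lambda(t_0)=x_0\leq x_0+1$.

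For \eqref{eq:shcmp:}, the pointwise dominance $\frntUp_\lambda\leq\Lup_{t_0,x_0+1,v}$ on $[0,t_0]$ just established entails $\Shd_{\frntUp_\lambda}(t_0)\subset\Shd_{\Lup_{t_0,x_0+1,v}}(t_0)$, so every $\eta$-particle that has visited $\Shd_{\frntUp_\lambda}(t_0)$ has also visited $\Shd_{\Lup_{t_0,x_0+1,v}}(t_0)$, and consequently $\eta(t_0,y)-\eta^{\frntUp_\lambda}(t_0,y)\leq\eta(t_0,y)-\eta^{\Lup_{t_0,x_0+1,v}}(t_0,y)$ for every $y$. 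Splitting the sum defining $\blt^{\frntUp_\lambda}(t_0)=\sum_{y>x_0}(\eta(t_0,y)-\eta^{\frntUp_\lambda}(t_0,y))$ at the single site $y=x_0+1$, the contribution from that site is trivially bounded by $\eta(t_0,x_0+1)$, while the contribution from $y>x_0+1=\Lup_{t_0,x_0+1,v}(t_0)$ is bounded by $\blt^{\Lup_{t_0,x_0+1,v}}(t_0)$; adding gives \eqref{eq:shcmp:}.

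The main bookkeeping difficulty is in the first part: one must track strict versus non-strict inequalities for the $\Z$-valued RCLL front (which is what converts $\frntUp_\lambda(t_0)=x_0$ into the strict inequality $\hitUp_\lambda(x_0)>t_0$), and verify that the truncation threshold $\lfloor\e^{-\gamma'}\rfloor$ in the hypothesis matches the one in the definition of $\Lup_{t_0,x_0+1,v}$. Once this is arranged, the ceiling-arithmetic identity $v^{-1}(\lceil v(t_0-t)\rceil-1)\leq t_0-t$ shows that the shift by $1$ from $x_0$ to $x_0+1$ in $\Lup_{t_0,x_0+1,v}$ is precisely what makes the hypothesis strong enough; the second part is then purely combinatorial and carries no further difficulty.
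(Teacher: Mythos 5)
Your proof is correct and is essentially the paper's argument: the paper establishes \eqref{eq:shcmp} by the same comparison of the complete graphs of $\hitUp_\lambda$ and $\Lup_{t_0,x_0+1,v}$ (you have simply made explicit, via the definition of $\invs$ and the ceiling arithmetic, what the paper leaves to a figure), and your derivation of \eqref{eq:shcmp:} from the shaded-region inclusion is identical to the paper's. One small correction to a point you single out as the key bookkeeping step: $\frntUp_\lambda(t_0)=x_0$ only yields $\hitUp_\lambda(x_0)\geq t_0$, not the strict inequality (if $t_0=\hitUp_\lambda(x_0)$ exactly, the set $\{\xi:\hitUp_\lambda(\xi)<t_0\}=[0,x_0)$ still has supremum $x_0$). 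This is harmless: by monotonicity and the definition of $\invs$, $\hitUp_\lambda(y)\geq t$ already forces $\frntUp_\lambda(t)\leq y$, and the non-strict bound $\lceil v(t_0-t)\rceil-1\leq v(t_0-t)$ is all your ceiling arithmetic requires.
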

\begin{proof}
The proof of~\eqref{eq:shcmp} is geometric, so we include a schematic figure to facilitate it.
\begin{figure}[h]
\includegraphics[width=.6\linewidth]{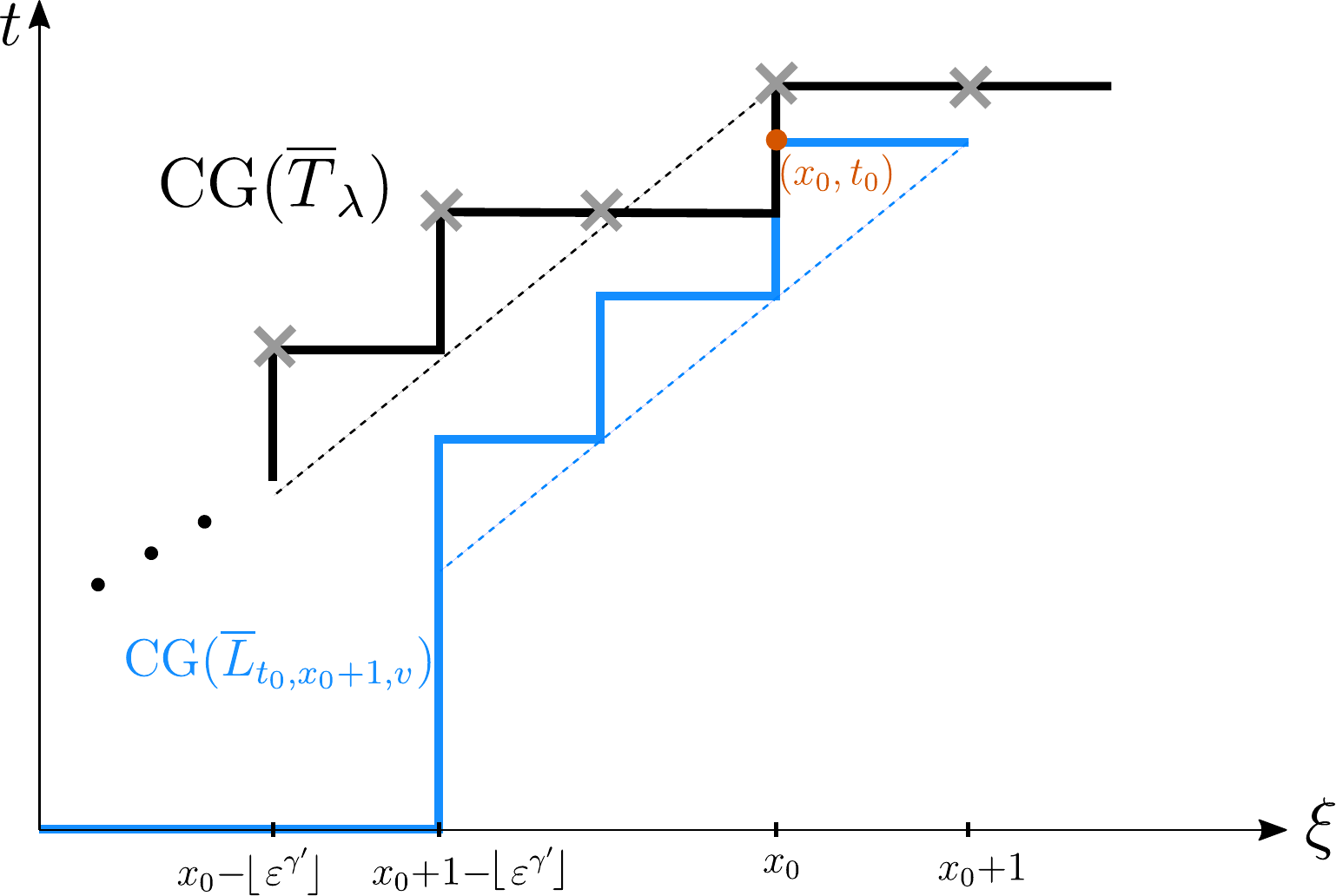}
\caption{Proof of~\eqref{eq:shcmp}.}
\label{fig:lem1}
\end{figure}
In Figure~\ref{fig:lem1}, we show the complete graphs CG$ (\hitUp_\lambda) $ and CG$ (\Lup_{t_0,x_0+1,v}) $
of $ \hitUp_\lambda $ and $ \Lup_{t_0,x_0+1,v} $.
The gray crosses in the figure label points of the form $ (x,\hitUp_\lambda(x)) $, $ x\in\Z_{\geq 0} $.
The dash lines both have slope $ v^{-1} $:
the black one passes through $ (x_0,\hitUp_{\lambda}(x_0)) $ while the blue one passes through $ (x_0+1,t_0) $.

The given assumption~\eqref{eq:shcmp:criterion} translates into 
\begin{center}
	the gray crosses lie above the back dash line, for all $ x\in [x_0- \lfloor\e^{-\gamma'}\rfloor,x_0]$.
\end{center}
From this one readily verifies that 
CG$ (\Lup_{t_0,x_0+1,v}) $ lies below CG$ (\hitUp_\lambda) $ within $ \xi\in[x_0 - \lfloor\e^{-\gamma'}\rfloor,x_0] $.
Further, by definition (see~\eqref{eq:Lup}), 
CG($ \Lup_{t_0,x_0+1,v} $) sits at level $ t=0 $ for $ \xi<x_0 - \lfloor\e^{-\gamma'}\rfloor $, as shown in Figure~\ref{fig:lem1}.
Hence, CG$ (\Lup_{t_0,x_0+1,v}) $ lies below CG$ (\hitUp_\lambda) $ for all $ \xi\in[0,x_0] $,
which gives \eqref{eq:shcmp}.

Having established \eqref{eq:shcmp}, we next turn to showing \eqref{eq:shcmp:}.
Recall from~\eqref{eq:shade} that $ \Shd_Q(t) $ denotes the
shaded region of a given process $ Q $,
and that $ \eta^Q $ denotes 
the particle system constructing from $ \eta $
by deleting all the $ \eta $-particles which has visited $ \Shd_Q(t) $ up to a given time $ t $.
By \eqref{eq:shcmp}, we have $ \Shd_{\frntUp_\lambda}(t_0) \subset \Shd_{\Lup_{t_0,x_0+1,v}}(t_0) $,
so in particular
\begin{align}
	\label{eq:etaRup:cmp}
	\eta^{\frntUp_{\lambda}}(t,x) \geq \eta^{\Lup_{t_0,x_0+1,v}}(t,x),
	\quad \forall x\in\Z.
\end{align}
Now, recall the definition of $ \blt^{Q}(t) $ from \eqref{eq:blt}.
Combining \eqref{eq:etaRup:cmp} and $ \Lup_{t_0,x_0+1,v}(t)=\frntUp_\lambda(t)+1 $,
we see that the second claim~\eqref{eq:shcmp:} holds.
\end{proof}

The next step is to prove an upper bound on $ \blt^{\frntUp_\lambda}(t) $.
To prepare for this,
we first establish a few elementary bounds
on the range of various variables related to
the processes $ \frntUp_\lambda $, $ \hitUp_\lambda $ and $ \ict $.
\begin{lemma}
\label{lem:techbd}
Let $ \lambda>0 $. The following holds with probability $ \to 1 $ as $ \e\to 0 $:
\begin{align}
	\label{eq:ict:bd}
	&
	\ict(x) < \e^{-\gamma-a},	\quad
	\forall x\leq\e^{-1-a},
\\
	\label{eq:ict:cnti}
	&|\ict(x)-\ict(y)| < \tfrac{\lambda}{4}\e^{-\gamma}, 
	\quad
	\forall x\in[0,\e^{-1-a}]\cap\Z, \ y\in [x-\e^{-\gamma'},x]\cap\Z_{\geq 0},
\\
	\label{eq:Tupe-1}
	&\hitUp_\lambda(\e^{-1-a}) 
	<
	\e^{-1-\gamma-3a},
\\
	\label{eq:etaQ}
	&|\eta(t,\frntUp_\lambda(t)+1)| < \tfrac{\lambda}{16} \e^{-\gamma},
	\
	\forall t < \hitUp_\lambda(\e^{-1-a}),
\\
	\label{eq:x*-x**}
	& x_*-x_{**} > \e^{-1+a},
\\
	\label{eq:Tupx**}
	&\hitUp_\lambda(x_{**}) > \e^{-\gamma-1+2a},
\\
	\label{eq:Rup0}
	&\frntUp_\lambda(0) \geq \lambda\e^{-1}.
\end{align}
\end{lemma}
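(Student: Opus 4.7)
The plan is to handle the seven bounds in three natural groups: the initial-data bounds \eqref{eq:ict:bd} and \eqref{eq:ict:cnti} on $\ict$; the four deterministic consequences \eqref{eq:x*-x**}, \eqref{eq:Tupx**}, \eqref{eq:Rup0}, \eqref{eq:Tupe-1} about the constructed objects $x_*, x_{**}, \hitUp_\lambda, \frntUp_\lambda$; and the occupation-variable bound \eqref{eq:etaQ}. All the real probabilistic content is packaged in Lemma~\ref{lem:Gamma} and Lemma~\ref{lem:eta:J}, so the argument is essentially bookkeeping, verifying that the exponents line up against the slack built into \eqref{eq:a:main}.

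For the first group I would invoke Lemma~\ref{lem:Gamma} twice with small parameters $b, b' > 0$. Setting $x = 0$ there gives $|\ict(y)| \leq \e^{-b'} |y|^{\gamma+b}$ uniformly on $y \in [0, \e^{-1-a}]$; bounding $|y|^{\gamma+b} \leq \e^{-(1+a)(\gamma+b)}$ and choosing $b = b' = a(1-\gamma)/4$ makes the resulting exponent strictly greater than $-\gamma - a$, establishing \eqref{eq:ict:bd}. For \eqref{eq:ict:cnti} the same lemma with $|x-y| \leq \e^{-\gamma'}$ yields $|\ict(x) - \ict(y)| \leq \e^{-b' - \gamma'(\gamma+b)}$, and the sufficient condition $b' + \gamma' b > \gamma(1-\gamma')$ is achievable because $a < \gamma(1-\gamma')$ in \eqref{eq:a:main} leaves room to pick such $b, b' \in (0, 1]$; the resulting bound is $o(\e^{-\gamma})$ and hence lies below $\tfrac{\lambda}{4}\e^{-\gamma}$ for small $\e$.

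The deterministic consequences then follow in sequence. For \eqref{eq:x*-x**} (which I read as $x_{**} - x_* > \e^{-1+a}$, since by construction $x_{**} \geq x_*$), either the truncation in \eqref{eq:x**} is active and $x_{**} - x_* = \lambda\e^{-1}$ (far above $\e^{-1+a}$ once $\e^a < \lambda$), or $\ict(x_{**}) \leq \tfrac12 \lambda\e^{-\gamma}$; in the latter case, combining with $\ict(x_*) \geq \lambda\e^{-\gamma}$ forces a drop of at least $\tfrac12 \lambda\e^{-\gamma}$ in $\ict$ across $[x_*, x_{**}]$, which \eqref{eq:ict:cnti} prohibits on any interval of length $\leq \e^{-\gamma'}$; hence $x_{**} - x_* > \e^{-\gamma'}$, and $a < \gamma(1-\gamma')$ together with $\gamma < 1$ yields $\gamma' < 1-a$, so $\e^{-\gamma'} > \e^{-1+a}$. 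Then \eqref{eq:Tupx**} is immediate from $\hitUp_\lambda(x_{**}) = v_*^{-1}(x_{**} - x_*) \geq \e^{-\gamma+a} \cdot \e^{-1+a}$. For \eqref{eq:Rup0}, the complete-graph characterization of $\invs$ turns the flat piece $\hitUp_\lambda \equiv 0$ on $[0, x_*]$ into a jump of $\frntUp_\lambda$ at $t = 0$ up to $x_*$, giving $\frntUp_\lambda(0) = x_* \geq \lambda\e^{-1}$ under the RCLL convention, provided $x_* < \infty$ with high probability; this last point is where the weak-convergence hypothesis \eqref{eq:ic:lim} enters, combined with \eqref{eq:Hit:cnd} forcing $\Ict$ to exceed $\lambda$ somewhere (for sufficiently small $\lambda$). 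For \eqref{eq:Tupe-1}, splitting \eqref{eq:Tup} at $x_{**}$ gives a first piece $\leq v_*^{-1}\lambda\e^{-1} = \lambda\e^{-1-\gamma+a}$, plus at most $\e^{-1-a}$ summands each bounded by $2\ict(y) \leq 2\e^{-\gamma-a}$ via \eqref{eq:ict:bd}, for a total $\leq 4\e^{-1-\gamma-2a}$, which is below $\e^{-1-\gamma-3a}$ once $\e^a < 1/5$.

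Finally, \eqref{eq:etaQ} is a direct application of Lemma~\ref{lem:eta:J}\eqref{eq:eta:bd}: the constraint $t < \hitUp_\lambda(\e^{-1-a}) < \e^{-1-\gamma-3a}$ from \eqref{eq:Tupe-1}, together with $\frntUp_\lambda(t) + 1 \leq \e^{-1-a} + 1$ from the inverse relation, place the point $(t, \frntUp_\lambda(t)+1)$ inside the scope of that lemma after inflating its parameter $a$ to some $a' \geq 3a$ (possible since \eqref{eq:a:main} implies $a \leq 1/3$), and choosing $b = \gamma/2 \in (0, \gamma)$ there gives $\eta(t, x) \leq \e^{-\gamma/2} < \tfrac{\lambda}{16}\e^{-\gamma}$ for small $\e$. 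The only genuinely nontrivial step in the whole argument is the finiteness of $x_*$ in \eqref{eq:Rup0}, where \eqref{eq:ic:lim} must actually be invoked rather than merely the concentration inequality \eqref{eq:ic:den1}; everything else is straightforward exponent-chasing supported by the slack encoded in \eqref{eq:a:main}.
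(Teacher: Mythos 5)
Most of your proposal tracks the paper's own proof: \eqref{eq:ict:bd} and \eqref{eq:ict:cnti} from the concentration hypothesis (the paper uses \eqref{eq:ic:den1} directly rather than Lemma~\ref{lem:Gamma}, but that is cosmetic), \eqref{eq:Tupe-1} by splitting \eqref{eq:Tup} at $x_{**}$, \eqref{eq:Rup0} from $x_*\geq\lambda\e^{-1}$, and \eqref{eq:etaQ} from Lemma~\ref{lem:eta:J} — where you are in fact more careful than the paper about inflating the parameter $a$ to cover $t\leq\e^{-1-\gamma-3a}$. (Minor slip: in your discussion of \eqref{eq:ict:cnti} the sufficient condition should read $b'+\gamma'b<\gamma(1-\gamma')$, not $>$; the conclusion is unaffected.)

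There is, however, a genuine gap in your argument for \eqref{eq:x*-x**} (correctly read as $x_{**}-x_*>\e^{-1+a}$), which then propagates to \eqref{eq:Tupx**}. Your use of \eqref{eq:ict:cnti} only rules out a drop of $\tfrac12\lambda\e^{-\gamma}$ over windows of length $\leq\e^{-\gamma'}$, so it yields $x_{**}-x_*>\e^{-\gamma'}$ — but this is \emph{weaker} than what is needed. Since \eqref{eq:a:main} forces $a<\gamma(1-\gamma')\leq 1-\gamma'$, we have $\gamma'<1-a$, hence $\e^{-\gamma'}<\e^{-1+a}$ (your final step asserts the reverse inequality; recall $\e^c$ is decreasing in $c$ for $\e<1$). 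Numerically, $\e^{-\gamma'}$ is a mesoscopic scale, far below the near-macroscopic $\e^{-1+a}$. The paper instead derives \eqref{eq:x*-x**} from the functional convergence \eqref{eq:ic:lim} together with continuity of $\Ict$: in macroscopic units the window $[x_*,x_*+\e^{-1+a}]$ has length $\e^{a}\to0$, and a continuous limit that is $\geq\lambda$ at $\e x_*$ cannot fall to $\tfrac12\lambda$ over a vanishing macroscopic distance, so with probability $\to1$ the infimum in \eqref{eq:x**} is not attained within $\e^{-1+a}$ of $x_*$. Alternatively, you could repair your route by going back to \eqref{eq:ic:den1} with $|x_2-x_1|\asymp\e^{-1+a}$ and $r\asymp\lambda\e^{-\gamma a}$ (so that $r|x_2-x_1|^{\gamma}\asymp\lambda\e^{-\gamma}$ is a large deviation) and a polynomial union bound over candidate locations; but as written the step fails, and \eqref{eq:Tupx**} fails with it since your derivation of the latter uses \eqref{eq:x*-x**} verbatim.
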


\begin{proof}
The proof of each inequality is listed sequentially as follows.
\begin{itemize}[leftmargin=3ex]
\item 
	Since, by definition, $ F(0)=0 $,
	using~\eqref{eq:ic:den1} for $ (x_1,x_2)=(0,x) $ and $ r=\e^{-\gamma-a}/|x|^{\gamma} $ gives
	\begin{align*}
		\Pr \big( \ict(x) > \e^{-\gamma-a} \big) \leq C_* \exp\big(-(\tfrac{\e^{-\gamma-a}}{x^{\gamma}})^{a_*}\big).
	\end{align*}
	Taking the union bound of this over $ x\in[1,\e^{-1-a}]\cap\Z $,
	using $ \e^{-\gamma-a}{x^{-\gamma}}\geq \e^{-(1-\gamma)a} $,
	we have that
	\begin{align*}
		\Pr \big( \ict(x) \leq \e^{-\gamma-a}, \, \forall x\in[0,\e^{-1-a}]\cap\Z \big) 
		\leq 
		C_* \e^{-1-a}\exp\big(-(\e^{-(1-\gamma)aa_*}\big)
		\longrightarrow 0.
	\end{align*}	
\item
	Let $ \gamma'':=\gamma(1-\gamma')-a>0 $ (by~\eqref{eq:a:main}).
	Using~\eqref{eq:ic:den1} for $ (x_1,x_2)=(y,x) $ and $ r=\e^{-\gamma''} $,
	we have that
	\begin{align*}
		\Pr\big( |\ict(x)-\ict(y)| \leq \e^{-\gamma''}|x-y|^\gamma \big)
		\geq
		1 - C_* e^{-\e^{-a_*\gamma''}}.
	\end{align*}
	Take the union bound of this over $ x\in[0,\e^{-1-a}]\cap\Z $
	and $ y\in[x-\e^{-\gamma'},x]\cap\Z_{\geq 0} $.
	Further, by
	$ \e^{-\gamma''}|x-y|^\gamma \leq \e^{-\gamma''-\gamma\gamma'} = \e^{-\gamma+a} $
	and $ -\gamma+a <-a $ (by~\eqref{eq:a:main}),
	we have that $ \e^{-\gamma''}|x-y|^\gamma < \frac{\lambda}{4}\e^{-a} $,
	for all $ \e $ small enough,
	and hence \eqref{eq:ict:cnti} holds.
\item
	Using \eqref{eq:ict:bd} in \eqref{eq:Tup}, we obtain
	\begin{align*}
		\hitUp_\lambda(\e^{-1-a}) 
		&\leq v^{-1}_*(x_*-x_{**}) + \e^{-1-a}\e^{-\gamma-a}
	\\
		&\leq  \e^{-\gamma+a} \lambda \e^{-1-a} + \e^{-1-\gamma-2a}
		=
		(1+\lambda) \e^{-1-\gamma-2a},
	\end{align*}
	with probability $ \to 1 $ as $ \e\to 0 $.
	Further using $ (1+\lambda) \e^{-1-\gamma-2a} < \e^{-1-\gamma-3a} $,
	for all $ \e $ small enough, we conclude \eqref{eq:Tupe-1}.
\item
	The condition~$ t<\hitUp_\lambda(\e^{-1-a}) $ 
	implies $ \frntUp_\lambda(t) \leq \e^{-1-a} $ and (by \eqref{eq:Tupe-1}) $ t<\e^{-1-\gamma-3a} $.
	With these bounds on the range of $ (t,\frntUp_\lambda(t)) $,
	we see that \eqref{eq:etaQ} follows from \eqref{eq:eta:bd}.
\item
	Since $ \e^{\gamma} \ict(\e^{-1}\Cdot) \Rightarrow \Ict(\Cdot)\in C[0,\infty) $ under $ \uTop $,
	from the definition~\eqref{eq:x**} of $ x_{**} $,
	we see that the bound \eqref{eq:x*-x**} holds with probability $ \to 1 $.
\item
	Combining \eqref{eq:x*-x**} with \eqref{eq:Tup} yields \eqref{eq:Tupx**}.
\item
	By \eqref{eq:x*}, $ x_* \geq \lambda\e^{-1} $. 
	Consequently, $ \hitUp_\lambda(\xi)=0 $, $ \forall \xi\leq \lambda \e^{-1} $,
	and hence the inequality~\eqref{eq:Rup0} holds.
\end{itemize}	
Having proven all claimed inequalities, we complete the proof.
\end{proof}

\begin{lemma}
\label{lem:blt:Rup}
Let $ \lambda>0 $.
We have
\begin{align}
	\label{eq:blt:Rup}
	&
	\lim_{\e\to 0} 
	\Pr\big( 
		\blt^{\frntUp_\lambda}(t) < \ict(\frntUp_\lambda(t)) - \tfrac{\lambda}{16}\e^{-\gamma}
		\text{ whenever }
		\frntUp_\lambda(t)\in (x_{**},\e^{-1-a}]
	\big)
	= 1,
\\
	&
	\label{eq:blt:Rup<}
	\lim_{\e\to 0} 
	\Pr\big( 
		\blt^{\frntUp_\lambda}(t) < \ict(\frntUp_\lambda(t)) - \tfrac{\lambda}{16}\e^{-\gamma}
		\text{ whenever }
		\frntUp_\lambda(t)\in [0,x_{**}\wedge\e^{-1-a}]
	\big)
	= 1.
\end{align}
\end{lemma}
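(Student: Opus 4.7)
The plan is to apply the comparison criterion of Lemma~\ref{lem:cmpCrit} in both parts: setting $x_0 := \frntUp_\lambda(t)$ and choosing a velocity $v$ that verifies the slope condition \eqref{eq:shcmp:criterion} will reduce the task to bounding $\blt^{\Lup_{t,x_0+1,v}}(t) + \eta(t, x_0+1)$, after which the occupation term is controlled via \eqref{eq:etaQ} and the boundary-layer term via Proposition~\ref{prop:blt}. The substantive work is thus to pick $v$ correctly in each regime.

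For part~(a), in order for $\frntUp_\lambda$ actually to attain the value $x_0 \in (x_{**}, \e^{-1-a}]$, the formula \eqref{eq:Tup} forces $F(x_0) > \lambda\e^{-\gamma}$. The natural choice is $v := (2F(x_0) - \tfrac{\lambda}{2}\e^{-\gamma})^{-1} \in \Sigma_\e(a)$. The continuity estimate \eqref{eq:ict:cnti} yields $F(y) \le F(x_0) + \tfrac{\lambda}{4}\e^{-\gamma}$ uniformly over $y \in [x_0 - \lfloor\e^{-\gamma'}\rfloor, x_0]$, so each increment of $\hitUp_\lambda$ in this window is at most $2F(y) - \lambda\e^{-\gamma} \le v^{-1}$; telescoping then gives \eqref{eq:shcmp:criterion}, after separately handling the sub-window $y \le x_{**}$ where the linear piece of $\hitUp_\lambda$ has the even smaller slope $v_*^{-1} \le v^{-1}$. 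Invoking \eqref{eq:blt:cnt:up} produces $\blt^{\Lup_{t,x_0+1,v}}(t) \le F(x_0) - \tfrac{\lambda}{4}\e^{-\gamma} + v^{1/C - 1}$, and since $v \asymp \e^{\gamma}$ the correction is $o(\e^{-\gamma})$; combined with \eqref{eq:etaQ} this yields the required strict inequality.

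For part~(b), by \eqref{eq:Rup0} the front lies in $[x_*, x_{**}]$ and advances with constant inverse velocity $v_*^{-1}$, so \eqref{eq:shcmp:criterion} holds trivially with $v := v_* \in \til\Sigma_\e(a)$ (using $a < \gamma/2$). The definition of $x_{**}$ further guarantees $F(x_0) \ge \tfrac{\lambda}{2}\e^{-\gamma}$, making the target right-hand side at least $\tfrac{7\lambda}{16}\e^{-\gamma}$. The main obstacle is that \eqref{eq:blt:<} bounds only $\blt^{L_{t,x_0+1,v_*}}$ and, moreover, by only the coarse $4\e^{-\gamma}$, which is too weak for small $\lambda$. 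I expect to remedy this by re-running the traveling-wave comparison from the proof of Lemma~\ref{lem:blt}\ref{enu:blt:<:} directly with $\Lup_{t,x_0+1,v_*}$ as the boundary: the truncation only enlarges the Dirichlet region, so the dominating function $\bar u := 3\e^{-a(1-\gamma)/8}\utr$ still satisfies the hypotheses of Lemma~\ref{lem:max}, and summing the resulting pointwise bound over $x > x_0$ gives $\blt^{\Lup_{t,x_0+1,v_*}}(t) = O(\e^{-a(1-\gamma)/8} v_*^{-1}) = o(\e^{-\gamma})$; combined with \eqref{eq:etaQ} this closes the bound.
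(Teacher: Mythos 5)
Your part~(a) matches the paper's proof essentially line for line: same velocity $v = \big(2(\ict(x_0) - \tfrac{\lambda}{4}\e^{-\gamma})\big)^{-1}$, same verification of the slope criterion~\eqref{eq:shcmp:criterion} via the modulus bound~\eqref{eq:ict:cnti}, same application of Lemma~\ref{lem:cmpCrit}, \eqref{eq:blt:cnt:up}, and~\eqref{eq:etaQ}. Your explicit remark about increments in the sub-window $y\le x_{**}$ being $v_*^{-1}\le v^{-1}$ is a correct detail the paper leaves implicit.

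Part~(b), however, has a genuine gap, and it is precisely where you flagged the difficulty. The issue is not with your diagnosis — you are right that \eqref{eq:blt:<} as stated concerns $L_{t,x_0+1,v_*}$ rather than $\Lup_{t,x_0+1,v_*}$, and that blindly plugging $v_*=\e^{\gamma-a}$ gives a bound of order $\e^{-\gamma}$, which does not beat the target $\tfrac{\lambda}{16}\e^{-\gamma}$ for small $\lambda$ — but with the fix you propose. Re-running the maximum-principle comparison with boundary $\Lup'$ and barrier $\bar u := 3\e^{-a(1-\gamma)/8}\utr$ requires checking $\bar u(t,\Lup'(t))\ge u(t,\Lup'(t))$, and this \emph{fails} precisely on the truncated segment. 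On that segment $\Lup'(t)=x_0-\lfloor\e^{-\gamma'}\rfloor > L'(t)$, and since $\utr(t,x)=e^{v'(v_*(t-t_0)-(x-x_0))}$ is \emph{decreasing} in $x$, we get
\begin{align*}
\utr(t,\Lup'(t)) = e^{v'(v_*(t-t_0)+\lfloor\e^{-\gamma'}\rfloor)},
\end{align*}
which for small $t$ (when $v_*t_0$ can be of order $\e^{-1-4a}\gg\e^{-\gamma'}$) is super-exponentially small in $\e^{-1}$, while $u(t,\Lup'(t))=u_1(t,\Lup'(t))$ remains of order~$1$. Your phrase ``the truncation only enlarges the Dirichlet region'' does not rescue this: the enlargement moves the \emph{boundary} deeper into the bulk where $\utr$ is tiny, and there is no monotonicity in the \emph{domain} to invoke since $u^{L'}$ and $u^{\Lup'}$ are different functions (indeed $u^{\Lup'}\ge u^{L'}$ pointwise, the wrong direction). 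To repair this, one either needs to first show $\blt^{\Lup'}(t)=\blt^{L'}(t)$ with high probability by the no-crossing argument of~\eqref{eq:bltL':clam} (which applies here because $x_0\ge\lambda\e^{-1}\gg\e^{-\gamma'}$ and $t\le\e^{-1-\gamma-3a}$), reducing to the untruncated bound for $L'$; or construct a barrier that, like $\utrUp$ in~\eqref{eq:baru:up:}, carries an additional $\erf$-type correction to handle the early-time boundary values near~$1$. With the first route, one should also track the sharper exponent $\e^{-a(1-\gamma)/8}v_*^{-1}$ produced by~\eqref{eq:blt:<pre} rather than the rounded $\e^{-a}v_*^{-1}$, since the former gives the needed $o(\e^{-\gamma})$ improvement.
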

\begin{proof}
%To the end of proving \eqref{eq:blt:Rup}--\eqref{eq:blt:Rup<},
%hereafter we assume $ \frntUp_\lambda(t) \leq \e^{-1-a} $,
%or equivalently $ t \leq \hitUp_\lambda(\e^{-1-a}) $,
%and let $ x:= \frntUp_\lambda(t) $.
To simplify notations, we let $ x:= \frntUp_\lambda(t) $.
We consider first the case $ x \in (x_{**},\e^{-1-a}] $
and prove~\eqref{eq:blt:Rup<}.
%
%First, for generic $ x=\frntUp_\lambda(t) $, we have
%\begin{align*}
%	\hitUp_\lambda(x-1) := \inf\{ s : \frntUp_\lambda(s) > \frntUp_\lambda(t)-1 \} 
%	\leq 
%	t
%	< \inf\{ s : \frntUp_\lambda(s) > \frntUp_\lambda(t) \} =: \hitUp_\lambda(x). 
%\end{align*}
%In particular $ \hitUp_\lambda(x)>\hitUp_\lambda(x-1) $.
%Referring back to~\eqref{eq:Tup}, for the case $ x> x_{**} $ of current consideration,
%$ \hitUp_\lambda(x)>\hitUp_\lambda(x-1) $ holds only if
%
In Figure~\ref{fig:lem2},
we show schematic figures of the graphs of $ \frntUp_\lambda $ and $ \hitUp_{\lambda} $,
together with their complete graph CG($ \frntUp_\lambda $)$ = $CG($ \hitUp_\lambda $).
As shown in Figure~\ref{fig:lem2-1}, the graph of $ \frntUp_\lambda $ consists of vertical line segment,
so $ (x_0,t_0) $ necessarily sits on a vertical segment.
Referring to Figure~\ref{fig:lem2-2}, we see that $ \hitUp_\lambda(x)>\hitUp_\lambda(x-1) $.
This is possible only if (see~\eqref{eq:Tup})
\begin{align}
	\label{eq:ict>e-1}
	\ict(x) \geq \lambda\e^{-1}.
\end{align}
\begin{figure}[h]
\centering
\begin{subfigure}{0.45\textwidth}
    \includegraphics[width=\textwidth]{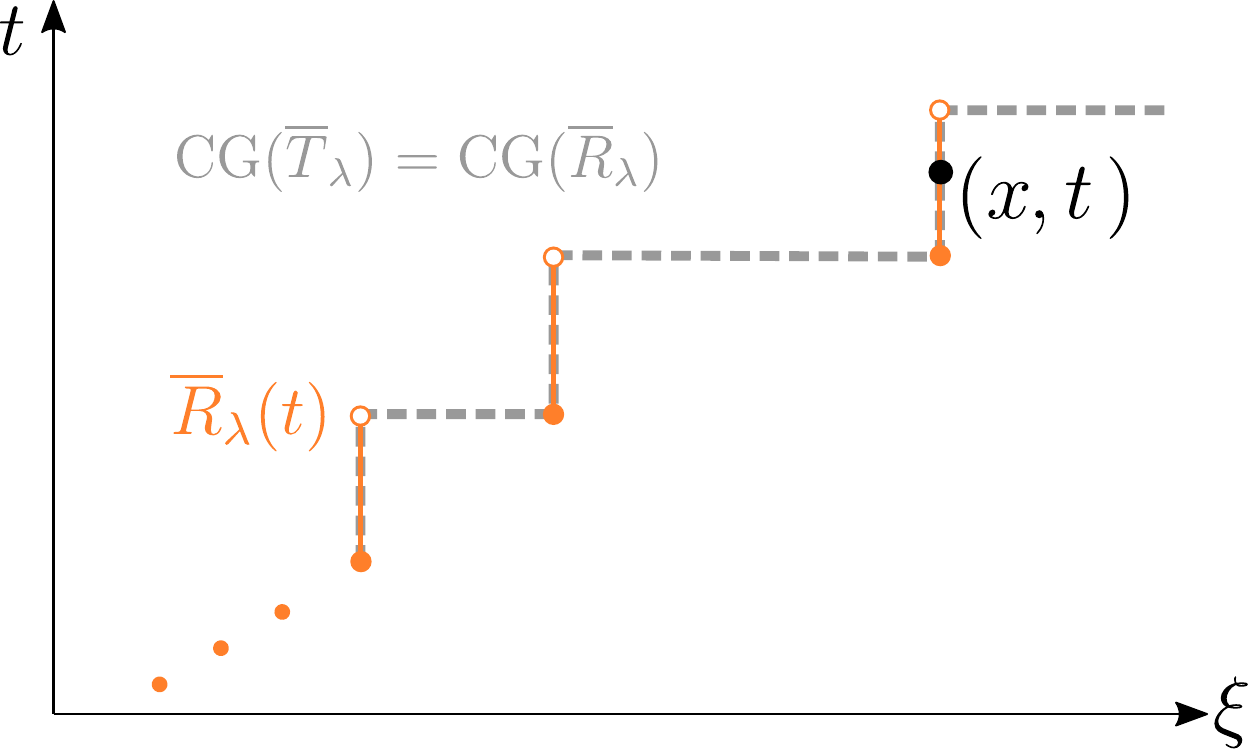}
    \caption{Graph of $ \frntUp_\lambda $}
    \label{fig:lem2-1}
\end{subfigure}
\hfil
\begin{subfigure}{0.45\textwidth}
    \includegraphics[width=\textwidth]{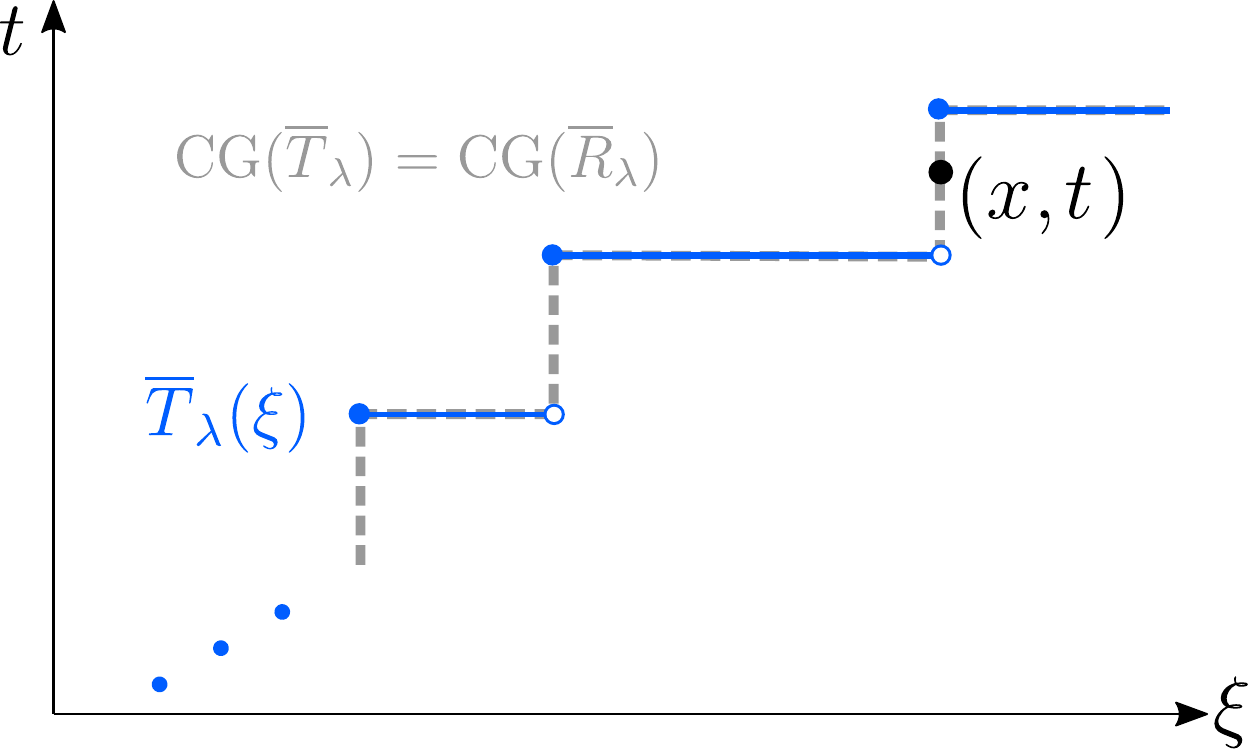}
    \caption{Graph of $ \hitUp_{\lambda} $}
    \label{fig:lem2-2}
\end{subfigure}
\caption{Graphs of $ \frntUp_\lambda $ and $ \hitUp_{\lambda} $, together with their complete graph (dashed line).}\label{fig:lem2}
\end{figure}
Given this lower bound on $ \ict(x) $, 
we now define $ v := 1/(2(\ict(x)-\frac{1}{4}\lambda\e^{-\gamma})) $,
and consider the truncated linear trajectory $ \Lup(\Cdot):= \Lup_{t,x+1,v}(\Cdot) $
passing through $ (t,x+1) $ with velocity $ v $.

The first step of proving \eqref{eq:blt:Rup}
is to compare $ \blt^{\frntUp_\lambda}(t) $ with $ \blt^{\Lup}(t) $,
by using Lemma~\ref{lem:cmpCrit}.
For Lemma~\ref{lem:cmpCrit} to apply,
we first verify the relevant condition~\eqref{eq:shcmp:criterion}.
To this end, use \eqref{eq:ict:cnti} to obtain that
\begin{align}
	\notag
	2 [ \ict(y')-\tfrac{\lambda}{2}\e^{-\gamma} ]_+ 
	&\leq 
	2 [ \ict(x)-\tfrac{\lambda}{2}\e^{-\gamma}+\tfrac{\lambda}{4}\e^{-\gamma} ]_+
\\
	\label{eq:ict:cnti:bd}
	&=
	2 (\ict(x)-\tfrac{\lambda}{4}\e^{-\gamma}),
	\quad
	\hbox{whenever} \quad
	x-\lfloor\e^{-\gamma'}\rfloor\leq y' \leq x,
\end{align}
where the equality follows by \eqref{eq:ict>e-1}.
The expression in \eqref{eq:ict:cnti:bd} equals $ v^{-1} $,
so summing \eqref{eq:ict:cnti:bd} over $ y'\in(y,x] $,
for any fixed $ y\in(x-\e^{-\gamma'},x] $, yields
\begin{align*}
	\hitUp_\lambda(x)-\hitUp_\lambda(y) \leq v^{-1}(x-y),
	\quad
	\forall  x-\lfloor\e^{-\gamma'}\rfloor\leq y' \leq x.
\end{align*}
This verifies the condition \eqref{eq:shcmp:criterion}.
We now apply Lemma~\ref{lem:cmpCrit} to conclude that 
\begin{align*}
	\blt^{\frntUp_\lambda}(t) \leq \blt^{\Lup}(t) + \eta(t,\frntUp_\lambda(t)+1).
\end{align*}
Further using \eqref{eq:etaQ},
after ignoring events of probability $ \to 0 $,
we obtain that
\begin{align}
	\label{eq:bltbd:bltLup}
	\blt^{\frntUp_\lambda}(t) 
	\leq 
	\blt^{\Lup}(t) + \tfrac{\lambda}{16}\e^{-\gamma}.
\end{align}
for all $ \e $ small enough.

The next step is to apply 
the estimates~\eqref{eq:blt:cnt:up}
to the term $ \blt^{\Lup}(t) $ in  \eqref{eq:bltbd:bltLup}.
For \eqref{eq:blt:cnt:up} to apply, 
we first establish bounds on the range of the variables $ (t,x,v) $.
Under the current consideration $ x_{**}< \frntUp_\lambda(t) \leq \e^{-1-a} $,
we have $ \hitUp_\lambda(x_{**}) \leq t \leq \hitUp_\lambda(\e^{-1-a}) $.
Combining this with \eqref{eq:Tupe-1} and \eqref{eq:Tupx**} yields
\begin{align}
	\label{eq:Rup:trg}
	\e^{-1-\gamma+2a} \leq t \leq \e^{-1-\gamma-3a}.
\end{align}
Next, With $ x:=\frntUp_\lambda(t) $,
we have $ x\leq\e^{-1-a} $ and $ x>x_{**} \geq x_{**}-x_{*} $.
Combining the last equality with \eqref{eq:x*-x**} yields $ x \geq \e^{-1+a} $,
so
\begin{align}
	\label{eq:Rup:xrg}
	\e^{-\gamma'-a}\leq\e^{-1+a} \leq x \leq \e^{-1-a}.
\end{align}
As for $ v := 1/(2(\ict(x)-\frac{1}{4}\lambda\e^{-\gamma})) $, 
by \eqref{eq:ict:bd} and \eqref{eq:ict>e-1} we have
\begin{align}
	\label{eq:Rup:vrg}
	\e^{\gamma+a} \leq v \leq \tfrac{2}{3\lambda}\e^{\gamma},
\end{align}
for all $ \e $ small enough.
Combining \eqref{eq:Rup:trg} and \eqref{eq:Rup:vrg},
followed by using $ \frac{1-\gamma-3a}{2}>a $ (since $ a<\frac{1-\gamma}{5} $, by~\eqref{eq:a:main}),
we have
\begin{align}
	\label{eq:Rup:vtrg}
	v\sqrt{t} \geq \tfrac{2}{3\lambda} \e^{\frac{-1+\gamma+3a}{2}} \geq \e^{-a},
\end{align}
for all $ \e $ small enough.
Recalling the definition of $ \Sigma_\e(a) $ from \eqref{eq:tv:cnd}--\eqref{eq:supdiff},
equipped with the bounds \eqref{eq:Rup:trg}--\eqref{eq:Rup:vtrg} on the range of $ (t,x,v) $,
one readily verifies that $ (t,x,v) \in \Sigma_\e(\frac{a}{3}) $.
With this, we now apply \eqref{eq:blt:cnt:up} to obtain
\begin{align}
	\label{eq:bltbd:v}
	\blt^{\Lup}(t) \leq \tfrac{1}{2v} + 4v^{-1+1/C}
	\leq
	\ict(x) - \tfrac{1}{4}\lambda\e^{-\gamma} + 4 (\tfrac{3\lambda}{2}\e^{\gamma})^{-1+1/C} 
	\leq
	\ict(x) - \tfrac{1}{8}\lambda\e^{-\gamma},
\end{align}
for all $ \e $ small enough.
Inserting \eqref{eq:bltbd:v} into \eqref{eq:bltbd:bltLup}
yields the desired result~\eqref{eq:blt:Rup}.

We next consider the case $ x=\frntUp_\lambda(t)\leq x_{**}\wedge\e^{-1-a} $,
and prove \eqref{eq:blt:Rup}.
Under the current consideration $ x\leq x_{**} $,
from the definition~\eqref{eq:Tup} of $ \hitUp_\lambda $
we have $ \hitUp_\lambda(x)-\hitUp_\lambda(y) \leq v_*(x-y) $, $ \forall y\in[0,x] $.
With this,
letting $ \Lup' := \Lup_{t,x+1,v_*} $,
using the same argument for deriving \eqref{eq:bltbd:bltLup} based on Lemma~\ref{lem:cmpCrit}, 
here we have
\begin{align}
	\label{eq:bltbd:bltLup:<}
	\blt^{\frntUp_\lambda}(t)
	\leq
	\blt^{\Lup'}(t) + \tfrac{\lambda}{16}\e^{-\gamma}.
\end{align}
Similarly to the preceding,
the next step is to apply \eqref{eq:blt:<} for bounding $ \blt^{\Lup'}(t) $.

Recall the definition of $ \til{\Sigma}_\e(a) $ from \eqref{eq:tv:cnd:}.
Since $ v_*:=\e^{\gamma-a} $ and $ a<\frac{\gamma}{2} $ (by~\eqref{eq:a:main}),
we have $ v_*\in[\e^{\gamma+a},\e^{a}] $.
From this and \eqref{eq:Tupe-1}, 
we see that $ (t,x,v_*)\in\til{\Sigma}_\e(\frac{a}{3}) $.
With this, we apply Proposition~\eqref{eq:blt:<} to conclude that
$ \blt^{\Lup'}(t) \leq \e^{-\frac{a}{3}}v^{-1}_* = \e^{-\gamma+\frac{2a}{3}} $.
Inserting this bound into \eqref{eq:bltbd:bltLup:<} yields
\begin{align}
	\label{eq:bltRup>gamma4}
	\blt^{\frntUp_\lambda}(t) 
	\leq 
	\e^{-\gamma+\frac{2a}{3}} +\tfrac{\lambda}{8}\e^{-\gamma}
	\leq 
	\tfrac{\lambda}{4}\e^{-\gamma},
\end{align}
for all $ \e $ small enough.
On the other hand, with $ x_{**} $ defined in~\eqref{eq:x**}, we have $ \ict(x) \geq \frac{\lambda}{2}\e^{-\gamma} $.
Combining this with \eqref{eq:bltRup>gamma4} yields the desired result~\eqref{eq:blt:Rup<}.
\end{proof}

We are now ready to prove that $ \frntUp_\lambda $ serves as an upper bound of $ \frnt $.
\begin{proposition}
\label{prop:Rup>R}
Let $ \lambda>0 $. We have that
\begin{align}
	\label{eq:NRup<Rup}
	\lim_{\e\to 0} 
	\Pr\big( 
		N^{\frntUp_\lambda}(t) \leq \frntUp_\lambda(t) \text{ whenever } \frntUp_\lambda(t) \leq \e^{-1-a} 
	\big)
	= 1.
\end{align}
In particular, by Proposition~\ref{prop:mono},
\begin{align*}
	\lim_{\e\to 0}
	\Pr\big( \frnt(t) \leq \frntUp_\lambda(t) \text{ whenever } \frntUp_\lambda(t) \leq \e^{-1-a} \big)
	=1.
\end{align*}
\end{proposition}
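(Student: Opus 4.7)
The plan is to apply the decomposition~\eqref{eq:N:dcmp} with $Q = \frntUp_\lambda$, which reduces the bound $N^{\frntUp_\lambda}(t) \leq \frntUp_\lambda(t)$ to showing
\[
-\ict(\frntUp_\lambda(t)) + \mgt(t, \frntUp_\lambda(t)) + \blt^{\frntUp_\lambda}(t) \leq 0
\]
with probability $\to 1$, uniformly over $\{t : \frntUp_\lambda(t) \leq \e^{-1-a}\}$. The two regimes of Lemma~\ref{lem:blt:Rup} together cover this range and already yield the stronger inequality $\blt^{\frntUp_\lambda}(t) - \ict(\frntUp_\lambda(t)) \leq -\tfrac{\lambda}{16}\e^{-\gamma}$ with probability $\to 1$. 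Thus the remaining work is to dominate the noise term $|\mgt(t,\frntUp_\lambda(t))|$ by $\tfrac{\lambda}{16}\e^{-\gamma}$ uniformly in $t$, and then invoke Proposition~\ref{prop:mono}.

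To bound the noise, the main step is to verify that the random pair $(t, \frntUp_\lambda(t))$ lies in $\Xi_\e(a')$ (cf.~\eqref{eq:Xi}) for some $a' \in (0, a)$ to be chosen small, with probability $\to 1$. The space bound $\frntUp_\lambda(t) \leq \e^{-1-a'}$ is by hypothesis (taking $a' \leq a$); the time bound $t \leq \e^{-1-\gamma-a'}$ follows from~\eqref{eq:Tupe-1} in Lemma~\ref{lem:techbd}, which yields $t \leq \hitUp_\lambda(\e^{-1-a}) < \e^{-1-\gamma-3a}$; and the super-diffusivity condition $\frntUp_\lambda(t)/\sqrt{t} \geq \e^{-a'}$ follows by combining~\eqref{eq:Rup0} (which gives $\frntUp_\lambda(t) \geq \lambda\e^{-1}$) with the time bound, provided $a'$ is small relative to $(1-\gamma)/2$, which is compatible with~\eqref{eq:a:main}.

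Once this inclusion is in hand, Proposition~\ref{prop:mgtbd} yields
\[
|\mgt(t, \frntUp_\lambda(t))| \leq 6\e^{-a'}(1+t)^{\frac14 \vee \frac{\gamma}{2}} \leq C\,\e^{-a' - (\frac14 \vee \frac{\gamma}{2})(1+\gamma+3a)}
\]
with probability $\to 1$. Since $\gamma > \tfrac13$, we have $(\tfrac14 \vee \tfrac{\gamma}{2})(1+\gamma) < \gamma$ (this is precisely the content of Remark~\ref{rmk:mgt<<}), so taking $a, a'$ small enough forces the exponent above to exceed $-\gamma$; in particular $|\mgt(t,\frntUp_\lambda(t))| \leq \tfrac{\lambda}{16}\e^{-\gamma}$ for all small $\e$. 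Combining this with the bound on $\blt - \ict$ gives~\eqref{eq:NRup<Rup}, and the dominance $\frnt(t) \leq \frntUp_\lambda(t)$ then follows from Proposition~\ref{prop:mono} applied with $Q := \frntUp_\lambda$ and $\tau := \inf\{t : \frntUp_\lambda(t) > \e^{-1-a}\}$ (noting $\frntUp_\lambda(0) \geq 0$ by construction).

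The main obstacle is really one of bookkeeping: one must choose $a'$ small enough relative to $a$ and to the factors of $3a$ appearing via~\eqref{eq:Tupe-1} to ensure both the inclusion $(t,\frntUp_\lambda(t))\in\Xi_\e(a')$ and the quantitative beating of $\tfrac{\lambda}{16}\e^{-\gamma}$ by the noise estimate. The uniformity in $t$ is already built into Lemma~\ref{lem:blt:Rup} and Proposition~\ref{prop:mgtbd} via their deterministic index sets, so no separate continuity-in-$t$ argument for $\frntUp_\lambda$ is required.
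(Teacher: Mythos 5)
Your proposal is correct and follows essentially the same route as the paper: decompose $N^{\frntUp_\lambda}(t)$ via \eqref{eq:N:dcmp}, use the two regimes of Lemma~\ref{lem:blt:Rup} to get $\blt^{\frntUp_\lambda}(t)-\ict(\frntUp_\lambda(t))\leq -\tfrac{\lambda}{16}\e^{-\gamma}$, verify the range conditions of $\Xi_\e(\cdot)$ using \eqref{eq:Tupe-1} and \eqref{eq:Rup0} so that Proposition~\ref{prop:mgtbd} makes the noise term negligible (here $\gamma>\tfrac13$ enters), and conclude with Proposition~\ref{prop:mono}. One small bookkeeping remark: since the time constraint in $\Xi_\e(a')$ is $t\leq\e^{-1-\gamma-a'}$ and you only know $t\leq\e^{-1-\gamma-3a}$, the auxiliary parameter must be taken \emph{at least} $3a$ (not in $(0,a)$), which is compatible with the super-diffusivity requirement thanks to $a<\tfrac{1-\gamma}{5}$ in \eqref{eq:a:main} — exactly the adjustment the paper makes implicitly.
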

\begin{proof}	
Recall the decomposition of $ N^Q(t) $ from \eqref{eq:N:dcmp}.
Applying Lemma~\ref{lem:blt:Rup} within this decomposition,
after ignoring events of small probability,
we have
\begin{align}
	\label{eq:Rup:goal}
	N^{\frntUp_\lambda}(t)
	\leq
	\frntUp_\lambda(t)
	-\tfrac{\lambda}{16}\e^{-\gamma} + \mgt(t,\frntUp_\lambda(t)),
\end{align}
for all $ \frntUp_\lambda(t) \leq \e^{-1-a} $.
The next step is to apply Proposition~\ref{prop:mgtbd} and bound the noise term $ \mgt(t,\frntUp_\lambda(t)) $.
The condition $ \frntUp_\lambda(t) \leq \e^{-1-a} $ 
implies $ t<\frntUp_\lambda(\e^{-1-a}) $,
so by \eqref{eq:Tupe-1} we have $ t \leq \e^{-1-\gamma-3a} $.
Combining this with \eqref{eq:Rup0}, and using $ a<\frac{1-\gamma}{5} $, we obtain
\begin{align*}
	\frac{\frntUp_\lambda(t)}{\sqrt{t}} 
	\geq \frac{\frntUp(0)}{\sqrt{t}} 
	\geq \lambda \e^{-\frac12(1-\gamma-3a)}
	\geq \e^{-a},
\end{align*}
for all $ \e $ small enough.
With these bounds on the range of $ (t,\frntUp_\lambda(t),\frntUp_\lambda(t)/\sqrt{t}) $,
we apply Proposition~\ref{prop:mgtbd} to the noise term $ \mgt(t,\frntUp_\lambda(t)) $ 
to obtain that $ |\mgt(t,\frntUp_\lambda(t))| \leq 6\e^{-a}(1+t)^{\frac{\gamma}{2}\vee\frac{1}{4}} $,
$ \forall \frntUp_\lambda(t) \leq \e^{-1-a} $.
Further, with $ t \leq \e^{-1-\gamma-3a} $ and $ a<\frac{1-\gamma}{4}\wedge \frac{3\gamma-1}{4} $,
we have that $ 6\e^{-a}(1+t)^{\frac{\gamma}{2}\vee\frac{1}{4}} \leq \frac{1}{32}\e^{-\gamma} $,
for all $ \e $ small enough, and therefore
$ |\mgt(t,\frntUp_\lambda(t))| \leq \frac{1}{32}\e^{-\gamma} $,
$ \forall \frntUp_\lambda(t) \leq \e^{-1-a} $.
Inserting this bound into \eqref{eq:Rup:goal}
yields the desired result~\eqref{eq:NRup<Rup}.
\end{proof}

\subsection{The lower bound}
\label{sect:pfmain:lw}
Similarly to the construction of $ \frntUp_\lambda $,
here the process $ \frntLw_\lambda(t) $ is constructed
via the corresponding hitting time process $ \hitLw_\lambda(\xi) $.
Let $ y_* := \lceil\e^{-1}\rceil $.
For each $ x\in\Z_{\geq 0} $, we define
\begin{align}
	\label{eq:Tlw}
	\hitLw_\lambda(x) :=
	y_*\lambda\e^{-\gamma}
	+
	\sum_{0< y \leq x}
	\big( \lambda\e^{-\gamma}+ 2[\ict(y)]_+ \big),
\end{align}
and extend $ \hitLw_\lambda(\Cdot) $ to $ [0,\infty) $ by letting 
$ \hitLw_\lambda(\xi) := \hitLw_\lambda(\lfloor\xi\rfloor) $.
We then define $ \frntLw_\lambda := \invs(\hitLw_\lambda) $.

The general strategy is the same as in Section~\ref{sect:pfmain:up}:
we aim at showing $ N^{\frntLw_\lambda}(t) \geq \frntLw_\lambda(t) $,
by using a comparison with a truncated linear trajectory
$ \Llw_{t,x,v} $ and applying \eqref{eq:blt:cnt:lw}.
The major difference here is the relevant regime of $ (t,\frntLw_\lambda(t)) $.
Unlike in Section~\ref{sect:pfmain:up},
where we treat separately the longer-time regime 
(corresponding to $ \Sigma_\e(a) $ defined in \eqref{eq:tv:cnd}--\eqref{eq:supdiff})
and short-time regime (corresponding to $ \til{\Sigma}_\e(a) $ defined in \eqref{eq:tv:cnd:}),
here we simply \emph{avoid} the short time regime.
Indeed, since $ \hitLw_\lambda(\xi) \geq y_*\lambda\e^{-\gamma} $ (by \eqref{eq:Tlw}),
we have $ \frntLw_\lambda(t)=0 $, $ \forall t <y_*\lambda\e^{-\gamma} $,
and therefore
\begin{align}
	\label{eq:NRlw:trivial}
	N^{\frntLw_\lambda}(t) \geq \frntLw_\lambda(t)=0, 
	\quad
	\forall t\in [0, y_*\lambda\e^{-\gamma}).
\end{align}
Given \eqref{eq:NRlw:trivial},
it thus suffices to consider $ t \geq y_*\lambda\e^{-\gamma} $,
whereby the condition $ t \geq 1 $ in the longer-time regime
\eqref{eq:tv:cnd} holds.
On the other hand,
within the longer-time regime,
we need also the condition $ \frntLw_\lambda(t) =:x\geq \e^{-\gamma'+a} $ 
in~\eqref{eq:tv:cnd} to hold.
This, however, fails when $ t $ is close to $ y_*\lambda \e^{-\gamma} $,
as can be seen from \eqref{eq:Tlw}.

We circumvent the problem by 
considering a `shifted' and `linear extrapolated' 
system $ (\til{\eta},\til{\frntLw}_\lambda,\til{\hitLw}_\lambda) $,
described as follows.
First, we shift the entire $ \eta $-particle system,
as well as $ \frntLw_\lambda $ and $ T_\lambda $,
by $ y_* $ in space, i.e.,
\begin{align*}
	\eta^\to(t,x):=\eta(t,x-y_*),
	\quad
	\frntLw^\to_\lambda(t) := \frntLw_\lambda(t)+y_*,
	\quad
	\hitLw^\to_\lambda(\xi) := \hitLw_\lambda(\xi-y_*).
\end{align*}
Subsequently, 
we consider the modified initial condition
\begin{align}
	\label{eq:Rlw:mdyic}
	\til\eta^\ic(x) := \ind_{(0,y_*]}(x) + \eta^{\ic}(x-y_*),
\end{align}
where we place one particle at each site of $ (0,y_*]\cap\Z $.
Let
\begin{align}
	\label{eq:tilict}
	\til{\ict}(x) := \sum_{y\in(0,x]} (1-\til{\eta}^\ic(x))
\end{align}
denote the corresponding centered distribution function.
From such $ \til{\ict} $,
we construct the following hitting time process $ \til{\hitLw}_\lambda(\Cdot) $:
\begin{align}
	\label{eq:tilTlw}
	\til{\hitLw}_\lambda(x) &:=
	2 \sum_{0< y \leq x}
	\Big( \lambda\e^{-\gamma}+[\til{\ict}(y)]_+ \Big),
\\
	\notag
	\til{\hitLw}_\lambda(\xi) &:= \til{\hitLw}_\lambda(\lfloor\xi\rfloor),
\end{align}
and let $ \til{\frntLw}_\lambda := \invs(\frntLw_\lambda) $.
To see how $ \til{\frntLw}_\lambda $ and $ \til{\hitLw}_\lambda $
are related to $ \frntLw_\lambda $ and $ \hitLw_\lambda $,
with $ \til{\eta} $ and $ \til{\ict} $ defined as in \eqref{eq:Rlw:mdyic}--\eqref{eq:tilict},
we note that $ \til{\ict}(x) = 0 $, $ \forall x\in(0,y_*] $
and that $ \til{\ict}(x) = \ict(x-y_*) $, $ \forall x>y_* $.
From this we deduce
\begin{align}
	\label{eq:tilTlw::}
	\til{\hitLw}_\lambda(\xi) &= 
	\left\{\begin{array}{l@{,}l}
		\lambda\e^{-\gamma}\lfloor \xi \rfloor &\text{ for } \xi\in [0,y_*),
		\\
		T_\lambda(\xi-y_*)	&\text{ for } \xi \geq y_*,
	\end{array}\right.
\\
	\label{eq:tilRlw}
	\til{\frntLw}_\lambda(t) &=
	\left\{\begin{array}{l@{,}l}
		\displaystyle \lfloor t/(\lambda\e^{-\gamma})\rfloor +1
		&\text{ for } t\in [0,y_*\lambda\e^{-\gamma}),
		\\
		\frntLw_\lambda(t)+y_*  &\text{ for } t \geq y_*\lambda\e^{-\gamma}.
	\end{array}\right.
\end{align}
From this, we see that
$ \til{\frntLw}_\lambda $ and $ \til{\hitLw}_\lambda $
are indeed shifted and linear extrapolated
processes of $ \frntLw_\lambda $ and $ \hitLw_\lambda $, respectively.
We consider further
the free particle system $ \til{\eta} $ starting from 
the modified initial condition $ \til{\eta}^\ic $ \eqref{eq:Rlw:mdyic}.
The systems $ \til{\eta} $ and $ \eta^\to $ are coupled in the natural way
such that all particles starting from $ \Z\cap(y_*,\infty) $ 
evolve exactly the same for both systems,
and those $ \til{\eta} $-particles starting in $ (0,y_*] $ 
run independently of the $ \eta $-particles.

Having constructed the modified system
$ (\til{\eta},\til{\frntLw}_\lambda, \til{\hitLw}_\lambda) $,
we proceed to explain how analyzing this modified system
helps to circumvent the previously described problem.
To this end,
we let $ \til{N}^{\til{\frntLw}_\lambda}(t) $ denote the analogous
quantity of total number of $ \til{\eta} $-particle absorbed into $ \til{\frntLw}_\lambda $
up to time $ t $.
By considering the extreme case where all $ \til{\eta} $-particles starting in $ (0,y_*] $
are all absorbed at a given time $ t $, we have that
\begin{align}
	\label{eq:N:Rlw:tilR}
	N^{\frntLw_\lambda}(t) 
	\geq
	\til{N}^{\til{\frntLw}_\lambda}(t) - \sum_{0<x\leq y_*} \til{\eta}^\ic(x)
	=
	\til{N}^{\til{\frntLw}_\lambda}(t) - y_*.
\end{align}
By \eqref{eq:tilRlw}, we have
$ \frntLw_\lambda(t)+y_* =\til{\frntLw}_\lambda(t) $,
$ \forall t \geq y_*\lambda\e^{-\gamma} $.
With this,
subtracting $ \frntLw_\lambda(t) $ from both sides of \eqref{eq:N:Rlw:tilR},
we arrive at
\begin{align}
	\label{eq:NRlw:cmp}
	N^{\frntLw_\lambda}(t)  - \frntLw_\lambda(t) 
	\geq
	N^{\til{\frntLw}_\lambda}(t) - \til{\frntLw}_\lambda(t),
	\quad
	\forall t \geq y_*\lambda\e^{-\gamma}.
\end{align}
Indeed, for $ t<y_*\lambda\e^{-\gamma} $,
we already have \eqref{eq:NRlw:trivial}.
For $ t \geq y_*\lambda\e^{-\gamma} $,
by \eqref{eq:NRlw:cmp}, 
it suffices to show the analogous property
$ N^{\til{\frntLw}_\lambda}(t) - \til{\frntLw}_\lambda(t) \geq 0 $
for the modified system.
For the case $ t \geq y_*\lambda\e^{-\gamma} $,
unlike $ \frntLw_\lambda(t) $,
the modified process satisfies $ \til{\frntLw}_\lambda(t) = \frntLw_\lambda(t)+y_* \geq y_* $,
so the aforementioned condition $ x \geq \e^{-\gamma'+a} $ (in \eqref{eq:tv:cnd})
does holds for $ x=\til{\frntLw}_\lambda(t) $.
That is, under the shifting by $ y_* $,
the modified process $ \til{\frntLw}_\lambda $ bypasses the aforementioned problem
regarding the range of $ \frntLw_\lambda(t) $,
and with a linear extrapolation,
the modified system $ (\til{\eta},\til{\frntLw}_\lambda,\til{\hitLw}_\lambda) $ 
links to the original system $ (\eta,\frntLw_\lambda,\hitLw_\lambda) $ 
via the inequality~\eqref{eq:NRlw:cmp}
to provide the desired lower bound.

In view of the preceding discussion,
we hereafter focus on the modified system $ (\til{\eta},\til{\frntLw}_\lambda,\til{\hitLw}_\lambda) $
and establish the relevant inequalities.
Recall that $ \Llw_{t_0,x_0,v} $ is the truncated
linear trajectory defined as in \eqref{eq:Lup},
and that $ \gamma'\in(\frac{\gamma+1}{2},1) $ is a fixed parameter.
Similarly to Lemma~\ref{lem:cmpCrit},
here we have:
\begin{lemma}
\label{lem:cmpCrit:}
Fixing $ (t_0,v)\in [y_*\lambda\e^{-\gamma},\infty)\times(0,\infty) $,
we let $ x_0:=\til{\frntLw}_\lambda(t_0) $.
If
\begin{align}
	\label{eq:shcmp:criterion:lw}
	\til{\frntLw}_\lambda(x_0-1)-\til{\frntLw}_\lambda(y)
	\geq
	v^{-1} (x_0-1-y),
	\quad
	\forall y\in [x_0-1-\e^{-\gamma'},x_0-1],
\end{align}
then we have that 
\begin{align}
	\label{eq:shcmp:lw}
	\til{\frntLw}_\lambda(t) &\geq  \Llw_{t_0,x_0-1,v}(t),\quad \forall t\leq t_0,
\\
	\label{eq:shcmp:lw:}
	\blt^{\til{\frntLw}_\lambda}(t_0) &\geq \blt^{\Llw_{t_0,x_0-1,v}}(t_0) 
	-\eta(t_0,x_0).
\end{align}
\end{lemma}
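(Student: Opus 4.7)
The plan is to mirror the proof of Lemma~\ref{lem:cmpCrit} verbatim, with every inequality reversed, reading the hypothesis as a statement about the hitting-time process $\til{\hitLw}_\lambda$ (the arguments in \eqref{eq:shcmp:criterion:lw} are spatial, and $\til{\hitLw}_\lambda=\invs(\til{\frntLw}_\lambda)$ is the natural analog of $\hitUp_\lambda$ in \eqref{eq:shcmp:criterion}). The two claims are a trajectory-dominance inequality and the resulting boundary-layer inequality; the first is purely geometric, obtained from the complete-graph description of $\invs$, and the second follows from the monotonicity \eqref{eq:blt:mono:cmp} of particle deletion.

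For \eqref{eq:shcmp:lw}, I will first observe that the identity $\til{\frntLw}_\lambda(t_0)=x_0$ together with the complete-graph characterization of $\invs$ gives $\til{\hitLw}_\lambda(x_0-1)\le t_0$. Adding this to the hypothesis yields $\til{\hitLw}_\lambda(y)\le t_0 - v^{-1}(x_0-1-y)$ for $y\in[x_0-1-\lfloor\e^{-\gamma'}\rfloor,x_0-1]\cap\Z$, i.e.\ the restriction of $\mathrm{CG}(\til{\hitLw}_\lambda)$ to that window lies weakly below the straight line through $(x_0-1,t_0)$ of slope $v^{-1}$. Transposing via $\invs$ converts this into $\til{\frntLw}_\lambda(t)\ge L_{t_0,x_0-1,v}(t)$ throughout the $t$-interval where $L_{t_0,x_0-1,v}(t)\ge x_0-1-\lfloor\e^{-\gamma'}\rfloor$; outside that interval, the definition \eqref{eq:Llw} sets $\Llw_{t_0,x_0-1,v}\equiv 0$, so the bound $\til{\frntLw}_\lambda\ge \Llw_{t_0,x_0-1,v}$ is automatic. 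A mirrored version of Figure~\ref{fig:lem1} makes the geometry transparent.

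For \eqref{eq:shcmp:lw:}, I will use \eqref{eq:shcmp:lw} to conclude $\Shd_{\Llw_{t_0,x_0-1,v}}(t_0)\subset \Shd_{\til{\frntLw}_\lambda}(t_0)$, which by monotonicity of the deletion map gives $\til{\eta}^{\til{\frntLw}_\lambda}(t_0,y)\le \til{\eta}^{\Llw_{t_0,x_0-1,v}}(t_0,y)$ for every $y$. Writing out the boundary-layer sums from \eqref{eq:blt}, the contributions at $y>x_0$ are dominated in the desired direction, and the only site needing separate accounting is $y=x_0$, where $\til{\eta}(t_0,x_0)-\til{\eta}^{\Llw_{t_0,x_0-1,v}}(t_0,x_0)\le \til{\eta}(t_0,x_0)$. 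Rearranging then yields \eqref{eq:shcmp:lw:}, with $\eta$ interpreted as $\til{\eta}$ since the entire discussion takes place within the modified system.

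The main point of care, and the only genuine novelty beyond Lemma~\ref{lem:cmpCrit}, is keeping track of how the direction-reversing nature of $\invs$ interacts with the left-truncation in \eqref{eq:Llw}: the hypothesis only provides information inside the window $[x_0-1-\lfloor\e^{-\gamma'}\rfloor,x_0-1]$, and outside of it one must fall back on $\Llw\equiv 0$ to preserve the lower bound. Once this bookkeeping is settled, the remainder of the argument is routine and parallels Lemma~\ref{lem:cmpCrit} step by step.
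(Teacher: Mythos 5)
Your proposal is correct and follows essentially the same route as the paper: the trajectory dominance \eqref{eq:shcmp:lw} via the complete-graph/inverse picture (with the truncation in \eqref{eq:Llw} handled by falling back on $\Llw\equiv 0$ outside the window), and \eqref{eq:shcmp:lw:} via the shaded-region inclusion and the monotonicity of particle deletion. You also correctly read the hypothesis \eqref{eq:shcmp:criterion:lw} as a statement about the hitting-time process $\til{\hitLw}_\lambda$, and your explicit accounting of the single boundary site $y=x_0$ is exactly the step the paper leaves implicit.
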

\begin{proof}
Similarly to Lemma~\ref{lem:cmpCrit}, we include a schematic figure to facilitate the proof.
In Figure~\ref{fig:lem3}, we show the complete graphs CG$ (\til\hitLw_\lambda) $ and CG$ (\Llw_{t_0,x_0-1,v}) $
of $ \til\hitLw_\lambda $ and $ \Lup_{t_0,x_0-1,v} $.
The gray crosses in the figure label points of the form $ (x,\til\hitLw_\lambda(x)) $, $ x\in\Z_{\geq 0} $.
The dash lines have slope $ v^{-1} $:
the black one  passes through $ (x_0-1,\til{\hitLw}_\lambda(x_0-1)) $, while the blue one passes through $ (x_0-1,t_0) $.

The given assumption~\eqref{eq:shcmp:criterion:lw} translates into 
\begin{center}
	the crosses lie below the black dash line, for all $ x\in [x_0- \lfloor\e^{-\gamma'}\rfloor-1,x_0-1]$.
\end{center}
From this it is now readily verified that 
CG$ (\Llw_{t_0,x_0-1,v}) $ lies above CG$ (\til\hitLw_\lambda) $ within $ \xi\in[x_0 -1- \lfloor\e^{-\gamma'}\rfloor,x_0] $.
Further, by definition (see~\eqref{eq:Llw}), 
CG($ \Llw_{t_0,x_0-1,v} $) sits at level $ t_0-v^{-1}\lfloor\e^{-\gamma'}\rfloor $ 
for $ x\in(0,x_0 - \lfloor\e^{-\gamma'}\rfloor) $, as shown in Figure~\ref{fig:lem3}.
Hence, CG$ (\Lup_{t_0,x_0-1,v}) $ lies below CG$ (\til\hitLw_\lambda) $ for all $ \xi\in[0,x_0] $,
which gives \eqref{eq:shcmp:lw}.

\begin{figure}[h]
\includegraphics[width=.6\linewidth]{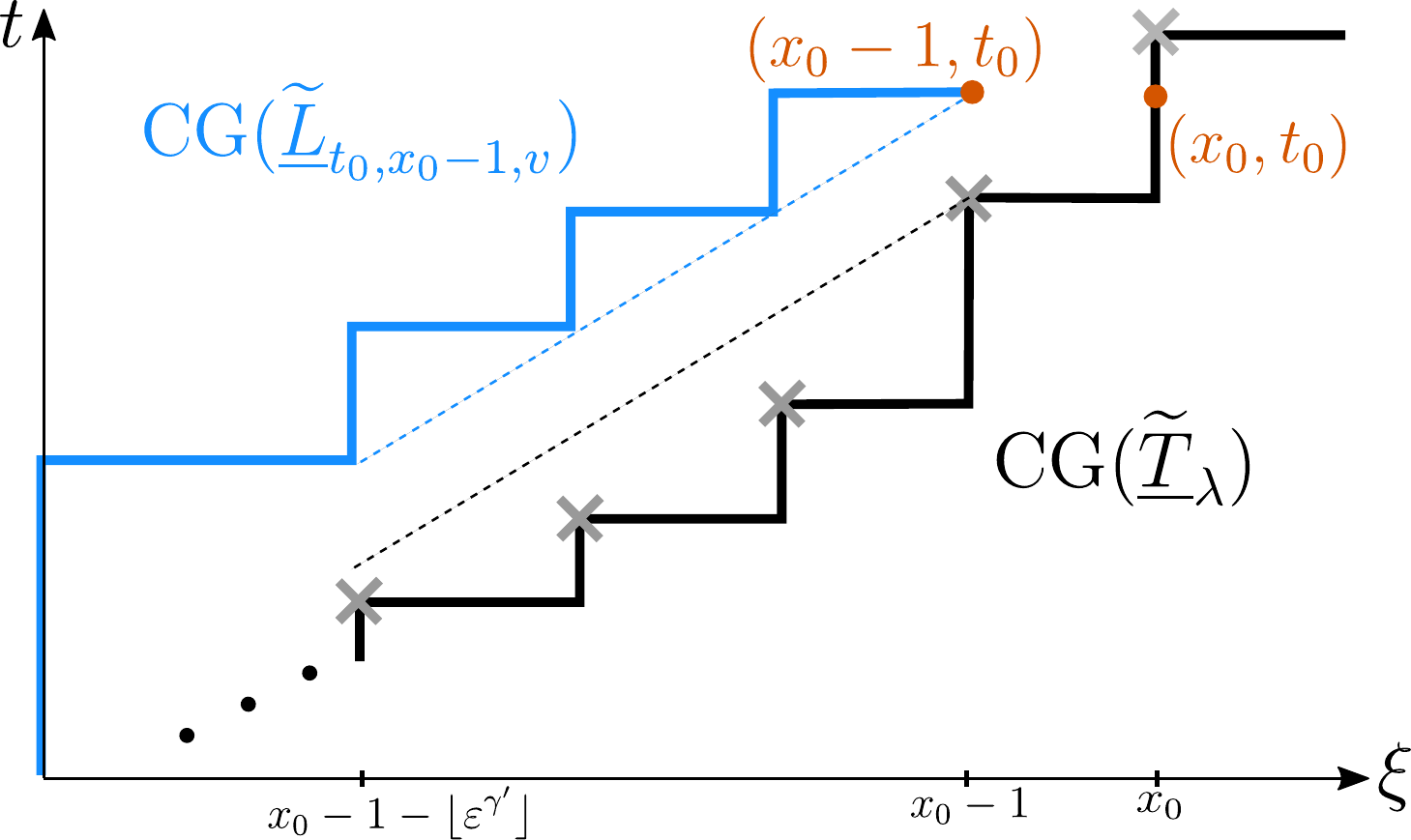}
\caption{Proof of~\eqref{eq:shcmp:lw}}
\label{fig:lem3}
\end{figure}

As for~\eqref{eq:shcmp:lw:},
recall from~\eqref{eq:shade} that $ \Shd_Q(t) $ denotes the
shaded region of a given process $ Q $,
and that $ \eta^Q $ denotes the particle system constructing from $ \eta $
by deleting all the $ \eta $-particles which has visited $ \Shd_Q(t) $ up to a given time $ t $.
By \eqref{eq:shcmp:lw}, we have $ \Shd_{\frntUp_\lambda}(t_0) \supset \Shd_{\Lup_{t_0,x_0-1,v}}(t_0) $, which gives
$
	\eta^{\frntLw_{\lambda}}(t,x) \leq \eta^{\Lup_{t_0,x_0-1,v}}(t,x),
$
$ \forall x\in\Z $, $ t\leq t_0 $.
Given this, referring to the definition~\eqref{eq:blt} of $ \blt^{Q}(t) $, we see that~\eqref{eq:shcmp:lw} follows.
\end{proof}

Let $ \til{\blt}^Q(t) $ and $ \til{\mgt}(t,x) $
denote the analogous boundary layer term and martingale term
of the modified particle system $ \til{\eta} $.
Indeed, the initial condition~\eqref{eq:Rlw:mdyic} satisfies all the conditions 
in Definition~\ref{def:ic},
with the same constants $ \gamma,a_*,C_* $
and with the limiting distribution $ \til{\Ict}(\xi) := \Ict(\xi)\ind_\set{\xi\geq 1} $.
Consequently, the bounds established in 
Proposition~\ref{prop:mgtbd} and Lemma~\ref{lem:techbd}
apply equally well to the $ \til{\eta} $-systems, giving
\begin{lemma}
\label{lem:techbd:}
Recall the definition of $ \Xi_\e(a) $ from \eqref{eq:Xi}.
For any fixed  $ \lambda>0 $,
the following holds with probability $ \to 1 $ as $ \e\to 0 $:
\begin{align}
	&
	\label{eq:mgt:bd:hat}
	|\til{\mgt}(t,\til{\frntLw}_\lambda(t))| < \tfrac{1}{32}\e^{-\gamma}\leq \tfrac{\lambda}{8}\e^{-\gamma}, \
	\forall t\leq \e^{-1-\gamma-3a},
\\
	\label{eq:ict:bd:hat}
	&
	\til{\ict}(x) < \e^{-\gamma-a},	\quad
	\forall x\leq 2\e^{-1-a},
\\
	\label{eq:ict:cnti:hat}
	&|\til{\ict}(x)-\til{\ict}(y)| < \tfrac{\lambda}{4}\e^{-\gamma}, 
	\quad
	\forall x\in(0,2\e^{-1-a}]\cap\Z, \ y\in [x-\e^{-\gamma'},x]\cap\Z,
\\
	\label{eq:Tupe-1:hat}
	&\til{\hitLw}_\lambda(2\e^{-1-a}) 
	<
	\e^{-1-\gamma-3a}.
\end{align}
\end{lemma}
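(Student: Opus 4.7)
The plan is to verify each of the four estimates by transferring the corresponding estimates from the un-tilded system, using the key observation (recorded in the paragraph just before the lemma) that the modified initial condition $\til\eta^\ic$ of \eqref{eq:Rlw:mdyic} satisfies Definition~\ref{def:ic} with the same constants $(\gamma,a_*,C_*)$ and limiting fluctuation $\til\Ict(\xi)=\Ict(\xi)\ind_{\set{\xi\geq 1}}$. Consequently, every concentration estimate proved in Sections~\ref{sect:mgt}--\ref{sect:blt} applies \emph{mutatis mutandis} to the $\til\eta$-system.

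I would first establish the static bounds \eqref{eq:ict:bd:hat} and \eqref{eq:ict:cnti:hat} by repeating \emph{verbatim} the union-bound arguments from the proof of Lemma~\ref{lem:techbd} (specifically those for \eqref{eq:ict:bd} and \eqref{eq:ict:cnti}), now applied to $\til\ict$ and over the enlarged window $x\leq 2\e^{-1-a}$; doubling the window only multiplies the union-bound prefactor by a constant and preserves the exponentially small failure probability. The estimate \eqref{eq:Tupe-1:hat} is then immediate by plugging \eqref{eq:ict:bd:hat} into the definition \eqref{eq:tilTlw} of $\til\hitLw_\lambda$: since $\lambda\e^{-\gamma}+\e^{-\gamma-a}\leq 2\e^{-\gamma-a}$ for all small enough $\e$,
\[
\til\hitLw_\lambda(2\e^{-1-a})\leq 2\cdot(2\e^{-1-a})\cdot(2\e^{-\gamma-a})=8\e^{-1-\gamma-2a}\leq \e^{-1-\gamma-3a}.
\]

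The main step is the martingale bound \eqref{eq:mgt:bd:hat}. The approach is to apply Proposition~\ref{prop:mgtbd} to the $\til\eta$-system at $(t,\til\frntLw_\lambda(t))$. The conditions $t\leq\e^{-1-\gamma-a}$ and $\til\frntLw_\lambda(t)\leq\e^{-1-a}$ follow from $t\leq\e^{-1-\gamma-3a}$ combined with \eqref{eq:Tupe-1:hat}. The delicate condition is the super-diffusivity inequality $\til\frntLw_\lambda(t)/\sqrt{t}\geq\e^{-a}$: using \eqref{eq:ict:bd:hat} in \eqref{eq:tilTlw} yields $\til\hitLw_\lambda(x)\leq Cx\e^{-\gamma-a}$, which upon inversion gives $\til\frntLw_\lambda(t)\geq C^{-1}t\e^{\gamma+a}$, so the required inequality holds once $t$ exceeds a threshold of order $\e^{-2\gamma-O(a)}$. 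For smaller $t$, the linear extrapolation \eqref{eq:tilTlw::} ensures $\til\frntLw_\lambda(t)\geq 1$, and a direct Chernoff-type estimate (paralleling Lemma~\ref{lem:mgt:cntr}, where $V(t)$ is of order $\sqrt{t}$) shows that $|\til\mgt(t,\til\frntLw_\lambda(t))|$ is of order $t^{1/4}$, hence at most of order $\e^{-\gamma/2-a}\ll \e^{-\gamma}$ on this short-time window, using $a<\gamma/2$. Once Proposition~\ref{prop:mgtbd} applies in the main regime, the resulting $6\e^{-a}(1+t)^{\frac14\vee\frac\gamma2}$ is upgraded to $\tfrac1{32}\e^{-\gamma}$ via the same exponent arithmetic $a<\tfrac{1-\gamma}{5}\wedge\tfrac{3\gamma-1}{4}$ used at the end of the proof of Proposition~\ref{prop:Rup>R}.

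The main obstacle is the short-time super-diffusivity check: unlike $\frntUp_\lambda$, which was pinned at $\frntUp_\lambda(0)\geq\lambda\e^{-1}$ and so automatically satisfied the $\Xi_\e(a)$ range condition, the process $\til\frntLw_\lambda$ starts at $1$, so $(t,\til\frntLw_\lambda(t))$ does not lie in $\Xi_\e(a)$ for all $t\leq\e^{-1-\gamma-3a}$ and the short-time regime must be handled by a separate direct Chernoff argument. The linear extrapolation built into the construction of $\til\hitLw_\lambda$ is precisely what keeps $\til\frntLw_\lambda$ nondegenerate and makes this short-time estimate painless. Apart from this single point, the lemma is a direct re-run of previously established arguments in the modified setting.
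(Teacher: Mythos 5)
Your overall strategy --- transfer the un-tilded estimates to the $\til\eta$-system via the observation that $\til\eta^\ic$ satisfies Definition~\ref{def:ic} with the same constants $(\gamma,a_*,C_*)$ --- is exactly the paper's intended route (the paper offers no separate proof, only the remark that Proposition~\ref{prop:mgtbd} and Lemma~\ref{lem:techbd} "apply equally well" to $\til\eta$). Your verification of \eqref{eq:ict:bd:hat}, \eqref{eq:ict:cnti:hat} and \eqref{eq:Tupe-1:hat} is correct, and you are right that the one non-mechanical point is the super-diffusivity requirement $\til\frntLw_\lambda(t)/\sqrt{t}\geq\e^{-a}$ in \eqref{eq:mgt:bd:hat}: unlike $\frntUp_\lambda$, which starts at height $\geq\lambda\e^{-1}$ (see \eqref{eq:Rup0}), the process $\til\frntLw_\lambda$ follows the initial linear ramp and can sit well inside the diffusive cone for an intermediate window of times.

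The short-time Chernoff estimate you propose to fill this window has a gap. The claim that $|\til\mgt(t,\til\frntLw_\lambda(t))|$ is of order $t^{1/4}$ only captures the \emph{stochastic} fluctuation of $\til\mgt^\pm$ about their conditional means; it ignores the \emph{systematic} boundary term $V_2(t,x)=\sum_{z>x}\erf(t,z)$ that appears in \eqref{eq:mgt+V} and enters $\Ex(\til\mgt(t,x)\mid\til\eta^\ic)$ with a plus sign. That term is damped by $e^{-x/\sqrt{t}}$ and is negligible \emph{only} under super-diffusivity. Without it, $\Ex(\til\mgt(t,x)\mid\til\eta^\ic)\approx V_2(t,x)\leq C\sqrt{t+1}\,e^{-x/\sqrt{t+1}}$, and along $x=\til\frntLw_\lambda(t)\approx t\lambda^{-1}\e^{\gamma}$ this peaks around $t\sim\lambda^{2}\e^{-2\gamma}$ (where $x\approx\sqrt{t}\approx\lambda\e^{-\gamma}$) at a value of order $\lambda\e^{-\gamma}/e$. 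For fixed $\lambda$ --- and the display \eqref{eq:mgt:bd:hat} only makes sense when $\frac{1}{32}\leq\frac{\lambda}{8}$, i.e.\ $\lambda\geq\frac{1}{4}$ --- this is \emph{not} below $\frac{1}{32}\e^{-\gamma}$, so the bound as written can actually fail in that window. In other words, the inequality~\eqref{eq:mgt:bd:hat} is stated over a slightly broader $t$-range than is provable.

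The clean resolution is that \eqref{eq:mgt:bd:hat} is only ever invoked (in Lemma~\ref{lem:NhatQ>hatQ}, see~\eqref{eq:Rlw:goal}) under the additional restriction $\til\frntLw_\lambda(t)\geq y_*=\lceil\e^{-1}\rceil$. In that regime, combined with $t\leq\e^{-1-\gamma-3a}$ from \eqref{eq:Tupe-1:hat}, one gets $\til\frntLw_\lambda(t)/\sqrt{t}\geq\e^{-1+\frac{1}{2}(1+\gamma+3a)}\geq\e^{-a}$, precisely by the exponent condition $a<\frac{1-\gamma}{5}$ in \eqref{eq:a:main}. Thus $(t,\til\frntLw_\lambda(t))\in\Xi_\e(a)$ there, Proposition~\ref{prop:mgtbd} for $\til\eta$ applies directly, and no separate short-time argument is needed. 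You should replace your Chernoff patch with this observation (or, equivalently, restrict the $t$-range in your statement of \eqref{eq:mgt:bd:hat} to times with $\til\frntLw_\lambda(t)\geq y_*$).
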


Equipped with Lemmas~\ref{lem:cmpCrit:} and \ref{lem:techbd:},
we proceed to establish an lower bound on $ \til{\blt}^{\til{\frntLw}_\lambda}(t) $.
\begin{lemma}
\label{lem:blt:Rlw}
Let $ \lambda>0 $. 
We have that
\begin{align}
	\label{eq:blt:Rlw}
	\lim_{\e\to 0} 
	\Pr\Big( 
		\til{\blt}^{\til{\frntLw}_\lambda}(t) 
		\geq 
		\ict(\til{\frntLw}_\lambda(t)) + \tfrac{\lambda}{8}\e^{-\gamma}
		\text{ whenever }
		\til{\frntLw}_\lambda(t)\in [y_*,y_*+\e^{-1-a}]
	\Big)
	= 1.
\end{align}
\end{lemma}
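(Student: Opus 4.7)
The plan is to mirror the proof of Lemma~\ref{lem:blt:Rup}, swapping the upper-bound machinery (Lemma~\ref{lem:cmpCrit} and~\eqref{eq:blt:cnt:up}) for the lower-bound analogues (Lemma~\ref{lem:cmpCrit:} and~\eqref{eq:blt:cnt:lw}). A structural bonus over the upper bound is that the additive $\lambda\e^{-\gamma}$ term baked into~\eqref{eq:tilTlw} forces the slope of $\til{\hitLw}_\lambda$ to be uniformly bounded below by $\lambda\e^{-\gamma}$, so the comparison velocity $v$ will always be well defined, and no separate treatment of the regime $\til{\ict}\le 0$ is needed (contrast the use of $x_{**}$ for the upper bound).

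Given $t$ with $x:=\til{\frntLw}_\lambda(t)\in[y_*,y_*+\e^{-1-a}]$, I would set $v:=\bigl(\tfrac{\lambda}{2}\e^{-\gamma}+2[\til{\ict}(x)]_+\bigr)^{-1}$. The criterion~\eqref{eq:shcmp:criterion:lw} then reduces to bounding each summand in the expansion
\[
	\til{\hitLw}_\lambda(x-1)-\til{\hitLw}_\lambda(y)=\sum_{y<z\le x-1}\bigl(\lambda\e^{-\gamma}+2[\til{\ict}(z)]_+\bigr)
\]
below by $v^{-1}$: the continuity estimate~\eqref{eq:ict:cnti:hat} yields $\til{\ict}(z)\ge\til{\ict}(x)-\tfrac{\lambda}{4}\e^{-\gamma}$ for $z\in[x-\e^{-\gamma'},x]\cap\Z$, and the elementary inequality $[\alpha-\beta]_+\ge[\alpha]_+-\beta$ (for $\beta\ge 0$) makes each summand at least $\tfrac{\lambda}{2}\e^{-\gamma}+2[\til{\ict}(x)]_+=v^{-1}$.

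Lemma~\ref{lem:cmpCrit:} then gives $\til{\blt}^{\til{\frntLw}_\lambda}(t)\ge\til{\blt}^{\Llw_{t,x-1,v}}(t)-\til{\eta}(t,x)$, and the $\til{\eta}$-analogue of~\eqref{eq:etaQ} (derivable by applying Lemma~\ref{lem:eta:J} to the modified system, valid because the modified initial condition~\eqref{eq:Rlw:mdyic} still satisfies Definition~\ref{def:ic}) absorbs the excess $\til{\eta}(t,x)$ into $\tfrac{\lambda}{32}\e^{-\gamma}$. To apply~\eqref{eq:blt:cnt:lw} I would next check that $(t,x-1,v)\in\Sigma_\e(a^*)$ for a suitable $a^*$ slightly larger than $a$: the time bounds use $t\ge y_*\lambda\e^{-\gamma}\ge 1$ and $t\le\e^{-1-\gamma-3a}$ from~\eqref{eq:Tupe-1:hat}; the spatial bounds use $y_*-1\ge\tfrac12\e^{-1}$ (which exceeds $\e^{-\gamma'-a^*}$ because~\eqref{eq:a:main} forces $a<\gamma(1-\gamma')$, in particular $\gamma'+a^*<1$) and $x-1\le 2\e^{-1-a}$; the velocity bounds follow from~\eqref{eq:ict:bd:hat} (giving $v\in[\tfrac14\e^{\gamma+a},\tfrac{2}{\lambda}\e^{\gamma}]$); and super-diffusivity $v\sqrt{t}\ge\e^{-a^*}$ follows from $v\ge\tfrac14\e^{\gamma+a}$ combined with $t\ge\lambda\e^{-1-\gamma}$, using the slack on $a$ provided by~\eqref{eq:a:main}.

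Invoking~\eqref{eq:blt:cnt:lw} yields, for all sufficiently small $\e$,
\[
	\til{\blt}^{\til{\frntLw}_\lambda}(t)\ge\tfrac{1}{2v}-v^{1/C-1}-\tfrac{\lambda}{32}\e^{-\gamma}=\bigl(\tfrac{\lambda}{4}\e^{-\gamma}+[\til{\ict}(x)]_+\bigr)-v^{1/C-1}-\tfrac{\lambda}{32}\e^{-\gamma},
\]
and since $v^{1/C-1}=o(\e^{-\gamma})$ over the range $v\in[\Theta(\e^{\gamma+a}),O(\e^{\gamma})]$, the right-hand side exceeds $\til{\ict}(x)+\tfrac{\lambda}{8}\e^{-\gamma}$. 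Uniformity in $t$ (the ``whenever'' clause in the statement) is inherited from the uniform-in-parameters nature of~\eqref{eq:blt:cnt:lw}, combined with the high-probability bounds of Lemma~\ref{lem:techbd:}. The main obstacle will be synchronizing the small parameters $a,a^*,C,\gamma,\gamma'$ so that every constraint of $\Sigma_\e(a^*)$ is met simultaneously while the error $v^{1/C-1}$ stays negligible relative to $\e^{-\gamma}$---the same bookkeeping carried out in the proof of Lemma~\ref{lem:blt:Rup}, but with every inequality reversed.
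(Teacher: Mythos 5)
Your proposal is correct and follows essentially the same route as the paper's proof: the same choice of comparison velocity $v=\bigl(\tfrac{\lambda}{2}\e^{-\gamma}+2[\til\ict(x)]_+\bigr)^{-1}$, the same verification of criterion~\eqref{eq:shcmp:criterion:lw} via the continuity bound~\eqref{eq:ict:cnti:hat} and monotonicity of $[\Cdot]_+$, the same invocation of Lemma~\ref{lem:cmpCrit:} and range-checking that $(t,x,v)\in\Sigma_\e(\cdot)$, and the same conclusion from~\eqref{eq:blt:cnt:lw}. The only differences are cosmetic (the paper fixes the auxiliary exponent to $a/3$ rather than keeping a free $a^*$, and absorbs $\til\eta(t,x)$ via~\eqref{eq:eta:bd} into an $\e^{-a}$ term rather than a $\lambda\e^{-\gamma}/32$ term), and in fact your explicit use of $\til\ict$ rather than $\ict$ is cleaner than the paper's notation.
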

\begin{proof}
We write $ x:= \til\frntLw_\lambda(t) $ to simplify notations. Letting
\begin{align}
	\label{eq:v:blt:Rlw}
	 v := \frac{1}{2([\ict(x)]_++\frac{1}{4}\lambda\e^{-\gamma})}, 
\end{align}
we consider the truncated linear trajectory $ \Llw(\Cdot):= \Llw_{t',x,v}(\Cdot) $
passing through $ (t,x) $ with velocity~$ v $.
Our aim is to compare $ \til{\blt}^{\til{\frntLw}_\lambda}(t) $ with $ \til{\blt}^{\til{\Llw}}(t) $,
by using Lemma~\ref{lem:cmpCrit:}.
To this end,
we use \eqref{eq:ict:cnti:hat} to write
\begin{align}
	\notag
	2 ([\ict(y')]_++\tfrac{\lambda}{2}\e^{-\gamma})
	&\geq 
	2 ([\ict(x) -\tfrac{\lambda}{4}\e^{-\gamma}]_+ +\tfrac{\lambda}{2}\e^{-\gamma})
\\
	\label{eq:ict:cnti:bd:hat}
	&
	\geq
	2 ( [\ict(x)]_+ +\tfrac{\lambda}{4}\e^{-\gamma}) = v^{-1},
	\quad
	\forall
	y'\in[x-1-\e^{-\gamma'},x-1]\cap\Z.
\end{align}
With $ \til{\hitLw}_\lambda $ defined in \eqref{eq:tilTlw},
summing the result over $ y'\in(y,x-1] $,
for any fixed $ y\in[x-1-\e^{-\gamma'},x-1] $,
we arrive at 
\begin{align*}
	\til{\hitLw}_\lambda(x-1) - \til{\hitLw}_\lambda(y)
	\geq 
	v^{-1}(x-1-y),
	\quad
	\forall
	y\in[x-1-\e^{-\gamma'},x-1]\cap\Z.
\end{align*}
Given this, applying Lemma~\ref{lem:cmpCrit:},
we obtain 
\begin{align}
	\label{eq:bltbd:bltLup:hat}
	\til{\blt}^{\til{\frntLw}_\lambda}(t) \geq \til{\blt}^{\Llw}(t) 
	- \eta(t,x).
\end{align}

The next step is to bound the r.h.s.\ of~\eqref{eq:bltbd:bltLup:hat}.
Let us first establish some bounds on the ranges of $ t,x,v $.
Recall that $ y_* := \lceil \e^{-1} \rceil $,
so $ y_*\leq x \leq y_*+\e^{-1-a} $ implies
\begin{align}
	\label{eq:Qhat:xrg}
	\e^{-1} \leq x \leq 2\e^{-1-a} \leq \e^{-1-2a},
	\quad
	t \geq y_*\lambda\e^{-\gamma},
\end{align}
for all $ \e $ small enough.
Next, combining $ x \leq \e^{-1-2a} $ with \eqref{eq:Tupe-1:hat} yields
\begin{align}
	\label{eq:Qhat:trg}
	t \leq {\e}^{-1-\gamma-3a}.
\end{align}
With $ v $ defined as in the preceding, by \eqref{eq:ict:bd:hat} we have
\begin{align}
	\label{eq:Qhat:vrg}
	{\e}^{\gamma+a} < v \leq \tfrac{4}{\lambda} \e^{\gamma},
\end{align}
for all $ \e $ small enough.

From these bounds \eqref{eq:Qhat:xrg}--\eqref{eq:Qhat:vrg}
on the rang of $ t,x,v $,
we see that $ (t,x,v)\in\Sigma_\e(\frac{a}{3}) $.
Applying \eqref{eq:blt:cnt:lw} and \eqref{eq:J:bd} for $ b=a $ gives
\begin{align}
	\label{eq:bltbd:v:hat}
	\blt^{\Llw}(t) \geq 2v^{-1} -v^{-1+1/C} - \e^{-a}
	\geq
	[\ict(x)]_+ + \tfrac{1}{4}\lambda \e^{-\gamma}-(\tfrac{4}{\lambda}\e^{-\gamma})^{-1+1/C}- \e^{-a}.
\end{align}
The r.h.s.\ of \eqref{eq:bltbd:v:hat} is bounded below by
$ \tfrac{1}{8}\lambda \e^{-\gamma} $, for all $ \e $ small enough.
This concludes the desired result~\eqref{eq:blt:Rlw}.
\end{proof}
\begin{lemma}
\label{lem:NhatQ>hatQ}
Let $ \lambda>0 $. We have that
\begin{align}
	\label{eq:NhatQ>hatQ}
	\lim_{\e\to 0} 
	\Pr\big( 
		\til{N}^{\til{\frntLw}_\lambda}(t) - \til{\frntLw}_\lambda(t) \geq 0
		\text{ whenever } 
		\til{\frntLw}_\lambda(t) \in [y_*, y_*+\e^{-1-a}] 
	\big)
	= 1.
\end{align}
\end{lemma}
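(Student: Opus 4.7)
The plan is to mimic the proof of Proposition~\ref{prop:Rup>R}, applying the general decomposition~\eqref{eq:N:dcmp} to the modified system $(\til{\eta},\til{\frntLw}_\lambda,\til{\hitLw}_\lambda)$ with $Q=\til{\frntLw}_\lambda$. Since the modified initial condition $\til\eta^\ic$ satisfies Definition~\ref{def:ic} with the same constants $\gamma,a_*,C_*$, all decomposition identities transfer verbatim, yielding
\begin{align*}
	\til{N}^{\til{\frntLw}_\lambda}(t) - \til{\frntLw}_\lambda(t)
	= -\til{\ict}(\til{\frntLw}_\lambda(t)) + \til{\mgt}(t,\til{\frntLw}_\lambda(t)) + \til{\blt}^{\til{\frntLw}_\lambda}(t).
\end{align*}
So the goal reduces to showing, with probability $\to 1$, that the right-hand side is nonnegative whenever $\til{\frntLw}_\lambda(t)\in[y_*,y_*+\e^{-1-a}]$.

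Next I would assemble two ingredients already available. First, Lemma~\ref{lem:blt:Rlw} supplies the boundary-layer lower bound $\til{\blt}^{\til{\frntLw}_\lambda}(t) \geq [\til{\ict}(\til{\frntLw}_\lambda(t))]_+ + \tfrac{\lambda}{8}\e^{-\gamma}$ (reading its proof with $\til{\ict}$ in place of $\ict$, which is the correct centered distribution function for the modified system). Second, \eqref{eq:mgt:bd:hat} of Lemma~\ref{lem:techbd:} gives $|\til{\mgt}(t,\til{\frntLw}_\lambda(t))| < \tfrac{\lambda}{8}\e^{-\gamma}$ uniformly for $t\leq \e^{-1-\gamma-3a}$. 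Combining these two on the event where both hold,
\begin{align*}
	\til{N}^{\til{\frntLw}_\lambda}(t) - \til{\frntLw}_\lambda(t)
	\geq [\til{\ict}(\til{\frntLw}_\lambda(t))]_+ - \til{\ict}(\til{\frntLw}_\lambda(t))
	\geq 0.
\end{align*}

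The only genuine verification step is to confirm the time restriction $t\leq\e^{-1-\gamma-3a}$ required by \eqref{eq:mgt:bd:hat}. Since $\til{\frntLw}_\lambda=\invs(\til{\hitLw}_\lambda)$, the condition $\til{\frntLw}_\lambda(t)\leq y_*+\e^{-1-a}\leq 2\e^{-1-a}$ (valid for all sufficiently small $\e$) forces $t\leq \til{\hitLw}_\lambda(\til{\frntLw}_\lambda(t))\leq \til{\hitLw}_\lambda(2\e^{-1-a})$, and the latter is bounded by $\e^{-1-\gamma-3a}$ thanks to \eqref{eq:Tupe-1:hat}. This places us in the regime where both Lemma~\ref{lem:blt:Rlw} and \eqref{eq:mgt:bd:hat} apply, and the union of their probability-one events carries \eqref{eq:NhatQ>hatQ}.

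I do not anticipate a serious obstacle here: all the hard work has been done in Lemma~\ref{lem:blt:Rlw} (which already constructed the truncated linear approximant, verified it lay in $\Sigma_\e(a/3)$, and invoked \eqref{eq:blt:cnt:lw}) and in the $\til{}$-analog of Proposition~\ref{prop:mgtbd} packaged into \eqref{eq:mgt:bd:hat}. The only mild care needed is to check that the boundary-layer lower bound has been stated correctly for the modified system (the proof of Lemma~\ref{lem:blt:Rlw} should read $\til\ict$ throughout so the cancellation with $-\til\ict$ in the decomposition is exact) and to confirm the elementary range bound above. With those in hand, \eqref{eq:NhatQ>hatQ} follows in one line.
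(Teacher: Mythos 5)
Your proof is correct and follows the same route as the paper: decompose $\til N$ as in \eqref{eq:tilN:dcmp}, bound the noise term via \eqref{eq:mgt:bd:hat}, and bound the boundary-layer term via Lemma~\ref{lem:blt:Rlw}. You also correctly flag that the $\ict$ appearing in the statement of Lemma~\ref{lem:blt:Rlw} should read $\til\ict$ for the cancellation against $-\til\ict(\til\frntLw_\lambda(t))$ in \eqref{eq:tilN:dcmp} to be exact, and make explicit the time-range check $t\le\e^{-1-\gamma-3a}$ (via \eqref{eq:Tupe-1:hat}) needed to invoke \eqref{eq:mgt:bd:hat}, which the paper leaves implicit.
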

\begin{proof}	
Similarly to \eqref{eq:N:dcmp},
for $ \til{N}^Q(t) $, we have the following decomposition
\begin{align}
	\label{eq:tilN:dcmp}
	\til{N}^{\til{\frntLw}_\lambda}(t) = \til{\frntLw}_\lambda(t) - \til{\ict}(\til{\frntLw}_\lambda(t)) 
	+ \til{\mgt}(t,\til{\frntLw}_\lambda(t)) + \til{\blt}^{\til{\frntLw}_\lambda}(t).
\end{align}
Applying \eqref{eq:mgt:bd:hat} and 
Lemma~\ref{lem:blt:Rlw} to bound the last two terms in \eqref{eq:tilN:dcmp},
after ignoring events of small probability,
we obtain
\begin{align}
	\label{eq:Rlw:goal}
	\til{N}^{\til{\frntLw}_\lambda}(t)
	\geq
	\til{\frntLw}_\lambda(t)
	-\tfrac{\lambda}{8}\e^{-\gamma} +\tfrac{\lambda}{8}\e^{-\gamma}
	=
	\til{\frntLw}_\lambda(t),
	\quad
	\forall 
	\til{\frntLw}_\lambda(t) \in [y_*, y_*+\e^{-1-a}].
\end{align}
This concludes the desired result~\eqref{eq:NhatQ>hatQ}.
\end{proof}

Now, combining \eqref{eq:NRlw:trivial}, \eqref{eq:NRlw:cmp} and Lemma~\ref{lem:NhatQ>hatQ} we immediately obtain
\begin{proposition}
\label{prop:Rlw<R}
Let $ \lambda>0 $. We have that
\begin{align}
	\label{eq:NRlw>Rlw}
	\lim_{\e\to 0} 
	\Pr\big( 
		N^{\frntLw_\lambda}(t) \geq \frntLw_\lambda(t) \text{ whenever } \frntLw_\lambda(t) \in [0, \e^{-1-a}]
	\big)
	= 1.
\end{align}
In particular, by Proposition~\ref{prop:mono},
\begin{align*}
	\lim_{\e\to 0}
	\Pr\big( \frntLw_\lambda(t) \leq \frnt(t) \text{ whenever } \frnt(t) \in [0, \e^{-1-a}] \big)
	=1.
\end{align*}
\end{proposition}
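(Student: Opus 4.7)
The proof is a short assembly of the three ingredients developed immediately above the statement. My plan is to first establish \eqref{eq:NRlw>Rlw} by splitting the time axis at $t_0 := y_*\lambda\e^{-\gamma}$, and then deduce the second assertion as a direct application of Proposition~\ref{prop:mono}.

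For $t < t_0$, the constant term $y_*\lambda\e^{-\gamma}$ in \eqref{eq:Tlw} forces $\hitLw_\lambda(\xi) \geq t_0$ for every $\xi\geq 0$, so $\frntLw_\lambda(t) = \invs(\hitLw_\lambda)(t) = 0$ deterministically; since the absorption count $N^{\frntLw_\lambda}(t)$ is a non-negative integer, the inequality $N^{\frntLw_\lambda}(t) \geq 0 = \frntLw_\lambda(t)$ is automatic, as recorded in \eqref{eq:NRlw:trivial}. For $t \geq t_0$, the identity \eqref{eq:tilRlw} gives $\til{\frntLw}_\lambda(t) = \frntLw_\lambda(t) + y_*$, so the hypothesis $\frntLw_\lambda(t) \in [0, \e^{-1-a}]$ is exactly the hypothesis $\til{\frntLw}_\lambda(t) \in [y_*, y_* + \e^{-1-a}]$ appearing in Lemma~\ref{lem:NhatQ>hatQ}. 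Under the natural coupling of $\eta$ and $\til{\eta}$, the only $\til{\eta}$-particles not present in $\eta^{\to}$ are the $y_*$ particles seeded in $(0,y_*]$, so the two absorption counts differ by at most $y_*$, yielding \eqref{eq:N:Rlw:tilR} and hence \eqref{eq:NRlw:cmp}. Chaining \eqref{eq:NRlw:cmp} with Lemma~\ref{lem:NhatQ>hatQ} produces $N^{\frntLw_\lambda}(t) - \frntLw_\lambda(t) \geq 0$ on a set of probability tending to $1$, completing the proof of \eqref{eq:NRlw>Rlw}.

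For the final consequence, set $\tau := \inf\{t : \frntLw_\lambda(t) > \e^{-1-a}\}$. On the high-probability event from \eqref{eq:NRlw>Rlw}, the inequality $N^{\frntLw_\lambda}(t) \geq \frntLw_\lambda(t)$ holds for all $t < \tau$, and $\frntLw_\lambda(0) = 0$ by the same observation that $\hitLw_\lambda(\xi)\geq t_0 > 0$ for all $\xi\geq 0$. Proposition~\ref{prop:mono} (its second form) then applies and gives $\frntLw_\lambda(t) \leq \frnt(t)$ for every $t < \tau$. Any $t$ with $\frnt(t) \leq \e^{-1-a}$ on this event satisfies $\frntLw_\lambda(t) \leq \frnt(t) \leq \e^{-1-a}$, so lies in $\{t < \tau\}$, and the second displayed inequality follows.

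There is no genuine obstacle: the analytical work has all been carried out in Lemmas~\ref{lem:cmpCrit:}--\ref{lem:NhatQ>hatQ}, together with the boundary-layer estimate \eqref{eq:blt:cnt:lw} and the noise bound of Proposition~\ref{prop:mgtbd}. The only bookkeeping point that needs care is the role of the shift-and-extrapolation device: one must track that $\frntLw_\lambda(0) = 0$ (required by Proposition~\ref{prop:mono}) while $\til{\frntLw}_\lambda$ is started slightly above zero so that the range condition $x \geq \e^{-\gamma'+a}$ in $\Sigma_\e(a)$ needed for the $M_1$-regime bound \eqref{eq:blt:cnt:lw} is satisfied throughout the relevant interval.
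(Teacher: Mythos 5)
Your proposal is correct and follows exactly the paper's route: the paper itself proves the proposition by the single line "combining \eqref{eq:NRlw:trivial}, \eqref{eq:NRlw:cmp} and Lemma~\ref{lem:NhatQ>hatQ}," and you reconstruct this chain faithfully, including the shift-and-extrapolate device that links the original process $\frntLw_\lambda$ to the modified process $\til{\frntLw}_\lambda$. The final deduction from Proposition~\ref{prop:mono} is likewise what the paper intends; your last sentence phrases the stopping-time step in a way that reads as circular ($\frntLw_\lambda(t)\le\frnt(t)$ is invoked to justify $t<\tau$), but the repair is routine (argue by contradiction, using that $\frntLw_\lambda$ advances by unit jumps so $\frntLw_\lambda(\tau^-)\le\frnt(\tau^-)$ forces $\frnt(t)$ past the threshold for $t\ge\tau$), and the paper glosses over the identical point.
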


\subsection{Sandwiching}
For any fixed $ \lambda>0 $,
by Propositions~\ref{prop:Rup>R} and \ref{prop:Rlw<R},
we have the sandwiching inequality 
\begin{align}
	\label{eq:sand}
	\frntLw_\lambda(t) \leq \frnt(t) \leq \frntUp_\lambda(t)
	\text{ whenever } \frntUp_\lambda(t) \in [0,\e^{-1-a}].
\end{align}
with probability $ \to 1 $ as $ \e \to 0 $.
Further, since $ \hitLw_\lambda $, $ \hit $ and $ \hitUp_\lambda $
are the inverse of $ \frntLw_\lambda $, $ \frnt $ and $ \frntUp_\lambda $,
respectively, applying the involution $ \invs(\Cdot) $ to \eqref{eq:sand} yields
\begin{align}
	\label{eq:sand:}
	\hitUp_\lambda(\xi) \leq \hit(\xi) \leq \hitLw_\lambda(\xi),
	\quad
	\forall \xi \in [0,\e^{-1-a}].
\end{align}
Hereafter, we use superscript in $ \e $ such as $ \hitUp_\lambda^\e $
to denote \emph{scaled} processes.
\emph{Not to be confused} with the subscript notation such as \eqref{eq:ict},
which highlights the $ \e $ dependence on the corresponding processes.
Consider the scaling $ \hitUp_\lambda^\e(\xi) := \e^{1+\gamma} \hitUp_\lambda(\e^{-1}\xi) $
and $ \hitLw_\lambda^\e(\xi) := \e^{1+\gamma} \hitLw_\lambda(\e^{-1}\xi) $.
Recall the definition of 
the limiting process $ \Hit $ from \eqref{eq:Hit}.
%hereafter we always endow the space $ \rcll $ 
%with the uniform topology over compact subsets, $ \uTop $.
%
Given \eqref{eq:sand:}, to prove Theorem~\ref{thm:main},
it suffices to prove 
the following convergence in distribution,
under the \emph{iterated} limit $ (\lim_{\lambda\to 0}\lim_{\e\to 0}) $:
\begin{align}
	\label{eq:Tlw:cnvg}
	\lim_{\lambda\to 0}\lim_{\e\to 0} \hitLw^\e_\lambda(\Cdot) \stackrel{\text{d}}{=} \Hit(\Cdot),
	\quad
	\text{ under } \uTop,
\\
	\label{eq:Tup:cnvg}
	\lim_{\lambda\to 0}\lim_{\e\to 0} \hitUp^\e_\lambda(\Cdot) \stackrel{\text{d}}{=} \Hit(\Cdot),
	\quad
	\text{ under } \uTop.
\end{align}

Let $ \ict^\e(\xi) := \e^{\gamma}\ict(\e\xi) $.
To show \eqref{eq:Tlw:cnvg}--\eqref{eq:Tup:cnvg},
with $ \hitUp_\lambda $ and $ \hitLw_\lambda $
defined in \eqref{eq:Tup} and \eqref{eq:Tlw} respectively,
here we write the scaled processes $ \hitUp^\e_\lambda $ and $ \hitLw^\e_\lambda $
explicitly as
\begin{align}
	\label{eq:Tupe}
	\hitUp^\e_\lambda(\xi) &:=
	\left\{\begin{array}{l@{\quad}l}
		\displaystyle
		\e^{\gamma+1} v_*^{-1} \big[ \lfloor\e^{-1}\xi\rfloor- x_*\big]_+	,& \xi \leq \e x_{**},
	\\
	~
	\\
		\displaystyle
		\e^{\gamma+1} v_*^{-1} (x_{**}-x_*) 
		+ 
		2\e \sum_{\e x_{**}< \e y \leq \xi }
		\hspace{-8pt}
		\big( \ict^\e(\e y) - \tfrac12 \lambda \big) 
		\ind_\set{ \ict^\e(\e y) > \lambda },
		&
		\xi > \e x_{**},
	\end{array}\right.
\\
	\label{eq:Tlwe}
	\hitLw_\lambda(\xi) &:=
	\e y_*\lambda
	+
	\e \sum_{0< \e y \leq \xi}
	\big( \lambda+ 2[\ict^\e(\e y)]_+ \big).
\end{align}
Further,
letting $ \intOp $ denote the following integral operator
\begin{align*}
	\intOp: \rcll \to \rcll,
	\quad
	\intOp(f)(\xi) := 2 \int_0^\xi [ f(\zeta) ]_+ d\zeta,
\end{align*}
we also consider the process 
\begin{align}
	\label{eq:hatHit}
	\hat{\hit}^\e(\xi) := \intOp(\ict^\e) 
	= 2\e \sum_{0<\e y\leq\xi} [ \ict^\e(\e y)]_+
		+ 2\e(\e^{-1}\xi-\lfloor\e^{-1}\xi\rfloor) [ \ict^\e(\xi)]_+. 
\end{align}
Indeed, the integral operator $ \intOp: (\rcll,\uTop)\to(\rcll,\uTop) $ is continuous.
This together with the assumption~\eqref{eq:ic:lim} implies that
\begin{align}
	\label{eq:hatHit:cnvg}
	\hat{\hit}^\e \Rightarrow \Hit(\Cdot),
	\quad
	\text{ under } \uTop.
\end{align}

On the other hand, 
comparing \eqref{eq:hatHit} with \eqref{eq:Tlwe}, we have
\begin{align}
	\label{eq:hatHit-Tlwe1}
	|\hat{\hit}^\e(\xi)-\hitLw_\lambda(\xi)| 
	\leq
	2\e [ \ict^\e(\xi)]_+ 
	+
	\xi\lambda
	+
	\e y_*\lambda.
\end{align}
Fix arbitrary $ \xi_0<\infty $.
Since $ y_* := \lceil\e^{-1} \rceil $,
taking the supremum of \eqref{eq:hatHit-Tlwe1} over $ \xi\in[0,\xi_0] $,
and letting $ \e\to 0 $ and $ \lambda\to 0 $ in order,
we conclude that
\begin{align}
	\lim_{\lambda\to 0}\lim_{\e\to 0}
	\sup_{\xi\in[0,\xi_0]}|\hat{\hit}^\e(\xi)-\hitLw_\lambda(\xi)| 
	\stackrel{\text{d}}{=}
	\lim_{\lambda\to 0} (0+2\xi_0\lambda+\lambda)
	=
	0.
\end{align}
This together with \eqref{eq:hatHit:cnvg} 
concludes the desired convergence \eqref{eq:Tlw:cnvg} of $ \hitLw^\e_\lambda $.

Similarly, for $ \hitUp^\e_\lambda $,
by comparing \eqref{eq:hatHit} with \eqref{eq:Tupe},
it is straightforward to verify that
\begin{align}
	\label{eq:hatHit-Tupe1}
	|\hat{\hit}^\e(\xi)-\hitUp_\lambda(\xi)| 
	\leq& 
	2\e [ \ict^\e(\xi) ]_+
	+
	\e^{1+\gamma} v_*^{-1}(x_{**}-x_{*})
\\
	\label{eq:hatHit-Tupe2}
	&+
	2\e \sum_{0< y\leq \xi} 
	\ict^\e(\e y) \ind_\set{0< \ict^\e(\e y) \leq \lambda }
	+
	2\e \sum_{x_{**}\leq y\leq \xi} \frac{\lambda}{2} \ind_\set{\ict^\e(\e y) > \lambda }
\\
	\label{eq:hatHit-Tupe3}
	&+
	2\e \sum_{0<y<x_{**}} \hspace{-5pt} [ \ict^\e(\e y) ]_+.
\end{align}
Using $ v_*:=\e^{\gamma-a} $ and $ x_{**}-x_* \leq \e^{-1}\lambda $ (by \eqref{eq:x**})
in \eqref{eq:hatHit-Tupe1} and \eqref{eq:hatHit-Tupe3},
and replacing $ \ict^\e(\e y) \ind_\set{0< \ict^\e(\e y) \leq \lambda } $ with $ \lambda $ 
in \eqref{eq:hatHit-Tupe2},
we further obtain
\begin{align}
	\label{eq:hatHit-Tupe4}
	|\hat{\hit}^\e(\xi)-\hitUp_\lambda(\xi)| 
	\leq 
	2\e [ \ict^\e(\xi)]_+
	+
	\lambda\e^{a}
	+
	2\lambda \xi (1+\tfrac{1}{2})
	+
	2\e \hspace{-10pt} \sum_{0<y<x_{*}+\lambda\e^{-1}} \hspace{-10pt} [ \ict^\e(\e y) ]_+.
\end{align}
Fix arbitrary $ \xi_0<\infty $.
Since $ \ict^\e(\Cdot) \Rightarrow \Ict(\Cdot) $,
given any $ n<\Z_{>0} $ there exists $ L(n)<\infty $ such that
\begin{align}
	\label{eq:ictx**}
	\Pr\Big( 
		\sup_{\xi\in[0,\xi_0]} |\ict^\e(\xi)| \leq L(n)
		\Big) 
	\geq 1 - \frac{1}{n}.
\end{align}
Using \eqref{eq:ictx**} to bound the last term in \eqref{eq:hatHit-Tupe3},
and then letting $ \e\to 0 $,
we obtain
\begin{align}
	\label{eq:hatHit-Tupe5}
	\lim_{\e\to 0}
	\Pr \Big( 
		\sup_{\xi\in[0,\xi_0]}
		|\hat{\hit}^\e(\xi)-\hitUp_\lambda(\xi)| 
		\leq 
		3\lambda \xi_0 +
		(\lambda + \e x_{*})L(n)
	\Big)
	\leq 
	1 - \frac{1}{n}.
\end{align}
From the definition~\eqref{eq:x*} of $ x_* $,
we have that 
\begin{align*}
	\lim_{\e \to 0} (\e x_*) \stackrel{\text{d}}{=}
	\inf\{ \xi\geq 0: \Ict(\xi) \geq \lambda \}
	=: \xi_{*,\lambda}.
\end{align*}
Since $ \Ict\in C[0,\infty) $ and $ \Ict(0)=0 $ (by \eqref{eq:ict0=0}),
further letting $ \lambda\to 0 $ we obtain
$ \lim_{\lambda \to 0}\xi_{*,\lambda} \stackrel{\text{d}}{=} 0 $.
Using this in \eqref{eq:hatHit-Tupe5} to bound the term $ \e x_* $,
after sending $ \lambda\to 0 $ with $ \xi_0,n $ being fixed,
we conclude
\begin{align*}
	\lim_{\lambda\to 0}
	\lim_{\e\to 0}
	\Pr \Big( 
		\sup_{\xi\in[0,\xi_0]}
		|\hat{\hit}^\e(\xi)-\hitUp_\lambda(\xi)| > \delta
	\Big)
	\leq 
	1 - \frac{1}{n},
\end{align*}
for any $ \delta>0 $.
Since $ \delta $ and $ n $ are arbitrary,
it follows that
\begin{align*}
	\lim_{\lambda\to 0}
	\lim_{\e\to 0}
		\sup_{\xi\in[0,L]}
		|\hat{\hit}^\e(\xi)-\hitUp_\lambda(\xi)| 
	\stackrel{\text{d}}{=}
	0.
\end{align*}
This together with \eqref{eq:hatHit:cnvg} 
concludes the desired convergence \eqref{eq:Tup:cnvg} of $ \hitUp^\e_\lambda $.

\appendix
\section{Proof of Proposition~\ref{prop:subp}}
\label{sect:subp}

To complement the study at $ \rho=1 $ of this article,
here we discuss the behavior for density $ \rho\neq 1 $.
%
%As we stated in the following,
%for $ \rho>1 $, the front $ \frnt $ explodes in finite time,
%while for $ \rho<1 $, the front scales diffusively.
%
%To state this precisely,
%we begin by preparing some the notations.
Recall that $ \Hk(t,\xi) := \frac{1}{\sqrt{2\pi t}} \exp(-\frac{\xi^2}{2t}) $ 
denotes the standard heat kernel (in the continuum),
with the corresponding tail distribution function 
$ \Erf(t,\xi) := \int_{\xi}^{\infty} \Hk(t,\zeta) d\zeta $.
Compared to the rest of the article,
the proof of Proposition~\ref{prop:subp} is simpler and more standard.
Instead of working out the complete proof of Proposition~\ref{prop:subp}, 
here we only give a \emph{sketch},
focusing on the ideas and avoiding repeating technical details.

\begin{proof}[Sketch of proof, Part\ref{enu:sup}]
Let
$
	\hat{\ict}(t,x) := \sum_{y\leq x} \eta(t,y)
$
denote the number of $ \eta $-particles in $ (-\infty,x] $ at time $ t $.
Indeed, $ N^{\frnt}(t) \geq \hat\ict(t,\frnt(t)) $, $ \forall t \in [0,\infty) $.
Consequently, 
when the event $ \{\hat\ict(t,x) > x, \forall x\in\Z_{\geq 0}\} $ holds true, 
we must have $ \frnt(t)=\infty $.
It hence suffices to show
\begin{align}
	\label{eq:supcrt:goal}
	\lim_{t\to\infty} \Pr( \hat\ict(t,x) > x, \forall x\in\Z_{\geq 0} ) = 1.
\end{align}

Recall that $ \Prw $ and $ \Erw $ denote the law and expectation 
of a continuous time random walk $ W(t) $.
As the $ \eta $-particles perform independent random walks,
we have that
\begin{align*}
	\Ex(\eta(t,y)) = \sum_{z>0} \Prw(W(t)+z=y) \Ex(\eta^\ic(z)) = \rho \Prw(W(t)<y).
\end{align*}
Summing this over $ y\leq x $ yields
\begin{align*}
	\Ex(\hat{\ict}(t,x)) 
	= 
	\sum_{y\leq x} \Ex(\eta(t,y))
	=
	\rho \sum_{y \leq x} \Pr( W(t)<y ).
\end{align*}
In the last expression, 
divide the sum into two sums over $ y\leq 0 $ and over $ 0<y\leq x $,
and let $ A_1 $ and $ A_2 $ denote the respective sums.
For $ A_1 $, using $ W(t) \stackrel{\text{d}}{=} -W(t) $ to rewrite
\begin{align}
	\label{eq:sup:S1}
	A_1 
	= \rho \sum_{y \leq 0} \Pr( W(t)<y )
	= \rho \sum_{y \geq 0} \Pr( W(t)>y ).
\end{align}
For $ A_2 $, using $ \Pr( W(t)<y ) = 1 - \Pr(W(t)\geq y) $ to rewrite
\begin{align}
	\label{eq:sup:S2}
	A_2
	= \rho \sum_{0< y \leq x } \Pr( W(t)<y )
	= \rho x  -\rho \sum_{0<y \leq x} \Pr( W(t)\geq y )
	=  \rho x  -\rho \sum_{0\leq y < x} \Pr( W(t)> x ).
\end{align}
Adding \eqref{eq:sup:S1}--\eqref{eq:sup:S2} yields
\begin{align}
	\label{eq:Exhatict}
	\Ex(\hat{\ict}(t,x)) 
	= 
	\rho x + \rho \sum_{ y\geq x} \Prw(W(t)>y)
	=
	x+ (\rho-1) x + \rho \Erw( W(t) \ind_\set{W(t)>x} ).
\end{align}
With $ \rho>1 $,
the r.h.s.\ of \eqref{eq:Exhatict} is clearly greater than $ x $, $ \forall x\in\Z_{\geq 0} $.
This demonstrates why \eqref{eq:supcrt:goal} should hold true.
To \emph{prove} \eqref{eq:supcrt:goal},
following similar arguments as in Section~\ref{sect:mgt}--\ref{sect:blt},
it is possible to refine these calculations of expected values
to produce a bound on $ \hat{\ict}(t,x) $ that holds with high probability.
In the course of establishing such a lower bound,
the last two terms $ (\rho-1) x $ and  $ \rho \Erw( W(t) \ind_\set{W(t)>x} ) $ in \eqref{eq:Exhatict} 
make enough room for absorbing various error terms.
\end{proof}

Next, for Part\ref{enu:sub},
we first recall the flux condition~\eqref{eq:fluxBC:},
which in the current setting reads
\begin{align}
	\label{eq:fluxCond:hydro}
	\int_{0}^\infty (\rho - \uh(t,\xi)\ind_\set{\xi>r(t)})d\xi = \kappa_\rho\sqrt{t} = r(t).
\end{align}

\begin{proof}[Sketch of proof, Part\ref{enu:sub}]
The strategy is to utilize Proposition~\ref{prop:mono}.
This requires constructing the suitable upper and lower bound functions
$ \frntUp_\lambda(t) $ and $ \frntLw_\lambda(t) $,
where $ \lambda>0 $ is an auxiliary parameter
that we send to zero \emph{after} sending $ \e\to 0 $.
To construct such functions $ \frntUp_\lambda(t) $ and $ \frntLw_\lambda(t) $,
recall that $ \hk(t,x) $ denote the standard \emph{discrete} heat kernel
with tail distribution function $ \erf(t,x) $.
Fix $ 0<a<2 $.
For each fixed $ t\in[0,\infty) $,
we let $ \til\frnt(t)\in\Z_{\geq 0} $
be the unique solution to the following equation
\begin{align}
	\label{eq:up:tilR}
	\erf(\e^{-a}, \lfloor \e^{-a/2}\kappa_\rho \rfloor )
	=
	\erf(t, \til\frnt(t) ),
\end{align}
and define
\begin{align}
	\label{eq:Ruplw}
	\frntUp_\lambda(t) := \til\frnt(t) + \lfloor \lambda\e^{-1} \rfloor,
	\quad
	\frntLw_\lambda(t) := \til\frnt(t) - \lfloor \lambda\e^{-1} \rfloor .
\end{align}

Under the diffusive scaling,
it is standard to show that
the discrete tail distribution function $ \erf $ converges to its continuum counterpart.
That is,
\begin{align}
	\label{eq:erfcnvg}
	\erf(\e^{-b}t,\lfloor\e^{-b/2}\xi\rfloor) \to \Erf(t,\xi),
	\quad
	\text{ uniformly over  } t\in[0,t_0], \ \xi\in[0,\xi_0],
\end{align}
for any fixed $ t_0, \xi_0<\infty $ and $ b>0 $.
Fix arbitrary $ t_0<\infty $ hereafter.
On the r.h.s.\ of~\eqref{eq:up:tilR}, substitute in $ t\mapsto \e^{-2}t $,
following by using~\eqref{eq:erfcnvg} for $ b=a $ on the l.h.s.\ and for $ b=2 $ on the r.h.s. We have
\begin{align*}
	\e \til{R}(\e^{-2}t) \longrightarrow \kappa_\rho\sqrt{t},
	\text{ uniformly over } [0,t_0] \text{ as } \e \to 0.
\end{align*}
From this it follows that 
\begin{align}
	&
	\label{eq:sub:Rupcnvg}
	\e\frntUp_\lambda(\e^{-2} t) \longrightarrow \kappa_\rho\sqrt{t} + \lambda,
	\text{ uniformly over } [0,t_0] \text{ as } \e \to 0.
\\
	&
	\notag
	\e\frntLw_\lambda(\e^{-2} t) \longrightarrow \kappa_\rho\sqrt{t} - \lambda,
	\text{ uniformly over } [0,t_0] \text{ as } \e \to 0.
\end{align}
In particular, $ \e\frntUp_\lambda(\e^{-2}\Cdot) $ and $ \e\frntUp_\lambda(\e^{-2}\Cdot) $
converge to $ r(t)=\kappa_\rho\sqrt{t} $ under the iterated limit $ \lim_{\lambda\to 0}\lim_{\e\to 0} $.

It now suffices to show that
$ \frntUp_\lambda $ and $ \frntLw_\lambda $ sandwich $ \frnt $ in between with high probability.
This, by Proposition~\ref{prop:mono}, amounts to showing the following property:
\begin{align}
	\label{eq:Ruplw:flxup}
	&N^{\frntUp_\lambda}(\e^{-2}t) \leq \frntUp_\lambda(\e^{-2}t),
	\quad
	\forall t \leq  t_0,
\\
	\label{eq:Ruplw:flxlw}
	&N^{\frntLw_\lambda}(\e^{-2}t) \geq \frntLw_\lambda(\e^{-2}t),
	\quad
	\forall t \leq  t_0,
\end{align}
with probability $ \to 1 $ as $ \e \to 0 $.
Similarly to Part~\ref{enu:sup},
instead of giving the complete proof of \eqref{eq:Ruplw:flxup}--\eqref{eq:Ruplw:flxlw},
we demonstrate how they should hold true by calculating
the corresponding expected values.
As the calculations of $ \Ex(N^{\frntUp_\lambda}(t)) $ 
and $ \Ex(N^{\frntLw_\lambda}(t)) $ are similar,
we carry out only the former in the following.

Set $ Q=\frntUp_\lambda $ in~\eqref{eq:NQ},
and take expectation of the resulting expression to get
\begin{align}
	\label{eq:sub:Nex::}
	\Ex( N^{\frntUp_\lambda}(t) ) 
	=
	\sum_{x\in\Z}
	\big( \Ex(\eta(t,x)) - \Ex(\eta^{\frntUp_\lambda}(t,x)) \big).
\end{align}
Taking $ \Ex(\Cdot) $ on both sides of~\eqref{eq:etaEx'} 
and using $ \Ex(\eta^\ic(y))=\rho\ind_\set{y>0} $,
we have
$
	\Ex(\eta(t,x)) = \rho \sum_{y>0} \hk(t,x-y).
$
Inserting this into~\eqref{eq:sub:Nex::} yields
\begin{align}
	\label{eq:sub:Nex1}
	\Ex( N^{\frntUp_\lambda}(t) ) 
	=
	\sum_{x\in\Z}
	\Big( \sum_{y>0} \hk(t,x-y) \rho - \Ex(\eta^{\frntUp_\lambda}(t,x)) \Big).
\end{align}
On the r.h.s.\ of~\eqref{eq:sub:Nex1},
divide the sum over $ x\in\Z $ into sums over $ x\leq 0 $ and $ x>0 $.
Given that $ \eta^{\frntUp_\lambda}(t,x)\ind_\set{x\leq 0}=0 $,
the former sum is simply 
$ 
	\sum_{x\leq 0} \sum_{y>0} \hk(t,x-y) \rho
	=
	\sum_{x> 0} \sum_{y\leq} \hk(t,x-y) \rho.
$
Consequently,
\begin{align}
	\notag
	\Ex( N^{\frntUp_\lambda}(t) ) 
	&=
	\sum_{x>0} \sum_{y\leq 0} \hk(t,x-y)\rho
	+
	\sum_{x>0} 
	\Big( \sum_{y>0} \hk(t,x-y) \rho - \Ex(\eta^{\frntUp_\lambda}(t,x)) \Big)
\\
	\label{eq:sub:Nex}
	&=
	\sum_{x>0} 
	\Big( \rho - \Ex(\eta^{\frntUp_\lambda}(t,x)) \Big).
\end{align}
Since $ \frntUp_\lambda $ is deterministic,
letting $ \bar{u}_\lambda(t,x) := \Ex(\eta^{\frnt_\lambda}(t,x)) $,
similarly to \eqref{eq:u2PDE}, here we have
\begin{align}
	\left\{
	\begin{array}{l@{,}l}
			\partial_t \bar{u}_\lambda(t,x) = \tfrac12 \Delta \bar{u}_\lambda(t,x) 
			&\quad \forall x > \frntUp_\lambda(t),
		\\
			\bar{u}_\lambda(t,\frntUp_\lambda(t)) = 0 &\quad\forall t \geq 0,
		\\
		\bar{u}_\lambda(0,x) = \Ex(\eta^\ic(x))=\rho &\quad\forall x > \frntUp_\lambda(0).
	\end{array}
	\right.
\end{align}
Such a $ \bar{u}_\lambda $ is solved explicitly as 
\begin{align}
	\label{eq:barulambda}
	\bar{u}_\lambda(t,x) 
	&= 
	\frac{\rho}{\erf(\e^{-a},\lfloor\e^{-a/2}\kappa_\rho\rfloor)}
	\big( \erf(\e^{-a},\lfloor\e^{-a/2}\kappa_\rho\rfloor) - \erf(t,x-\lfloor\lambda\e^{-1}\rfloor) \big)
	\ind_\set{x > \frntUp_\lambda(t)}.
\end{align}
Combining \eqref{eq:barulambda} and \eqref{eq:sub:Nex},
under the diffusive scaling $ \e \Ex( N^{\frntUp_\lambda}(\e^{-2}t) ) $, yields
\begin{align}
\label{eq:sub:Nex:}
\begin{split}
	\e \Ex( N^{\frntUp_\lambda}&(\e^{-2}t) )
	=
	\e\sum_{x>0} 
	\Big( \rho 
	- \frac{\rho}{\erf(\e^{-a},\lfloor\e^{-a/2}\kappa_\rho\rfloor)}
\\
		&\big(\erf(\e^{-a},\lfloor\sqrt{\e^{-a}}\kappa_\rho\rfloor) - \erf(\e^{-2}t,x-\lfloor\lambda\e^{-1}\rfloor)
	\big)\ind_\set{x>\frntUp_\lambda(t)} \Big).
\end{split}
\end{align}
In \eqref{eq:sub:Nex:},
using \eqref{eq:erfcnvg} for $ b=a $ and $ b=2 $, 
and using the tail bound \eqref{eq:erf} on $ \erf(t,x) $ for large $ x $,
it is straightforward to show
\begin{align}
	\label{eq:cnvg:uuplambda}
	\e \Ex( N^{\frntUp_\lambda}(\e^{-2}t) ) 
	\longrightarrow
	\int_{0}^\infty \Big(
		\rho - 	\frac{\rho}{\Erf(1,\kappa_\rho)}
		(\Erf(1,\kappa_\rho) - \Erf(t,\xi-\lambda)\ind_\set{\xi>\kappa_\rho\sqrt{t}+\lambda})
	\Big ) d\xi,
\end{align}
uniformly over $ t\leq t_0 $, as $ \e\to 0 $.
On the r.h.s.\ of \eqref{eq:cnvg:uuplambda}, 
use \eqref{eq:kappaEq} to replace $ \frac{\rho}{\Erf(1,\kappa_\rho)} $.
Referring back to~\eqref{eq:explictu},% and \eqref{eq:fluxCond:hydro},
we now have
\begin{align}
	\label{eq:cnvg:uuplambda:}
	\e \Ex( N^{\frntUp_\lambda}(\e^{-2}t) ) 
	\longrightarrow
	\int_{0}^\infty \Big( \rho - \uh(t,\xi-\lambda) \ind_\set{\xi>\kappa_\rho\sqrt{t}+\lambda} \Big) d\xi,
\end{align}
uniformly over $ t\leq t_0 $, as $ \e\to 0 $.
Given~\eqref{eq:fluxCond:hydro}, after a change of variable $ \xi-\lambda\mapsto \xi $,
the r.h.s.\ of~\eqref{eq:cnvg:uuplambda:}, matches $ \lambda+\kappa_\rho\sqrt{t} $.
Combining this with \eqref{eq:sub:Rupcnvg},
we see that \eqref{eq:Ruplw:flxup} holds in expectation.
\end{proof}

\section*{Ethical Statement}

\textit{Funding}: Dembo's research was partially supported by the NSF grant 
DMS-1613091, whereas Tsai's research was partially supported 
by a Graduate Fellowship from the \ac{KITP} 
and by a Junior Fellow award from the Simons Foundation.
Some of this work was done during the KITP
program ``New approaches to non-equilibrium and random
systems: KPZ integrability, universality, applications and experiments''
supported in part by the NSF grant PHY-1125915.

\textit{Conflict of Interest}: The authors declare that they have no conflict of interest.

\bibliographystyle{alphaabbr}
\bibliography{fuel}
\end{document}